\theoremstyle{plain}
\newtheorem{theorem}{\textbf{Theorem}}[chapter]
\newtheorem{question}[theorem]{Question}
\newtheorem{case1}{Case}
\newtheorem{case2}{Case}
\newtheorem{claim}[theorem]{Claim}
\newtheorem{conjecture}[theorem]{\textbf{Conjecture}}
\newtheorem{corollary}[theorem]{Corollary}
\newtheorem{definition}[theorem]{Definition}
\newtheorem{example}[theorem]{Example}
\newtheorem{lemma}[theorem]{Lemma}
\newtheorem{proposition}[theorem]{Proposition}
\newtheorem{remark}[theorem]{Remark}
\numberwithin{equation}{section}
\newcommand{\field}[1]{\mathbb{#1}}
\newcommand{\R}{\field{R}}
\newcommand{\N}{\field{N}}
\newcommand{\Q}{\field{Q}}
\newcommand{\Le}{\field{L}}
\newcommand{\Z}{\field{Z}}
\newcommand{\K}{\field{K}}
\DeclareMathOperator{\cl}{cl}
\DeclareMathOperator{\trdeg}{trdeg}
\DeclareMathOperator{\spam}{span}
\DeclareMathOperator{\ch}{ch}
\DeclareMathOperator{\proj}{Proj}
\DeclareMathOperator{\lk}{lk}
\begin{document}
\author{Michał Lasoń}

\thispagestyle{empty}
\centerline{}
\vspace{6cm}
\centerline{\Huge\bf Coloring Games and Algebraic Problems}
\vspace{0.2cm}
\centerline{\Huge\bf on Matroids}
\vspace{1cm}
\centerline{\huge Michał Lasoń}
\vspace{2cm}
\centerline{\large Ph.D. thesis}
\centerline{\large Theoretical Computer Science Department}
\centerline{\large Faculty of Mathematics and Computer Science}
\centerline{\large Jagiellonian University}
\centerline{\large advised by Jarosław Grytczuk}
\vspace{7.9cm}
\centerline{\large Kraków 2013}

\cleardoublepage

\tableofcontents

\chapter*{Introduction}

This thesis is basically devoted to matroids --- fundamental structure of combinatorial optimization --- though some of our results concern simplicial complexes, or Euclidean spaces. We study old and new problems for these structures, with combinatorial, algebraic, and topological flavor. Therefore the thesis splits naturally into three parts, accordingly to these three aspects.\bigskip

In the first part we study several coloring games on matroids. Proper colorings of a ground set of a matroid (colorings in which every color class forms an independent set) were already studied by Edmonds \cite{Ed65}. He showed an explicit formula for the chromatic number of a matroid (the least number of colors in a proper coloring) in terms of rank function. We generalize (Theorem \ref{TheoremGeneralizationOfSeymour}) a theorem of Seymour \cite{Se98} on the list chromatic number of a matroid (another important result concerning colorings). 

Connections between matroids and games go back to the famous Shannon switching game (cf. \cite{Mi61}), invented independently by Gale (cf. \cite{We01}). Its matroidal version, introduced by Lehman \cite{Le64}, was solved by Edmonds \cite{Ed66} already in 1960's. However our games are of a slightly different nature. 

For instance, in one variant two players are coloring a matroid properly, but only the first player is interested in finishing the job (with a given set of colors). The second, `bad' one, tries to prevent it from happening. A natural question arises: how many colors are needed for the `good guy' to win, compared to the chromatic number of a matroid. We show that this parameter is at most twice as big as the chromatic number (Theorem \ref{TheoremGeneralizedGame}). This improves and extends to arbitrary matroids the result of Bartnicki, Grytczuk and Kierstead \cite{BaGrKi08}, which concerns only graphic matroids. Moreover, we show that this bound is in general almost optimal (up to at most one).

In another game, we consider a coloring of a matroid from lists, but in a situation when only part of information about colors in the lists is known. The `bad player' assigns consecutive colors to lists of some elements arbitrarily, while the `good' one colors properly elements, each with a color from its list immediately after the color is assigned. It is called the on-line list coloring game. We prove, with Wojciech Lubawski, that the chromatic parameter of this game is the same as its off-line version (Theorem \ref{TheoremOnLineGeneralizationOfSeymour}). This generalizes a theorem of Seymour \cite{Se98} to the on-line setting. 

Games of this type were initially investigated for graphs, leading to lots of interesting results and fascinating conjectures (cf. \cite{Ga81,Bo91,BaGrKiZh07} and \cite{Sc09,Zh09} respectively). We hope that our results add a new aspect of structural type, by showing that most of pathological phenomena appearing for graph coloring games are no longer present in the realm of matroids.\bigskip

The second part of the thesis studies two problems of algebraic nature. The first one concerns $f$-vectors of simplicial complexes. These vectors encode the number of faces of a given size in the complex, and are characterized in the celebrated Kruskal-Katona inequality \cite{Kr63,Ka68}. A similar characterization for matroids or, more generally, for pure complexes (those with all maximal faces of the same size) remains elusive. Our main result (Theorem \ref{TheoremExtremalAreDecomposable}) asserts that every extremal simplicial complex (pure, with equality in the Kruskal-Katona inequality) is vertex decomposable. This extends a theorem of Herzog and Hibi \cite{HeHi99} asserting that extremal complexes have the Cohen-Macaulay property (it is a simple fact that the last property is implied by vertex decomposability). Our argument is purely combinatorial with the main inspiration coming from the proof of the Kruskal-Katona inequality.

The second problem we consider in this part of the thesis is a long standing conjecture of White \cite{Wh80} from 1980 (Conjecture \ref{ConjectureWhite}). The problem concerns the symmetric exchange phenomenon in matroids, and it is solved only for some very special classes of matroids (cf. \cite{Bl08,Bo13,Ka10, Sc11,Co07}). In a simplest form White's conjecture says that if two families of bases of a matroid have equal unions (as multisets), then one can pass between them by a sequence of single element symmetric exchanges. In the algebraic language this means that the toric ideal of a matroid is equal to the ideal generated by quadratic binomials corresponding to symmetric exchanges. In a joint work with Mateusz Michałek we prove White's conjecture up to saturation (Theorem \ref{TheoremMain2}), that is that the saturations of both ideals are equal. We believe that it is the first result in this direction valid for arbitrary matroids. Additionally we prove the full conjecture for a new large class -- strongly base orderable matroids (Theorem \ref{TheoremMain1}).\bigskip

The last part of the thesis concerns the famous necklace splitting problem. Suppose we are given a colored necklace (a segment of integers, or a line segment), and we want to cut it so that the resulting pieces can be fairly split into two (or more) parts, which means that each part captures the same amount of every color. A theorem of Goldberg and West \cite{GoWe85} asserts that if the number of parts is two, then every $k$-colored necklace has a fair splitting using at most $k$ cuts. This fact has a simple proof \cite{AlWe86} using the celebrated Borsuk-Ulam theorem (cf. \cite{Ma03}). Alon \cite{Al87} extended this result to arbitrary number of parts by showing that $k(q-1)$ cuts suffice in the case of $q$ parts.

In a joint paper with Alon, Grytczuk, and Michałek \cite{AlGrLaMi09} we studied a kind of opposite question, which was motivated by the problem of Erd\H{o}s on strongly nonrepetitive sequences \cite{Er61} (cf. \cite{Cu93,De79,Gr08}). We proved that for every $k$ there is a $(k+3)$-coloring of the real line such that no segment has a fair splitting into two parts with at most $k$ cuts. The main results of this part of the thesis generalize this theorem to arbitrary dimension and arbitrary number of parts. We consider two versions -- with cuts made by axis-aligned hyperplanes (Theorem \ref{TheoremNecklaces}), and by arbitrary hyperplanes (Theorem \ref{TheoremNecklacesArbitrary}). In the first case the upper bound we achieve almost matches the lower bound implied by a theorem of Alon \cite{Al87} (or by a more general theorem of de Longueville and \v{Z}ivaljevi\v{c} \cite{LoZi08}). The methods we use relay on the topological Baire category theorem and several applications of algebraic matroids.

\settocdepth{part}
\chapter*{Acknowledgements}
\settocdepth{section}

Firstly, I would like to thank my advisor Jarosław Grytczuk for his scientific guidance during my entire Ph.D. studies. It was a great pleasure and opportunity to learn from him the taste of combinatorics. Throughout the thesis he provided me with encouragement and a lot of interesting problems. In particular he introduced me to the game chromatic number of a matroid. He also was the first person who asked if Seymour's theorem generalizes to the on-line setting. Finally, Jarek is a coauthor of my first scientific publication. I am sincerely grateful for his effort.\smallskip

I would like to express my gratitude to professor J\"{u}rgen Herzog for pointing a direction in which I can generalize one of his results, and also for showing me the conjecture of White.\smallskip

I am especially grateful to Wojciech Lubawski and Mateusz Michałek, who are coauthors of some of the results presented in this thesis. I thank them for inspiration, motivation and hours of fruitful discussions.\smallskip

I thank my friends: Łucja Farnik, Michał Farnik, Adam G\k{a}gol, Andrzej Grzesik, Grzegorz Gutowski, Jakub Kozik, Tomasz Krawczyk, Piotr Micek, Arkadiusz Pawlik, Małgorzata Sulkowska and Bartosz Walczak for interesting talks, questions, answers and a great working atmosphere.\smallskip

Writing this thesis would not be possible without the constant support of my family. I thank a lot my mum for her love and motivation.\bigskip

The author has been granted scholarship in the project \emph{Doctus Małopolski fundusz stypendialny dla doktorantów} co-funded by the European Union through the European Social Fund, and also supported by the grant of the Polish Ministry of Science and Higher Education \emph{On-line tasks selection}  N~N206 568240. 


\chapter{Introduction to Matroids}

Matroids are one of basic mathematical structures. They abstract the idea of independence from various areas of mathematics, ranging from algebra to combinatorics. The origins of matroid theory turn back to 1930's, when they were introduced by Whitney \cite{Wh35}. For an adequate introduction we refer the reader to the book of Oxley \cite{Ox92} (and to \cite{Ox03} for a sketchy one).

In this Chapter we give a brief sketch of basic concepts of matroid theory. We present definitions, examples and several classes of matroids. We also prove Matroid Union Theorem \ref{TheoremMatroidUnion}, a basic tool, which already reveals regularity of the structure of matroid. Therefore, this Chapter can be treated as preliminaries, in particular it does not contain any our result. 

\section{Definitions and Terminology}

Matroids are cryptomorphic structures, that is they can be defined in many different but nontrivially equivalent ways. We will introduce five of this axiom systems which are most important for us. We show correspondence between the first one, which can be considered as basic, and the others (for proofs see \cite[Chapter 1]{Ox92}). This will also provide a necessary notation.

Usually by a matroid $M$ on a finite ground set $E$ we mean a collection $\mathcal{I}$ of subsets of $E$ satisfying the following conditions.

\begin{definition}\emph{(Independent Sets)}\label{DefinitionIndependentSets}
A \emph{matroid} $M$ consists of a finite set $E$, called the \emph{ground set}, and a set $\mathcal{I}$ of \emph{independent subsets} of $E$ satisfying the following conditions:
\begin{enumerate}
\item empty set is an independent set,
\item subset of an independent set is independent,
\item if $I,J\in\mathcal{I}$ and $\left\vert I\right\vert<\left\vert J\right\vert$, then we can extend $I$ with an element of $J$, that is $I\cup e\in\mathcal{I}$ for some $e\in J\setminus I$ (augmentation axiom).
\end{enumerate}
\end{definition}

Actually, as we will see in Definition \ref{DefinitionRepresentableMatroid}, any finite subset $E$ of a vector space together with a family of linearly independent sets contained in $E$ constitutes a basic example of a matroid. This connection justifies a large part of terminology used in matroid theory. For example, one may define bases of a matroid, the rank of a subset, or the closure operation just the same way as for vector spaces.

\begin{definition}\emph{(Bases)}\label{DefinitionBases}
A \emph{matroid} $M$ consists of a finite set $E$, called the \emph{ground set}, and a set $\mathcal{B}$ of subsets of $E$, called \emph{bases}, satisfying the following conditions:
\begin{enumerate}
\item $\mathcal{B}$ is non-empty,
\item if $B_1,B_2$ are bases and $e\in B_1\setminus B_2$, then there is $f\in B_2\setminus B_1$, such that $B_1\cup f\setminus e$ is a basis.
\end{enumerate}
\end{definition}

Bases are just maximal independent sets. Due to augmentation axiom all of them have equal size, so bases are in fact maximum independent sets. The other direction is also clear, having the set of bases, independent sets are exactly those which are contained in some basis. 

\begin{definition}\emph{(Rank Function)}\label{DefinitionRank}
A \emph{matroid} $M$ consists of a finite set $E$, called the \emph{ground set}, and a function $r:\mathcal{P}(E)\rightarrow\N$, called the \emph{rank function}, satisfying the following conditions:
\begin{enumerate}
\item if $A\subset E$, then $0\leq r(A)\leq \left\vert A\right\vert$,
\item rank is monotone, that is if $A\subset B\subset E$, then $r(A)\leq r(B)$,
\item if $A,B\subset E$, then $r(A\cup B)+r(A\cap B)\leq r(A)+r(B)$ (submodularity axiom).
\end{enumerate}
\end{definition}

The rank of a set $A\subset E$ is the size of the maximal independent set contained in $A$. Knowing the rank function, independent sets are exactly whose sets $A\subset E$ for which $r(A)=\left\vert A\right\vert$. So as independent sets abstract among others linear independence or algebraic independence, the rank function abstracts dimension or transcendence degree. Sometimes the rank of the whole matroid $M$, which is $r(E)$, will be denoted by $r(M)$ in order not to refer to the ground set.

\begin{definition}\emph{(Closure)}\label{DefinitionClosure}
A \emph{matroid} $M$ consists of a finite set $E$, called the \emph{ground set}, and a \emph{closure operator} $\cl:\mathcal{P}(E)\rightarrow\mathcal{P}(E)$ satisfying the following conditions:
\begin{enumerate}
\item if $A\subset E$, then $A\subset\cl(A)$,
\item if $A\subset B\subset E$, then $\cl(A)\subset\cl(B)$,
\item if $A\subset E$, then $\cl(\cl(A))=\cl(A)$,
\item if $A\subset E,e\in E$, and $f\in\cl(A\cup e)\setminus\cl(A)$, then $e\in\cl(A\cup f)$.
\end{enumerate}
\end{definition}

The closure of a set $A\subset E$ is the set of all elements $e\in E$ satisfying $r(A\cup e)=r(A)$. Or, in other words, $\cl(A)$ is the maximum set $B$ containing $A$ for which $r(B)=r(A)$. Independent sets are exactly those sets which do not have a proper subset with the same closure.

Another source of matroid terminology comes from graph theory. This is because, as we will see in Definition \ref{DefinitionGraphicMatroid}, every graph $G$ can be turned into a graphic matroid $M(G)$ on the set of edges by defining $\mathcal{I}$ as the family of sets not containing a cycle. Therefore circuits in a matroid, which generalize cycles of graphic matroid, are minimal sets which are not independent. Thus, a set is independent if it does not contain any circuit. 

\begin{definition}\emph{(Circuits)}\label{DefinitionCircuits}
A \emph{matroid} $M$ consists of a finite set $E$, called the \emph{ground set}, and a set $\mathcal{C}$ of subsets of $E$, called \emph{circuits}, satisfying the following conditions:
\begin{enumerate}
\item empty set is not a circuit,
\item there is no inclusion between any two circuits,
\item if $C_1,C_2$ are two circuits and $e\in C_1\cap C_2$, then there is a circuit $C_3$ such that $C_3\subset C_1\cup C_2\setminus e$.
\end{enumerate}
\end{definition}

There are also several other ways to define a matroid, for example by flats or hyperplanes.

\section{Examples and Operations}\label{SectionExamplesAndOperations}

We show basic examples and classes of matroids (for proofs that they satisfy axioms of matroid see \cite{Ox92}). More examples can be produced using operations on matroids described in the second part of this Section. We begin with an almost trivial example. 

\begin{definition}\emph{(Uniform Matroid)}\label{DefinitionUniformMatroid} Let $E$ be a finite set, and let $b$ be an integer. Then, $$(E,\mathcal{I})=(E,\{A\subset E:\left\vert A\right\vert\leq b\})$$ is a matroid denoted by $U_{r(E),\left\vert E\right\vert}$, with the rank function $r(A)=\min\{\left\vert A\right\vert,b\}$.
\end{definition}

The next two examples come from algebra. The second will play a big role in the last part of the thesis.

\begin{definition}\emph{(Representable Matroid)}\label{DefinitionRepresentableMatroid} Let $V$ be a vector space over a field $\K$, and let $E\subset V$ be a finite set. Then, $$(E,\mathcal{I})=(E,\{A\subset E:A\text{ is linearly independent over }\K\})$$ is a matroid with the rank function $r(A)=\dim_{\K}(\spam(A))$ and closure operator $\cl(A)=\spam(A)\cap E$. Such a matroid is called \emph{representable over the field $\K$}. If a matroid is representable over some field it is called \emph{representable}.
\end{definition}

\begin{definition}\emph{(Algebraic Matroid)}\label{DefinitionAlgebraicMatroid} Let $\K\subset\Le$ be a field extension, and let $E\subset\Le$ be a finite set. Then, $$(E,\mathcal{I})=(E,\{A\subset E:A\text{ is algebraically independent over }\K\})$$ is a matroid with the rank function $r(A)=\trdeg_{\K}(\K(A))$. 
\end{definition}

The following example is probably the most natural for combinatorics. Its big advantage is that it can be easily visualized. We will often refer to graphic matroids in examples and some theorems. 

\begin{definition}\emph{(Graphic Matroid)}\label{DefinitionGraphicMatroid} Let $G=(V,E)$ be a multigraph. Then, $$(E,\mathcal{I})=(E,\{A\subset E:A\text{ does not contain a cycle}\})$$ is a matroid with the set of bases equal to the set of spanning forests of $G$, and the set of circuits equal to the set of cycles of $G$. We denote it by $M(G)$. 
\end{definition}

Graphic matroids are also interesting because they are very different from previous two families of matroids coming from algebra.

Some classes of matroids are defined by a certain property, instead of by a specific presentation (as it was in the previous examples). 

\begin{definition}\emph{(Base Orderable Matroid)}\label{DefinitionBaseOrderable} 
A matroid is \emph{base orderable} if for any two bases $B_1$ and $B_2$ there is a bijection $\pi:B_1\rightarrow B_2$ such that $B_1\cup\pi(e)\setminus e$ and $B_2\cup e\setminus\pi(e)$ are bases for any element $e\in B_1$.
\end{definition}

\begin{definition}\emph{(Strongly Base Orderable Matroid)}\label{DefinitionStronglyBaseOrderable} 
A matroid is \emph{stron\-gly base orderable} if for any two bases $B_1$ and $B_2$ there is a bijection $\pi:B_1\rightarrow B_2$ such that $B_1\cup\pi(A)\setminus A$ is a basis for any subset $A\subset B_1$.
\end{definition}

The condition of strongly base orderable matroid implies also that $B_2\cup A\setminus\pi(A)$ is a basis for any $A\subset B_1$. Moreover, we can assume that $\pi$ is the identity on $B_1\cap B_2$. The class of strongly base orderable matroids is closed under taking minors. As we will see in Section \ref{SectionWeakWhiteConjecture} of Chapter \ref{ChapterWhiteConjecture}, one of our results holds only for  strongly base orderable matroids.

\begin{definition}\emph{(Transversal Matroid)}\label{DefinitionTransversalMatroid} Let $E$ be a finite set, and let $\mathcal{A}=\{A_j:j\in J\}$ be a multiset of subsets of $E$. The following is a matroid
$$(E,\mathcal{I})=(E,\{A\subset E:\text{ there exists an injection }\phi:A\rightarrow J\text{ such that }$$ $$a\in A_{\phi(a)}\text{ for every }a\in A\}.$$
\end{definition}

In fact for any transversal matroid one can find a corresponding multiset $\mathcal{A}$ of cardinality equal to its rank. Another similar, and not so well-known class of matroids, is the class of laminar matroids, see \cite{Go09}.

\begin{definition}\emph{(Laminar Matroid)}\label{DefinitionLaminarMatroid} A family of sets is \emph{laminar} if any two sets are either disjoint or one is contained in the other. Let $\mathcal{F}$ be a laminar family of subsets of a finite set $E$, and let $k:\mathcal{F}\rightarrow\N$ be a function. The following is a matroid
$$(E,\mathcal{I})=(E,\{A\subset E:\left\vert A\cap F\right\vert\leq k(F)\text{ for every }F\in\mathcal{F}\}).$$ 
\end{definition}

As one can prove both transversal matroids and laminar matroids are strongly base orderable. Moreover, none of these classes is contained in the other.  

\begin{definition}\emph{(Regular Matroid)}\label{DefinitionRegularMatroid} 
A matroid is \emph{regular} if it is representable over every field.
\end{definition}

One can easily show that graphic matroids are regular. For characterizations of regular matroids see the paper of Tutte \cite{Tu58} (cf. \cite{Ox92}). Seymour \cite{Se80} proved a very interesting structural result. It asserts that every regular matroid may be constructed by piecing together graphic and cographic matroids and copies of a certain $10$-element matroid. 

Now we pass to operations on matroids. Firstly we are going to define two basic, restriction and contraction, which produce from a given matroid a matroid on a smaller ground set with useful properties. 

Let $M$ be a matroid on a ground set $E$ with the set of independent sets $\mathcal{I}$, the set of bases $\mathcal{B}$, rank function $r$, and let $F\subset E$. 

\begin{definition}\emph{(Restriction)} \emph{Restriction} of a matroid $M$ to the set $F$, denoted by $M\vert_{F}$ or $M\setminus (E\setminus F)$, is a matroid on the ground set $F$ satisfying:
\begin{enumerate}
\item $I\subset F$ is independent in $M\vert_{F}$ if $I$ is independent in $M$,
\item rank function of $M\vert_{F}$ equals to $r\vert_{F}$ -- restriction of rank function of $M$ to the set $F$.
\end{enumerate}
\end{definition}

\begin{definition}\emph{(Contraction)} \emph{Contraction} of the set $F$ in a matroid $M$, denoted by $M/F$, is a matroid on the ground set $E\setminus F$ satisfying:
\begin{enumerate}
\item $I\subset E\setminus F$ is independent in $M/F$ if for every $J\subset F$ independent in $M$ the set $I\cup J$ is independent in $M$,
\item rank function $r^{\prime}$ of $M/F$ satisfies $r^{\prime}(A)=r(A\cup F)-r(F)$ for every $A\subset E\setminus F$.
\end{enumerate}
\end{definition}

A matroid is called a \emph{minor} of another matroid if it is obtained from it by a sequence of restrictions and contractions. One can show that in fact any minor is obtained by at most two such operations. 

Matroid theory has a concept of duality. 

\begin{definition}\emph{(Dual Matroid)} \emph{Dual matroid} of a matroid $M$, denoted by $M^*$, is a matroid on the ground set $E$, such that:
\begin{enumerate}
\item $B\subset E$ is a basis in $M^*$ if $E\setminus B$ is a basis in $M$,
\item rank function $r^{\prime}$ of $M^*$ satisfies $r^{\prime}(A)=\left\vert A\right\vert-r(E)+r(E\setminus A)$ for every $A\subset E$.
\end{enumerate}
\end{definition}

Of course $(M^*)^*=M$. Now we can describe the duality between operations of restriction and contraction. Namely, we have 
$$M/F=(M^*\setminus F)^*\text{ and }M\setminus F=(M^*/F)^*.$$ 
Bases, circuits, etc. of the dual matroid are called \emph{cobases}, \emph{cocircuits}, etc. of the original matroid. 

It is also easy to define direct sum of matroids.

\begin{definition}\emph{(Direct Sum)}\label{DefinitionDirectSumOfMatroids}
Suppose that $M_1=(E_1,\mathcal{I}_1,r_1),\dots,M_k=(E_k,\mathcal{I}_k,r_k)$ are matroids on disjoint ground sets. Then, there exists a matroid $M_1\sqcup\dots\sqcup M_k=(E_1\cup\dots\cup E_k,\mathcal{I},r)$ satisfying:
\begin{enumerate}
\item $\mathcal{I}=\{I_1\cup\dots\cup I_k:I_1\in\mathcal{I}_1,\dots,I_k\in\mathcal{I}_k\}$,
\item $r(A)=r_1(A\cap E_1)+\dots+r_k(A\cap E_k)$ for every $A\subset E_1\cup\dots\cup E_k$.
\end{enumerate}
\end{definition}

As we will see in Theorem \ref{TheoremJoinOfMatroids} it is possible to extend this definition to matroids with not necessary disjoint ground sets.

One can also blow up an element of a matroid by adding `parallel' elements. This operation extends the idea of adding parallel edges in graphic matroids.

\begin{definition}\emph{(Blow Up)}\label{DefinitionBlowUp}
Let $M=(E,\mathcal{I},\mathcal{C})$ be a matroid and let $e\in E,f\notin E$. Then, there exists a matroid $M'=(E\cup f,\mathcal{I}',\mathcal{C}')$ satisfying:
\begin{enumerate}
\item $\mathcal{I}'=\mathcal{I}\cup\{I\cup f\setminus e:e\in I\in\mathcal{I}\}$,
\item $\mathcal{C}'=\mathcal{C}\cup\{C\cup f\setminus e:e\in C\in\mathcal{C}\}\cup\{e,f\}$.
\end{enumerate}
\end{definition}

The crucial fact about above definitions is that in each case the matroid with described set of independent sets, bases, circuits or rank function exists (for the proofs see \cite{Ox92}). 

\section{Matroid Union Theorem}

We will describe one of possible formulations of the Matroid Union Theorem, which is best for our applications.

\begin{theorem}[Nash-Williams, cf. \cite{Ox92}] \emph{(Matroid Union Theorem)}\label{TheoremMatroidUnion}
Let $M_1,$ $\dots,M_k$ be matroids on the same ground set $E$ with rank functions $r_1,\dots,r_k$ respectively. The following conditions are equivalent:
\begin{enumerate}
\item there exist sets $V_i$ with $V_1\cup\dots\cup V_k=E$, such that the set $V_i$ is independent in $M_i$ for each $i$,
\item for every $A\subset E$ we have $r_1(A)+\dots+r_k(A)\geq\left\vert A\right\vert$.
\end{enumerate}
\end{theorem}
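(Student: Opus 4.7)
The plan is to realize condition (1) as the statement that the ground set $E$ itself is independent in the \emph{union matroid} $N := M_1 \vee \cdots \vee M_k$ on $E$, whose independent sets are by definition $\mathcal{I}_N = \{I_1 \cup \cdots \cup I_k : I_j \in \mathcal{I}_j\}$, and then to read condition (2) off from the rank function of $N$.

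The direction (1) $\Rightarrow$ (2) is a direct counting argument. Given such sets $V_1,\dots,V_k$, for every $A\subset E$ the intersection $A\cap V_i$ is independent in $M_i$ (as a subset of the independent set $V_i$), so $|A\cap V_i|\leq r_i(A)$, and since $A\subset V_1\cup\cdots\cup V_k$,
$$|A|\leq \sum_{i=1}^k |A\cap V_i|\leq \sum_{i=1}^k r_i(A).$$

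For the converse, the key fact I would establish is that $N$ is indeed a matroid, with rank function
$$r_N(S)=\min_{B\subset S}\Bigl(|S\setminus B|+\sum_{i=1}^k r_i(B)\Bigr). \qquad (\star)$$
Granting $(\star)$, applying it to $S=E$ shows that the equality $r_N(E)=|E|$ is equivalent to $|E\setminus B|+\sum_i r_i(B)\geq|E|$ for every $B\subset E$, that is, to condition (2). But $r_N(E)=|E|$ means exactly that $E\in\mathcal{I}_N$, which by the definition of $\mathcal{I}_N$ provides the decomposition $E=V_1\cup\cdots\cup V_k$ asserted in (1).

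It remains to justify $(\star)$. The upper bound is immediate: for any $I=I_1\cup\cdots\cup I_k\in\mathcal{I}_N$ with $I\subset S$ and any $B\subset S$, write $|I|=|I\cap B|+|I\setminus B|$ and estimate $|I\cap B|\leq \sum_i|I_i\cap B|\leq \sum_i r_i(B)$ and $|I\setminus B|\leq |S\setminus B|$. The matching lower bound --- that some $I\in\mathcal{I}_N$ attains the minimum, together with the verification of the augmentation axiom for $\mathcal{I}_N$ --- is the heart of the proof. I would handle it by induction on $k$ via the associativity $M_1\vee\cdots\vee M_k=(M_1\vee\cdots\vee M_{k-1})\vee M_k$, reducing to the case $k=2$, which is then dispatched by an augmenting-sequence argument in the spirit of matroid intersection: starting from a maximal $I_1\cup I_2$ that fails to meet the bound, one either finds an alternating augmenting sequence (contradicting maximality) or extracts a set $B\subset S$ witnessing equality in $(\star)$. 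This lower bound, i.e.\ proving the augmentation axiom for the union $\mathcal{I}_N$, is the main obstacle; once it is in place, everything else in the proof is bookkeeping.
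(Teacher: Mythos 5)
Your route is logically different from the paper's and inverts its order of development. The paper first proves a weighted generalization (Theorem~\ref{TheoremWeightedMatroidUnion}) by a self-contained double induction --- on $|E|$ and then on the total weight $\sum_{e\in E}w(e)$ --- using only restriction and contraction of the given matroids $M_i$; the unweighted statement is then the case $w\equiv 1$. Crucially, the paper never needs to know that $M_1\vee\cdots\vee M_k$ is itself a matroid, nor what its rank function is; on the contrary, it later derives the join-of-matroids rank formula (your~$(\star)$, which is the content of Theorem~\ref{TheoremJoinOfMatroids}) as a \emph{corollary} of the union theorem. You propose the opposite order: first establish that $\mathcal{I}_N$ is the independence family of a matroid with rank function~$(\star)$, then read off both directions of the theorem by applying $(\star)$ to $S=E$. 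That route is classical and correct, but as written the decisive step --- verifying the augmentation axiom for $\mathcal{I}_N$ and the matching lower bound in~$(\star)$, which you delegate to an unspecified augmenting-sequence argument --- is left as a sketch, and it is precisely the non-trivial content of the theorem; everything else in your proposal is, as you say, bookkeeping. The trade-off is real: the paper's induction is elementary and fully self-contained but does not exhibit the union matroid as an object, while your approach buys the stronger structural statement (the explicit rank function of $M_1\vee\cdots\vee M_k$) at the cost of a harder argument that would still need to be carried out in full.
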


The theorem follows from its weighted version, which we prove below. Let $w:E\rightarrow\N$ be a weight assignment of elements of $E$. A collection of (not necessarily different) subsets $V_1,\dots,V_d\subset E$ is said to be a $w$\emph{-covering} of $E$ if for every element $e\in E$ there are exactly $w(e)$ members of the collection containing $e$. If $w\equiv k$ for some integer $k$, then we write shortly $k$\emph{-covering}. By a covering we mean a $1$-covering.

\begin{theorem}[Nash-Williams, cf. \cite{Ox92}] \emph{(Weighted Matroid Union Theorem)}\label{TheoremWeightedMatroidUnion} Let $M_1,\dots,M_k$ be matroids on the same ground set $E$ with rank functions $r_1,\dots,r_k$ respectively. The following conditions are equivalent:
\begin{enumerate}
\item there exists a $w$-covering of $E$ by sets $V_1,\dots,V_k$ with $V_{i}$ independent in $M_i$ for each $i$,
\item for every $A\subset E$ we have $r_1(A)+\dots+r_k(A)\geq\sum_{e\in A}w(e)$.
\end{enumerate}
\end{theorem}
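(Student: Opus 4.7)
The plan is to reduce to the (unweighted) Matroid Union Theorem \ref{TheoremMatroidUnion} by replacing each element $e\in E$ with $w(e)$ parallel copies, using iterated blow-ups from Definition \ref{DefinitionBlowUp}. The direction (1)$\Rightarrow$(2) is then a one-line double count: given a $w$-covering $V_1,\dots,V_k$ and $A\subset E$,
$$\sum_{e\in A}w(e)\;=\;\sum_{i=1}^{k}\module{V_i\cap A}\;\leq\;\sum_{i=1}^{k}r_i(A),$$
since $V_i\cap A$ is independent in $M_i$.

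For the harder direction (2)$\Rightarrow$(1), I would form the ground set $E'=\{(e,j):e\in E,\,1\leq j\leq w(e)\}$ (elements with $w(e)=0$ are omitted), and for each $i$ let $M'_i$ be the matroid on $E'$ obtained by iterating the blow-up operation so that $(e,1),\dots,(e,w(e))$ form a parallel class in $M'_i$. Writing $\pi\colon E'\to E$ for the natural projection, the key identity is that the rank function of $M'_i$ satisfies $r'_i(A')=r_i(\pi(A'))$, because within a parallel class only the first chosen element contributes to the rank. Combining this identity with hypothesis (2) and the obvious bound $\module{A'}\leq\sum_{e\in\pi(A')}w(e)$ gives, for every $A'\subset E'$,
$$\sum_i r'_i(A')=\sum_i r_i(\pi(A'))\;\geq\;\sum_{e\in\pi(A')}w(e)\;\geq\;\module{A'},$$
which is exactly condition (2) of Theorem \ref{TheoremMatroidUnion} applied to $M'_1,\dots,M'_k$. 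Invoking that theorem yields sets $V'_1,\dots,V'_k\subset E'$, with $V'_i$ independent in $M'_i$ and $V'_1\cup\dots\cup V'_k=E'$; by discarding duplicated elements (a subset of an independent set is independent) I may further assume that the $V'_i$ are pairwise disjoint.

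Finally, setting $V_i=\pi(V'_i)$, independence of $V'_i$ in $M'_i$ forces it to meet each parallel class $\pi^{-1}(e)$ in at most one element, so $\pi\vert_{V'_i}$ is injective and $V_i$ is independent in $M_i$. Since $\pi^{-1}(e)$ has size $w(e)$ and is partitioned by the sets $V'_i\cap\pi^{-1}(e)$, each of size at most one, exactly $w(e)$ of these intersections are nonempty, which means $e$ lies in exactly $w(e)$ of the $V_i$; so $V_1,\dots,V_k$ is the desired $w$-covering. The only step I expect to require real care is the bookkeeping for the blow-up: verifying that iterating Definition \ref{DefinitionBlowUp} genuinely yields a matroid with the clean rank identity $r'_i=r_i\circ\pi$ and that independence descends along the injective restriction $\pi\vert_{V'_i}$. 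Both follow from the definition by a direct induction on $\sum_{e\in E}w(e)$, but this is where the argument actually exploits the structure of parallel elements, and is the place most likely to require attention.
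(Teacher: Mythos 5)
Your reduction --- blowing each element up into $w(e)$ parallel copies, establishing the rank identity $r'_i(A')=r_i(\pi(A'))$, invoking the unweighted Theorem~\ref{TheoremMatroidUnion} on the blown-up matroids, then projecting a disjointified covering of $E'$ down to a $w$-covering of $E$ --- is technically sound, and the details you flag as delicate (the blow-up rank formula and the injectivity of $\pi$ on each $V'_i$) do go through by the induction you indicate.

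The difficulty is one of logical order. Immediately after stating Theorem~\ref{TheoremMatroidUnion}, the paper says ``the theorem follows from its weighted version, which we prove below,'' and gives no independent proof of \ref{TheoremMatroidUnion}: it is obtained from \ref{TheoremWeightedMatroidUnion} as the special case $w\equiv 1$. Invoking \ref{TheoremMatroidUnion} to prove \ref{TheoremWeightedMatroidUnion} therefore makes the two statements circularly dependent, and within this paper neither would actually be established. The paper's own proof is a self-contained double induction, first on $\left\vert E\right\vert$ and then on $\sum_{e\in E}w(e)$: one examines the defect $f(A)=\sum_i r_i(A)-\sum_{e\in A}w(e)$; if it vanishes on some non-empty proper $A\subsetneq E$, restrict the matroids to $A$ and contract $A$ and glue the two inductive solutions; if it is strictly positive on all such $A$, pick any $e$ with $w(e)>0$ and some $i$ with $r_i(e)>0$, decrement $w(e)$, contract $e$ in $M_i$, apply induction, and re-insert $e$ into $V_i$. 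Your blow-up reduction would be a perfectly acceptable alternative route if an independent proof of the unweighted theorem were supplied first, but as the paper is organized it cannot serve as the primary argument.
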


\begin{proof}
Implication $(1)\Rightarrow (2)$ is easy. For every $A\subset E$ we have 
$$r_1(A)+\dots+r_k(A)\geq\left\vert A\cap V_1\right\vert+\dots+\left\vert A\cap V_k\right\vert=\sum_{e\in A}w(e).$$ 

We focus on the opposite implication $(2)\Rightarrow (1)$, which we will prove by double induction. Firstly on the size of the ground set $E$, and secondly on the sum $\sum_{e\in E}w(e)$. 

The basis of induction, that is when $\left\vert E\right\vert=1$ or $w\equiv 0$, is clearly true. Consider a function $f:\mathcal{P}(E)\rightarrow\N$ defined by
\begin{equation*}
f(A)=r_1(A)+\dots+r_d(A)-\sum_{e\in A}w(e).
\end{equation*}
Notice that the condition $(2)$ guarantees that $f$ is nonnegative for all $A\subset E$. We distinguish two cases.

\begin{case1}
For some non-empty proper subset $A\subsetneq E$ there is $f(A)=0$. 
\end{case1}

Consider matroids $M_1,\dots,M_k$ restricted to $A$ and weight function $w\vert_A$. Since for them the condition $(2)$ of the theorem is satisfied, then from the inductive assumption there is a $w\vert_A$-covering by $U_1,\dots,U_k$ with $U_i$ is independent in $M_i\vert_A$. 

Consider also weight function $w\vert_{E\setminus A}$ and matroids $M_1^{\prime},\dots,M_d^{\prime}$ on the set $E\setminus A$ obtained by contracting set $A$ in $M_1,\dots,M_d$, that is $M_i^{\prime}=M_i/A$. Then $r_i^{\prime}(X)=r_i(X\cup A)-r_i(A)$, so they satisfy the condition $(2)$ of the theorem. From the inductive assumption there is a $w\vert_{E\setminus A}$-covering by $U_1^{\prime},\dots,U_k^{\prime}$ with $U_i^{\prime}$ is independent in $M_i/A$. Now unions $V_i=U_i\cup U_i^{\prime}$ form a $w$-covering of $E$, and $V_i$ is independent in $M_i$.

\begin{case1}
For every non-empty proper subset $A\subset E$ there is $f(A)>0$. 
\end{case1}

Pick an arbitrary element $e\in E$ with $w(e)>0$, and pick an arbitrary $i$ such that $r_i(e)>0$, which clearly exists because $f(e)>0$. Consider matroids $M_1,\dots,M_i^{\prime},\dots,M_k$, where $M_i^{\prime}=M_i/e$, and weight function $w'$ such that $w'\vert_{E\setminus e}\equiv w\vert_{E\setminus e}$ and $w'(e)=w(e)-1$. For them condition $(2)$ of the theorem still holds, so from the inductive assumption there is a $w'$-covering by sets $V_1,\dots,V_i',\dots,V_k$ independent in corresponding matroids. Observe that if we define $V_i=V_i'\cup e$, then $V_1,\dots,V_i,\dots,V_k$ will be an eligible $w$-covering for matroids $M_1,\dots,M_i,\dots,M_k$.
\end{proof}

Another formulation of the Matroid Union Theorem describes the rank function of the join of matroids. We derive it as a corollary of the previous formulation.

\begin{theorem}[Nash-Williams, cf. \cite{Ox92}] \emph{(Join of Matroids)}\label{TheoremJoinOfMatroids}
Let $M_1=(E,\mathcal{I}_1,r_1),$ $\dots,M_k=(E,\mathcal{I}_k,r_k)$ be matroids on the same ground set $E$. Then, there exists a matroid $M_1\vee\dots\vee M_k=(E,\mathcal{I},r)$ satisfying:
\begin{enumerate}
\item $\mathcal{I}=\{I_1\cup\dots\cup I_k:I_1\in\mathcal{I}_1,\dots,I_k\in\mathcal{I}_k\}$,
\item $r(A)=\min\{r_1(B)+\dots+r_k(B)+\left\vert A\setminus B\right\vert:B\subset A\}$.
\end{enumerate}
\end{theorem}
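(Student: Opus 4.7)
The plan is to introduce the putative rank function and then recover both the matroid structure and the claimed family of independent sets from it. Let $r\colon\mathcal{P}(E)\to\N$ be defined by
$$r(A)=\min\{r_1(B)+\dots+r_k(B)+|A\setminus B|:B\subset A\}.$$
By Definition \ref{DefinitionRank} it suffices to check that $r$ satisfies the rank axioms (so that it determines a matroid $M$) and that $r(A)=|A|$ holds precisely when $A$ is of the form $I_1\cup\dots\cup I_k$ with $I_i\in\mathcal{I}_i$, so the independent sets of $M$ coincide with the family $\mathcal{I}$ in the statement.

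For the rank axioms, nonnegativity is immediate, and $r(A)\leq|A|$ follows by taking $B=\emptyset$. For monotonicity, given $A\subset A'$, I take an optimal $B'\subset A'$ and set $B=B'\cap A$; monotonicity of each $r_i$ and the inclusion $A\setminus B\subset A'\setminus B'$ yield $r(A)\leq r(A')$. Submodularity is the main computation. Given $A_1,A_2\subset E$, I take optimal $B_i\subset A_i$ and bound $r(A_1\cup A_2)+r(A_1\cap A_2)$ from above using the candidate sets $B_1\cup B_2\subset A_1\cup A_2$ and $B_1\cap B_2\subset A_1\cap A_2$. Submodularity of each $r_i$ absorbs the rank contributions, while the cardinality contributions collapse via the identity
$$|(A_1\cup A_2)\setminus(B_1\cup B_2)|+|(A_1\cap A_2)\setminus(B_1\cap B_2)|=|A_1\setminus B_1|+|A_2\setminus B_2|,$$
valid whenever $B_i\subset A_i$, which I verify by a straightforward case analysis on how a single element lies in each of $A_1,A_2,B_1,B_2$.

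For the independent-set description, I apply the already proven Theorem \ref{TheoremMatroidUnion} to the restricted matroids $M_1|_A,\dots,M_k|_A$: a subset $A\subset E$ can be written as a union $I_1\cup\dots\cup I_k$ with $I_i\in\mathcal{I}_i$ if and only if $r_1(S)+\dots+r_k(S)\geq|S|$ for every $S\subset A$. Rewriting this inequality as $r_1(S)+\dots+r_k(S)+|A\setminus S|\geq|A|$ shows that it is exactly the assertion $r(A)\geq|A|$, and combined with the universal bound $r(A)\leq|A|$ it is equivalent to $r(A)=|A|$. Hence the matroid determined by $r$ has precisely $\mathcal{I}$ as its family of independent sets.

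The main obstacle is the submodularity verification, which bundles the submodularity of each $r_i$ with a purely combinatorial cardinality identity whose case analysis depends on the constraints $B_i\subset A_i$; once this identity is in hand, the remaining assertions are direct translations of the Matroid Union Theorem into the language of rank functions, with no further structural difficulties.
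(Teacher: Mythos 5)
Your proposal is correct and follows essentially the same route as the paper: define $r$ by the stated formula, verify the rank axioms (with submodularity reduced to the submodularity of each $r_i$ plus the cardinality identity, which indeed holds with equality when $B_i\subset A_i$), and identify the sets with $r(A)=|A|$ using the Matroid Union Theorem. The only cosmetic difference is that you invoke the Matroid Union Theorem as a biconditional on the restricted matroids $M_i|_A$ to get both directions of the independence characterization at once, whereas the paper uses it for one direction and gives a short direct estimate for the converse; both are valid and the underlying idea is identical.
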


\begin{proof} 
We present a proof in the case $k=2$, a generalization to arbitrary number of matroids is straightforward.

According to Definition \ref{DefinitionRank}, matroid $M_1\vee M_2=(E,r)$ exists if conditions $(1)-(3)$ of rank function hold. It is easy to check that first two are satisfied. To check the submodularity axiom $r(A\cup B)+r(A\cap B)\leq r(A)+r(B)$ assume that minima from the definition of $r(A)$ and $r(B)$ are realized for sets $C,D$ respectively. We have
$$r(A\cup B)+r(A\cap B)\leq r_1(C\cup D)+r_2(C\cup D)+\left\vert A\cup B\setminus(C\cup D)\right\vert+$$ $$r_1(C\cap D)+r_2(C\cap D)+\left\vert A\cap B\setminus(C\cap D)\right\vert$$ $$\leq r_1(C)+r_2(C)+\left\vert A\setminus C\right\vert+r_1(D)+r_2(D)+\left\vert B\setminus D\right\vert=r(A)+r(B),$$
where the second inequality follows from submodularity of $r_1$ and $r_2$. 

Suppose now that $r(A)=\left\vert A\right\vert$. It means that for each $B\subset A$ there is $$r_1(B)+r_2(B)+\left\vert A\setminus B\right\vert\geq\left\vert A\right\vert\text{, so }r_1(B)+r_2(B)\geq\left\vert B\right\vert.$$
From Matroid Union Theorem \ref{TheoremMatroidUnion} it follows that $A=I_1\cup I_2$ for some $I_1\in\mathcal{I}_1,I_2\in\mathcal{I}_2$. Contrary, if $A=I_1\cup I_2$ is the sum of two independent sets, then for each $B\subset A$ we have
$$r_1(B)+r_2(B)+\left\vert A\setminus B\right\vert\geq r_1(B\cap I_1)+r_2(B\cap I_2)+\left\vert A\setminus B\right\vert\geq$$
$$\left\vert B\cap I_1\right\vert+\left\vert B\cap I_2\right\vert+\left\vert A\setminus B\right\vert\geq\left\vert A\right\vert,$$
so $A$ is independent.
\end{proof}

Notice that a special case of join of matroids is the direct sum of matroids (see Definition \ref{DefinitionDirectSumOfMatroids}). Direct sum of matroids on disjoint ground sets is just the join of their extensions to the sum of all ground sets.

\begin{corollary}
A matroid $M=(E,r)$ can be covered with $k$ independent sets if and only if for every $A\subset E$ there is $kr(A)\geq\left\vert A\right\vert$.
\end{corollary}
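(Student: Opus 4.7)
The plan is to derive this as an essentially immediate corollary of the Matroid Union Theorem \ref{TheoremMatroidUnion} by specializing to the case where all the constituent matroids are identical. First, I would set $M_1 = M_2 = \cdots = M_k = M$, so that each rank function $r_i$ coincides with $r$, and apply Theorem \ref{TheoremMatroidUnion} to this $k$-tuple on the common ground set $E$.

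With this choice, condition $(1)$ of the theorem reads: there exist $V_1, \dots, V_k \subset E$ with $V_1 \cup \dots \cup V_k = E$ and each $V_i$ independent in $M$. This is precisely the statement that $E$ can be covered by $k$ independent sets of $M$. Meanwhile, condition $(2)$ becomes $r(A) + \dots + r(A) = k r(A) \geq |A|$ for every $A \subset E$. The equivalence claimed in the corollary is therefore exactly the equivalence $(1) \Leftrightarrow (2)$ of Theorem \ref{TheoremMatroidUnion} in this specialized setting.

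There is no real obstacle here beyond recognizing that the corollary is the diagonal case of the Matroid Union Theorem; no additional combinatorial or inductive work is required. Alternatively, one could invoke Theorem \ref{TheoremJoinOfMatroids}: the join $M \vee \dots \vee M$ ($k$ copies) has as independent sets precisely the unions of $k$ independent sets of $M$, so $E$ is coverable by $k$ independent sets if and only if $E$ itself is independent in the join, which by the rank formula of Theorem \ref{TheoremJoinOfMatroids} amounts to $kr(B) + |E \setminus B| \geq |E|$, equivalently $kr(B) \geq |B|$, for every $B \subset E$.
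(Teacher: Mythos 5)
Your proof is correct and follows exactly the paper's approach: the first part is the paper's primary argument (specializing Matroid Union Theorem \ref{TheoremMatroidUnion} to $M_1=\dots=M_k=M$), and your alternative via Theorem \ref{TheoremJoinOfMatroids} is also the paper's secondary remark. Nothing to add.
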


\begin{proof}
It is exactly Matroid Union Theorem \ref{TheoremMatroidUnion} applied to $M_1=\dots=M_k=M$. We can also easily get it from Theorem \ref{TheoremJoinOfMatroids}. Namely, matroid $M$ can be covered with $k$ independent sets if and only if $E$ is independent in set (rank of $E$ is $\left\vert E\right\vert$) in the matroid $M'=M_1\vee\dots\vee M_k$, where all $M_i=M$. Rank of $E$ in $M'$ equals to $\min\{kr(A)+\left\vert E\setminus A\right\vert:A\subset E\}$. 
\end{proof}

\begin{corollary}
A matroid $M=(E,r)$ has $k$ disjoint bases if and only if for every $A\subset E$ there is $kr(A)+\left\vert E\setminus A\right\vert\geq kr(E)$.
\end{corollary}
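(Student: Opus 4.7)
The plan is to reduce this to the Join of Matroids (Theorem \ref{TheoremJoinOfMatroids}) applied to $k$ copies of $M$. Let $M'=M\vee\dots\vee M$ ($k$ times). By that theorem, its rank function satisfies
$$r'(A)=\min\{kr(B)+\left\vert A\setminus B\right\vert:B\subset A\},$$
and its independent sets are exactly those which split as a union $I_1\cup\dots\cup I_k$ with each $I_i$ independent in $M$.

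The key claim I would establish is that $M$ has $k$ disjoint bases if and only if $r'(E)=kr(E)$. The forward direction is immediate: if $B_1,\dots,B_k$ are pairwise disjoint bases, then $B_1\cup\dots\cup B_k$ is an independent set in $M'$ of size $kr(E)$, and since the choice $B=E$ in the formula already gives $r'(E)\leq kr(E)$, we obtain equality. For the converse, an independent set of size $kr(E)$ in $M'$ decomposes as $I_1\cup\dots\cup I_k$ with each $I_i$ independent in $M$, hence $\left\vert I_i\right\vert\leq r(E)$; since the $\left\vert I_i\right\vert$ sum to at least $kr(E)$ (and equal it once we force the union to be disjoint), each $I_i$ must have size exactly $r(E)$ and the $I_i$ must be pairwise disjoint, so they are $k$ disjoint bases.

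Now I would combine this with the rank formula: the equality $r'(E)=kr(E)$ holds precisely when the minimum $\min\{kr(A)+\left\vert E\setminus A\right\vert:A\subset E\}$ equals $kr(E)$. Since $A=E$ attains the value $kr(E)$, the minimum equals $kr(E)$ if and only if $kr(A)+\left\vert E\setminus A\right\vert\geq kr(E)$ for every $A\subset E$, which is exactly the stated condition.

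The main obstacle, such as it is, lies in the converse half of the key claim: turning a single large independent set in $M'$ into $k$ disjoint bases of $M$. The essential point is the sharp size accounting $\sum \left\vert I_i\right\vert=kr(E)$ together with $\left\vert I_i\right\vert\leq r(E)$, which forces each piece to be a full basis and forces the pieces to be disjoint. Everything else is a bookkeeping translation between the two formulations, so I expect the argument to be short, essentially a one-line corollary of Theorem \ref{TheoremJoinOfMatroids}.
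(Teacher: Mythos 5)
Your proof is correct and takes essentially the same route as the paper: both reduce to the join $M' = M\vee\dots\vee M$ of $k$ copies of $M$ and apply the rank formula from Theorem \ref{TheoremJoinOfMatroids}. The paper states the equivalence ``$M$ has $k$ disjoint bases iff $r'(E)\geq kr(E)$'' without elaboration, whereas you supply the (correct) size-counting argument showing that any decomposition $I=I_1\cup\dots\cup I_k$ of a maximum independent set of $M'$ is automatically a decomposition into disjoint bases.
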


\begin{proof}
Matroid $M$ has $k$ disjoint bases if and only if $E$ has rank at least $kr(E)$ in the matroid $M'=M_1\vee\dots\vee M_k$, where all $M_i=M$. Due to Theorem \ref{TheoremJoinOfMatroids} rank of $E$ in $M'$ equals to $\min\{kr(A)+\left\vert E\setminus A\right\vert:A\subset E\}$. 
\end{proof}

The last theorem of this Section gives an explicit formula, in terms of rank functions, for the largest common independent set in two matroids. Unfortunately, it does not have an easy generalization to larger number of matroids.

\begin{theorem}[Edmonds, cf. \cite{Ox92}] \emph{(Intersection of Two Matroids)}\label{TheoremEdmonds}
Let $M_1=(E,\mathcal{I}_1,r_1)$ and $M_2=(E,\mathcal{I}_2,r_2)$ be two matroids on the same ground set $E$. Then $$\max\{\left\vert I\right\vert: I\in\mathcal{I}_1\cap\mathcal{I}_2\}=\min\{r_1(A)+r_2(E\setminus A):A\subset E\}.$$
\end{theorem}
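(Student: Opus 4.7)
My plan is to reduce the nontrivial direction to the Matroid Union Theorem via matroid duality. The easy inequality $\max\le\min$ is immediate: for any common independent set $I\in\mathcal{I}_1\cap\mathcal{I}_2$ and any $A\subset E$, write $I=(I\cap A)\cup(I\cap(E\setminus A))$; the two parts are independent in $M_1$ and $M_2$ respectively (as subsets of independent sets), whence $|I|\le r_1(A)+r_2(E\setminus A)$.

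For the opposite inequality, the key duality identity is that $I\in\mathcal{I}_2$ if and only if $E\setminus I$ is a spanning set of the dual matroid $M_2^*$; this follows from the dual rank formula $r_2^*(X)=|X|-r_2(E)+r_2(E\setminus X)$ upon checking that $r_2^*(E\setminus I)=r_2^*(E)$ is equivalent to $r_2(I)=|I|$. I would then apply Theorem \ref{TheoremJoinOfMatroids} to the join $M_1\vee M_2^*$ and substitute the dual rank formula to obtain
$$r_{M_1\vee M_2^*}(E)=\min_{B\subset E}\{r_1(B)+r_2^*(B)+|E\setminus B|\}=\min_{B\subset E}\{r_1(B)+r_2(E\setminus B)\}+(|E|-r_2(E)).$$
Set $m:=\min_{B\subset E}\{r_1(B)+r_2(E\setminus B)\}$ and $k:=|E|-r_2(E)$, so $r_{M_1\vee M_2^*}(E)=m+k$.

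Now I would pick a maximum-size independent set in $M_1\vee M_2^*$ and, by removing any overlaps, present it as a disjoint union $I_1\cup I_2^*$ with $I_1$ independent in $M_1$, $I_2^*$ independent in $M_2^*$, and $|I_1|+|I_2^*|=m+k$. Provided $I_2^*$ can be taken to be a basis of $M_2^*$ (of size exactly $k$), then $E\setminus I_1\supset I_2^*$ is spanning in $M_2^*$, hence $I_1\in\mathcal{I}_1\cap\mathcal{I}_2$ and $|I_1|=m$, which is the common independent set we need.

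The step I expect to require most care is arranging that $I_2^*$ is indeed a basis of $M_2^*$, via an exchange argument. If $I_2^*$ is not spanning in $M_2^*$, there exists $e\in E$ with $I_2^*\cup e$ independent in $M_2^*$; maximality of $|I_1\cup I_2^*|$ forces $e\in I_1$, since otherwise enlarging $I_2^*$ by $e$ would strictly enlarge the union. Swapping---replacing $I_1$ by $I_1\setminus e$ and $I_2^*$ by $I_2^*\cup e$---preserves disjointness and the total size but strictly increases $|I_2^*|$. Since $|I_2^*|\le k$, this iteration terminates at a configuration in which $I_2^*$ is a basis of $M_2^*$, completing the argument.
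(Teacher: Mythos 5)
Your proof is correct and takes essentially the same approach as the paper: both derive the hard inequality by applying the join/union machinery to $M_1\vee M_2^*$, substituting the dual rank formula, and then promoting the $M_2^*$-part of a maximum independent set in the join to a basis of $M_2^*$. The only cosmetic difference is that the paper extends $I_2$ to a basis $B^*$ in one step and takes $I_1\cap(E\setminus B^*)$, whereas you reach the same configuration by an iterated exchange; the two are equivalent.
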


\begin{proof} 
Inequality $\leq$ is obvious. To prove $\geq$ suppose that for each $A\subset E$ there is $r_1(A)+r_2(E\setminus A)\geq k$. Observe that $r_{M_1\vee M_2^*}(E)\geq k+r_{M_2^*}(E)$, because for each $A\subset E$ there is $$r_1(A)+\left\vert A\right\vert-r_2(E)+r_2(E\setminus A)+\left\vert E\setminus A\right\vert\geq k+\left\vert E\right\vert-r_2(E)=k+r_{M_2^*}(E).$$ Thus there are two disjoint sets $I_1$ independent in $M_1$ and $I_2$ independent in $M_2^*$, such that $\left\vert I_1\right\vert+\left\vert I_2\right\vert\geq k+r_{M_2^*}(E)$. Suppose $c=r_{M_2^*}(E)-\left\vert I_2\right\vert$. Extend $I_2$ with $c$ elements to a basis $B^*$ of $M_2^*$. Now $I_1\cap (E\setminus B^*)$ is independent in both $M_1$ and $M_2$. Moreover, it has at least $\left\vert I_1\right\vert-c\geq k$ elements.
\end{proof}

\part{Coloring Games on Matroids}

\chapter{Colorings of a Matroid}\label{ChapterColoringsOfMatroid}

This Chapter extends the concept of a proper graph coloring to matroids. Namely, a coloring of a matroid is proper if elements of the same color form an independent set. In that way we get a notion of chromatic number and list chromatic number of a matroid (several game versions of this parameters will be studied in the following Chapter \ref{ChapterGameColoringsMatroid}). We present explicit formula of Edmonds \cite{Ed65} for chromatic number in terms of rank function, and a theorem of Seymour \cite{Se98} asserting that the list chromatic number of a matroid equals to the chromatic number. 

The main result of this Chapter, Theorem \ref{TheoremGeneralizationOfSeymour}, generalizes Seymour's theorem from lists of constant size to the case when lists have arbitrary fixed size. We prove that a matroid is colorable from any lists of a fixed size if and only if it is colorable from particular lists of that size (this theorem will be later even further generalized, see Theorem \ref{TheoremOnLineGeneralizationOfSeymour}). 

The last Section shows applications of Theorem \ref{TheoremGeneralizationOfSeymour} to several basis exchange properties. The results of this Chapter come from our paper \cite{La14}.

\section{Chromatic Number}

Let $M$ be a matroid on a ground set $E$. A \emph{coloring} of $M$ is an assignment of colors (usually natural numbers) to elements of $E$. A coloring is \emph{proper} if elements of the same color form an independent set. For the rest of this thesis whenever we write the term coloring, we always mean proper coloring. 

An element $e\in E$ in a matroid $M=(E,r)$ is called a \emph{loop} if $r(e)=0$ (in analogy to a loop in a graph, which is a loop in the corresponding graphic matroid). Notice that if a matroid contains a loop, then it does not admit any proper coloring. On the other hand, if a matroid does not have any loop, it is \emph{loopless}, then it has at least one proper coloring. Thus, when considering colorings we restrict to loopless matroids.

\begin{definition}\emph{(Chromatic Number)}\label{DefinitionChromaticNumber} The \emph{chromatic number} of a loopless matroid $M$, denoted by $\chi(M)$, is the minimum number of colors in a proper coloring of $M$.
\end{definition}

For instance, if $M=M(G)$ is a graphic matroid, then $\chi(M)$ is the least number of colors needed to color edges of $G$ so that no cycle is monochromatic. This number is known as the \emph{arboricity} of a graph $G$ and should not be confused with the usual chromatic number $\chi(G)$ of a graph $G$.

In further analogy to graph theory we define fractional chromatic number. For a fixed integer $a$ an $a$\emph{-coloring} of a matroid $M$ on a ground set $E$ is an assignment of $a$ colors to each element of $E$. Still a coloring is \emph{proper} if for each color elements to which it is assigned form an independent set in $M$. Notice, that there is no difference between proper $a$-coloring of a matroid and $a$-covering with independent sets. 

\begin{definition}\emph{(Fractional Chromatic Number)}\label{DefinitionFractionalChromaticNumber}
The \emph{fractional chromatic number} of a loopless matroid $M$, denoted by $\chi_f(M)$, is the infimum of fractions $\frac{b}{a}$ such that $M$ has a proper $a$-coloring with $b$ colors. 
\end{definition}

For a matroid the chromatic number as well as the fractional chromatic number can be easily expressed in terms of a rank function. Extending a theorem of Nash-Williams \cite{Na64} for graph arboricity, Edmonds \cite{Ed65} proved the following explicit formula for the chromatic number of a matroid. It was observed that his proof gives also a formula for fractional chromatic number. 

\begin{theorem}[Edmonds, \cite{Ed65}] \emph{(Formula for Chromatic Number)}\label{TheoremFractionalChromaticNumber} For every looples matroid $M=(E,r)$ we have \begin{equation*}
\chi_f(M)=\max_{\emptyset\neq A\subset E}\frac{\left\vert A\right\vert}{r(A)}\text{ and } \chi(M)=\max_{\emptyset\neq A\subset E}\left\lceil\frac{\left\vert A\right\vert}{r(A)}\right\rceil.
\end{equation*}
\end{theorem}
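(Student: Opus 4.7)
The plan is to derive both formulas directly from the Matroid Union machinery that was already established, using the reformulation of colorings as coverings by independent sets.

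First I would settle the formula for $\chi(M)$. A proper coloring with $k$ colors is, by definition, a decomposition of $E$ into $k$ independent sets, i.e.\ a covering of $M$ by $k$ independent sets. The corollary of Theorem \ref{TheoremJoinOfMatroids} already recorded in the text shows that such a covering exists if and only if $k\,r(A)\geq |A|$ for every $A\subset E$, equivalently $k\geq |A|/r(A)$. Since $M$ is loopless, $r(A)\geq 1$ for every nonempty $A$, so the ratio is well defined, and therefore
\begin{equation*}
\chi(M)=\min\Bigl\{k\in\N: k\geq\tfrac{|A|}{r(A)}\text{ for all }\emptyset\neq A\subset E\Bigr\}=\max_{\emptyset\neq A\subset E}\left\lceil\frac{|A|}{r(A)}\right\rceil.
\end{equation*}

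For the fractional chromatic number, the key observation is that a proper $a$-coloring of $M$ with $b$ colors (each element receiving $a$ colors, each color class independent) is exactly an $a$-covering of $E$ by $b$ independent sets, in the sense of the definition preceding Theorem \ref{TheoremWeightedMatroidUnion}. I would then apply the Weighted Matroid Union Theorem to $k=b$ copies of $M$ on the common ground set $E$, with constant weight $w\equiv a$. The rank condition becomes
\begin{equation*}
b\,r(A)\;\geq\;\sum_{e\in A}a\;=\;a\,|A|\qquad\text{for every }A\subset E,
\end{equation*}
which is equivalent to $b/a\geq |A|/r(A)$ for every nonempty $A$. Consequently a proper $a$-coloring with $b$ colors exists iff $b/a\geq \max_{\emptyset\neq A\subset E}|A|/r(A)$.

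Taking the infimum over all admissible pairs $(a,b)$ gives
\begin{equation*}
\chi_f(M)=\inf\Bigl\{\tfrac{b}{a}:\tfrac{b}{a}\geq\max_{\emptyset\neq A\subset E}\tfrac{|A|}{r(A)}\Bigr\}.
\end{equation*}
The right-hand maximum is a maximum over a finite set of rationals, hence itself rational; writing it as $b_0/a_0$ with $b_0,a_0\in\N$ shows that the infimum is attained, and equals $\max_{\emptyset\neq A\subset E}|A|/r(A)$. The only routine care needed is to verify that the loopless hypothesis prevents $r(A)=0$ for nonempty $A$, so no division by zero occurs; beyond that, no real obstacle arises since the heavy lifting is done by Theorem \ref{TheoremWeightedMatroidUnion}.
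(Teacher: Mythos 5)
Your proof is correct and rests on the same core tools as the paper's argument, namely the Weighted Matroid Union Theorem \ref{TheoremWeightedMatroidUnion} and the covering corollary, viewing proper $a$-colorings with $b$ colors as $a$-coverings of $E$ by $b$ independent sets. The only difference is presentational: the paper proves the two inequalities separately, picking a ratio-maximizing set $A$ and instantiating the union theorem with $|A|$ copies of $M$ and weight $r(A)$, whereas you extract the clean biconditional characterization of admissible $(a,b)$ in one pass and take the infimum directly — slightly tidier, but not a different route.
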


\begin{proof} 
Denote $$\Delta(M)=\max_{\emptyset\neq A\subset E}\frac{\left\vert A\right\vert}{r(A)}.$$ 

Suppose $M$ has an $a$-covering with $b$ independent sets $V_1,\dots,V_b$, then for each $\emptyset\neq A\subset E$ we have $$br(A)\geq r(A\cap V_1)+\dots+r(A\cap V_b)=\left\vert A\cap V_1\right\vert+\dots+\left\vert A\cap V_b\right\vert=a\left\vert A\right\vert.$$ Hence $\frac{b}{a}\geq\frac{\left\vert A\right\vert}{r(A)}$, and as a consequence $\chi_f(M)\geq\Delta$. Since $\chi(M)$ is an integer, and obviously $\chi(M)\geq\chi_f(M)$, we get immediately that $\chi(M)\geq\left\lceil\Delta\right\rceil$. 

To prove opposite inequalities suppose that the maximum in the definition of $\Delta$ is reached for some non-empty subset $A\subset E$, that is $\Delta(M)=\frac{\left\vert A\right\vert}{r(A)}$. 

For fractional chromatic number consider $\left\vert A\right\vert$ matroids $M_1,\dots,M_{\left\vert A\right\vert}$ on the ground set $E$, all equal to $M$. Apply Weighted Matroid Union Theorem \ref{TheoremWeightedMatroidUnion} with weight function $w\equiv r(A)$. For each $\emptyset\neq B\subset E$ we have 
$$r_1(B)+\dots+r_{\left\vert A\right\vert}(B)=\left\vert A\right\vert r(B)\geq\left\vert B\right\vert r(A),$$ 
so the second condition of the theorem is satisfied. Hence, there exists an $r(A)$-covering of $M$ with $\left\vert A\right\vert$ independent sets. This means that $\chi_f(M)\leq\Delta$. 

For chromatic number consider $\left\lceil\Delta\right\rceil$ matroids $M_1,\dots,M_{\left\lceil\Delta\right\rceil}$ on the ground set $E$, all equal to $M$. We will apply Matroid Union Theorem \ref{TheoremMatroidUnion}. For each $\emptyset\neq B\subset E$ we have $$r_1(B)+\dots+r_{\left\lceil\Delta\right\rceil}(B)=\left\lceil\Delta\right\rceil r(B)=\left\lceil\frac{\left\vert A\right\vert}{r(A)}\right\rceil r(B)\geq\frac{\left\vert A\right\vert}{r(A)}r(B)\geq\left\vert B\right\vert.$$ 
The second condition of the theorem holds, hence there exists a proper coloring of $M$ with $\left\lceil\Delta\right\rceil$ independent sets. This means that $\chi_f(M)\leq\left\lceil\Delta\right\rceil$. 
\end{proof}

In particular, the chromatic number of a matroid is less than one bigger than its fractional chromatic number. In contrary for graphs chromatic number is not bounded from above by any function of fractional chromatic number (see for example Kneser's graphs \cite{Gr02}). 

Obviously in graph theory there is no chance for an analogical formula, however there is one example of an almost exact formula for the chromatic number. Vizing \cite{Vi64} proved that the chromatic index (chromatic number of line graph -- incidence graph of edges of a graph) is always equal to the maximum degree or to the maximum degree plus one.

\section{List Chromatic Number}\label{SectionListChromaticNumber}

Suppose each element $e$ of the ground set $E$ of a matroid $M$ is assigned with a set (a \emph{list}) of colors $L(e)$. In analogy to the list coloring of graphs (initiated by Vizing \cite{Vi76}, and independently by Erd\H{o}s, Rubin and Taylor \cite{ErRuTa80}), we want to find a proper coloring of $M$ with additional restriction that each element gets a color from its list. By a \emph{list assignment} (or simply \emph{lists}) we mean a function $L:E\rightarrow\mathcal{P}(\N)$, and by the \emph{size} of $L$ we mean the function $\ell:E\rightarrow\N$ with $\left\vert L(e)\right\vert=\ell(e)$ for each $e\in E$. A matroid $M$ is said to be \emph{colorable from lists} $L$ if there exists a proper coloring $c:E\rightarrow\N$ of $M$, such that $c(e)\in L(e)$ for every $e\in E$. 

\begin{definition}\emph{(List Chromatic Number)}\label{DefinitionListChromaticNumber} The \emph{list chromatic number} of a loopless matroid $M$, denoted by $\ch(M)$, is the minimum number $k$ such that $M$ is colorable from any lists of size at least $k$.
\end{definition}

Clearly, we have $\ch(M)\geq \chi (M)$. The theorem of Seymour \cite{Se98} asserts that actually we have equality here. We provide a proof of this result for completeness. It is a simple application of Matroid Union Theorem \ref{TheoremMatroidUnion}.

\begin{theorem}[Seymour, \cite{Se98}]\label{TheoremSeymour}
For every loopless matroid $M$ we have equality $\ch(M)=\chi(M)$.
\end{theorem}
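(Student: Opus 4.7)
The plan is to prove the nontrivial direction $\ch(M)\leq \chi(M)$ by reducing the existence of a list coloring to a matroid covering, and then applying the Matroid Union Theorem \ref{TheoremMatroidUnion}. The reverse inequality $\ch(M)\geq \chi(M)$ is immediate by taking all lists equal to $\{1,\dots,k\}$, so I will focus on showing that if $\chi(M)\leq k$ and $L$ is any list assignment with $|L(e)|\geq k$ for every $e\in E$, then $M$ is colorable from $L$.

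First, for every color $c$ appearing in some list set $E_c=\{e\in E: c\in L(e)\}$. A proper coloring of $M$ from the lists $L$ is exactly a partition of $E$ into sets $V_c$ (indexed by colors $c$) such that $V_c\subseteq E_c$ and $V_c$ is independent in $M$. I would encode this by introducing, for each color $c$, the matroid $M_c$ on the ground set $E$ whose independent sets are precisely the independent sets of $M$ contained in $E_c$; its rank function is $r_c(A)=r(A\cap E_c)$. A list coloring is then exactly a covering of $E$ by sets $V_c$ with $V_c$ independent in $M_c$, which is the object produced by the Matroid Union Theorem.

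The key step is therefore to verify the rank inequality $\sum_{c} r_c(A)\geq |A|$ for every $A\subseteq E$. From Edmonds' formula (Theorem \ref{TheoremFractionalChromaticNumber}) and the hypothesis $\chi(M)\leq k$, one has $|B|\leq k\cdot r(B)$ for every non-empty $B\subseteq E$; applying this with $B=A\cap E_c$ (the case $B=\emptyset$ being trivial) yields $r(A\cap E_c)\geq |A\cap E_c|/k$. Summing over $c$ and interchanging the order of summation gives
\begin{equation*}
\sum_{c} r_c(A) \;=\; \sum_c r(A\cap E_c) \;\geq\; \frac{1}{k}\sum_{c}|A\cap E_c| \;=\; \frac{1}{k}\sum_{e\in A}|L(e)| \;\geq\; \frac{1}{k}\cdot k|A| \;=\; |A|,
\end{equation*}
which is exactly condition $(2)$ of Theorem \ref{TheoremMatroidUnion}. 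Only finitely many colors occur in the lists, so the application of the theorem is legitimate and produces the desired covering, which we then refine to a partition to obtain the list coloring.

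The step I expect to need the most care is the double-counting identity $\sum_c |A\cap E_c|=\sum_{e\in A}|L(e)|$ together with the correct setup of the matroids $M_c$ (in particular, extending each to the full ground set so that Matroid Union applies to matroids on a common ground set). Apart from that, the argument is a clean transfer from the numerical criterion $|B|\leq k\cdot r(B)$ supplied by Edmonds' formula to the union-type covering condition supplied by Nash-Williams, and no further combinatorial work is required.
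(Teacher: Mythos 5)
Your proposal is correct and takes essentially the same approach as the paper: both reduce the problem to the Matroid Union Theorem \ref{TheoremMatroidUnion} using the auxiliary matroids obtained by restricting $M$ to each color class $E_c$ (with ground set extended to $E$), and both verify the required rank inequality via the double-count $\sum_c\left\vert A\cap E_c\right\vert=\sum_{e\in A}\left\vert L(e)\right\vert\geq k\left\vert A\right\vert$. The only cosmetic difference is that you invoke Edmonds' formula to obtain $k\,r(B)\geq\left\vert B\right\vert$, while the paper works directly from a fixed partition of $E$ into $k$ independent sets; your route is marginally more streamlined but logically equivalent.
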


\begin{proof}
Suppose $\chi(M)=k$, and take any list assignment $L:E\rightarrow\mathcal{P}(\{1,\dots,d\})$ of size at least $k$. Let $Q_i=\{e\in E:i\in L(e)\}$, and let $M_i=(E,r_i)$ be the restriction of $M$ to $Q_i$ with the ground set extended to $E$. From the assumption there exists a $1$-covering with $k$ independent sets $V_1,\dots,V_k$. Then, for every $A\subset E$ we have $$\sum_{1\leq j\leq k}\frac{1}{k}\sum_{1\leq i\leq d}\left\vert V_j\cap A\cap Q_i\right\vert\geq\sum_{1\leq j\leq k}\left\vert V_j\cap A\right\vert=\left\vert A\right\vert,$$
since each element $e\in V_j\cap A$ belongs to $Q_i$ for at least $k$ values of $i$. But $r_i(A)\geq\left\vert V_j\cap A\cap Q_i\right\vert$, so $$\sum_{1\leq i\leq d}r_i(A)=\sum_{1\leq j\leq k}\frac{1}{k}\sum_{1\leq i\leq d}r_i(A)\geq\left\vert A\right\vert.$$ The result follows from Matroid Union Theorem \ref{TheoremMatroidUnion}.
\end{proof}

For a weight assignment $w$ and a list assignment $L$ we say that $M$ is $w$\emph{-colorable from lists} $L$, if it is possible to choose for each $e\in E$ a set of $w(e)$ colors from its list $L(e)$, such that for each color elements to which it is assigned form an independent set in $M$. In other words each list $L(e)$ contains a subset $L^{\prime }(e)$ of size $w(e)$ such that choosing any color from $L^{\prime}(e)$ for $e$ results in a proper coloring of a matroid $M$. 

\begin{definition}\emph{(Fractional List Chromatic Number)}\label{DefinitionFractionalListChromaticNumber}
The \emph{fractional list chromatic number} of a loopless matroid $M$, denoted by $\ch_f(M)$, is the infimum of fractions $\frac{b}{a}$ for which $M$ is $a$-colorable from any lists of size at least $b$.
\end{definition}

For graphs list chromatic number is not bounded from above by any function of chromatic number, as it can already be arbitrarily large for graphs with chromatic number two. Dinitz stated a conjecture that chromatic number and list chromatic number coincide for line graphs. It was confirmed by Galvin \cite{Ga95} for bipartite graphs, and for arbitrary graphs in the asymptotic sense by Kahn \cite{Ka00}. Surprisingly, Alon, Tuza and Voigt \cite{AlTuVo97} proved that fractional versions of chromatic number and list chromatic number coincide for all graphs. 

We generalize Seymour's Theorem \ref{TheoremSeymour} to the setting, where sizes of lists are still fixed, but not necessarily equal. 

\begin{theorem}[Lasoń, \cite{La14}]\emph{(Generalization of Seymour's Theorem)}\label{TheoremGeneralizationOfSeymour} Let $M=(E,r)$ be a matroid, and let $\ell:E\rightarrow\N$, $w:E\rightarrow\N$ be list size and weight functions. Then the following conditions are equivalent:
\begin{enumerate}
\item for each $A\subset E$ there is $\sum_{i\in \N_{+}}r(\{e\in A:\ell(e)\geq i\})\geq \sum_{e\in A}w(e)$,
\item matroid $M$ is $w$-colorable from lists $L_{\ell}(e)=\{1,\dots,\ell(e)\}$,
\item matroid $M$ is $w$-colorable from any lists of size $\ell$.
\end{enumerate}
\end{theorem}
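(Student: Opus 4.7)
The plan is to prove the cyclic chain $(3) \Rightarrow (2) \Rightarrow (1) \Rightarrow (3)$. The implication $(3) \Rightarrow (2)$ is immediate since $L_\ell$ is itself a list assignment of size $\ell$.

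For $(2) \Rightarrow (1)$, let $V_i$ be the set of elements receiving color $i$ in a proper $w$-coloring from $L_\ell$. Each $V_i$ is independent in $M$, and the defining form $L_\ell(e) = \{1, \dots, \ell(e)\}$ forces $V_i \subseteq \{e \in E : \ell(e) \geq i\}$. For any $A \subseteq E$ the set $V_i \cap A$ is therefore an independent subset of $\{e \in A : \ell(e) \geq i\}$, so $|V_i \cap A| \leq r(\{e \in A : \ell(e) \geq i\})$. Summing over $i$ and using $\sum_i |V_i \cap A| = \sum_{e \in A} w(e)$ yields (1).

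The main work is $(1) \Rightarrow (3)$. Take any list assignment $L$ of size $\ell$ and set $Q_i = \{e \in E : i \in L(e)\}$ for each color $i$. A $w$-coloring from $L$ is exactly a $w$-covering of $E$ by sets $V_i$ with $V_i$ independent in $M|_{Q_i}$, viewed as a matroid on the full ground set $E$ with rank $r_i(A) = r(A \cap Q_i)$. By the Weighted Matroid Union Theorem \ref{TheoremWeightedMatroidUnion}, it suffices to show that $\sum_i r(A \cap Q_i) \geq \sum_{e \in A} w(e)$ for every $A \subseteq E$.

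To derive this inequality from (1), I would prove a rearrangement lemma: for any finite family $F_1, \dots, F_N$ of subsets of $E$, setting $G_j := \{e \in E : |\{i : e \in F_i\}| \geq j\}$, we have $\sum_i r(F_i) \geq \sum_j r(G_j)$. The proof is a standard uncrossing: while any two $F_i, F_j$ are incomparable, replace the pair by $(F_i \cup F_j, F_i \cap F_j)$. Submodularity of $r$ ensures that $\sum_i r(F_i)$ does not increase, the multiplicity function $e \mapsto |\{i : e \in F_i\}|$ is preserved, and $\sum_i |F_i|^2$ strictly increases (by $2|F_i \setminus F_j| \cdot |F_j \setminus F_i|$), so the process terminates with a nested family, which must coincide with $G_1 \supseteq G_2 \supseteq \cdots$. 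Applying the lemma with $F_i = A \cap Q_i$, each $e \in A$ has multiplicity $\ell(e)$ since $|L(e)| = \ell(e)$, so $G_j = \{e \in A : \ell(e) \geq j\}$, and (1) finishes the argument via $\sum_i r(A \cap Q_i) \geq \sum_j r(\{e \in A : \ell(e) \geq j\}) \geq \sum_{e \in A} w(e)$. The main obstacle is cleanly packaging this rearrangement lemma; the rest amounts to bookkeeping with the Weighted Matroid Union Theorem.
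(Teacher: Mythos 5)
Your proposal is correct and takes essentially the same approach as the paper: the $(1)\Rightarrow(3)$ direction reduces, via the Weighted Matroid Union Theorem, to showing $\sum_i r(A\cap Q_i)\geq\sum_{e\in A}w(e)$, which you establish by the same uncrossing argument (submodularity plus the sum-of-squares potential $\sum_i|F_i|^2$ as a termination measure), terminating in the nested family $G_1\supseteq G_2\supseteq\cdots$. The only cosmetic difference is that you verify $(2)\Rightarrow(1)$ by a direct counting argument on the color classes rather than through the union theorem, and you uncross the intersected sets $A\cap Q_i$ directly instead of the global $Q_i$; both are equivalent reformulations of the paper's steps.
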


\begin{proof}
Let $L$ be a fixed list assignment of size $\ell$, and let us denote by $Q_i=\{e\in E:i\in L(e)\}$ and $d=\max\bigcup_{e\in E}L(e)$ (and respectively $Q^{\ell}_{i}$ for lists $L_{\ell}$). Consider matroids $M_1,\dots,M_d$, such that $M_i$ equals to $M\vert_{Q_i}$ with the ground set extended to $E$. Observe that matroid $M$ is properly $w$-colorable from lists $L$ if and only if its
ground set $E$ can be $w$-covered by sets $V_1,\dots,V_d$, such that $V_i$
is independent in $M_i$. Namely two expressions to cover and to color have the same meaning. From Weighted Matroid Union Theorem \ref{TheoremWeightedMatroidUnion} we get that this condition is equivalent to the fact, that for each $A\subset E$ there is an inequality 
$$r(A\cap Q_1)+\cdots+r(A\cap Q_d)\geq\sum_{e\in A}w(e).$$

Now equivalence between conditions $(1)$ and $(2)$ follows from the fact
that $\{e\in A:\ell(e)\geq i\}=A\cap Q^{\ell}_{i}$.

Implication $(3)\Rightarrow(2)$ is obvious. To prove $(2)\Rightarrow(3)$ it
is enough to show that for each subset $A\subset E$ there is an inequality 
$$r(A\cap Q_1)+\cdots+r(A\cap Q_d)\geq r(A\cap Q^{\ell}_1)+\cdots+r(A\cap Q^{\ell}_d).$$
We will use the notion $\sum$ or $+$ for the sum of multisets. Since both list assignments $L_{\ell}$ and $L$ are of size $\ell$, we get that $\sum_iQ_i=\sum_i Q^{\ell}_{i}=\sum_{e\in E}\ell(e)\{e\}$
as multisets. Observe that to prove the inequality, whenever two sets $Q_i$ and $Q_j$ are incomparable in the inclusion order, we may replace them by sets 
$Q_i\cup Q_j$, and $Q_i\cap Q_j$. It is because:

\begin{enumerate}
\item $r(A\cap Q_i)+r(A\cap Q_j)\geq r(A\cap(Q_i\cup Q_j))+r(A\cap(Q_i\cap
Q_j))$, by submodularity of $r$ (see Definition \ref{DefinitionRank}),
\item $Q_i+Q_j=(Q_i\cup Q_j)+(Q_i\cap Q_j)$ as multisets,
\item $\left\vert Q_i \right\vert^2 +\left\vert Q_j \right\vert^2 < \left\vert Q_i\cup Q_j\right\vert^2
+\left\vert Q_i\cap Q_j\right\vert^2$.
\end{enumerate}

The last parameter grows, and it is bounded (because the number of non-empty
sets, and their sizes are bounded). Hence, after a finite number of steps the
replacement procedure stops. Then sets $Q_i$ are linearly ordered by inclusion.
Let us reorder them in such a way that $Q_1\supset Q_2\supset\dots\supset Q_d$. Then, it is
easy to see that $Q_1=Q^{\ell}_1,Q_2=Q^{\ell}_2,\dots,Q_d=Q^{\ell}_d$, so the inequality $r(A\cap Q_1)+\cdots+r(A\cap Q_d)\geq r(A\cap Q^{\ell}_1)+\cdots+r(A\cap Q^{\ell}_d)$ is satisfied. This proves the assertion.
\end{proof}

Seymour's Theorem \ref{TheoremSeymour} follows from the above result by taking a constant
size function $\ell\equiv\chi(M)$ and a weight function $w\equiv 1$. We get also that $\ch_f(M)=\chi_f(M)$. Our theorem implies as well the following stronger statement.

\begin{corollary}
Let $I_{1},\dots ,I_{k}$ be a partition of the ground set of a matroid $M$ into independent sets, and let $\ell(e)=i$ whenever $e\in I_{i}$. Then matroid $M$ is colorable from any lists of size $\ell$.
\end{corollary}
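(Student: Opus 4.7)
The plan is to apply Theorem \ref{TheoremGeneralizationOfSeymour} directly with the weight function $w\equiv 1$ and the list size function $\ell$ described in the statement. It then suffices to verify condition $(1)$ of that theorem, namely that for every $A\subset E$ we have
\begin{equation*}
\sum_{i\in\N_{+}} r\bigl(\{e\in A:\ell(e)\geq i\}\bigr)\geq \left\vert A\right\vert.
\end{equation*}

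The key observation is that, by the definition of $\ell$, the set $\{e\in A:\ell(e)\geq i\}$ equals $A\cap(I_i\cup I_{i+1}\cup\dots\cup I_k)$, and so it contains $A\cap I_i$ as a subset. Since $I_i$ is independent by assumption, $A\cap I_i$ is independent as well, and therefore
\begin{equation*}
r\bigl(\{e\in A:\ell(e)\geq i\}\bigr)\geq r(A\cap I_i)=\left\vert A\cap I_i\right\vert.
\end{equation*}
Summing over $i=1,\dots,k$ and using the fact that $I_1,\dots,I_k$ partition $E$ (and hence $A\cap I_1,\dots,A\cap I_k$ partition $A$) gives $\sum_i \left\vert A\cap I_i\right\vert =\left\vert A\right\vert$, which establishes condition $(1)$.

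The conclusion then follows from the implication $(1)\Rightarrow(3)$ in Theorem \ref{TheoremGeneralizationOfSeymour}: since $w\equiv 1$, being $w$-colorable from lists of size $\ell$ is precisely the usual notion of being colorable from such lists. There is no real obstacle here; the entire content of the corollary lies in noticing that the rank inequality to be checked is witnessed by the independent pieces of the partition themselves.
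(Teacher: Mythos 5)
Your proof is correct and is exactly the intended derivation: the paper states the corollary without proof as an immediate consequence of Theorem \ref{TheoremGeneralizationOfSeymour}, and your verification of condition $(1)$ via $r(\{e\in A:\ell(e)\geq i\})\geq r(A\cap I_i)=|A\cap I_i|$ and summing over $i$ is precisely the calculation the authors leave to the reader.
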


In the next Section we show applications of our Generalization of Seymour's Theorem \ref{TheoremGeneralizationOfSeymour} to basis exchange properties.

\section{Basis Exchange Properties}\label{SectionBasisExchangeProperties}

Let $E$ be a finite set. For a family $\mathcal{B}\subset\mathcal{P}(E)$ of subsets of $E$ consider the following conditions:
\begin{enumerate}
\item $\mathcal{B}$ is non-empty,
\item for every $B_1,B_2\in\mathcal{B}$ and $e\in B_1\setminus B_2$ there exists $f\in B_2\setminus B_1$, such that $B_1\cup f\setminus e\in\mathcal{B}$,
\item for every $B_1,B_2\in\mathcal{B}$ and $e\in B_1\setminus B_2$ there exists $f\in B_2\setminus B_1$, such that $B_1\cup f\setminus e\in\mathcal{B}$ and $B_2\cup e\setminus f\in\mathcal{B}$,
\item for every $B_1,B_2\in\mathcal{B}$ and $A_1\subset B_1$ there exists $A_2\subset B_2$, such that $B_1\cup A_2\setminus A_1\in\mathcal{B}$ and $B_2\cup A_1\setminus A_2\in\mathcal{B}$.
\end{enumerate}

According to Definition \ref{DefinitionBases}, family $\mathcal{B}\subset\mathcal{P}(E)$ is a set of bases of a matroid if conditions $(1)$ and $(2)$ are satisfied. It is a nice exercise to show that in this case condition $(3)$ also holds. It is called \emph{symmetric exchange property}, and was discovered by Brualdi \cite{Br69}. We will often refer to it in Chapter \ref{ChapterWhiteConjecture}. 

Surprisingly even condition $(4)$, known as \emph{multiple symmetric exchange property}, is true (for simple proofs see \cite{LaLu12,Wo74}, and for more exchange properties \cite{La14,Ku86}). 

We will demonstrate usefulness of our Generalization of Seymour's Theorem \ref{TheoremGeneralizationOfSeymour} by giving easy proofs of several basis exchange properties, in particular of multiple symmetric exchange property. The idea of the proofs is to choose a suitable list assignment which guarantees existence of a required coloring. The crucial point in our Generalization of Seymour's Theorem is that lists may have distinct sizes. In particular, if a list of an element has size one, then the color of that element is already determined by a list assignment.

\begin{proposition}\emph{(Multiple Symmetric Exchange Property)}\label{PropositionMultipleSymmetricExchange} 
Let $B_1$ and $B_2$ be bases of a matroid $M$. Then for every $A_1\subset B_1$ there exists $A_2\subset B_2$, such that $(B_1\setminus A_1)\cup A_2$ and $(B_2\setminus A_2)\cup A_1$ are both bases.
\end{proposition}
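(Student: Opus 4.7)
My plan is to encode the desired exchange as a two-color list-coloring problem on the restriction $M\vert_{B_1\cup B_2}$ and then invoke Theorem \ref{TheoremGeneralizationOfSeymour}. The two colors represent the two new bases; size-$1$ lists will force the prescribed elements of $A_1$ and $B_1\setminus A_1$ into their correct class, while size-$2$ lists on $B_2$ leave a free choice whose outcome defines $A_2$. Elements of $B_1\cap B_2$ should sit in both new bases simultaneously, and this I handle naturally via the weighted formulation of the theorem.

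Concretely, I take weights $w(e)=2$ for $e\in B_1\cap B_2$ and $w(e)=1$ otherwise, list sizes $\ell(e)=1$ for $e\in B_1\setminus B_2$ and $\ell(e)=2$ for $e\in B_2$, together with the lists $L(e)=\{2\}$ on $A_1\setminus B_2$, $L(e)=\{1\}$ on $(B_1\setminus A_1)\setminus B_2$, and $L(e)=\{1,2\}$ on $B_2$. Given a proper $w$-coloring $c$ from these lists, I set $A_2=(A_1\cap B_2)\cup\{e\in B_2\setminus B_1:1\in c(e)\}\subset B_2$. A short set-theoretic check shows that the two color classes are precisely $(B_1\setminus A_1)\cup A_2$ and $(B_2\setminus A_2)\cup A_1$.

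The key step is to verify condition $(1)$ of Theorem \ref{TheoremGeneralizationOfSeymour}: for every $A\subset B_1\cup B_2$,
$$r(A)+r(A\cap B_2)\geq\left\vert A\cap B_1\right\vert+\left\vert A\cap B_2\right\vert=\sum_{e\in A}w(e).$$
Both $A\cap B_1$ and $A\cap B_2$ are independent as subsets of bases, so $r(A\cap B_2)=\left\vert A\cap B_2\right\vert$ and $r(A)\geq r(A\cap B_1)=\left\vert A\cap B_1\right\vert$, which yields the inequality at once.

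To conclude, each color class is independent in $M$ and hence has at most $r(M)$ elements, while their two sizes sum to $\sum_{e}w(e)=\left\vert B_1\right\vert+\left\vert B_2\right\vert=2r(M)$; so each color class has exactly $r(M)$ elements and both are bases, producing the required $A_2$. The only mildly delicate point is the bookkeeping that identifies the color classes with $(B_1\setminus A_1)\cup A_2$ and $(B_2\setminus A_2)\cup A_1$ in the presence of $B_1\cap B_2$; everything else is an immediate application of Theorem \ref{TheoremGeneralizationOfSeymour}.
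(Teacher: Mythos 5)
Your proof is correct, and it follows the same basic route as the paper: encode the exchange as a two-color list-coloring problem on $M\vert_{B_1\cup B_2}$, with size-$1$ lists pinning down the elements of $B_1$ and size-$2$ lists on $B_2$ leaving a free choice, and then invoke Theorem \ref{TheoremGeneralizationOfSeymour}. The one genuine difference is how the intersection $B_1\cap B_2$ is handled. The paper first reduces to the case of disjoint bases by contracting $B_1\cap B_2$, working in $M/(B_1\cap B_2)$ with the bases $B_1\setminus B_2$ and $B_2\setminus B_1$ and the set $A_1\setminus B_2$, and then translating the resulting $A_2$ back via $A_2\cup(A_1\cap B_2)$. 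You instead keep the original matroid and give the common elements weight $2$, so they are forced to appear in both color classes; this handles the intersection in place, with no contraction and no lifting step. Both reductions are short, but yours exploits the weight flexibility of Theorem \ref{TheoremGeneralizationOfSeymour} a bit more and avoids invoking a minor of $M$, at the price of slightly more elaborate bookkeeping when identifying the color classes with $(B_1\setminus A_1)\cup A_2$ and $(B_2\setminus A_2)\cup A_1$. Your verification of condition $(1)$ (the inequality $r(A)+r(A\cap B_2)\ge \left\vert A\cap B_1\right\vert+\left\vert A\cap B_2\right\vert$, using that subsets of bases are independent) and the cardinality argument showing both color classes have exactly $r(M)$ elements are both correct.
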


\begin{proof} 
Observe that we can restrict our attention to the case when $B_1$ and $B_2$ are disjoint. Indeed, otherwise consider matroid $M/(B_1\cap B_2)$, its disjoint bases $B_1\setminus B_2,B_2\setminus B_1$ and a set $A_1\setminus B_2\subset B_1\setminus B_2$. If for them there exists an appropriate $A_2$, then $A_2\cup (A_1\cap B_2)$ solves the original problem.

When bases $B_1,B_2$ are disjoint, then restrict matroid $M$ to their union. Let $L$ assign list $\{1\}$ to elements of $A_1$, list $\{2\}$ to elements of $B_1\setminus A_1$ and list $\{1,2\}$ to elements of $B_2$. Observe that for $w\equiv 1$ the condition $(1)$ of Theorem \ref{TheoremGeneralizationOfSeymour} is satisfied, so we get that there is a $1$-coloring from lists $L$. Denote by $C_1$ elements colored with $1$, and by $C_2$ those colored with $2$. Now $A_2=C_2\cap B_2$ is a good choice, since sets $(B_1\setminus A_1)\cup A_2=C_2$ and $(B_2\setminus A_2)\cup A_1=C_1$ are independent. 
\end{proof}

We formulate also a more general multiple symmetric exchange property for independent sets. We will use it in Section \ref{SectionOnLineListColoring} of Chapter \ref{ChapterGameColoringsMatroid}. Exactly the same proof applies.

\begin{proposition}\label{PropositionMultipleIndependentSetExchange} 
Let $I_1$ and $I_2$ be independent sets in a matroid $M$. Then for every $A_1\subset I_1$ there exists $A_2\subset I_2$, such that $(I_1\setminus A_1)\cup A_2$ and $(I_2\setminus A_2)\cup A_1$ are both independent sets.
\end{proposition}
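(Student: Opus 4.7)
The plan is to follow the proof of Proposition~\ref{PropositionMultipleSymmetricExchange} almost verbatim, observing that nowhere in that argument did we actually use that $B_1, B_2$ are bases---only their independence. First I would reduce to the case $I_1 \cap I_2 = \emptyset$ by contracting $B := I_1 \cap I_2$; directly from the definition of contraction, both $I_1 \setminus B$ and $I_2 \setminus B$ are independent in $M/B$, and any solution $A_2'$ for the modified input $A_1 \setminus B \subset I_1 \setminus B$ in $M/B$ extends back to $A_2 := A_2' \cup (A_1 \cap B)$, which solves the original problem in $M$.

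Assuming now $I_1, I_2$ disjoint, I would restrict $M$ to $I_1 \cup I_2$ and invoke Theorem~\ref{TheoremGeneralizationOfSeymour} with weight $w \equiv 1$, list-size function $\ell \equiv 1$ on $I_1$ and $\ell \equiv 2$ on $I_2$, and the specific lists $L(e) = \{1\}$ for $e \in A_1$, $L(e) = \{2\}$ for $e \in I_1 \setminus A_1$, and $L(e) = \{1,2\}$ for $e \in I_2$. Condition~$(1)$ of the theorem then reads
\[
r(A) + r(A \cap I_2) \geq |A| \quad \text{for every } A \subset I_1 \cup I_2,
\]
which is immediate: both $A \cap I_1$ and $A \cap I_2$ are independent in $M$, so $r(A) \geq |A \cap I_1|$ and $r(A \cap I_2) = |A \cap I_2|$; disjointness of $I_1, I_2$ then yields $|A \cap I_1| + |A \cap I_2| = |A|$.

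The theorem therefore supplies a proper coloring $c \colon I_1 \cup I_2 \to \{1,2\}$ respecting the lists. By construction $A_1 \subset c^{-1}(1)$ and $I_1 \setminus A_1 \subset c^{-1}(2)$, so setting $A_2 := c^{-1}(2) \cap I_2$ yields $(I_1 \setminus A_1) \cup A_2 = c^{-1}(2)$ and $(I_2 \setminus A_2) \cup A_1 = c^{-1}(1)$, both independent as color classes of a proper coloring of the restricted matroid.

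The only real obstacle I anticipate is a sanity check that the rank inequality above---the single place where independence of $I_1, I_2$ (as opposed to their maximality) actually enters---still goes through. It does, because the calculation uses only that independent subsets have rank equal to their cardinality, never invoking that $I_1$ or $I_2$ is a basis. Beyond that, the argument is pure bookkeeping transferred from the base-case proof of Proposition~\ref{PropositionMultipleSymmetricExchange}.
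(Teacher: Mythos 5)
Your proof is correct and follows the same route as the paper: the paper's entire proof of this proposition is the remark ``exactly the same proof applies'' (referring to Proposition~\ref{PropositionMultipleSymmetricExchange}), and you have simply spelled out why that transfer goes through — namely, that both the contraction reduction and the rank inequality $r(A)+r(A\cap I_2)\geq |A|$ needed for condition~$(1)$ of Theorem~\ref{TheoremGeneralizationOfSeymour} use only that $I_1,I_2$ are independent, never that they are bases.
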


Multiple Symmetric Exchange Property \ref{PropositionMultipleSymmetricExchange} can be slightly generalized. Instead of having a partition of one of bases into two pieces we can have an arbitrary partition of it. We prove that for any such partition there exists a partition of the second basis which is consistent in two different ways.

\begin{proposition} 
Let $A$ and $B$ be bases of a matroid $M$. Then for every partition $B_1\sqcup\dots\sqcup B_k=B$ there exists a partition $A_1\sqcup\dots\sqcup A_k=A$, such that $(B\setminus B_i)\cup A_i$ are all bases for $1\leq i\leq k$.
\end{proposition}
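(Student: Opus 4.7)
My plan is to apply the Generalization of Seymour's Theorem (Theorem \ref{TheoremGeneralizationOfSeymour}) to a carefully chosen list assignment, in the spirit of the proof of the Multiple Symmetric Exchange Property (Proposition \ref{PropositionMultipleSymmetricExchange}). First I would reduce to the case $A \cap B = \emptyset$ by contracting $C := A \cap B$: in $M/C$ the images of $A$ and $B$ are disjoint bases, the partition $\{B_i\}$ induces a (possibly with empty parts) partition $\{B_i \setminus C\}$ of $B \setminus C$, and a solution $\{A_i'\}$ for the disjoint instance lifts to $A_i := A_i' \cup (A \cap B_i)$, giving a partition of $A$ such that $(B \setminus B_i) \cup A_i = C \cup ((B \setminus C) \setminus (B_i \setminus C)) \cup A_i'$ is a basis of $M$.

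Assume now $A \cap B = \emptyset$, and restrict $M$ to $A \cup B$. I would assign
$$L(e) = \{1, \dots, k\} \setminus \{j\} \text{ for every } e \in B_j, \qquad L(e) = \{1, \dots, k\} \text{ for every } e \in A,$$
with weights $w(e) = k - 1$ for $e \in B$ and $w(e) = 1$ for $e \in A$. Since $w(e) = |L(e)|$ on $B$, every $e \in B_j$ is forced to receive all colors except $j$, so the color-$i$ class of any valid $w$-coloring necessarily has the form $V_i = (B \setminus B_i) \cup A_i$, where $A_i := V_i \cap A$ and $\{A_i\}$ partitions $A$. The independence of each $V_i$ combined with $|A| = |B|$ forces $|A_i| = |B_i|$ by a cardinality count, so each $V_i$ is a basis of the required form.

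The main obstacle is to verify condition (1) of Theorem \ref{TheoremGeneralizationOfSeymour} for this data, which amounts to showing that for every $X \subset A \cup B$,
$$\sum_{i=1}^{k} r\bigl((X \cap A) \cup (X \cap (B \setminus B_i))\bigr) \;\geq\; (k-1)|X \cap B| + |X \cap A|.$$
Write $Y := X \cap A$ and $T_i := X \cap (B \setminus B_i)$. Because $\{B_i\}$ partitions $B$, one has $\bigcup_i T_i = X \cap B$ and $\bigcap_i T_i = \emptyset$. Introducing the chain $U_m := \bigcap_{i \leq m} T_i$, the identities $U_m \cup T_{m+1} = X \cap B$ and $U_m \cap T_{m+1} = U_{m+1}$ let submodularity applied to $r(Y \cup U_m) + r(Y \cup T_{m+1}) \geq r(X) + r(Y \cup U_{m+1})$ telescope (since $U_1 = T_1$ and $U_k = \emptyset$) to
$$\sum_{i=1}^{k} r(Y \cup T_i) \;\geq\; (k-1)\, r(X) + r(Y).$$
Since $Y \subset A$ is independent, $r(Y) = |X \cap A|$, and since $X \cap B$ is an independent subset of $X$, $r(X) \geq |X \cap B|$, which yields the desired inequality and completes the plan.
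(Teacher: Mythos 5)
Your proof takes the same route as the paper: reduce to disjoint bases, restrict to $A\cup B$, apply Theorem \ref{TheoremGeneralizationOfSeymour} with lists $\{1,\dots,k\}\setminus\{i\}$ on $B_i$ and $\{1,\dots,k\}$ on $A$, and weights $k-1$ on $B$, $1$ on $A$. The argument is correct; the details you add (the contraction argument for the reduction, the cardinality count showing each $V_i$ is a basis) are sound and fill in what the paper leaves implicit.

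One remark on efficiency. What you call ``condition (1) of Theorem \ref{TheoremGeneralizationOfSeymour}'' is in fact the hypothesis of the Weighted Matroid Union Theorem for the specific list assignment $L$ (i.e., $\sum_i r(X\cap Q_i)\ge\sum_{e\in X}w(e)$), not condition (1), which concerns only the size function $\ell$ via the prefix lists $L_\ell$. Condition (1) here reads $(k-1)r(X)+r(X\cap A)\ge (k-1)|X\cap B|+|X\cap A|$, which is immediate: $r(X\cap A)=|X\cap A|$ since $A$ is independent, and $r(X)\ge r(X\cap B)=|X\cap B|$ since $B$ is independent. Your telescoping submodularity chain
$$\sum_{i=1}^{k} r(Y\cup T_i)\ \ge\ (k-1)\,r(X)+r(Y)$$
is correct, but it essentially re-proves, in this special case, the hard implication $(1)\Rightarrow(3)$ of Theorem \ref{TheoremGeneralizationOfSeymour} (the replacement/uncrossing argument there establishes precisely that $\sum_i r(X\cap Q_i)\ge\sum_i r(X\cap Q_i^\ell)$ for any $L$ of size $\ell$). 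Once you decide to invoke Theorem \ref{TheoremGeneralizationOfSeymour}, it is both shorter and in the intended spirit to verify the one-line condition (1) and then cite (3); your chain is what you would use if you chose to bypass the theorem and appeal to the Weighted Matroid Union Theorem directly.
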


\begin{proof}
Analogously to the proof of Proposition \ref{PropositionMultipleSymmetricExchange} we can assume that bases $A,B$ are disjoint (on their intersection sets $A_i$ and $B_i$ should coincide). 

When bases $A$ and $B$ are disjoint, then restrict matroid $M$ to their sum. Let $L$ assign list $\{1,\dots,k\}\setminus\{i\}$ to elements of $B_i$, and list $\{1,\dots,k\}$ to elements of $A$. Define weight function by $w\equiv k-1$ on $B$ and $w\equiv 1$ on $A$. Observe that the condition $(1)$ of Theorem \ref{TheoremGeneralizationOfSeymour} is satisfied, so there exists a $w$-coloring from lists $L$. Denote by $C_i$ elements colored with $i$. Now $A_i=C_i\cap A$ is a good partition, since sets $(B\setminus B_i)\cup A_i=C_i$ are independent. 
\end{proof}

\begin{proposition} 
Let $A$ and $B$ be bases of a matroid $M$. Then for every partition $B_1\sqcup\dots\sqcup B_k=B$ there exists a partition $A_1\sqcup\dots\sqcup A_k=A$, such that $(A\setminus A_i)\cup B_i$ are all bases for $1\leq i\leq k$.
\end{proposition}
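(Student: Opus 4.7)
The plan is to imitate the proof of the preceding proposition, swapping the roles of $A$ and $B$ in the list assignment: now the elements of $B$ must be ``pinned'' to the class dictated by the given partition, while each element of $A$ is left free to land in any class except one (the one that becomes its $A_i$).

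First I would reduce to the case where $A$ and $B$ are disjoint. Passing to the matroid $M/(A\cap B)$, the sets $A\setminus(A\cap B)$ and $B\setminus(A\cap B)$ become disjoint bases, and the given partition of $B$ restricts to a partition of $B\setminus(A\cap B)$. If the disjoint case yields a suitable partition $A_1'\sqcup\dots\sqcup A_k'$ of $A\setminus(A\cap B)$, then setting $A_j:=A_j'\cup (B_j\cap (A\cap B))$ gives a partition of $A$ for which $(A\setminus A_j)\cup B_j$ contains all of $A\cap B$ and, modulo $A\cap B$, agrees with the basis produced in $M/(A\cap B)$.

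Assume now $A\cap B=\emptyset$ and restrict $M$ to $A\cup B$. Define the list assignment $L(b)=\{j\}$ for $b\in B_j$, $L(a)=\{1,\dots,k\}$ for $a\in A$, with sizes $\ell\equiv 1$ on $B$, $\ell\equiv k$ on $A$, and weight function $w\equiv 1$ on $B$, $w\equiv k-1$ on $A$. To check condition $(1)$ of Theorem \ref{TheoremGeneralizationOfSeymour}, note that for any $C\subset A\cup B$ only $i=1$ contributes the full set $C$ and each of $i=2,\dots,k$ contributes $C\cap A$, so
\begin{equation*}
\sum_{i\geq 1}r(\{e\in C:\ell(e)\geq i\})=r(C)+(k-1)r(C\cap A)=r(C)+(k-1)\left\vert C\cap A\right\vert,
\end{equation*}
using that $A$ is independent. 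The right-hand side equals $\left\vert C\cap B\right\vert+(k-1)\left\vert C\cap A\right\vert$, so it suffices to check $r(C)\geq\left\vert C\cap B\right\vert$, which holds because $C\cap B$ is independent ($B$ being a basis). By Theorem \ref{TheoremGeneralizationOfSeymour}(3) we obtain a $w$-coloring of $A\cup B$ from $L$.

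Finally I would read off the partition from the coloring: each $b\in B_j$ is forced into color class $j$, and each $a\in A$ receives $k-1$ colors, hence misses exactly one, say $j(a)$; put $A_j:=\{a\in A:j(a)=j\}$. Then color class $j$ equals $(A\setminus A_j)\cup B_j$, which is therefore independent. A counting argument closes the proof: total multiplicity is $(k-1)\left\vert A\right\vert+\left\vert B\right\vert=k\left\vert A\right\vert$ distributed over $k$ independent classes in a matroid of rank $\left\vert A\right\vert$, so each class has size exactly $\left\vert A\right\vert=r(M)$ and is a basis. The only subtle point is this final counting, and the little bookkeeping in the disjoint reduction to ensure $A_j$ really partitions $A$; once condition $(1)$ is verified, Theorem \ref{TheoremGeneralizationOfSeymour} does all the work.
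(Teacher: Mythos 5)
Your proposal is correct and follows the same approach as the paper: reduce to disjoint bases via contraction of $A\cap B$, restrict to $A\cup B$, set up the identical list and weight functions, and read off the partition from the resulting $w$-coloring. You supply two details the paper leaves implicit --- the verification of condition~(1) of Theorem~\ref{TheoremGeneralizationOfSeymour} and the counting argument upgrading the independent color classes to bases --- but the argument is the same.
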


\begin{proof}
Analogously to the proofs of previous propositions we can assume that bases $A$ and $B$ are disjoint. Then we restrict matroid $M$ to $A\cup B$. Let $L$ assign list $\{i\}$ to elements of $B_i$, and list $\{1,\dots,k\}$ to elements of $A$. Define weight function by $w\equiv 1$ on $B$ and $w\equiv k-1$ on $A$. Observe that the condition $(1)$ of Theorem \ref{TheoremGeneralizationOfSeymour} holds, so there is a $w$-coloring from lists $L$. Denote by $C_i$ elements colored with $i$. Now $A_i=A\setminus C_i$ is a good partition, since sets $(A\setminus A_i)\cup B_i=C_i$ are independent. 
\end{proof}

\chapter{Game Colorings of a Matroid}\label{ChapterGameColoringsMatroid}

The typical problem in game coloring is to get optimal coloring of a given combinatorial structure (graph, poset, matroid, etc.) even if one of the players craftily aims to fail this task.

The scheme of our games is the following. There are two players -- Alice and Bob. Both know the structure of a fixed loopless matroid $M$. During the play, they (or one of them) are coloring properly elements of the ground set of $M$ obeying some rules. The game ends when the whole matroid has been colored, or if they arrive to a partial coloring that cannot be further extended by making a legal move. Alice wins in the first case, so her goal is to color the whole matroid. Therefore she is called a `good player'. Bob's goal is the opposite, and he is called a `bad guy'. 

The chromatic parameter of this kind of game, denoted by $\chi_{game}(M)$, is the minimum number of colors available for an element (usually the size of the set of colors) for which Alice has a winning strategy. We always have 
$$\chi_{game}(M)\geq\chi(M),$$ 
since Alice's win results in a proper coloring of $M$. Our goal is to bound $\chi_{game}$ form above by the best possible function of $\chi$. That is, to find the pointwise smallest function $f:\N\rightarrow\N$, such that every loopless matroid $M$ satisfies
$$\chi_{game}(M)\leq f(\chi(M)).$$

We consider three coloring games on matroids, all of which were initially investigated for graphs (for graphs function $f$ usually does not exist). In two cases we get optimal functions, and in the third case an almost optimal. 

In Section \ref{SectionMatroidGameColoring} we study game coloring, introduced for planar graphs by Brams (cf. \cite{Ga81}), and independently by Bodlaender \cite{Bo91}. In Theorem \ref{TheoremGeneralizedGame} we prove that every matroid $M$ satisfies $\chi_{g}(M)\leq 2\chi (M)$. This improves and extends a theorem of Bartnicki, Grytczuk, and Kierstead \cite{BaGrKi08}, who proved that $\chi_{g}(M)\leq 3\chi(M)$ holds for every graphic matroid $M$. Moreover, in Theorem \ref{TheoremLowerBoundForGameColoring} we show that our bound is in general almost tight (up to at most one).

In Section \ref{SectionOnLineListColoring} we analyse on-line list coloring, introduced for graphs by Schauz \cite{Sc09} (see also \cite{Zh09}). In Theorem \ref{TheoremOnLineGeneralizationOfSeymour} we prove that the corresponding game parameter $\ch_{on-line}(M)$ in fact equals to $\chi(M)$. This generalizes to the on-line setting our Generalization of Seymour's Theorem \ref{TheoremGeneralizationOfSeymour} (and in particular Seymour's Theorem \ref{TheoremSeymour}).

In the last Section \ref{SectionIndicatedColoring} we examine indicated coloring, introduced for graphs by Grytczuk (cf. \cite{Gr12}). Theorem \ref{TheoremGeneralIndicatedColoring} asserts that $\chi_i(M)=\chi(M)$.

We hope that our results add a new aspect of structural type, by showing that most of pathological phenomena appearing for graph coloring games are no longer present in the realm of matroids. The results of consecutive Sections come from our papers \cite{La12}, \cite{LaLu12} (with Wojciech Lubawski), and \cite{La13b} respectively.

\section{Game Coloring}\label{SectionMatroidGameColoring}

In this Section we study a game-theoretic variant of the chromatic number of a matroid defined as follows. 
Two players, Alice and Bob, alternately color elements of the ground set $E$ of a matroid $M$ using a fixed set of colors $C$. The only rule that both players have to obey is that at any moment of the play, all elements in the same color must form an independent set (it is a proper coloring). The game ends when the whole matroid has been colored, or if they arrive to a partial coloring that cannot be further extended (what happens when trying to color any uncolored element with any possible color results in a monochromatic circuit of $M$). Alice wins in the first case, while Bob in the second. The \emph{game chromatic number} of a matroid $M$, denoted by $\chi_{g}(M)$, is the minimum size of the set of colors $C$ for which Alice has a winning strategy. 

The above game is a matroidal analog of a well studied graph game coloring, which was introduced by Brams (cf. \cite{Ga81}) for planar graphs with a motivation to give an easier proof of the Four Color Theorem. The game was independently reinvented by Bodlaender \cite{Bo91}, and since then the topic developed into several directions leading to interesting results, sophisticated methods and challenging open problems (see a recent survey \cite{BaGrKiZh07}). 

The first step in studying game chromatic number of matroids was made by Bartnicki, Grytczuk, and Kierstead \cite{BaGrKi08}, who proved that for every graphic matroid $M$ inequality $\chi_{g}(M)\leq 3\chi(M)$ holds. In Theorem \ref{TheoremGeneralizedGame} we improve and extend this result by showing that for every loopless matroid $M$ we have $\chi_{g}(M)\leq 2\chi (M)$. This gives a nearly tight bound, since in Theorem \ref{TheoremLowerBoundForGameColoring} we provide a class of matroids satisfying $\chi(M_k)=k$ and $\chi_{g}(M_k)=2k-1$ for every $k\geq 3$. Our bounds remain true also for the fractional parameters, as well as for list version of the game chromatic number. The results of this Section come from our paper \cite{La12}.

We pass to the proof of an upper bound. To achieve our goal we shall need a more general version of the matroid coloring game. Let $M_{1},\dots ,M_{d}$ be matroids on the same ground set $E$. The set of colors is restricted to $\{1,\dots,d\}$. As before, the players alternately color elements of $E$, but now for all $i$ the set of elements colored with $i$ must be independent in the matroid $M_{i}$. We call this game a \emph{coloring game on} $M_1,\dots,M_d$. The initial game on $M$ with $d$ colors coincides with the coloring game on $M_{1}=\dots=M_{d}=M$.

\begin{theorem}[Lasoń, \cite{La12}]\label{TheoremGeneralizedGame}
Let $M_{1},\dots ,M_{d}$ be matroids on the ground set $E$. If $E$ has a $2$-covering by sets $V_{1},\dots,V_{d}$, with $V_{i}$ independent in $M_{i}$, then Alice has a winning strategy in the coloring game on $M_{1},\dots ,M_{d}$. In particular, for every loopless matroid $M$ we have $\chi_{g}(M)\leq 2\chi (M)$. 
\end{theorem}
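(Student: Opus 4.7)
The plan is to construct an explicit winning strategy for Alice, guided by the given $2$-covering. For each $e\in E$ let $\{i(e),j(e)\}$ denote the two indices $c$ with $e\in V_c$. Throughout the play, Alice will maintain an auxiliary assignment $\alpha\colon U\to\{1,\dots,d\}$ on the set $U$ of currently uncolored elements, with $\alpha(e)\in\{i(e),j(e)\}$, subject to the invariant that for every color $c$ the set $\alpha^{-1}(c)\cup C_c$ is independent in $M_c$, where $C_c$ is the set of elements already colored with~$c$. This $\alpha$ is Alice's ``plan'': together with the current partial coloring it forms a proper full coloring in which each uncolored element is assigned one of its two covering colors. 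The invariant holds initially with the choice $\alpha(e)=i(e)$, since $\alpha^{-1}(c)\subseteq V_c$ is independent in $M_c$ and $C_c=\emptyset$.

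Alice's rule would be: on her move, pick any uncolored $e$ and play the color $\alpha(e)$; this preserves the invariant trivially and is a valid move because $\{e\}\cup C_{\alpha(e)}\subseteq\alpha^{-1}(\alpha(e))\cup C_{\alpha(e)}$ is independent in $M_{\alpha(e)}$. When Bob colors $e_0$ with a color $c_0$, if $\alpha(e_0)=c_0$ no update is needed; otherwise only the invariant for color~$c_0$ may fail, so Alice must modify $\alpha$ to restore independence of $\alpha^{-1}(c_0)\cup C_{c_0}\cup\{e_0\}$ in $M_{c_0}$. Since $C_{c_0}\cup\{e_0\}$ is independent in $M_{c_0}$ (by validity of Bob's move) and $\alpha^{-1}(c_0)\subseteq V_{c_0}$ is independent, matroid augmentation yields an independent subset $J\supseteq C_{c_0}\cup\{e_0\}$ of $\alpha^{-1}(c_0)\cup C_{c_0}\cup\{e_0\}$ that is maximal in this union; Alice reassigns each evicted $f\in\alpha^{-1}(c_0)\setminus J$ to the other color in $\{i(f),j(f)\}\setminus\{c_0\}$, and iterates the correction for any color whose invariant becomes broken.

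The hard part will be to control this cascade and show that it always terminates with a consistent updated plan. My plan is to reformulate the existence of a valid~$\alpha$ globally as a matroid covering problem and apply the Matroid Union Theorem \ref{TheoremMatroidUnion}. On $U$ define auxiliary matroids $N_c$ in which $S\subseteq U$ is independent iff $S\subseteq V_c$ and $S\cup C_c$ is independent in $M_c$; a valid plan then corresponds precisely to a partition of $U$ into sets independent in the respective $N_c$, which by Matroid Union exists iff $\sum_c r_{N_c}(A)\geq|A|$ for every $A\subseteq U$. At the start of the game this covering inequality holds with margin $|A|$, since $\sum_c r_{N_c}(A)=\sum_c|A\cap V_c|=2|A|$ using that the $V_c$ form a 2-covering by independent sets. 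Any single move decreases this sum by at most one and only in the summand corresponding to the color just used, so the plan is to argue that, by suitable choice of which element she colors on each of her turns, Alice can always absorb the potential decrease caused by Bob's previous move and thereby preserve the covering inequality; equivalently, her cascade of reassignments can be realized as a shortest augmenting sequence in the auxiliary matroids, touching each element at most once and hence terminating. Consequently the plan can always be rebuilt, the game never stalls, and Alice wins.

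For the ``in particular'' statement, Edmonds' formula (Theorem \ref{TheoremFractionalChromaticNumber}) provides a partition of $E$ into $\chi(M)$ independent sets of $M$; taking each of these sets twice yields a $2$-covering of $E$ by $2\chi(M)$ independent sets of $M$, and applying the first part of the theorem with $M_1=\cdots=M_{2\chi(M)}=M$ exhibits Alice's winning strategy in the coloring game on $M$ with $2\chi(M)$ colors, so $\chi_{g}(M)\leq 2\chi(M)$.
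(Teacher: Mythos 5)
Your high-level idea (tie Alice's play to the $2$-covering and keep a ``reservation'' of independent sets) is the right one, and your final paragraph deducing $\chi_g(M)\le 2\chi(M)$ is correct. But the core of the first part is left open at exactly the point where the paper's proof does its work, and this is a genuine gap rather than a matter of detail.

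You maintain the \emph{weaker} invariant that each uncolored element is committed to a single color $\alpha(e)\in\{i(e),j(e)\}$ with $\alpha^{-1}(c)\cup C_c$ independent in $M_c$ for every $c$. The paper instead maintains the \emph{stronger} invariant $(\clubsuit)$: for every $i$, the set $U_i\cup(V_i\setminus C)$ --- elements colored $i$ together with \emph{all} uncolored elements of $V_i$, not just the subset currently committed to $i$ --- is independent in $M_i$. This stronger invariant is precisely what makes Bob's moves repairable by a single, local response: when Bob colors $e$ with $j$ and $(U_j\cup e)\cup(V_j\setminus C)$ becomes dependent, augmenting $U_j\cup e$ inside it leaves out exactly one element $f\in V_j\setminus C$; this $f$ lies in some $V_l$ with $l\neq j$ by the $2$-covering, and crucially $f$ was \emph{already} contained in $l$'s invariant set $U_l\cup(V_l\setminus C)$. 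So coloring $f$ with $l$ merely moves $f$ from the ``$V_l\setminus C$'' half to the ``$U_l$'' half of that same set, leaving it unchanged, and no other color's invariant set grows. No cascade can occur. In your formulation the reassigned $f$ is a \emph{new} addition to $\alpha^{-1}(l)$, so $l$'s invariant can break, which is why you run into a cascade in the first place --- you have discarded exactly the slack that eliminates it.

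The attempt to close this by the Matroid Union Theorem also does not go through as written. You correctly observe that initially $\sum_c r_{N_c}(A)\ge 2|A|$, i.e.\ the covering inequality holds with margin $|A|$, and that each move drops $\sum_c r_{N_c}(A)$ by at most one. But you do not show that this margin cannot be driven to $-1$ over the course of the game if only the weaker plan-invariant is maintained. The paper's $(\clubsuit)$ is precisely what guarantees $\sum_c r_{N_c}(A)\ge 2|A|$ at \emph{every} stage (since each $V_c\setminus C$ is then independent in $M_c/C_c$), so the margin stays $\ge|A|\ge 1$ and one Bob move cannot break it. Without $(\clubsuit)$, the assertion that ``Alice can always absorb the potential decrease'' is a statement you would still have to prove, and proving it is essentially equivalent to establishing $(\clubsuit)$. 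So I'd suggest replacing your single-valued plan $\alpha$ by the paper's set-valued invariant $(\clubsuit)$; the rest of your strategy then becomes the paper's one-swap repair and the argument closes.
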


\begin{proof}
Let us fix a $2$-covering of $E$ by sets $V_1,\dots,V_d$ independent in corresponding matroids. Denote by $C=U_{1}\cup\dots\cup U_{d}$ the set of colored elements (at some moment of the play), where $U_{i}$ is the set of elements colored with $i$. Alice will be coloring element $e$ which belongs to $V_{i}$ and $V_j$ always with color $i$ or $j$, and she will try to keep the following invariant after each of her move:
\begin{equation}
\text{for each }i\text{ the set }U_{i}\cup (V_{i}\setminus C)\text{ is independent in }M_{i}.
\tag{$\clubsuit$}
\end{equation}

Observe first that if the condition $(\clubsuit)$ holds and there is an uncolored element $e$ ($e\in V_i$ for some $i$), then the player can make an `obvious' move, namely color $e$ with $i$. After this `obvious' move the condition $(\clubsuit)$ remains true.

To prove that Alice can keep the condition $(\clubsuit)$ assume that the condition holded (remember sets $U_i$ and $C$ of this moment) and later Bob has colored an element $e$ with color $j$. 

If $(U_{j}\cup e)\cup (V_{j}\setminus C)$ is independent in $M_{j}$, then $(\clubsuit)$ still holds and we use the above observation. 

When $(U_{j}\cup e)\cup (V_{j}\setminus C)$ is dependent in $M_{j}$, then by using the 
augmentation axiom (see Definition \ref{DefinitionIndependentSets}) we extend the independent set $U_{j}\cup e$ from the independent set $U_{j}\cup (V_{j}\setminus C)$ in $M_{j}$. The extension equals to $(U_{j}\cup e)\cup (V_{j}\setminus C)\setminus f$ for some $f\in V_{j}\setminus C$. Since sets $V_{1},\dots,V_{d}$ form a $2$-covering we know that $f\in V_{l}$ for some $l\neq j$. Now Alice has an admissible move, and her strategy is to color $f$ with color $l$. It is easy to observe that after her move the condition $(\clubsuit)$ is preserved.

To get the second assertion suppose $\chi(M)=k$, so matroid $M$ has a partition into independent sets $V_{1},\dots ,V_{k}$. We may take $M_{1}=\dots=M_{2k}=M$ and repeat twice each set $V_{i}$ to get a $2$-covering of $E$. By the first part of the assertion we infer that Alice has a winning strategy and therefore $\chi_{g}(M)\leq 2k$. 
\end{proof}

As usually in this kind of games the following question seems to be natural and non trivial (for graph coloring game it was asked by Zhu \cite{Zh99}).

\begin{question}
Suppose Alice has a winning strategy on a matroid $M$ with $k$ colors. Does she also have a winning strategy with $l>k$ colors?
\end{question}

We can define also a fractional version of the game chromatic number. Let $M$ be a matroid on a ground set $E$. For given integers $a$ and $b$ consider a slight modification of the matroid coloring game on $M$. As before, Alice and Bob alternately properly color elements of $E$ using a fixed set of $b$ colors. The only difference is that each element has to receive $a$ colors. The infimum of fractions $\frac{b}{a}$ for which Alice has a winning strategy is called the \emph{fractional game chromatic number} of $M$, which we denote by $\chi_{f,g}(M)$. Clearly, $\chi_{f,g}(M)\leq \chi_{g}(M)$ for every matroid $M$. Using Theorem \ref{TheoremGeneralizedGame} applied to $2b$ matroids equal to the $a$-th blow up of a loopless matroid $M$ (each element of $M$ is blown up to $a$ copies, see Definition \ref{DefinitionBlowUp}) we get that $\chi_{f,g}(M)\leq 2\chi_f(M)$. Motivated by the behaviour of fractional chromatic number we can ask two natural questions about fractional game chromatic number. The infimum in Definition \ref{DefinitionFractionalChromaticNumber} of fractional chromatic number is always reached (see the proof of Theorem \ref{TheoremFractionalChromaticNumber}). Is it also true in the case of fractional game chromatic number? The difference $\chi(M)-\chi_{f}(M)$ is less than $1$ (Theorem \ref{TheoremFractionalChromaticNumber}). Can the difference $\chi_{g}(M)-\chi_{f,g}(M)$ be arbitrary large? 

Now we generalize our results to the list version. The rules of the game between Alice and Bob do not change, except that now each element has its own list of colors. So instead of a fixed set of colors $C$, each element $e\in E$ can be colored by Alice or Bob only with a color from its list $L(e)$. The minimum number $k$ for which Alice has a winning strategy for every assignment of lists of size $k$ is called the \emph{game list chromatic number} of $M$, and denoted by $\ch_{g}(M)$. Clearly, $\ch_{g}(M)\geq\chi_{g}(M)$ for every matroid $M$, however the same upper bound as in Theorem \ref{TheoremGeneralizedGame} holds also for list parameter.

\begin{corollary}
For every loopless matroid $M$ we have $\ch_{g}(M)\leq 2\chi(M)$.
\end{corollary}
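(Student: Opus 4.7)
The plan is to reduce to Theorem \ref{TheoremGeneralizedGame} by encoding lists as auxiliary matroids, exactly as in the proof of Seymour's Theorem \ref{TheoremSeymour}. Fix $k=\chi(M)$, let $L$ be any list assignment of size at least $2k$, and set $d=\max\bigcup_{e\in E}L(e)$. For each $i\in\{1,\dots,d\}$ let $Q_{i}=\{e\in E:i\in L(e)\}$ and let $M_{i}$ be the restriction $M\vert_{Q_{i}}$ with its ground set extended to $E$ by declaring the added elements loops. Under this encoding, coloring an element $e$ with color $i$ is admissible in the list coloring game on $M$ from $L$ exactly when it is admissible in the coloring game on $M_{1},\dots ,M_{d}$ of Theorem \ref{TheoremGeneralizedGame}: the loop status of $e$ in $M_{i}$ for $i\notin L(e)$ forbids using a color outside $L(e)$, while a monochromatic circuit in any $M_{i}$ is the same as a monochromatic circuit of $M$ inside $Q_{i}$.

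Next I would verify the hypothesis of Theorem \ref{TheoremGeneralizedGame}, namely that $E$ admits a $2$-covering by sets $V_{1},\dots ,V_{d}$ with $V_{i}$ independent in $M_{i}$. By Edmonds' formula (Theorem \ref{TheoremFractionalChromaticNumber}) the matroid $M$ admits a partition $W_{1},\dots ,W_{k}$ into independent sets, and since each $W_{j}\cap A\cap Q_{i}$ is independent in $M_{i}$ we get $r_{i}(A)\geq \left\vert W_{j}\cap A\cap Q_{i}\right\vert$ for every $j$, and therefore
\begin{equation*}
k\sum_{i=1}^{d}r_{i}(A)\geq \sum_{i=1}^{d}\sum_{j=1}^{k}\left\vert W_{j}\cap A\cap Q_{i}\right\vert =\sum_{i=1}^{d}\left\vert A\cap Q_{i}\right\vert =\sum_{e\in A}\left\vert L(e)\right\vert \geq 2k\left\vert A\right\vert .
\end{equation*}
Dividing by $k$ gives $\sum_{i}r_{i}(A)\geq 2\left\vert A\right\vert$ for every $A\subset E$, so the Weighted Matroid Union Theorem \ref{TheoremWeightedMatroidUnion} with $w\equiv 2$ supplies the desired $2$-covering.

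Finally I would invoke Theorem \ref{TheoremGeneralizedGame} with the matroids $M_{1},\dots ,M_{d}$ and the $2$-covering just produced: Alice wins the coloring game on $M_{1},\dots ,M_{d}$, which by the encoding above is precisely the list coloring game on $M$ from $L$. Since $L$ was an arbitrary list assignment of size $2k$, this yields $\ch_{g}(M)\leq 2\chi(M)$. The one point requiring attention is the verification that the game on $M_{1},\dots ,M_{d}$ faithfully simulates the list game -- in particular that extending the ground set to $E$ by loops is the right way to prevent illegal colors; once this translation is in place, the argument is a clean composition of Seymour's list-to-multimatroid trick with the maintained invariant $(\clubsuit)$ from Theorem \ref{TheoremGeneralizedGame}, and no new strategic idea is needed.
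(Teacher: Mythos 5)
Your argument is correct and takes essentially the same route as the paper: encode the list assignment as a family of restricted matroids $M_i = M\vert_{Q_i}$, produce a $2$-covering by sets $V_i$ independent in $M_i$, and invoke Theorem \ref{TheoremGeneralizedGame}. The only cosmetic difference is that you derive the $2$-covering by inlining the Matroid Union computation (mimicking the proof of Seymour's Theorem \ref{TheoremSeymour}), whereas the paper obtains it in one line by citing Theorem \ref{TheoremGeneralizationOfSeymour} with $\ell\equiv 2k$ and $w\equiv 2$.
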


\begin{proof}
Let $\chi(M)=k$, and suppose $L:E\rightarrow\mathcal{P}(\{1,\dots,d\})$ is a list assignment of size $2k$. Denote $Q_i=\{e:i\in L(e)\}$ and $M_i=M\vert_{Q_i}$. By Theorem \ref{TheoremGeneralizationOfSeymour} for $\ell\equiv 2k$ and $w\equiv 2$, there is a $2$-covering of $E$ by sets $V_1,\dots,V_d$ with $V_i$ is independent in the matroid $M_i$. So by Theorem \ref{TheoremGeneralizedGame} Alice has a winning strategy, and as a consequence $\ch_g(M)\leq 2k$.
\end{proof}

In the same way as we defined  fractional game chromatic number one can define the fractional game list chromatic number $\ch_{f,g}(M)$. Clearly, $\ch_{f,g}(M)\leq\ch_{g}(M)$ and $\ch_{f,g}(M)\geq\chi_{f,g}(M)$ holds for every matroid $M$. As before, using Theorem \ref{TheoremGeneralizedGame} it is easy to get that $\ch_{f,g}(M)\leq 2\chi_f(M)$.

In view of Seymour's Theorem \ref{TheoremSeymour} we ask the following natural question.

\begin{question}
Does for every matroid $M$ the game list chromatic number equal to the game
chromatic number $\ch_g(M)=\chi_g(M)$?
\end{question}

Now we pass to the proof of a lower bound. We present a family of matroids $M_{k}$ for $k\geq 3$, with $\chi(M_k)=k$ and $\chi_{g}(M_k)\geq 2k-1$. This slightly improves the lower bound from the paper \cite{BaGrKi08} of Bartnicki, Grytczuk and Kierstead. They give an example of graphical matroids $H_{k}$ with $\chi(H_{k})=k$ and $\chi_{g}(H_{k})\geq 2k-2$ for each $k$. We believe that their family also satisfies $\chi_{g}(H_{k})\geq 2k-1$ for $k\geq 3$, however a proof would be much longer and more technical than in our case.

Fix $k\geq 3$. Let $E=C\sqcup D_1\sqcup\dots\sqcup D_{3k(2k-1)}$, where $C,D_1,\dots,D_{3k(2k-1)}$ are disjoint sets, $C=\{c_{1,1},\dots,c_{k,2k-1}\}$ has $k(2k-1)$ elements, and each $D_i=\{d_{1,i},\dots,d_{k,i}\}$ has $k$ elements. Let $M_k$ be a transversal matroid (see Definition \ref{DefinitionTransversalMatroid}) on a ground set $E$ with multiset of subsets $\mathcal{A}_k$ consisting of sets $D_1,\dots,D_{3k(2k-1)}$ and $(2k-1)$ copies of $E$. 

\begin{theorem}[Lasoń, \cite{La12}]\label{TheoremLowerBoundForGameColoring}
For every $k\geq 3$ matroid $M_{k}$ satisfies $\chi(M_k)=k$ and $\chi_{g}(M_{k})\geq 2k-1$.
\end{theorem}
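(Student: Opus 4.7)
For the chromatic number equality, I would invoke Edmonds' formula (Theorem~\ref{TheoremFractionalChromaticNumber}). The collection $\mathcal{A}_k$ consists of $(2k-1)(3k+1)$ sets and admits a system of distinct representatives, for instance by sending $D_i\mapsto d_{1,i}$ and the $m$-th copy of $E\mapsto c_{1,m}$; hence $r(M_k)=(2k-1)(3k+1)$. A direct count yields $|E|=k(2k-1)+3k(2k-1)\cdot k=k(2k-1)(3k+1)=k\cdot r(M_k)$, so Edmonds' formula forces $\chi(M_k)\ge k$. Conversely, the partition $V_\ell:=\{c_{\ell,m}:1\le m\le 2k-1\}\cup\{d_{\ell,i}:1\le i\le 3k(2k-1)\}$, for $\ell=1,\dots,k$, cuts $E$ into $k$ independent sets (match $c_{\ell,m}$ to the $m$-th copy of $E$ and $d_{\ell,i}$ to $D_i$), giving $\chi(M_k)\le k$.

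For the game chromatic lower bound, I aim to exhibit a winning strategy for Bob when Alice plays with a color set of cardinality $d=2k-2$. The pivotal observation is a blocking criterion in $M_k$: for a current color class $V_j$, let $s_j=|V_j\cap C|$, $t_{j,i}=|V_j\cap D_i|$, $u_j=|\{i:t_{j,i}\ge 1\}|$ and $\Phi_j=s_j+\sum_i t_{j,i}-u_j$. A routine Hall-type check shows that $\Phi_j$ equals the number of copies of $E$ used in an optimum transversal of $V_j$, and that an uncolored $c\in C$ cannot receive color $j$ if and only if $\Phi_j=2k-1$. Consequently Bob wins once the global potential $\Phi:=\sum_{j=1}^{d}\Phi_j$ reaches $\Phi^{\ast}:=d(2k-1)=(2k-2)(2k-1)$ while some $c\in C$ remains uncolored.

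A direct verification shows each coloring move either leaves $\Phi$ unchanged (a \emph{seeding} move: placing a first $D_i$-element into a color class) or raises $\Phi$ by one (a \emph{doubling} move: any $C$-coloring, or placing the second, third, etc., $D_i$-element into the same class). Bob's plan is therefore to respond by doubling on every turn where possible --- picking any seeded $(i,j)$-cell with $D_i$-elements still uncolored and assigning one of them color $j$ --- and to play a seeding move himself when no doubling is available. Crucially, Bob never colors an element of $C$, so all $|C|=k(2k-1)$ elements of $C$ remain as candidate blocking witnesses; a surplus of $k-1\ge 2$ potential doublings per seeded $(i,j)$-pair, together with the abundance of $D_i$-groups ($3k(2k-1)$ of them), guarantees Bob always has raw material for his chosen move.

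The main obstacle is Alice's natural \emph{spreading} counter-strategy: she seeds each $D_i$ into as many distinct color classes as possible, keeping her own moves $\Phi$-neutral. The key quantitative point is that $|D_i|=k<2k-2=d$ holds precisely when $k\ge 3$, so a single $D_i$ can cover at most $k$ of the $d$ color classes; Alice is forced to introduce many fresh $D_i$-groups, each of which hands Bob another doubling opportunity on his following turn. A careful round-by-round accounting --- tracking the number of fresh seeds Alice must play, the $C$-moves she is ultimately forced to make, and the $(k-1)$ doublings Bob can extract per seeded pair --- yields the inequality needed to push $\Phi$ past $\Phi^{\ast}$ before the last $c\in C$ is colored. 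Rigorously establishing this accounting inequality, and in particular verifying that Bob's strategy is not derailed by Alice's best ordering, is the principal technical step.
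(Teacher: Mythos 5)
Your argument for $\chi(M_k)=k$ is correct and in fact more explicit than the paper's, which only exhibits the partition into $k$ independent sets $V_\ell$; your rank computation $r(M_k)=(2k-1)(3k+1)$ and the ratio $|E|/r(E)=k$ is the right way to get the matching lower bound via Edmonds' formula.

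The game-chromatic part has a genuine gap, and it is exactly the one you flag yourself: the round-by-round accounting that would push $\Phi$ past $\Phi^{\ast}$ before $C$ is colored is never carried out, and that accounting is not routine. The difficulty is that your potential $\Phi$ is global, whereas Bob needs a \emph{local} overflow: some single class $j$ with $\Phi_j=2k-1$ that is blocking a specific uncolored $c\in C$ no matter which color Alice tries. A large aggregate $\Phi$ does not by itself produce such a witness, and your "doubling whenever possible" rule gives Bob no control over \emph{which} classes saturate; Alice can in principle spread her $C$-moves so that the classes Bob has been inflating are not the ones the remaining $c$'s would need. Moreover you assert that all $k(2k-1)$ elements of $C$ "remain as candidate blocking witnesses," but Alice is coloring $C$ throughout, so witnesses are being consumed; you would need to show that Bob's inflation outpaces Alice's consumption per color class, and that is the uncontrolled part.

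The paper avoids this by replacing the global potential with a per-color invariant tied directly to the rank bound $r(C\cup D_i)=2k$: whenever Alice colors some $c\in C$ with $i$, Bob immediately colors an element of $D_i$ (in the general case, of $D_i\cup D_{i+h}\cup D_{i+2h}$) with $i$, maintaining $d_i\ge c_i$ (resp.\ $d_i\ge c_i+f_i$ or $d_i\ge k+2$). Combined with $c_i+d_i\le 2k$ (coming from the rank of the relevant restriction), this forces $c_i\le k$ for every color, hence at most $(2k-2)k<k(2k-1)=|C|$ elements of $C$ ever get colored. This \emph{mimicking} strategy makes the per-color bookkeeping one-line and never needs a global potential. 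If you want to salvage your approach, you would have to localize $\Phi$ to a per-$j$ invariant that Bob can actively steer toward the classes that still matter for uncolored $c$'s; as written, the spreading counter-strategy you describe for Alice is not ruled out.
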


\begin{proof}
To prove the first part of the assertion we can take $k$ independent sets $V_i=\{c_{i,1},\dots,c_{i,2k-1},d_{i,1},\dots,d_{i,3k(2k-1)}\}$. Observe that also $\chi_{f}(M_{k})=k$. 

To prove the second part notice that the rank of $C\cup D_i$ equals to $2k$, so if there are $t$ elements from $D_i$ colored with $i$, then there are at most $2k-t$ elements from $C$ colored with $i$. This suggests that Bob should try to color each $D_i$ with one color. 

Suppose the set of colors in the game is $\{1,\dots,h\}$ with $h\leq 2k-2$. We will show a winning strategy for Bob. Alice will always loose the game because she will not be able to color all elements of the set $C$. 

Assume first that Alice colors only elements from the set $C$, and her goal is to color all of them. It is the main case to understand. Denote by $d_i$ and $c_i$ the number of elements colored with $i$ in $D_i$ and $C$ respectively. Bob wants to keep the following invariant after each of his move:
\begin{equation}
d_i\geq c_i\text{ for any color }i.
\tag{$\heartsuit$}
\end{equation}
It is easy to see that he can always do it, and in fact there is an equality $d_i=c_i$ for every color $i$. Bob just mimics Alice's moves. Whenever she colors some $c\in C$ with $i$ he responds by coloring an element of $D_i$ with $i$. He can do it, because when after Alice's move $c_i=d_i+1 $ for some $i$, then $c_i+d_i\leq 2k$. But, then also $c_i+(d_i+1)\leq 2k$, so elements colored with $i$ are independent, and $d_i+1\leq k$, so there was an uncolored element in $D_i$. 

Observe that when Bob plays with this strategy, then for each color $i$ we have $c_i+d_i\leq 2k$ and $c_i\leq d_i$, so as a consequence $c_i\leq k$. This means that Alice can color only $hk$ elements of $C$, thus she looses.

It remains to justify that coloring elements of $D_1\cup\dots\cup D_{3k(2k-1)}$ by Alice can not help her in coloring elements of $C$. To see this we have to modify the invariant that Bob wants to keep. We assume $k\geq 4$, because for $k=3$ more careful case analysis is needed. Denote by $d_i,f_i$ the number of elements in $D_i\cup D_{i+h}\cup D_{i+2h}$ colored with $i$ and with some other color respectively, and $c_i$ as before. Now the invariant that Bob wants to keep after each of his move is the following: 
\begin{equation}
d_i\geq c_i+f_i\text{ or }d_i\geq k+2\text{ for any color }i.
\tag{$\spadesuit$}
\end{equation}
Analogously to the previous case one can show that Bob can keep this invariant. He just have to obey one more rule. Whenever Alice colors an element of $D_i\cup D_{i+h}\cup D_{i+2h}$, then Bob colors another element of this set with $i$ always trying to keep $\epsilon_i$, which is the number of sets among $D_i,D_{i+h},D_{i+2h}$ with at least one element colored with $i$, as low as possible. This completes a description of Bob's strategy. Observe that $f_i\geq\epsilon_i-1$.

Condition $(\spadesuit)$ gives the same consequence as $(\heartsuit)$. Let $\mathcal{D}_i$ be the union of those of $D_i,D_{i+h},D_{i+2h}$, which have an element colored with $i$. Matroid $M_k\vert_{C\cup\mathcal{D}_i}$ has rank $2k-1+\epsilon_i$. If $d_i\geq c_i+f_i$ we have 
$$2k-1+\epsilon_i\geq  c_i+d_i\geq 2c_i+f_i\geq 2c_i+\epsilon_i-1.$$ 
Otherwise, if $d_i\geq k+2$, then
$$2k+2\geq 2k-1+\epsilon_i\geq  c_i+d_i\geq c_i+k+2.$$ 
Therefore there can be at most $k$ elements in $C$ colored with $i$. 
\end{proof}

Suppose that Alice wins the game on $M_k$ (for $k\geq 3$), with $2k-1$ colors, and the above strategy for Bob. Then exactly $k$ elements of the set $C$ will be colored with $i$, for every color $i$. Additionally there are equalities in the inequalities showing a contradiction in the proof of Theorem \ref{TheoremLowerBoundForGameColoring}. So $M_k$ can be viewed as extremal examples for which $2k-1$ colors suffice for Alice to win.

Our results lead naturally to the following question.

\begin{question}
Is it true that $\chi_g(M)\leq 2\chi(M)-1$ holds for arbitrary loopless matroid $M$?
\end{question}

\section{On-line List Coloring}\label{SectionOnLineListColoring}

In this Section we consider a coloring of a matroid from lists (as in Section \ref{SectionListChromaticNumber} of Chapter \ref{ChapterColoringsOfMatroid}), but in a situation when only part of information about colors in the lists is known. This can be modeled by a two person game in which Alice will be coloring properly elements of the ground set of a matroid from lists revealed during the game by Bob. As usually Alice wins if at the end of the game, when all colors in the lists are revealed, the whole matroid is colored. 

Let $M$ be a loopless matroid on a ground set $E$, $\N_+=\{1,2,3,\dots\}$ be the set of colors and let $k$ be a positive integer. In the first round Bob chooses an arbitrary non-empty subset $B_1\subset E$ and inserts color $1$ to the lists of all elements of $B_1$. Then Alice decides which elements with $1$ on its list color with color $1$. So she chooses some independent set $A_{1}\subset B_{1}$ and colors its elements with $1$. In the $i$-th round Bob picks an arbitrarily non-empty subset $B_i\subset E$ and inserts color $i$ to the lists of all elements of $B_i$. Then Alice chooses an independent subset $A_i\subset B_i$ and colors its elements with color $i$. The game ends when all lists have exactly $k$ elements. If at the end of the game the whole matroid is colored, then Alice is the winner. Bob wins in the opposite case. 

Let $\ch_{on-line}(M)$ denote the minimum number $k$ for which Alice has a winning strategy in the above game. It is called the \emph{on-line list chromatic number} of $M$. Clearly, we have $\ch(M)\leq\ch_{on-line}(M)$ for every loopless matroid $M$.

The above game is a matroidal analog of the on-line graph coloring game introduced by Schauz \cite{Sc09} (see also \cite{Zh09}). For several classes of graphs on-line list chromatic number equals to list chromatic number. In almost all cases proofs of upper bounds for $\ch(G)$ remain valid for $\ch_{on-line}(G)$ \cite{Sc10'}. For example a simple modification of the argument of Thomassen \cite{Th94} shows that for every planar graph $G$ we have $\ch_{on-line}(G)\leq 5$. The same applies to the result of Galvin \cite{Ga95}, which works also in the on-line setting. Schauz proved \cite{Sc10} even an on-line version of combinatorial Nullstellensatz of Alon \cite{Al99} (we also generalize combinatorial Nullstellensatz in a different direction in \cite{La10}). It implies that if a bound $\ch(G)\leq k$ is a consequence of using combinatorial Nullstellensatz, then also $\ch_{on-line}(G)\leq k$ holds (for example $\ch_{on-line}(G)\leq 3$ for planar bipartite graph $G$, or asymptotic version of Dinitz's conjecture by H\"{a}ggkvist and Janssen \cite{HaJa97}). Recently, Gutowski \cite{Gu11} showed the on-line version of a theorem of Alon, Tuza and Voigt \cite{AlTuVo97}. Namely, that the fractional on-line list chromatic number equals to the fractional chromatic number. The only graphs for which it is known that parameters $\ch(G)$ and $\ch_{on-line}(G)$ differ is a certain family of graphs with $\ch(G)=2$ and $\ch_{on-line}(G)=3$ \cite{Zh09}. 

Surprisingly the best upper bound on $\ch_{on-line}(G)$ in terms of $\ch(G)$ is exponential as observed by Zhu \cite{Zh99}. It follows from a result of Alon \cite{Al00}. Therefore the problem of bringing upper and lower bounds closer is one of the most challenging problems in the area of on-line list chromatic number of graphs.

The main result of this Section, which comes from a joint paper with Wojciech Lubawski \cite{LaLu12}, asserts that for matroids we have equality between these two parameters. The proof relies on a multiple symmetric exchange property. Actually we prove a more general Theorem \ref{TheoremOnLineGeneralizationOfSeymour} that can be seen as the on-line version of our Generalization of Seymour's Theorem \ref{TheoremGeneralizationOfSeymour}.  

Recall that Theorem \ref{TheoremGeneralizationOfSeymour} provides equivalent conditions for a matroid $M$ to be $w$-colorable from any lists of size $\ell$. Now we may generalize this notion to the on-line setting in the same way as in the game of Schauz, except that now sizes of lists are exactly $\ell$ and Alice wins when she ended with a $w$-coloring of $M$. In this case we say that matroid $M$ is \emph{on-line $w$-colorable from lists of size $\ell$} if Alice has a winning strategy. 

Conditions from Theorem \ref{TheoremGeneralizationOfSeymour} are necessary for the above property to hold. Our aim is to prove that they are sufficient.

\begin{theorem}[Lasoń, Lubawski, \cite{LaLu12}] \emph{(On-line Version of Generalization of Seymour's Theorem)}\label{TheoremOnLineGeneralizationOfSeymour}
Let $\ell$ and $w$ be list size and weight functions on a matroid $M$ respectively. Then the following conditions are equivalent:
\begin{enumerate}
\item matroid $M$ is $w$-colorable from lists $L_{\ell}(e)=\{1,\dots,\ell(e)\}$,
\item matroid $M$ is $w$-colorable from any lists of size $\ell$,
\item matroid $M$ is on-line $w$-colorable from lists of size $\ell$.
\end{enumerate}
In particular, for every loopless matroid $M$ we have $\ch_{on-line}(M)=\chi(M)$.
\end{theorem}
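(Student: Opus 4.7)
The plan is to establish (1)$\Leftrightarrow$(2)$\Leftrightarrow$(3). The equivalence (1)$\Leftrightarrow$(2) is Theorem~\ref{TheoremGeneralizationOfSeymour}, so the new content is (2)$\Leftrightarrow$(3). The direction (3)$\Rightarrow$(2) is a simple simulation: for an arbitrary off-line list assignment $L$ of size $\ell$, have Bob play the on-line game by choosing, at round $i$, the set $B_{i}=\{e\in E:i\in L(e)\}$. After sufficiently many rounds each element's list coincides with $L(e)$, and Alice's winning response is exactly a $w$-coloring of $M$ from $L$.

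The heart of the proof is (2)$\Rightarrow$(3). I would exhibit a concrete strategy for Alice. At any point of the play denote by $(w',\ell')$ the \emph{remaining} data: $w'(e)$ is the number of colors Alice still owes to $e$, and $\ell'(e)$ is the number of colors Bob still has to reveal on $e$'s list. Alice maintains the invariant that $M$ is $w'$-colorable from the canonical list $L_{\ell'}(e)=\{1,\dots,\ell'(e)\}$. Initially $(w',\ell')=(w,\ell)$ and the invariant reduces to condition (1), which is implied by our hypothesis (2). By Theorem~\ref{TheoremGeneralizationOfSeymour} the invariant is equivalent to $w'$-colorability of $M$ from \emph{any} list assignment of size $\ell'$, and this flexibility is what Alice exploits each round.

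Given the invariant before round $t+1$, in which Bob reveals the set $B\subset E$, Alice introduces an auxiliary color $0$ and forms the list
\begin{equation*}
L(e)=\begin{cases}\{0,1,\dots,\ell'(e)-1\}&\text{if }e\in B,\\ \{1,2,\dots,\ell'(e)\}&\text{if }e\notin B.\end{cases}
\end{equation*}
Every $L(e)$ has size $\ell'(e)$, so by the invariant $M$ is $w'$-colorable from $L$; let $V_{0},V_{1},\dots,V_{m}$ denote the color classes of such a coloring. Because $0\in L(e)$ exactly when $e\in B$, the class $V_{0}$ is an independent subset of $B$, and Alice plays $A_{t+1}=V_{0}$. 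A short case distinction on whether $e\in B$ checks that, for each $c\geq 1$, every $e\in V_{c}$ satisfies $c\leq\ell'(e)-\mathbf{1}_{B}(e)$, so that $V_{1},\dots,V_{m}$ witness $(w'-\mathbf{1}_{V_{0}})$-colorability of $M$ from $L_{\ell'-\mathbf{1}_{B}}$. This is precisely the invariant after the round.

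When the game ends we have $\ell'\equiv 0$ and the invariant forces $w'\equiv 0$, so Alice has produced a proper $w$-coloring of $M$. The particular equality $\ch_{on-line}(M)=\chi(M)$ follows by specialising to $w\equiv 1$ and $\ell\equiv\chi(M)$. The main creative step is the design of the auxiliary list $L$: inserting the new color $0$ exactly for the elements of $B$, and shifting $L_{\ell'}$ down by one on $B$, converts Alice's on-line response into the off-line extraction of a single color class from a $w'$-coloring, which the invariant supplies. Beyond this device the verification of the invariant is routine, and the multiple symmetric exchange property (Proposition~\ref{PropositionMultipleIndependentSetExchange}) enters only implicitly through its role in Theorem~\ref{TheoremGeneralizationOfSeymour}.
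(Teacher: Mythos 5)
Your proof is correct but differs genuinely from the paper's. The paper's argument for $(1)\Rightarrow(3)$ fixes a $w$-coloring $I_1,\dots,I_k$ of $M$ from the canonical lists and then pushes the revealed set $V=B_1$ through the color classes by a chain of applications of the multiple symmetric exchange property (Proposition~\ref{PropositionMultipleIndependentSetExchange}), obtaining an independent $I\subset V$ whose extraction, together with the modified classes, yields a coloring from the shifted canonical lists. You instead recolor from scratch each round, using an auxiliary list assignment that makes a fresh color available exactly on $B$ while shifting the canonical colors down by one on $B$, and then hand Alice the fresh color's class $V_0$. This replaces the explicit exchange chain by one extra invocation of the equivalence $(1)\Leftrightarrow(2)$ of Theorem~\ref{TheoremGeneralizationOfSeymour}, and the invariant check you sketch is routine and correct, including the degenerate case $\ell'(e)=0$ forcing $w'(e)=0$. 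Two small remarks. First, your claim that the multiple symmetric exchange property enters ``implicitly through its role in Theorem~\ref{TheoremGeneralizationOfSeymour}'' has the dependency backwards: in this thesis Theorem~\ref{TheoremGeneralizationOfSeymour} is derived from matroid union and is then \emph{used} to prove Propositions~\ref{PropositionMultipleSymmetricExchange} and~\ref{PropositionMultipleIndependentSetExchange}, so your argument actually bypasses the exchange property entirely, which is a genuine simplification. Second, color $0$ lies outside $\N_+$; a literal application of the theorem requires relabeling it to an unused positive integer, but this is purely cosmetic.
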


\begin{proof}
Implications $(3)\Rightarrow (2)\Rightarrow (1)$ from the first part of the assertion are obvious. We will prove $(1)\Rightarrow (3)$ by induction on the number $w(E)=\sum_{e\in E}w(e)$. 

If $w(E)=0$, then $w$ is the zero vector and the assertion holds trivially. 

Suppose now that $w(E)\geq 1$ and the assertion of the theorem holds for all $w^{\prime }$ with $w^{\prime}(E)<w(E)$. Let $V\subset E$ be the set $B_1$ of elements picked by Bob in the first round of the game. So, all elements of $V$ have color $1$ in their lists. For a given subset $U\subset E$, denote by $c_U:E\rightarrow\{0,1\}$ the characteristic function of $U$, that is, $c_U(e)=1$ if $e\in U$, and $c_U(e)=0$, otherwise. 

To prove inductive step we have to show that there exists an independent set $I\subset V$ such that matroid $M$ is $(w-c_{I})$-colorable from lists $L_{\ell-c_{V}}(e)=\{1,2,\dots,\ell(e)-c_{V}(e)\}$. Then simply Alice takes $A_1=I$, so she colors elements of $I$ with color $1$.

By condition $(1)$ there exists a $w$-coloring $c:E\rightarrow\mathcal{P}(\N_+)$ of $M$ from lists $L_{\ell}(e)=\{1,\dots,\ell(e)\}$. Let $I_i$ denote the set of elements colored with $i$, for $i=1,\dots,k$. Let $X_{1}=V\cap I_{1}$. By Proposition \ref{PropositionMultipleIndependentSetExchange} (Multiple Symmetric Exchange Property) there exists $Y_{2}\subseteq I_{2}$ such that both $I_{1}^{\prime}=(I_{1}\setminus X_{1})\cup Y_{2}$ and $I_{2}^{\prime \prime}=(I_{2}\setminus Y_{2})\cup X_{1}$ are independent. In general let $X_{i}=V\cap I_{i}^{\prime \prime}$. So again by Proposition \ref{PropositionMultipleIndependentSetExchange} there exists $Y_{i+1}\subseteq I_{i+1}$, such that sets $I_{i}^{\prime }:=(I_{i}^{\prime \prime }\setminus X_{i})\cup Y_{i+1}$ and $I_{i+1}^{\prime \prime }:=(I_{i+1}\setminus Y_{i+1})\cup X_{i}$ are independent. Let $I=X_{k}$. 

It is not hard to see that $c':E\ni e\rightarrow\{i:e\in I_i^{\prime}\}$ is a $(w-c_{I})$-coloring from lists $L_{\ell-c_{V}}(e)=\{1,2,\dots,\ell(e)-c_{V}(e)\}$. The assertion of the theorem follows by induction.

The second part of the assertion follows as a corollary of the first one by taking $w\equiv 1$ and $\ell\equiv\chi(M)$. 
\end{proof}

Hence it turns out that matroids are so nice structures that if it is possible to color a matroid using $k$ colors, then it is not only possible to do it from any lists of size $k$, but one can even do it on-line.

\section{Indicated Coloring}\label{SectionIndicatedColoring}

In this Section we consider the following asymmetric variant of the coloring game, which was originally proposed by Grytczuk for graphs. In each round of the game Alice only indicates an uncolored element of the ground set of a loopless matroid $M$ to be colored. Then Bob chooses a color for that element from a fixed set of colors. The rule Bob has to obey is that it is a proper coloring. The goal of Alice is to achieve a proper coloring of the whole matroid. The least number of colors guaranteeing a win for Alice is denoted by $\chi_i(M)$ and called the \emph{indicated chromatic number} of $M$. Clearly $\chi_i(M)\geq\chi(M)$.

The indicated variant of the usual chromatic number for graphs, denoted by $\chi_i(G)$, was studied by Grzesik in \cite{Gr12}. It exhibits rather strange behaviour. For bipartite graphs we have trivially $\chi_i(G)=\chi(G)$, while there are $3$-chromatic graphs $G$ with $\chi_i(G)\geq 4$.

We show in Theorem \ref{TheoremGeneralIndicatedColoring} that for matroids there is always an equality $\chi_i(M)=\chi(M)$. The results of this Section come from our paper \cite{La13b}.

As before we will need a more general approach. Let $M_1,\dots,M_k$ be a collection of matroids on the same ground set $E$. We consider a modification of the above game in which the set of colors is $\{1,\dots,k\}$. The other rules are the same, except that now elements of color $i$ must form an independent set in the matroid $M_i$, for each $i=1,\dots,k$. 

\begin{theorem}[Lasoń, \cite{La13b}]\label{TheoremGeneralIndicatedColoring}
Let $M_1,\dots,M_k$ be matroids on the same ground set $E$. Suppose that there is a partition $E=V_1\cup\dots\cup V_k$ such that $V_i$ is independent in $M_i$. Then Alice has a winning strategy in the generalized indicated coloring game. In particular for every loopless matroid $M$ we have $\chi_i(M)=\chi(M)$.
\end{theorem}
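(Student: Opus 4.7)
The plan is for Alice to maintain, throughout the game, the invariant that the currently colored elements can be extended to a full valid coloring. More precisely, writing $U_l$ for the set of elements already colored $l$ and $U=\bigcup_{l}U_l$, Alice's invariant is the existence of a partition $W_1\cup\dots\cup W_k=E$ with $U_l\subset W_l$ and $W_l$ independent in $M_l$. By contracting each $U_l$ in $M_l$ and applying Matroid Union Theorem~\ref{TheoremMatroidUnion}, this invariant is equivalent to the rank condition
\[ \sum_{l=1}^{k} r_{M_l}(A\cup U_l)\ \ge\ |A|+|U|\qquad\text{for every }A\subset E\setminus U, \]
and initially it is witnessed by $(W_1,\dots,W_k)=(V_1,\dots,V_k)$.

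Next I would analyse what happens when Alice indicates an uncolored $e$ and Bob responds with a legal color $i$, meaning $U_i\cup e$ is independent in $M_i$. Calling $A\subset E\setminus U$ \emph{tight} if the displayed inequality is an equality, a short computation shows that the new rank condition after Bob's move can fail only when some tight $A\subset E\setminus(U\cup e)$ satisfies $e\in\cl_{M_i}(A\cup U_i)$. Hence Alice's task reduces to indicating, at each turn, a \emph{safe} element $e$ for which no such tight obstruction exists for any of Bob's legal colors.

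The main obstacle is showing that a safe $e$ exists in every state where the invariant holds. By submodularity of the rank functions, the family of tight sets is closed under unions and intersections, so there is a unique maximum tight set $A^*$. For $e\in E\setminus(U\cup A^*)$, maximality of $A^*$ forces
\[ \sum_{l=1}^{k}\bigl(r_{M_l}(A^*\cup U_l\cup e)-r_{M_l}(A^*\cup U_l)\bigr)\ \ge\ 2, \]
which sharply restricts how many colors $i$ can be unsafe for $e$. For the remaining case $e\in A^*$ a more delicate inspection of the lattice of tight subsets of $A^*$ is needed, combined with the constraint $e\notin\cl_{M_i}(U_i)$ built into the legality of Bob's moves. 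The hard part of the proof is this existence step: matching the indices $i$ for which Alice would be in trouble with the indices for which Bob is actually allowed to play, by exploiting the gap between $\cl_{M_i}(U_i)$ and $\cl_{M_i}(A^*\cup U_i)$.

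Once a safe $e$ is produced at each turn, Alice indicates it, Bob's legal coloring preserves the invariant, and the game terminates with every element colored, so that the final assignment is exactly a partition witnessing the invariant and is therefore a proper coloring. Specialising to $M_1=\dots=M_k=M$ with a proper $\chi(M)$-coloring of $M$ as the required partition yields $\chi_i(M)\le\chi(M)$; the reverse inequality is immediate, and we conclude $\chi_i(M)=\chi(M)$.
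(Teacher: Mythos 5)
Your framework is a sensible reformulation of the problem: maintaining the extendability invariant
\[
\sum_{l=1}^{k} r_{M_l}(A\cup U_l)\ \ge\ |A|+|U|\qquad(A\subset E\setminus U)
\]
is indeed necessary and sufficient for Alice to be able to win from a given position, and the observation that the new rank condition after Bob plays color $i$ on $e$ can only fail via a tight $B$ with $e\in\cl_{M_i}(B\cup U_i)$ is correct. However, you explicitly leave the central step — the \emph{existence} of a safe element — as a sketch, and as written it does not close. For $e\in E\setminus(U\cup A^*)$ the inequality $\sum_l\bigl(r_{M_l}(A^*\cup U_l\cup e)-r_{M_l}(A^*\cup U_l)\bigr)\ge 2$ only tells you that $e\notin\cl_{M_i}(A^*\cup U_i)$ for at least two indices $i$; it does not rule out that some \emph{other} index $j$ with $e\in\cl_{M_j}(A^*\cup U_j)$ is still a legal color for Bob (i.e.\ $e\notin\cl_{M_j}(U_j)$), in which case $e$ is not safe. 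And for $e\in A^*$ you acknowledge that the matching between ``dangerous'' indices and ``forbidden'' indices is precisely the hard part and is not carried out. So this is a genuine gap, not a cosmetic omission.

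The paper avoids the need to produce a single safe element with respect to the \emph{global} invariant. It inducts on $|E|$ and splits according to the Matroid Union rank condition $\sum_l r_l(A)\ge|A|$. If every nonempty proper $A$ is slack, then \emph{any} element $e$ is safe: for every legal color $j$, the condition for the matroids $M_i|_{E\setminus e}$ ($i\ne j$) and $M_j/e$ still holds because each rank sum drops by at most one and started strictly above $|A|$; so Alice indicates anything and recurses. If some proper $A$ is tight, Alice plays the entire sub-game on $A$ first using the inductive strategy for the restricted matroids $M_l|_A$, and then the sub-game on $E\setminus A$ with the contracted matroids $M_l/U_l$, both of which satisfy the hypothesis. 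This restriction/contraction recursion sidesteps the lattice-of-tight-sets analysis you were attempting; it is cleaner precisely because Alice does not need to certify global extendability after each individual move, only that each recursive sub-game is winnable. If you want to complete your own route, you would need to prove that the most nested tight sets (minimal nonempty tight sets inside $A^*$) always contain an element $e$ for which every Bob-legal color keeps $e$ out of all closures of tight sets — that argument essentially re-derives the paper's recursion in disguise.
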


\begin{proof}
The proof goes by induction on the number of elements of $E$. For a set $E$ consisting of only one element assertion clearly holds. Thus we assume that $\left\vert E\right\vert>1$ and that the assertion is true for all sets of smaller size. 

Denote the rank functions of matroids $M_1,\dots,M_k$ by $r_1,\dots,r_k$ respectively. From the Matroid Union Theorem \ref{TheoremMatroidUnion} we know that $$r_1(A)+\dots+r_k(A)\geq\left\vert A\right\vert$$ for each $A\subset E$. We distinguish two cases:

\begin{case2}
There is a proper subset $\emptyset\neq A\subsetneq E$ with the equality $$r_1(A)+\dots+r_k(A)=\left\vert A\right\vert.$$ 
\end{case2}

Then by subtracting equality for $A$ from the inequality for $A\cup B$ we get that for every subset $B\subset E\setminus A$ holds 
$$r_1(A\cup B)-r_1(A)+\dots+r_k(A\cup B)-r_k(A)\geq\left\vert A\cup B\right\vert-\left\vert A\right\vert=\left\vert B\right\vert.$$
The left hand side of this inequality is the sum of ranks of the set $B$ in matroids $M_1/A,\dots,M_k/A$. Thus by the Matroid Union Theorem \ref{TheoremMatroidUnion} the collection of matroids $M_1/A,\dots,M_k/A$ on $E\setminus A$ satisfies assumptions of the theorem. The collection $M_1\vert_{A},\dots,M_k\vert_{A}$ of matroids on $A$ clearly also does. By inductive assumption Alice has a winning strategy in the game on matroids $M_1\vert_{A},\dots,M_k\vert_{A}$ on the set $A$, so she plays with this strategy. Let $U_i$ denote the set of elements colored by Bob with $i$ after the game is finished. Now the play moves to the set $E\setminus A$, so the original game is on matroids $M_i/U_i$. But since the collection of matroids $M_i/A$ satisfies assumptions of the theorem, the collection of matroids $M_i/U_i$ also does, and Alice has a winning strategy by inductive assumption. As a result she wins the whole game on $E$. 

\begin{case2}
All proper subsets $\emptyset\neq A\subsetneq E$ satisfy the strict inequality $$r_1(A)+\dots+r_k(A)>\left\vert A\right\vert.$$ 
\end{case2}

Then in the first round of the game Alice indicates an arbitrary element $e\in E$. Obviously $e\in V_l$ for some $l$, thus Bob has an admissible move, namely he may color $e$ with $l$. Suppose Bob colors $e$ with color $j$. Now the original game is played on the matroids $M_i\vert_{E\setminus e}$ for $i\neq j$ and $M_j/e$, on the set $E\setminus e$. For these matroids the second condition of the Matroid Union Theorem \ref{TheoremMatroidUnion} holds since $r_j(A)$ can be smaller only by one. Hence Alice has a winning strategy by the inductive assumption. 
\end{proof} 

\part{Algebraic Problems on Matroids}

\chapter{$f$-Vectors of Simplicial Complexes}\label{ChapterSimplicialComplexes}

The results of this Chapter come from our paper \cite{La13a} and concern $f$-vectors of simplicial complexes. These vectors encode the number of faces of a given dimension in the complex, and are characterized by the Kruskal-Katona inequality. A similar characterization for matroids, or more generally, for pure complexes (those with all maximal faces, called facets, of the same dimension) remains elusive. 

A pure simplicial complex is called extremal if its $f$-vector satisfies equality in the highest dimensional Kruskal-Katona inequality. That is, a pure simplicial complex of dimension $d$ with $n$ facets is extremal if it has the least possible number of $(d-1)$-dimensional faces among all complexes with $n$ faces of dimension $d$. 

Our main result -- Theorem \ref{TheoremExtremalAreDecomposable} asserts that every extremal simplicial complex is vertex decomposable. This extends a theorem of Herzog and Hibi asserting that extremal complexes have the Cohen-Macaulay property. It is a simple fact that the last property is implied by vertex decomposability. 

Moreover, our argument is purely combinatorial with the main inspiration coming from the proof of Kruskal-Katona inequality. This answers a question of Herzog and Hibi, who asked for such a proof.

\section{Kruskal-Katona Inequality}\label{SectionKruskalKatonaInequality}

Simplicial complexes are one of basic mathematical structures, even more basic than matroids. They play important role in a number of mathematical areas, ranging from combinatorics to algebra and topology (see \cite{MiSt05,St96}). Here we present only most important information from our point of view.

Simplicial complexes can be viewed as discrete objects or as their geometric realizations. Our approach is combinatorial, thus we introduce abstract simplicial complex. 

\begin{definition}\emph{(Simplicial Complex)}\label{DefinitionSimplicialComplex}
A \emph{simplicial complex} $\Delta$ on a finite set $V$, called the set of \emph{vertices}, is a set of subsets of $V$, called \emph{faces} (or \emph{simplices}), satisfying property that a subset of a face is a face. 
\end{definition}

Cardinality of a face $\sigma$ minus one is called its \emph{dimension}, and it is denoted by $\dim\sigma$. The \emph{dimension} of a simplicial complex is the biggest dimension of its face. Maximal faces are called \emph{facets}, and a simplicial complex is called \emph{pure} if all its facets are of the same dimension. 

Matroids are regarded as simplicial complexes via their set of independent sets. Clearly, a matroid is a pure simplicial complex (see Definition \ref{DefinitionIndependentSets}), since all bases are of equal cardinality.

Let $\Delta$ be a simplicial complex of dimension $d$. Let $f_k$ be the number of $k$-dimensional faces of $\Delta$, for $k=-1,\dots,d$.  

\begin{definition}\emph{($f$-Vector)}\label{DefinitionfVector}
The \emph{$f$-vector} of a simplicial complex $\Delta$ is the sequence $(f_0,\dots,f_d)$.
\end{definition}

Observe that a simplicial complex with one face of dimension $k-1$ has at least $k$ faces of dimension $k-2$. One of most natural questions concerning simplicial complexes is to determine the minimum number of $(k-2)$-dimensional faces in a simplicial complex with $n$ faces of dimension $k-1$. A slightly more general problem is to characterize sequences which are $f$-vectors of some simplicial complex. Both problems were resolved independently by Kruskal \cite{Kr63} and Katona \cite{Ka68} in 1960's.

For a nonnegative integer $k$, they enlisted all $k$-element subsets of integers in the following order, called the \emph{squashed order}: 
\begin{equation*}
A<B\text{ if }\max (A\setminus B)<\max (B\setminus A).
\end{equation*} 
Let $S_{k}(n)$ be the set of first $n$ sets in this list. For a given set $\mathfrak{U}$ of $k$-element sets, denote by $\delta \mathfrak{U}$ the set of all $(k-1)$-element subsets of sets from $\mathfrak{U}$. Note that $\delta S_k(n)=S_{k-1}(m)$ for some $m$, that is subsets of fixed cardinality of sets from a prefix in squashed order form also a prefix in squashed order. The Kruskal-Katona theorem reads as follows.

\begin{theorem}[Kruskal, \cite{Kr63}; Katona, \cite{Ka68}] \emph{(Kruskal-Katona Inequality)}\label{TheoremKruskalKatona}
Let $\mathfrak{U}$ be a family of $n$ sets of size $k$. Then
\begin{equation*}
\left\vert {\delta\mathfrak{U}}\right\vert \geq \left\vert {\delta S_{k}(n)}\right\vert.
\end{equation*}
\end{theorem}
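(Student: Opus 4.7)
The plan is to prove the inequality by the classical shifting (compression) technique: starting from an arbitrary family $\mathfrak{U}$ of $n$ sets of size $k$, I would apply a sequence of transformations preserving $\left\vert\mathfrak{U}\right\vert$ and not increasing $\left\vert\delta\mathfrak{U}\right\vert$, arriving at a family whose shadow can be compared directly with that of the initial segment $S_{k}(n)$.

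For each pair of indices $i<j$ define the $(i,j)$-compression $C_{ij}(\mathfrak{U})$ as follows: for every $A\in\mathfrak{U}$ with $j\in A$ and $i\notin A$, replace $A$ by $(A\setminus\{j\})\cup\{i\}$ whenever this new set is not already in $\mathfrak{U}$, and leave the remaining sets unchanged. Two facts need verification. First, $\left\vert C_{ij}(\mathfrak{U})\right\vert=\left\vert\mathfrak{U}\right\vert$, which is immediate. Second, the key monotonicity $\left\vert\delta C_{ij}(\mathfrak{U})\right\vert\leq\left\vert\delta\mathfrak{U}\right\vert$; I would prove it by constructing, for each $B\in\delta C_{ij}(\mathfrak{U})\setminus\delta\mathfrak{U}$, a distinct element of $\delta\mathfrak{U}\setminus\delta C_{ij}(\mathfrak{U})$ via the reverse swap $B\mapsto(B\setminus\{i\})\cup\{j\}$, with a small case analysis on the membership of $i$ and $j$ in $B$. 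Each nontrivial compression strictly decreases the nonnegative integer $\sum_{A\in\mathfrak{U}}\sum_{a\in A}a$, so iterating compressions over all pairs $i<j$ terminates at a \emph{left-compressed} family $\mathfrak{U}^{\ast}$, namely one in which $A\in\mathfrak{U}^{\ast}$, $j\in A$, $i<j$, $i\notin A$ imply $(A\setminus\{j\})\cup\{i\}\in\mathfrak{U}^{\ast}$, while still $\left\vert\mathfrak{U}^{\ast}\right\vert=n$ and $\left\vert\delta\mathfrak{U}^{\ast}\right\vert\leq\left\vert\delta\mathfrak{U}\right\vert$.

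It then suffices to show $\left\vert\delta\mathfrak{U}^{\ast}\right\vert\geq\left\vert\delta S_{k}(n)\right\vert$ for every left-compressed $\mathfrak{U}^{\ast}$, which I would prove by induction on $k$ with a secondary induction on $n$. Let $m=\max\bigcup\mathfrak{U}^{\ast}$ and split $\mathfrak{U}^{\ast}=\mathfrak{U}_{0}\sqcup\mathfrak{U}_{1}$ according to whether a set contains $m$; set $\mathfrak{U}_{1}'=\{A\setminus\{m\}:A\in\mathfrak{U}_{1}\}$, a family of $(k-1)$-sets. The shadow decomposes as $\delta\mathfrak{U}^{\ast}=(\delta\mathfrak{U}_{0}\cup\mathfrak{U}_{1}')\sqcup\{C\cup\{m\}:C\in\delta\mathfrak{U}_{1}'\}$. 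The left-compressed property forces $\mathfrak{U}_{1}'$ to be itself left-compressed (applying a compression to $A\in\mathfrak{U}_{1}$ does not remove $m$), so the inductive hypothesis on $k$ gives $\left\vert\delta\mathfrak{U}_{1}'\right\vert\geq\left\vert\delta S_{k-1}(\left\vert\mathfrak{U}_{1}'\right\vert)\right\vert$; the same property forces $\mathfrak{U}_{1}'\subseteq\delta\mathfrak{U}_{0}$, so $\left\vert\delta\mathfrak{U}_{0}\cup\mathfrak{U}_{1}'\right\vert=\left\vert\delta\mathfrak{U}_{0}\right\vert$, which by the induction on $n$ is bounded below by $\left\vert\delta S_{k}(\left\vert\mathfrak{U}_{0}\right\vert)\right\vert$. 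Matching these two bounds against the cascade expansion $n=\binom{a_k}{k}+\binom{a_{k-1}}{k-1}+\dots+\binom{a_s}{s}$ with $a_k>\dots>a_s\geq s\geq 1$ recovers exactly $\left\vert\delta S_k(n)\right\vert=\binom{a_k}{k-1}+\binom{a_{k-1}}{k-2}+\dots+\binom{a_s}{s-1}$.

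\textbf{Main obstacle.} The hardest ingredient is the shadow monotonicity under $C_{ij}$: the injection between newly appearing and disappearing $(k-1)$-subsets must handle several overlapping cases, and the naive swap fails when the resulting shadow element collides with one already present. The secondary delicate point is that a left-compressed family need not equal $S_k(n)$, so the final induction must exploit only the compression property and the structural split above rather than any direct identification with the initial segment.
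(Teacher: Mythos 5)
The paper does not include a proof of Kruskal--Katona: it states the theorem with references to \cite{Kr63,Ka68,Da74,Da75,Hi79,ArHeHi97}, and later borrows only Hilton's counting idea from \cite{Hi79} in the proof of Theorem \ref{TheoremExtremalAreDecomposable}. Your compression (shifting) argument is therefore a genuinely different classical route from the one the paper relies on elsewhere.

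The compression machinery in your sketch is sound: $C_{ij}$ preserves cardinality, the shadow-monotonicity injection works with the usual case analysis, the potential $\sum_{A}\sum_{a\in A}a$ forces termination, the decomposition $\delta\mathfrak{U}^{\ast}=(\delta\mathfrak{U}_0\cup\mathfrak{U}_1')\sqcup\{C\cup\{m\}:C\in\delta\mathfrak{U}_1'\}$ is correct, and so are the two structural facts you extract from left-compression ($\mathfrak{U}_1'$ is itself left-compressed, and $\mathfrak{U}_1'\subseteq\delta\mathfrak{U}_0$ once $m>k$).

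The gap is in the closing paragraph. After invoking induction you arrive at $|\delta\mathfrak{U}^{\ast}|\geq\delta_{k-1}(n_0)+\delta_{k-2}(n_1)$ with $n_0+n_1=n$, and you assert that "matching these two bounds against the cascade expansion" recovers $\delta_{k-1}(n)$. But the bare numerical inequality $\delta_{k-1}(n_0)+\delta_{k-2}(n_1)\geq\delta_{k-1}(n_0+n_1)$ is false without further constraints: take $k=2$, $n_0=1$, $n_1=10$; then $\delta_1(1)+\delta_0(10)=2+1=3$, while $\delta_1(11)=\binom{5}{1}+\binom{1}{0}=6$. What excludes such splits is precisely the inclusion $\mathfrak{U}_1'\subseteq\delta\mathfrak{U}_0$, which forces $n_1\leq|\delta\mathfrak{U}_0|$ and hence (via the inductive lower bound on $|\delta\mathfrak{U}_0|$) caps $n_1$ in terms of $n_0$. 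In your write-up the inclusion is used only to collapse $|\delta\mathfrak{U}_0\cup\mathfrak{U}_1'|$ to $|\delta\mathfrak{U}_0|$, not to constrain $n_1$; without feeding $n_1\leq|\delta\mathfrak{U}_0|$ into the final numerical comparison, the induction does not close. The standard completion splits on whether $n_0\geq\binom{a_k}{k}$ (the leading term of the cascade of $n$) and uses the $n_1$-bound to rule out $n_0$ being too small, and this case analysis is the genuine content you are waving past.
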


This result was further generalized by Clements and Lindstr\"{o}m in \cite{ClLi69}. Daykin \cite{Da74,Da75} gave two simple proofs, and later Hilton \cite{Hi79} gave another one. There is even an algebraic proof, for which we refer the reader to \cite{ArHeHi97}. 

The cardinality of the set $\delta S_{k}(n)$ may be easily determined. For a positive integer $k$, each positive integer $n$ can be uniquely expressed as 
\begin{equation*}
n=\displaystyle\binom{a_{k}}{k}+\displaystyle\binom{a_{k-1}}{k-1}+\dots+\displaystyle\binom{a_{t}}{t},
\end{equation*}
with $1\leq t\leq a_{t}$ and $a_{t}<\dots <a_{k}$. Denote by $\delta_{k-1}(n)=\left\vert {\delta S_{k}(n)}\right\vert$, we have 
\begin{equation*}
\delta_{k-1}(n)=\displaystyle%
\binom{a_{k}}{k-1}+\displaystyle\binom{a_{k-1}}{k-2}+\dots +\displaystyle%
\binom{a_{t}}{t-1}.
\end{equation*}

As one can easily see Kruskal-Katona Theorem \ref{TheoremKruskalKatona} gives the full characterization of $f$-vectors of simplicial complexes.

\begin{corollary} 
A sequence $(f_0,\dots,f_d)$ is the $f$-vector of a simplicial complex if and only if $f_{k-1}\geq\delta_{k}(f_k)$ for all $k=1,\dots,d$. 
\end{corollary}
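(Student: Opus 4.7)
The plan is to prove both implications directly from Kruskal-Katona Theorem \ref{TheoremKruskalKatona}, using the fact already recorded in the text that the shadow of a squashed prefix is itself a squashed prefix, namely $\delta S_{k}(n)=S_{k-1}(\delta_{k-1}(n))$.

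For the forward direction, suppose $(f_0,\dots,f_d)$ is the $f$-vector of a simplicial complex $\Delta$, and fix $k\in\{1,\dots,d\}$. I would take $\mathfrak{U}$ to be the collection of $k$-dimensional faces of $\Delta$, regarded as $(k+1)$-element subsets of the vertex set; then $\left\vert\mathfrak{U}\right\vert=f_k$. Kruskal-Katona then yields $\left\vert\delta\mathfrak{U}\right\vert\geq\left\vert\delta S_{k+1}(f_k)\right\vert=\delta_k(f_k)$. Since $\Delta$ is closed under taking subsets (Definition \ref{DefinitionSimplicialComplex}), every member of $\delta\mathfrak{U}$ is itself a face of dimension $k-1$, so $\left\vert\delta\mathfrak{U}\right\vert\leq f_{k-1}$. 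Combining these two inequalities gives $f_{k-1}\geq\delta_k(f_k)$.

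For the reverse direction, I would construct a complex explicitly by compression. Set $\Delta_k:=S_{k+1}(f_k)$ for each $k=0,\dots,d$, and let $\Delta=\{\emptyset\}\cup\Delta_0\cup\dots\cup\Delta_d$. By construction $\Delta$ has exactly $f_k$ sets of cardinality $k+1$, so provided $\Delta$ really is a simplicial complex its $f$-vector matches the prescribed one. The only thing to check is closure under subsets, which reduces to verifying that $\delta\Delta_k\subseteq\Delta_{k-1}$ for each $k\geq 1$. Using the prefix property of shadows, $\delta\Delta_k=\delta S_{k+1}(f_k)=S_k(\delta_k(f_k))$, and the hypothesis $\delta_k(f_k)\leq f_{k-1}$ then gives $S_k(\delta_k(f_k))\subseteq S_k(f_{k-1})=\Delta_{k-1}$, which is exactly what is needed.

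Since the key combinatorial inputs are already available, namely Kruskal-Katona and the stability of squashed prefixes under $\delta$, I do not expect any real obstacle; the argument is essentially a bookkeeping exercise propagating the inequality through all dimensions.
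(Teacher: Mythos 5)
Your proof is correct; the forward direction applies Theorem \ref{TheoremKruskalKatona} to the family of $k$-dimensional faces, and the reverse direction builds the compressed complex by taking squashed-order prefixes $S_{k+1}(f_k)$ in each dimension and using the prefix-stability of shadows to verify closure under subsets. The paper merely asserts this corollary as an immediate consequence of Kruskal-Katona without giving a proof, and the argument you supply is exactly the standard one that the assertion is alluding to.
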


A lot of work has been done to characterize $f$-vectors of pure simplicial complexes, but only some partial results are known (see \cite{BoMiMiNaZa12,PaZa12}). It is believed that they are too complicated to have an easy description. 

Even shape of $f$-vectors of matroids, which are a special class of pure simplicial complexes, is a mystery. There is no guess for a classification, only some conditions are expected to be necessary. Welsh \cite{We71} conjectured that $f$-vector of a matroid is unimodal, that is $f_k$ grows with $k$, and after reaching maximum value decreases ($f$-vector of a general pure simplicial complex need not be unimodal \cite{PaZa12}). Mason \cite{Ma72} strengthen it by conjecturing that $f$-vector of a matroid is log-concave, that is $f_k^2\geq f_{k-1}f_{k+1}$ for every $k$. Recently there was a breakthrough made by the seminal paper of Huh \cite{Hu11}. He proves that the coefficients of the characteristic polynomial of a matroid representable over a field of characteristic zero form a sign-alternating log-concave sequence. The proof uses advanced tools of algebraic geometry. Soon after it was generalized by Huh and Katz \cite{HuKa12} to representable matroids. Using this result Lenz \cite{Le11} proved Mason's conjecture for representable matroids. See also \cite{Hu12,Da84,St77,CoKaVa12} for conjectures and results concerning $h$-vector of a matroid -- another fundamental invariant of a simplicial complex encoding the number of faces in each dimension.

There are also classes of pure simplicial complexes for which a classification of $f$-vectors exists. For example, in the case of pure clique-complexes (faces are cliques contained in the graph) of chordal graphs (see \cite{HeHiSaTrZh08}).

Some research in the spirit of Kruskal-Katona theorem has been done also for other structures than simplicial complexes. Just to mention the result of Stanley (see \cite{St80,Fu93}), who proved using toric geometry a classification for convex polytopes.

\section{Vertex Decomposable Simplicial Complexes}\label{SectionVertexDecomposability}

\begin{definition}\emph{(Link of a Face)}
Let $\sigma $ be a face of a simplicial complex $\Delta$. The simplicial complex 
$$\{\tau \in \Delta :\tau \cap \sigma =\emptyset\text{ and }\tau\cup \sigma \in \Delta \}$$ 
is called the \emph{link of $\sigma $ in $\Delta$}, and denoted by $\lk_{\Delta}\sigma$. 
\end{definition}

For a vertex $x$ of $\Delta$ by $\Delta\setminus x$ we will denote the simplicial complex $\{\tau \in \Delta: x\notin\tau\}$. The definition of vertex decomposability is inductive. This notion was invented by Provan and Billera \cite{PrBi80}.

\begin{definition}\emph{(Vertex Decomposable Pure Simplicial Complex)}
A pure simplicial complex $\Delta$ is \emph{vertex decomposable} if one of the following conditions is satisfied:
\begin{enumerate}
\item $\Delta$ consists of empty set,
\item $\Delta$ consists of a single vertex,
\item for some vertex $x$ both $\lk_{\Delta }\{x\}$ and $\Delta\setminus x$ are pure and vertex decomposable.
\end{enumerate}
\end{definition}

In particular, a zero dimensional complex is vertex decomposable. It is a special case of a much more general fact.

\begin{proposition}\label{PropositionMatroidsVertexDecomposable}
Every matroid is vertex decomposable.
\end{proposition}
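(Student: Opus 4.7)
The plan is to induct on $|E|$, exploiting the fact that the recursive structure of vertex decomposability mirrors the matroid operations of restriction and contraction. View the matroid $M=(E,\mathcal{I})$ as the simplicial complex $\Delta_M$ whose faces are the independent sets.

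For the base case, if $E=\emptyset$, or more generally if every element of $E$ is a loop, then $\mathcal{I}=\{\emptyset\}$ and $\Delta_M$ consists of the empty set alone, putting us in clause (1) of the definition of vertex decomposability.

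In the inductive step, assuming $M$ contains at least one non-loop, I would choose any non-loop element $x\in E$; since $\{x\}\in\mathcal{I}$, this $x$ is genuinely a vertex of $\Delta_M$. A direct unfolding of the definitions then gives the identifications
$$\Delta_M\setminus x=\Delta_{M\setminus x},\qquad \lk_{\Delta_M}\{x\}=\Delta_{M/x},$$
where $M\setminus x=M|_{E\setminus\{x\}}$ is the restriction and $M/x$ is the contraction; the link identification relies on $x$ being a non-loop, so that $\{x\}$ is itself independent and hence $\tau\cup\{x\}\in\mathcal{I}$ is the sole condition defining the link. Both $M\setminus x$ and $M/x$ are matroids on ground sets strictly smaller than $E$, and so by the inductive hypothesis the two subcomplexes are vertex decomposable. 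Purity is automatic: by the augmentation axiom of Definition \ref{DefinitionIndependentSets} every matroid complex is pure, since all of its bases share the same cardinality.

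There is essentially no substantive obstacle here; the single bookkeeping concern is to ensure the chosen element $x$ is a vertex of $\Delta_M$ rather than merely an element of the ground set, which is precisely why the case when every element of $E$ is a loop must be folded into the base case rather than the inductive step.
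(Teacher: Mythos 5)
Your proof is correct and takes essentially the same route as the paper: induction on $|E|$, with $\Delta_M\setminus x=\Delta_{M\setminus x}$ and $\lk_{\Delta_M}\{x\}=\Delta_{M/x}$ handing the problem to two smaller matroids, and purity following from equicardinality of bases. The one place you are slightly more careful than the paper is in folding the all-loops case into the base case and insisting the chosen $x$ be a non-loop: the paper just says ``let $x$ be any element of $E$,'' which strictly speaking requires $\{x\}$ to be a face for the link to match the matroid contraction, so your bookkeeping is a small improvement rather than a departure.
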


\begin{proof}
The proof goes by induction on the size of the ground set. If $\left\vert E\right\vert=1$, then matroid $M$ consists of a vertex or of the empty set. If $\left\vert E\right\vert>1$, then let $x$ be any element of $E$. Now both $\lk_M\{x\}=M/x$ and $M\setminus x$  are matroids on the ground set $E\setminus x$, hence the assertion follows by induction.
\end{proof}

We are ready to define the Stanley-Reisner ring of a simplicial complex.

\begin{definition}\emph{(Stanley-Reisner Ring)}\label{DefinitionStanleyReisnerRing}
Let $\Delta$ be a simplicial complex on the set of vertices $\{1,\dots,n\}$, and let $\K$ be a  field. The \emph{Stanley-Reisner ring} of $\Delta$ (or the \emph{face ring}) is 
$\K[\Delta]:=\K[x_{1},\dots x_{n}]/I$, where $I$ is the ideal generated by all square-free monomials $x_{i_{1}}\cdots x_{i_{l}}$ for which $\{i_{1},\dots,i_{l}\}$ is not a face in $\Delta$ (it is enough to consider only circuits of $\Delta$).
\end{definition}

When we say that a pure simplicial complex is \emph{Cohen-Macaulay} we always mean that its Stanley-Reisner ring has this property (see \cite{AtMa} for the definition of Cohen-Macaulay rings). Cohen-Macaulay simplicial complexes have also combinatorial description. Namely, Reisner \cite{Re76} proved that a pure simplicial complex $\Delta$ is Cohen-Macaulay over $\K$ if and only if for any face $\sigma\in\Delta$ all except top homologies of $\lk_{\Delta}\sigma$ over $\K$ are zero. For Cohen-Macaulay complexes we have a full classification of $h$-vectors. Macaulay \cite{Ma27} proved that they consist exactly of $O$-sequences (see e.g. \cite{BoMiMiNaZa12} for a definition of $O$-sequence).

It is a folklore result that vertex decomposability is a stronger property than being Cohen-Macaulay (we refer the reader e.g. to \cite{Bj95}, see also \cite[Section 3.2]{LaMi11} for an interesting method of counting homologies).

\begin{proposition}\label{PropositionBjorner}
Every vertex decomposable simplicial complex is Cohen-Macaulay over any field.
\end{proposition}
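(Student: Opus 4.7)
The plan is to proceed by induction on the number of vertices of $\Delta$ and to apply Reisner's combinatorial criterion: a simplicial complex is Cohen--Macaulay over $\K$ if and only if $\tilde{H}_i(\lk_\Delta \sigma;\K)=0$ for every face $\sigma\in\Delta$ (including $\sigma=\emptyset$) and every $i<\dim\lk_\Delta \sigma$. The base cases (empty complex, a single vertex) are trivial.

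For the inductive step let $x$ be the shedding vertex supplied by vertex decomposability, so that both $\lk_\Delta\{x\}$ and $\Delta\setminus x$ are pure and vertex decomposable on strictly fewer vertices; by the inductive hypothesis they are Cohen--Macaulay, and hence by Reisner so are all of their own links. Fix a face $\sigma\in\Delta$ and split into three cases. If $x\in\sigma$, then $\lk_\Delta \sigma=\lk_{\lk_\Delta\{x\}}(\sigma\setminus\{x\})$, and Reisner applied to the CM complex $\lk_\Delta\{x\}$ yields the required vanishing. If $x\notin\sigma$ and $\sigma\cup\{x\}\notin\Delta$, then no face of $\lk_\Delta \sigma$ can contain $x$, whence $\lk_\Delta \sigma=\lk_{\Delta\setminus x}\sigma$, and the conclusion follows from Reisner on $\Delta\setminus x$.

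The remaining case, $x\notin\sigma$ but $\sigma\cup\{x\}\in\Delta$, is the heart of the matter. Here $x$ is a vertex of $\lk_\Delta \sigma$, and I would decompose
\[
\lk_\Delta \sigma \;=\; A\cup B,\qquad A=\lk_{\Delta\setminus x}\sigma,\qquad B=\overline{\mathrm{star}}_{\lk_\Delta \sigma}\{x\},
\]
with intersection $A\cap B=\lk_\Delta(\sigma\cup\{x\})$. Since $B$ is a cone over $\lk_\Delta(\sigma\cup\{x\})$ it is contractible, so the Mayer--Vietoris sequence reduces to the exact fragment
\[
\tilde{H}_i(A)\;\longrightarrow\;\tilde{H}_i(\lk_\Delta \sigma)\;\longrightarrow\;\tilde{H}_{i-1}(A\cap B).
\]
Setting $d=\dim\Delta$, the purity of $\Delta$ together with the purity of $\Delta\setminus x$ built into the shedding condition gives $\dim\lk_\Delta \sigma=\dim A=d-|\sigma|$ and $\dim(A\cap B)=d-|\sigma|-1$. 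For every $i<d-|\sigma|$ both flanking groups then vanish by the inductive Reisner condition applied to $\Delta\setminus x$ and to $\lk_\Delta\{x\}$, and exactness forces $\tilde{H}_i(\lk_\Delta \sigma)=0$ in the required range.

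The main obstacle is precisely the dimension accounting in this third case: one needs the inductive vanishing on $A=\lk_{\Delta\setminus x}\sigma$ to extend up to index $d-|\sigma|-1$, which requires $\dim(\Delta\setminus x)=d$ (the very purity clause in the definition of a shedding vertex), and one needs $\dim\lk_\Delta(\sigma\cup\{x\})=d-|\sigma|-1$, which depends on the purity of $\Delta$ itself. Without these matching dimension drops, the vanishing ranges supplied by the inductive hypothesis would fall one short and the Mayer--Vietoris argument would not close.
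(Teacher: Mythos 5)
The paper does not give its own proof of this proposition --- it is stated as folklore with pointers to \cite{Bj95} and to \cite{LaMi11} --- so your argument has to be judged on its own terms. The route you take (Reisner's criterion plus a Mayer--Vietoris argument on the decomposition $\lk_\Delta\sigma = (\lk_\Delta\sigma\setminus x)\cup\overline{\mathrm{star}}_{\lk_\Delta\sigma}\{x\}$, where the star is a contractible cone and the intersection is $\lk_\Delta(\sigma\cup\{x\})=\lk_{\lk_\Delta\{x\}}\sigma$) is the standard way to prove this implication, and your three cases and exact-sequence fragment are set up correctly.

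There is, however, one place where the ``dimension accounting'' you flag as the crux is not quite justified, and you have talked yourself past it. You assert that the purity clause in the definition of a shedding vertex forces $\dim(\Delta\setminus x)=d$. That is not what the definition in this paper says: it only requires $\Delta\setminus x$ to be pure and vertex decomposable, and with that definition $\Delta$ may well be a cone with apex $x$, in which case $\Delta\setminus x=\lk_\Delta\{x\}$ is pure of dimension $d-1$, not $d$. In that situation $A=\lk_{\Delta\setminus x}\sigma$ has dimension $d-|\sigma|-1$, so the inductive Reisner vanishing on $A$ stops one index short of where your Mayer--Vietoris argument needs it, exactly the failure mode you warn against. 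The fix is cheap but should be said: if every facet of $\Delta$ contains $x$, then for any $\sigma$ in your third case $\lk_\Delta\sigma$ is itself a cone with apex $x$ (every facet of $\lk_\Delta\sigma$ contains $x$), hence contractible, and the required vanishing is immediate without invoking Mayer--Vietoris at all; otherwise some facet of $\Delta$ avoids $x$, which together with purity of $\Delta\setminus x$ does give $\dim(\Delta\setminus x)=d$, and your argument closes. With that dichotomy added, the proof is complete.
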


Due to Propositions \ref{PropositionMatroidsVertexDecomposable}, \ref{PropositionBjorner} and a theorem of Reisner for $\sigma=\emptyset$ we get the following interesting property of matroids (see also \cite{AhBe06,BjKoLo}). 

\begin{corollary}
All homologies of a matroid except top ones are zero.
\end{corollary}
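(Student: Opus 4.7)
The plan is to chain together exactly the three ingredients flagged in the sentence preceding the corollary, so the argument is essentially a one-line specialization. First I would view the matroid $M$ as the pure simplicial complex $\Delta$ whose faces are the independent sets of $M$; purity is immediate from the augmentation axiom in Definition \ref{DefinitionIndependentSets}, and by Proposition \ref{PropositionMatroidsVertexDecomposable} this complex is vertex decomposable.

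Next I would invoke Proposition \ref{PropositionBjorner} to upgrade vertex decomposability to the Cohen-Macaulay property of $\Delta$ over an arbitrary field $\K$. At this point the only remaining tool is Reisner's criterion recalled in Section \ref{SectionVertexDecomposability}: a pure simplicial complex $\Delta$ is Cohen-Macaulay over $\K$ if and only if for every face $\sigma \in \Delta$ the reduced homologies of $\lk_\Delta \sigma$ over $\K$ vanish below the top dimension.

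To finish, I would specialize Reisner's criterion to the empty face, which belongs to every non-empty simplicial complex. Since $\lk_\Delta \emptyset = \Delta$ by Definition of the link, the criterion directly yields that all reduced homologies of $\Delta$ over $\K$ except possibly the top one are zero; as $\K$ was arbitrary, the conclusion holds over every field. There is no real obstacle here: the substantive content lives entirely inside the two cited propositions and Reisner's theorem, and the corollary is exactly the $\sigma = \emptyset$ instance of that chain of implications.
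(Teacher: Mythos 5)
Your argument is correct and follows exactly the chain the paper indicates: Proposition \ref{PropositionMatroidsVertexDecomposable} (matroids are vertex decomposable), Proposition \ref{PropositionBjorner} (vertex decomposable implies Cohen-Macaulay over any field), and Reisner's criterion specialized to $\sigma=\emptyset$ with $\lk_\Delta\emptyset=\Delta$. This is precisely the paper's reasoning, merely spelled out in full.
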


\section{Extremal Simplicial Complexes}\label{SectionExtremalSimplicialComplexes}

\begin{definition}\emph{(Extremal Simplicial Complex)}
A pure simplicial complex $\Delta$ of dimension $d$ and $n$ facets is called \emph{extremal} if it has the least possible number of $(d-1)$-dimensional faces among all complexes with $n$ faces of dimension $d$. 
\end{definition}

We get the following as a consequence of Kruskal-Katona Theorem \ref{TheoremKruskalKatona}. 

\begin{corollary}\label{ExtremalfVector} 
A pure simplicial complex $\Delta$ of dimension $d>0$ with $f$-vector $(f_{0},\dots,f_{d})$
is extremal if and only if $f_{d-1}=\delta_{d}(f_{d})$. 
\end{corollary}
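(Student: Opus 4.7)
The plan is to derive the corollary as a direct consequence of Theorem \ref{TheoremKruskalKatona} by translating the simplicial-complex condition into the set-system language of that theorem and then matching the resulting universal lower bound against an explicit extremizer.

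First I would reinterpret the data. The $d$-dimensional faces of $\Delta$ are precisely the $(d+1)$-element subsets of the vertex set that belong to $\Delta$; since $\Delta$ is pure of dimension $d$, these coincide with the facets, so the collection $\mathfrak{U}$ of facets has cardinality $f_d$. Purity also implies that every $(d-1)$-face is contained in some facet (otherwise it would be maximal of dimension $d-1<d$), hence the set of $(d-1)$-faces equals the shadow $\delta\mathfrak{U}$ and $f_{d-1}=|\delta\mathfrak{U}|$. Applying Theorem \ref{TheoremKruskalKatona} with $k=d+1$ and $n=f_d$ then gives the universal lower bound
\[
f_{d-1} \;=\; |\delta\mathfrak{U}| \;\geq\; |\delta S_{d+1}(f_d)| \;=\; \delta_d(f_d),
\]
valid for every pure simplicial complex $\Delta$ of dimension $d>0$.

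Next I would produce a pure complex where equality is attained. Let $\Delta^{\ast}$ be the simplicial complex whose set of facets is exactly $S_{d+1}(f_d)$ (closed under subsets as required). This is pure of dimension $d$ with $f_d(\Delta^{\ast})=f_d$, and its $(d-1)$-faces form precisely $\delta S_{d+1}(f_d)$, so $f_{d-1}(\Delta^{\ast})=\delta_d(f_d)$. For the $(\Leftarrow)$ direction, suppose $f_{d-1}=\delta_d(f_d)$; then for any other simplicial complex $\Delta'$ with $f_d(\Delta')=f_d$ faces of dimension $d$, the same shadow argument shows $f_{d-1}(\Delta')\geq |\delta\mathfrak{U}'|\geq\delta_d(f_d)=f_{d-1}(\Delta)$, so $\Delta$ is extremal. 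For the $(\Rightarrow)$ direction, extremality of $\Delta$ implies $f_{d-1}(\Delta)\leq f_{d-1}(\Delta^{\ast})=\delta_d(f_d)$, and the lower bound from the previous paragraph forces equality.

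There is essentially no hard step here; the only subtlety is that the definition of extremality compares $\Delta$ against all simplicial complexes (not merely pure ones) with $f_d$ faces of dimension $d$, but in any such competitor $\Delta'$ the $(d-1)$-faces still contain the shadow of its $d$-faces, so Kruskal--Katona still yields $f_{d-1}(\Delta')\geq\delta_d(f_d)$ and the comparison goes through. Hence the entire content of the corollary reduces to recognizing $\Delta^{\ast}$ as a tight example for Theorem \ref{TheoremKruskalKatona}.
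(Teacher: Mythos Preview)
Your proof is correct and follows exactly the approach the paper intends: the paper does not give an explicit proof, simply presenting the corollary as an immediate consequence of Theorem~\ref{TheoremKruskalKatona}, and your argument spells out precisely that derivation (including the purity observation that $f_{d-1}=|\delta\mathfrak{U}|$ and the squashed-order complex $\Delta^\ast$ as the extremizer).
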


In particular, a zero dimensional complex is extremal, since all such complexes have exactly one $(-1)$-dimensional face, namely the empty set.

Herzog and Hibi proved using algebraic methods (in particular results from \cite{ArHeHi97,EaRe89}) the following theorem.

\begin{theorem}[Herzog, Hibi, \cite{HeHi99}]\label{ExtremalAreCohenMacaulay}
Every extremal simplicial complex is Cohen-Macaulay over any field.
\end{theorem}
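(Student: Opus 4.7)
My plan is to prove the stronger statement announced as Theorem~\ref{TheoremExtremalAreDecomposable} in the Introduction — that every extremal simplicial complex is vertex decomposable — and then invoke Proposition~\ref{PropositionBjorner} to conclude Cohen--Macaulayness over every field. The argument proceeds by double induction: on the dimension $d$ of $\Delta$ and, within fixed $d$, on the number $n$ of facets. The base case $d=0$ is immediate, since a pure $0$-dimensional complex is a nonempty set of isolated vertices, and such a complex is vertex decomposable by peeling off vertices one at a time (each link being $\{\emptyset\}$).

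For the inductive step, write the cascade representation
$$n=\binom{a_d}{d+1}+\binom{a_{d-1}}{d}+\dots+\binom{a_t}{t+1},$$
so that extremality reads $f_{d-1}=\binom{a_d}{d}+\binom{a_{d-1}}{d-1}+\dots+\binom{a_t}{t}$. The task is to produce a vertex $v$ which is a \emph{shedding vertex} in the sense that (a) $\Delta\setminus v$ is pure of dimension $d$, (b) $\lk_{\Delta}v$ is pure of dimension $d-1$, and (c) both $\Delta\setminus v$ and $\lk_{\Delta}v$ are themselves extremal in their own dimensions. Given such a $v$, the inductive hypothesis makes both complexes vertex decomposable, and then so is $\Delta$ by definition.

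To locate $v$ I would mimic the compression/cascade step underlying the Kruskal--Katona inequality. After fixing the total ordering of vertices coming from the squashed order and letting $v$ be the top vertex, split the $d$-faces of $\Delta$ into those containing $v$ and those avoiding it; the standard shadow inequality together with the cascade identity for $n$ gives
$$f_{d-1}\geq \bigl|\{\sigma\in\Delta:v\in\sigma,\ \dim\sigma=d-1\}\bigr|+\bigl|\delta(\text{$d$-faces of $\Delta$ avoiding $v$})\bigr|,$$
and the rigidity of equality in Kruskal--Katona forces this to split term-by-term. The first summand is then exactly $\binom{a_d}{d}$, yielding extremality of $\lk_{\Delta}v$; the remaining terms match the cascade of $\Delta\setminus v$, yielding its extremality.

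The main obstacle I anticipate is clause (a), purity of $\Delta\setminus v$: one must rule out that some $(d-1)$-face of $\Delta$ avoiding $v$ sits only below facets that contain $v$, since such a ``trapped'' face would fail to be extended to a $d$-facet inside $\Delta\setminus v$. I expect to handle this by the same rigidity principle — showing that a trapped face would strictly increase $f_{d-1}$ beyond $\delta_d(n)$, contradicting extremality — possibly after first replacing $\Delta$ by a compressed version and checking that the compression step respects both the cascade and the existence of a shedding vertex. This combinatorial compression lemma, modeled on Daykin's proof of Kruskal--Katona, is where I expect the technical heart of the argument to lie, and it answers Herzog and Hibi's question for a combinatorial proof.
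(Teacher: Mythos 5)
Your high-level plan --- prove the stronger statement that extremal complexes are vertex decomposable, then cite Proposition~\ref{PropositionBjorner} --- is exactly the paper's route, and the double induction on dimension and number of facets with base case $d=0$ is also exactly what the paper does. Where you go astray is in how you locate the shedding vertex. You propose to take $v$ as ``the top vertex'' in a fixed ordering (imported from the squashed order), but there is no reason such a vertex satisfies the shedding properties you need: for instance, if $v$ happens to lie in every facet, then $\Delta\setminus v$ has no facets at all, and your inequality degenerates. The paper instead follows Hilton's proof of Kruskal--Katona, not Daykin's compression proof. Its key step (Lemma~\ref{i}) is a counting/averaging argument showing that either $\Delta$ is the full $(d-1)$-skeleton of a simplex (in which case any vertex works) or \emph{some} vertex $i$ satisfies $\left\vert\delta B_i\right\vert > \left\vert C_i\right\vert$, where $B_i$ is the set of facets avoiding $i$ and $C_i$ the links of $i$ in facets containing it. That strict inequality is what drives the dichotomy $\delta S_d^i(b_i)\subset S_{d-1}^i(c_i)$ versus $\delta S_d^i(b_i)\supset S_{d-1}^i(c_i)$ in Lemma~\ref{characterization}: the first branch contradicts extremality, and the second forces $C_i\subset\delta B_i$ and extremality of both $(B_i)$ and $(C_i)$, from which purity of $\Delta\setminus i$ (your clause~(a)) also falls out. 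A fixed ``top vertex'' gives you none of this.

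The fallback you mention --- compressing $\Delta$ first --- does not repair the gap. Compression/shifting replaces $\Delta$ by a different complex with the same $f$-vector; vertex decomposability of the compressed complex says nothing about vertex decomposability of the original, and there is no ``decompression'' step that transfers the property back. So you would prove that \emph{some} extremal complex with the right $f$-vector is vertex decomposable, not that \emph{every} extremal complex is. The real content of the theorem is a statement about an arbitrary extremal complex, and the averaging lemma is the device that lets the argument run inside the given complex without normalizing it.
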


They also asked for a combinatorial proof. We give it by proving by only combinatorial means that an extremal simplicial complex is vertex decomposable. Due to Proposition \ref{PropositionBjorner} vertex decomposable complex is Cohen-Macaulay over any field. 

\begin{theorem}[Lasoń, \cite{La13a}]\label{TheoremExtremalAreDecomposable}
Every extremal simplicial complex is vertex decomposable.
\end{theorem}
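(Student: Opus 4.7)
The plan is to mimic the combinatorial proof of Kruskal--Katona, carrying along the extremality information to extract a shedding vertex for $\Delta$. The argument goes by induction on the dimension $d$, with the base cases $d = 0$ (trivial) and the ``pure cascade'' case $n = \binom{a_{d+1}}{d+1}$, where extremality forces $\Delta$, up to relabeling, to be the uniform matroid $\binom{[a_{d+1}]}{d+1}$, which is vertex decomposable by Proposition \ref{PropositionMatroidsVertexDecomposable}.

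Write $n = |\mathcal{F}|$ in cascade form, $n = \binom{a_{d+1}}{d+1} + \binom{a_d}{d} + \cdots + \binom{a_t}{t}$, where $\mathcal{F}$ is the set of facets of $\Delta$. For a vertex $v$ set $\mathcal{F}_v = \{\sigma \in \mathcal{F}: v \in \sigma\}$, $\mathcal{F}_{\bar v} = \mathcal{F} \setminus \mathcal{F}_v$, and $\mathcal{F}_v' = \{\sigma \setminus \{v\}: \sigma \in \mathcal{F}_v\}$. Partitioning $(d-1)$-faces of $\Delta$ by whether they contain $v$ yields the identity
\[
f_{d-1}(\Delta) = |\delta \mathcal{F}_v'| + |\mathcal{F}_v' \cup \delta \mathcal{F}_{\bar v}|.
\]
The standard chain of inequalities $|\delta \mathcal{F}_v'| \geq \delta_{d-1}(|\mathcal{F}_v|)$ and $|\mathcal{F}_v' \cup \delta \mathcal{F}_{\bar v}| \geq |\delta \mathcal{F}_{\bar v}| \geq \delta_d(|\mathcal{F}_{\bar v}|)$, combined with an elementary computation on the cascade coefficients, recovers the Kruskal--Katona bound $f_{d-1}(\Delta) \geq \delta_d(n)$. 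Extremality of $\Delta$ forces each link in this chain to be tight for a suitable vertex $v$.

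The crucial step is to exhibit a vertex $v$ realizing the cascade split; concretely, a $v$ such that (i) $(|\mathcal{F}_v|, |\mathcal{F}_{\bar v}|) = \bigl(n - \binom{a_{d+1}}{d+1},\, \binom{a_{d+1}}{d+1}\bigr)$; (ii) both $\lk_\Delta\{v\}$ and $\Delta \setminus v$ are themselves extremal, of dimensions $d-1$ and $d$ respectively; and (iii) $\mathcal{F}_v' \subseteq \delta \mathcal{F}_{\bar v}$. I plan to select $v$ greedily --- for instance, as a vertex of smallest degree in the facet hypergraph --- and to verify (i)--(iii) by observing that any deviation from the cascade split forces strict inequality in one of the bounds above, contradicting the extremality of $\Delta$.

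Granted such a vertex $v$, the induction closes. By (ii) and the inductive hypothesis on $d$, the link $\lk_\Delta\{v\}$ is vertex decomposable. The deletion $\Delta \setminus v$ is extremal with $\binom{a_{d+1}}{d+1}$ facets of dimension $d$ and only $\binom{a_{d+1}}{d}$ subfacets; this double equality forces $\Delta \setminus v$, up to vertex relabeling, to be the full $(d+1)$-subset complex on $a_{d+1}$ vertices, a uniform matroid, hence vertex decomposable by Proposition \ref{PropositionMatroidsVertexDecomposable}. Condition (iii) guarantees that each facet of the link is contained in some facet of $\Delta \setminus v$, so $\Delta \setminus v$ is pure of dimension $d$. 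Thus $v$ is a shedding vertex witnessing vertex decomposability of $\Delta$. The principal obstacle is step (i)--(iii): turning the numerical extremality into a single vertex with the required structural properties is precisely where the combinatorial analysis of the Kruskal--Katona equality case does the real work.
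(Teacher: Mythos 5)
Your high-level strategy matches the paper's: use Hilton's decomposition of $\delta\mathfrak{U}$ along a vertex $v$ into $\delta\mathcal{F}_v'$, $\mathcal{F}_v'$, and $\delta\mathcal{F}_{\bar v}$, force tightness from extremality, and conclude that both the link and the deletion behave well. However, your condition (i) — that some vertex realizes the exact cascade split $\bigl(|\mathcal{F}_v|,|\mathcal{F}_{\bar v}|\bigr) = \bigl(n-\binom{a_{d+1}}{d+1},\,\binom{a_{d+1}}{d+1}\bigr)$ — is simply \emph{false} in general, and the subsequent deduction that $\Delta\setminus v$ is a uniform matroid fails with it. Concretely, take $\Delta = C_4$, the $4$-cycle viewed as a pure $1$-dimensional complex: $n = 4 = \binom{3}{2}+\binom{1}{1}$, and $f_0 = 4 = \delta_1(4)$, so $\Delta$ is extremal, yet every vertex has degree $2$, giving $|\mathcal{F}_{\bar v}| = 2 \neq 3 = \binom{3}{2}$ for all $v$. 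Moreover $\Delta\setminus v$ is the path $P_3$, which has $2$ facets and is not a uniform matroid. So the key structural claims (i) and the ``double equality forces $\Delta\setminus v$ to be a full $(d+1)$-subset complex'' are both wrong, and no greedy vertex selection can rescue them.

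The fix is to demand less from the deletion. What extremality actually yields (and what the paper proves via a counting argument — Lemma \ref{i} finding $i$ with $|\delta B_i| > |C_i|$, followed by a comparison of the colex initial segments $\delta S_d^i(b_i)$ and $S_{d-1}^i(c_i)$) is that there exists a vertex $x$ for which \emph{both} $\lk_\Delta\{x\}$ and $\Delta\setminus x$ are extremal, with $\mathcal{F}_x'\subset\delta\mathcal{F}_{\bar x}$ guaranteeing $\Delta\setminus x$ is pure of dimension $d$. The link has smaller dimension; the deletion has the same dimension but strictly fewer facets (since $c_x\neq 0$). One then runs a \emph{double} induction — on dimension, and secondarily on the number of facets — rather than trying to terminate the deletion branch in one step. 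In the $C_4$ example this is exactly what happens: $\Delta\setminus 1 = P_3$ is extremal with fewer edges, and the induction continues. Your single induction on dimension alone, paired with the incorrect uniform-matroid shortcut, does not close.
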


\begin{proof}
For a better understanding of extremal simplicial complexes we will use Hilton's idea from his proof \cite{Hi79} of Kruskal-Katona Theorem \ref{TheoremKruskalKatona}. First we define sets similar to $S_{k}(n)$. Let $S_{k}^{i}(n)$ be the first $n$ $k$-element subsets of integers in the squashed order ($A<B$ if $\max (A\setminus B)<\max (B\setminus A)$) which do not contain an element $i$. We will denote the set $\{i\cup A:A\in \mathfrak{U}\}$ by $i\uplus\mathfrak{U}$. 

Let $\mathfrak{U}$ be a family of $n$ $k$-element sets. Denote by $V=\bigcup\mathfrak{U}$ the underlying set of vertices, and by $v$ its cardinality. For $i\in V$, let $B_{i}=\{A\in\mathfrak{U}:i\notin A\}$, $C_{i}=\{A\setminus i:i\in A\in\mathfrak{U}\}$, and let $b_{i},c_{i}$ be the respective cardinalities. Note that always $c_{i}\neq 0$. We want to find a vertex $i$ such that $\left\vert {\delta B_{i}}\right\vert>\left\vert {C_{i}}\right\vert $.

\begin{lemma}\label{i} 
Either there exists a vertex $i$ such that $\left\vert {\delta B_{i}}\right\vert >\left\vert {C_{i}}\right\vert $, or $\mathfrak{U}$ consists of all possible $k$-element subsets of $V$.
\end{lemma}

\begin{proof}
We are going to count the sum of cardinalities of both sets when $i$ runs over all
elements of $V$. We have
\begin{equation*}
\sum_{i\in V}\left\vert {\delta B_{i}}\right\vert \geq kn(v-k)/(v-k)=kn=\sum_{i\in V}\left\vert {C_{i}}\right\vert, 
\end{equation*}%
since on the left hand side each $A\in\mathfrak{U}$ gives exactly $k$ distinct sets in its boundary, and each $A$ belongs to exactly $v-k$ sets $B_i$. Some sets in boundaries of sets from $B_{i}$
can be the same, but their number is at most $(v-1)-(k-1)=v-k$. On the right
hand side each $A\in\mathfrak{U}$ is counted $k$ times. Hence we can find a desired $i$, or
the above bounds are tight. In the latter case, if $A\in\delta B_{i}$, then all $v-k$ possibilities of completing it to a $k$-element set not containing $i$ have to be in $\mathfrak{U}$. If $v>k+1$ this means that $\mathfrak{U}$ consists of all possible $k$-element subsets of $V$. It is because from any set in $\mathfrak{U}$ we can delete any element and insert any other. If $v=k+1$ or $v=k$ the statement is easy to verify by hand.
\end{proof}

\begin{lemma}\label{characterization} 
If $\Delta $ is an extremal simplicial complex of positive dimension, then there exists a vertex $x$ such that both $\lk_{\Delta }\{x\}$ and $\Delta\setminus x$ are extremal.
\end{lemma}

\begin{proof}
Let $\Delta$ be an extremal simplicial complex of dimension $d-1>0$, and let $\mathfrak{U}$ be the set of all $d$-element faces in $\Delta$. 

If $\mathfrak{U}$ consists of all possible $d$-element subsets of a given $v$-element set, then the assertion of the lemma is clearly true. We can take any vertex. 

Otherwise, due to Lemma \ref{i}, there exists $i\in V$ such that $\left\vert {%
\delta B_{i}}\right\vert >\left\vert {C_{i}}\right\vert $. We have that 
\begin{equation*}
\delta\mathfrak{U}=\delta B_{i}\cup C_{i}\cup (i\uplus\delta C_{i}).
\end{equation*}%
Since $\delta B_{i}$ and $i\uplus\delta C_{i}$ are disjoint, it follows
that 
\begin{equation*}
\left\vert \delta\mathfrak{U}\right\vert \geq \left\vert \delta B_{i}\right\vert
+\left\vert i\uplus\delta C_{i}\right\vert > \left\vert {C_{i}}\right\vert
+\left\vert i\uplus\delta C_{i}\right\vert.
\end{equation*}
So, by Kruskal-Katona Theorem \ref{TheoremKruskalKatona}, 
\begin{equation}
\left\vert \delta\mathfrak{U}\right\vert \geq \left\vert \delta
S_{d}^{i}(b_{i})\right\vert +\left\vert i\uplus\delta
S_{d-1}^{i}(c_{i})\right\vert ,  
\tag{$\diamondsuit$}
\end{equation}
and
\begin{equation}
\left\vert \delta\mathfrak{U}\right\vert >\left\vert S_{d-1}^{i}(c_{i})\right\vert
+\left\vert i\uplus\delta S_{d-1}^{i}(c_{i})\right\vert.
 \tag{$\heartsuit$}
\end{equation}
Notice that $\delta S_{d}^{i}(b_{i})=S_{d-1}^{i}(e)$ for some $e$, thus $\delta S_{d}^{i}(b_{i})$ and $S_{d-1}^{i}(c_{i})$ are comparable in the inclusion order. If $\delta S_{d}^{i}(b_{i})\subset S_{d-1}^{i}(c_{i})$, then by $(\heartsuit)$ we get
$$\left\vert \delta\mathfrak{U}\right\vert >\left\vert S_{d-1}^{i}(c_{i})\right\vert
+\left\vert i\uplus\delta S_{d-1}^{i}(c_{i})\right\vert =\left\vert
\delta (S_{d}^{i}(b_{i})\cup (i\uplus S_{d-1}^{i}(c_{i})))\right\vert,$$
But this contradicts the assumption that $\Delta$ is extremal, since the complex generated by sets $S_{d}^{i}(b_{i})\cup (i\uplus S_{d-1}^{i}(c_{i}))$ has also $b_i+c_i=\left\vert\mathfrak{U}\right\vert$ maximal faces.

Thus $\delta S_{d}^{i}(b_{i})\supset S_{d-1}^{i}(c_{i})$, and by $(\diamondsuit)$ we obtain that
$$\left\vert \delta\mathfrak{U}\right\vert \geq \left\vert \delta
S_{d}^{i}(b_{i})\right\vert +\left\vert i\uplus\delta
S_{d-1}^{i}(c_{i})\right\vert =\left\vert \delta (S_{d}^{i}(b_{i})\cup
(i\uplus S_{d-1}^{i}(c_{i})))\right\vert.$$
With equality if and only if $C_{i}\subset \delta B_{i}$,$\;\left\vert
\delta B_{i}\right\vert =\left\vert \delta S_{d}^{i}(b_{i})\right\vert $,
$\;\left\vert \delta C_{i}\right\vert =\left\vert \delta
S_{d-1}^{i}(c_{i})\right\vert$.

Complex $\Delta $ is extremal, so there is an equality. We get that $C_{i}\subset \delta B_{i}$ and $(B_i),(C_i)$ are extremal, where $(A)$ denotes the simplicial complex generated by the set of faces $A$. Observe that $\lk_{\Delta }\{i\}=(C_i)$ and $\Delta\setminus i=(B_i)$. The first equality is obvious, while the second is not as clear. If $\sigma=\{v_1,\dots,v_k\}$ is a face in $\Delta\setminus i$, then it is a subface of some facet 
$F=\{v_1,\dots,v_d\}$. If $i$ does not belong to $F$ then $F\in (B_i)$ and so $\sigma$ does. Otherwise, $F\setminus \{i\}\in C_i\subset \delta B_i$, 
so $F\setminus i\cup j\in B_i$ for some $j$ and as a consequence $\sigma\in (B_i)$. Now take $x=i$ to get the assertion.
\end{proof}

The final step of the proof goes by induction on the dimension $d$ of $\Delta$ and secondly on the number of facets. If $d=0$, then $\Delta$ consists of points and almost by the definition it is vertex decomposable. When $d>0$, then by Lemma \ref{characterization} there exists a vertex $x$, such that both complexes $\lk_{\Delta }\{x\}$ and $\Delta\setminus x$ are extremal. The first is of lower dimension and the second either has the same dimension as $\Delta$ but fewer facets, or it has smaller dimension. By inductive hypothesis, both $\lk_{\Delta }\{x\}$ and $\Delta\setminus x$ are vertex decomposable, as a consequence $\Delta$ also is.
\end{proof}

We provide an example which shows that the opposite implication to the one from Theorem \ref{TheoremExtremalAreDecomposable} does not hold. 

\begin{example} 
A path on $n\geq 3$ vertices is a vertex decomposable simplicial complex which is not extremal.
\end{example}

Theorem \ref{TheoremExtremalAreDecomposable} can be treated as a characterization of $f$-vectors implying vertex decomposability. It is best possible in the following sense.  Let $\Delta$ be a pure simplicial complex of dimension $d>0$ with $f$-vector $(f_{0},\dots,f_{d})$, and with $f_{d-1}=\delta_{d}(f_{d})+c$, $c\in\N$. Due to Corollary \ref{ExtremalfVector} Theorem \ref{TheoremExtremalAreDecomposable} asserts that if $c=0$, then $\Delta$ is vertex decomposable. The example below shows that for $c=1$ it is already not true. 

\begin{example}
Let $\Delta$ be a pure simplicial complex of dimension $d$ with exactly two disjoint facets. If $\mathfrak{U}$ is the set of facets of $\Delta$, then $\left\vert\delta\mathfrak{U}\right\vert=2d+2$, while $\delta_{d}(2)=2d+1$. Complex $\Delta$ is not connected, thus due to a theorem of Reisner it is not Cohen-Macaulay over any field, and as a consequence also not vertex decomposable.
\end{example}

Unfortunately, there is no inclusion between matroids and extremal simplicial complexes. The simplicial complex generated by $S_2(4)$ is extremal, but it is not a matroid. The matroid $U_{1,1}\sqcup U_{1,3}$ (see Definitions \ref{DefinitionDirectSumOfMatroids}, \ref{DefinitionUniformMatroid}) is not an extremal simplicial complex.

\chapter{The Conjecture of White}\label{ChapterWhiteConjecture}

Describing minimal generating set of a toric ideal is a well-studied and difficult problem in commutative algebra. In 1980 White conjectured \cite{Wh80} that for every matroid $M$ its toric ideal $I_M$ is generated by quadratic binomials corresponding to symmetric exchanges. Despite big interest in algebraic combinatorics community the conjecture is still wide open. It is confirmed only for some very special classes of matroids like graphic matroids \cite{Bl08}, sparse paving matroids \cite{Bo13}, lattice path matroids (a subclass of transversal matroids) \cite{Sc11}, or matroids of rank at most three \cite{Ka10}.

In Theorem \ref{TheoremMain1} we prove White's conjecture for strongly base orderable matroids, which is already a large class of matroids. In particular it contains transversal matroids, for which it was known only that the toric ideal $I_M$ is generated by quadratic binomials \cite{Co07}, and also laminar matroids. 

However, the most important of our results is Theorem \ref{TheoremMain2} -- White’s conjecture up to saturation. We prove that the saturation with respect to the so-called irrelevant ideal of the ideal generated by quadratic binomials corresponding to symmetric exchanges equals to the toric ideal of a matroid. Our result has even more natural formulation in the language of algebraic geometry. Namely, it means that both ideals define the same projective scheme, while the conjecture asserts that their affine schemes are equal. To our knowledge it is the only result concerning White's conjecture, which is valid for all matroids.

In the last Section \ref{SectionRemarksWhiteConjecture} we discuss the original formulation of the conjecture due to White, which was combinatorial. It asserts that if two multisets of bases of a matroid have equal union (as a multiset), then one can pass between them by a sequence of single element symmetric exchanges. In fact White defined three properties of a matroid of growing difficulty, and conjectured that all matroids satisfy them. We reveal relations between these properties, present a beautiful alternative combinatorial reformulation by Blasiak \cite{Bl08}, and also mention how to extend our results to discrete polymatroids. 

The results of this Chapter come from the joint work \cite{LaMi13} with Mateusz Michałek.  

\section{Introduction}\label{SectionWhiteConjecture}

White's conjecture can be expressed in the language of combinatorics, algebra and geometry. We decided to present the conjecture and our results in the algebraic language, since then they have the most natural and compact formulation. However ideas of our proofs are purely combinatorial. 

Let $M$ be a matroid on a ground set $E$ with the set of bases $\mathcal{B}\subset\mathcal{P}(E)$. For a fixed field $\K$, let
$S_M:=\K[y_B:B\in\mathcal{B}]$
be a polynomial ring. There is a natural $\K$-homomorphism
$$\varphi_M:S_M\ni y_B\longrightarrow\prod_{e\in B}x_e\in\K[x_e:e\in E].$$

\begin{definition}\emph{(Toric Ideal of a Matroid)}
The \emph{toric ideal of a matroid} $M$, denoted by $I_M$, is the kernel of the map $\varphi_M$.
\end{definition}

Suppose that a pair of bases $D_1,D_2$ is obtained from a pair of bases $B_1,B_2$ by a symmetric exchange. That is, $D_1=B_1\cup f\setminus e$ and $D_2=B_2\cup e\setminus f$ for some $e\in B_1$ and $f\in B_2$. Then clearly the quadratic binomial
$$y_{B_1}y_{B_2}-y_{D_1}y_{D_2}$$
belongs to the ideal $I_M$. We say that such a binomial \emph{corresponds to symmetric exchange}.

\begin{conjecture}[White, \cite{Wh80}]\label{ConjectureWhite}
For every matroid $M$ its toric ideal $I_M$ is generated by quadratic binomials corresponding to symmetric exchanges. 
\end{conjecture}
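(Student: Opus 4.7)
The conjecture is equivalent to a combinatorial statement: for every $k\geq 2$ and every pair of $k$-tuples of bases $(B_1,\ldots,B_k)$ and $(D_1,\ldots,D_k)$ of $M$ with $B_1+\cdots+B_k=D_1+\cdots+D_k$ as multisets on $E$, the two tuples are connected by a sequence of single symmetric exchanges applied to pairs of bases drawn from the current tuple. My plan is a double induction, outer on $k$ and inner on the mismatch potential $\Phi:=\sum_i\left\vert B_i\setminus D_i\right\vert$. The base case $k=2$ is Brualdi's symmetric exchange property (condition $(3)$ of Section \ref{SectionBasisExchangeProperties}), which chains $\left\vert B_1\setminus D_1\right\vert$ single exchanges together to align $B_1$ with $D_1$, forcing $B_2$ to become $D_2$ simultaneously.

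For the inductive step I would attempt a pairing-and-contraction strategy. Pick an element $e\in B_1\setminus D_1$; since multiplicities in the multiset union agree, $e$ appears in strictly fewer of the $B_i$ than of the $D_i$, so after reindexing there is some $D_j$ with $e\in D_j$ and $e\notin B_j$. The aim is to move a copy of $e$ from $B_1$ into the $j$-th slot through a chain of symmetric exchanges $(B_1,B_{i_1})\to\cdots\to(\cdot,B_j)$, then peel $e$ off both sides and invoke the outer induction on the contracted matroid $M/e$ with a $k$-tuple of bases of $M/e$. Each move in the chain should strictly decrease $\Phi$ or a refined lexicographic variant; Proposition \ref{PropositionMultipleSymmetricExchange} (multiple symmetric exchange) would be used to batch-correct whole subsets where possible, with single symmetric exchanges clearing isolated obstructions. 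The same pairing idea, driven by a global bijection between $B_1$ and $D_1$, is what underlies the strongly base orderable case (Theorem \ref{TheoremMain1}).

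The main obstacle — and the reason the conjecture has resisted proof for over three decades — is guaranteeing a globally coherent choice of the partner element $f$ in each symmetric exchange. Any single exchange modifies two bases simultaneously, so a move that reduces $\left\vert B_1\triangle D_1\right\vert$ can increase $\left\vert B_{i_1}\triangle D_{i_1}\right\vert$ or destroy alignment already built between $B_2$ and $D_2$, and no short-sighted choice of $f$ provably decreases $\Phi$ globally. The strongly base orderable hypothesis supplies a single bijection $\pi$ coherent across all subsets, which is precisely what makes the pairing step close; for a general matroid the bijections produced by Proposition \ref{PropositionMultipleSymmetricExchange} are only local, and several such bijections applied in sequence may conflict across the tuple. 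A genuine proof would need one of: (i) a global coherence principle extracted from submodularity of the rank function alone, upgrading the local exchange to a ``simultaneous'' exchange over all $k-1$ partners; (ii) a matroid-theoretic Gröbner-basis argument with a term order under which every $S$-pair of a generator $y_{B_1}\cdots y_{B_k}-y_{D_1}\cdots y_{D_k}$ reduces through quadratic symmetric-exchange binomials; or (iii) an obstruction-theoretic argument ruling out minimal counterexamples by deletion and contraction, showing no minor-minimal matroid can fail the conjecture. I would begin with (ii), testing small non-base-orderable examples such as the Vámos matroid to look for a term order whose leading terms propagate reductions coherently.
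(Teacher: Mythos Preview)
The statement you are addressing is a \emph{conjecture}, not a theorem; the paper does not prove it (nor does anyone else, to date). The paper establishes only partial results: Theorem \ref{TheoremMain1} for strongly base orderable matroids and Theorem \ref{TheoremMain2} for the saturation $J_M:\mathfrak{m}^\infty$. So there is no ``paper's own proof'' to compare against, and you yourself correctly identify the problem as open in your final paragraph. What you have written is a sketch of a plausible attack together with an honest account of why it stalls, which is appropriate --- but it should not be labelled a proof proposal.

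One concrete error worth flagging: your claimed base case $k=2$ is not a consequence of Brualdi's symmetric exchange property. That property guarantees, for $e\in B_1\setminus B_2$, the existence of \emph{some} $f\in B_2\setminus B_1$ with $B_1\cup f\setminus e$ and $B_2\cup e\setminus f$ both bases; it does \emph{not} let you choose $f\in D_1$. So repeated application need not drive $B_1$ toward $D_1$, and the potential $\left\vert B_1\setminus D_1\right\vert$ need not decrease. The $k=2$ case is precisely the Farber--Richter--Shank conjecture that the graph $\mathfrak{B}_2(M)$ is connected (see Section \ref{SectionRemarksWhiteConjecture}), which is itself open for general matroids. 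Thus your outer induction has no established base. The contraction step is also problematic: after moving a copy of $e$ into slot $j$, the remaining $B_i$ that do not contain $e$ are not bases of $M/e$, so you cannot pass to a $k$-tuple of bases in the minor.
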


Since every toric ideal is generated by binomials it is not hard to rephrase the above conjecture in the combinatorial language. It asserts that if two multisets of bases of a matroid have equal union (as a multiset), then one can pass between them by a sequence of single element symmetric exchanges. In fact this is the
original formulation due to White (cf. \cite{St97}). We immediately see that the conjecture does not depend on the field $\K$.

Actually White stated a bunch of conjectures of growing difficulty. The weakest asserts that the toric ideal $I_M$ is generated by quadratic binomials, the second one is Conjecture \ref{ConjectureWhite}, and the most difficult is an analog of Conjecture \ref{ConjectureWhite} for the noncommutative polynomial ring $S_M$. However, Conjecture \ref{ConjectureWhite} attracted the most attention because of its consequences in commutative algebra. We will discuss the original conjectures of White and show more subtle relations between them in Section \ref{SectionRemarksWhiteConjecture}. 

The most significant result towards Conjectue \ref{ConjectureWhite} was obtained by Blasiak in 2008 \cite{Bl08}, he confirmed it for graphical matroids. In 2010 Kashiwabara \cite{Ka10} checked the case of matroids of rank at most $3$. In 2011 Schweig \cite{Sc11} proved the case of lattice path matroids, which is a subclass of transversal matroids. Recently, in 2013 Bonin \cite{Bo13} solved the case of sparse paving matroids (all circuits have $r(M)+1$ or $r(M)$ elements). For transversal matroids it was known only, due to Conca in 2007 \cite{Co07}, that the toric ideal $I_M$ is generated by quadratic binomials. 

\section{White's Conjecture for Strongly Base Orderable Matroids}\label{SectionWhiteForSBO}

Here we prove White's conjecture for strongly base orderable matroids (see Definition \ref{DefinitionStronglyBaseOrderable}), which is already a large class of matroids. An advantage of this class of matroids is that it is characterized by a certain property instead of a specific presentation. Thus it is easier to check if a matroid is strongly base orderable or not. In particular we get that White's conjecture is true for transversal matroids, and also for laminar matroids.

Our argument uses an idea from the proof \cite{Sch03} of a theorem of Davies and McDiarmid \cite{DaMc76}. It asserts that if the ground set $E$ of two strongly base orderable matroids can be partitioned into $n$ bases in each of them, then there exists also a common partition.

\begin{theorem}[Lasoń, Michałek, \cite{LaMi13}]\label{TheoremMain1}
If $M$ is a strongly base orderable matroid, then the toric ideal $I_M$ is generated by quadratic binomials corresponding to symmetric exchanges. 
\end{theorem}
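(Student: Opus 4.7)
The statement is equivalent to the following combinatorial claim: for any two multisets $\{B_1,\dots,B_n\}$ and $\{D_1,\dots,D_n\}$ of bases of the SBO matroid $M$ with equal multiset unions, one can transform the first into the second by a finite sequence of single-element symmetric exchanges, each exchange replacing a pair $\{A,A'\}$ in the current multiset by $\{A\cup f\setminus e,\; A'\cup e\setminus f\}$ with $e\in A,\ f\in A'$ and both resulting sets bases. I would prove this by induction on $n$, the case $n=1$ being vacuous.

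For the inductive step, it suffices to produce a sequence of symmetric exchanges inside $\{B_1,\dots,B_n\}$ that turns one $B_i$ into $D_1$; the remaining $n-1$ bases will have the same multiset union as $\{D_2,\dots,D_n\}$, and the inductive hypothesis finishes the job. I would accomplish this reduction by a secondary induction on the quantity $\delta:=\min_i\lvert B_i\triangle D_1\rvert$. When $\delta=0$ some $B_i$ already equals $D_1$. Otherwise, pick $B_i$ realizing the minimum and any $e\in B_i\setminus D_1$, and invoke strong base orderability on the pair $(B_i,D_1)$: there is a bijection $\tau\colon B_i\to D_1$ which is the identity on $B_i\cap D_1$ and such that $(B_i\setminus A)\cup\tau(A)$ is a basis for every $A\subset B_i$. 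In particular, setting $f:=\tau(e)\in D_1\setminus B_i$, the set $B_i\cup f\setminus e$ is a basis and is strictly closer to $D_1$ than $B_i$ is. Since $f\in D_1$ and the multiset unions of $\{B_k\}$ and $\{D_k\}$ agree while $f\notin B_i$, some other $B_j$ contains $f$, and the natural candidate is to realize the replacement $B_i\rightsquigarrow B_i\cup f\setminus e$ as a symmetric exchange between $B_i$ and $B_j$.

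The main obstacle is that SBO applied to $(B_i,D_1)$ only guarantees that $B_i\cup f\setminus e$ is a basis; the partner move $B_j\mapsto B_j\cup e\setminus f$ requires a separate justification, since an arbitrary pair $(e,f)$ with $e\in B_i\setminus B_j$, $f\in B_j\setminus B_i$ need not admit a symmetric exchange between $B_i$ and $B_j$. My plan for overcoming this is to adapt the sequencing idea used by Schrijver \cite{Sch03} in his proof of the Davies--McDiarmid theorem: if the direct exchange of $e$ and $f$ between $B_i$ and $B_j$ fails on the $B_j$-side, one should first perform a short chain of auxiliary symmetric exchanges among the bases $B_k$ (each justified by strong base orderability applied between a suitable current pair, using the freedom to choose the SBO bijection so as to align the distinguished element $f$) that relocates $f$ into a basis with which the partner exchange is valid. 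This mirrors the Davies--McDiarmid trick for moving an element to the correct part of a common partition, transplanted from the two-matroid, two-partition setting to the one-matroid, two-multiset setting.

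Once such a single $\delta$-decreasing sequence is constructed, the secondary induction on $\delta$ produces one of the $B_i$'s equal to $D_1$, and the outer induction on $n$ then proves the full combinatorial claim and hence the theorem. The heart of the argument is the careful exploitation of SBO at two distinct moments: once to produce the desired single-element replacement in $B_i$, and once again to engineer a partner basis containing $f$ for which the complementary exchange is legitimate.
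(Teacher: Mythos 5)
Your proposal identifies the right inspiration (Schrijver's proof of the Davies--McDiarmid theorem) but does not succeed in extracting the actual technique, and the gap you flag yourself is a real one that your suggested fix does not close. Applying strong base orderability to the pair $(B_i,D_1)$ tells you only that $B_i\cup f\setminus e$ is a basis; it says nothing about whether $B_j\cup e\setminus f$ is a basis for the particular $B_j$ containing $f$, and the basis-exchange axiom applied to $(B_j,B_i)$ at $f$ produces some partner element in $B_i\setminus B_j$ which need not be your chosen $e$. The ``short chain of auxiliary symmetric exchanges that relocates $f$'' is never specified, and there is no reason a priori that such a chain exists; moreover, if a purely local argument of this sort worked, it would not visibly use strong base orderability in any essential way beyond the single exchange axiom, whereas White's conjecture is open in general. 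So as written the secondary induction on $\delta=\min_i|B_i\triangle D_1|$ does not go through.

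The paper's proof is structurally different in a way that dissolves this difficulty. Instead of privileging $D_1$ and trying to march one $B_i$ toward it element by element, one picks $e\in B_1\setminus D_1$ with $e\in D_2\setminus B_2$, takes \emph{two} SBO bijections simultaneously, $\pi_B\colon B_1\to B_2$ and $\pi_D\colon D_1\to D_2$ (each the identity on the respective intersection), and forms the bipartite graph on $B_1\cup B_2\cup D_1\cup D_2$ whose edges are the pairs $\{b,\pi_B(b)\}$ and $\{d,\pi_D(d)\}$. A proper $2$-coloring $S\sqcup T$ of this graph (which exists because a union of two matchings is bipartite) yields new pairs $B_1',B_2'$ and $D_1',D_2'$ that are bases by the multiple symmetric exchange property of $\pi_B$ and $\pi_D$, are reachable from the old pairs by sequences of genuine symmetric exchanges (one element of $\pi_B$, resp.\ $\pi_D$, at a time -- here SBO gives both halves of each exchange at once, which is exactly what your argument lacked), and strictly increase the overlap potential $\max_\pi\sum_i|B_i\cap D_{\pi(i)}|$. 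Decreasing induction on this overlap then finishes the proof. Note also that the paper's potential is symmetric between the $B$-side and the $D$-side, whereas yours singles out $D_1$; that symmetry is what lets SBO be invoked on both sides simultaneously and is the step your outline is missing.
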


\begin{proof}
The ideal $I_M$, as every toric ideal, is generated by binomials. Thus it is enough to prove that quadratic binomials corresponding to symmetric exchanges generate all binomials of $I_M$.

Fix $n\geq 2$. We are going to show by decreasing induction on the overlap function
$$d(y_{B_1}\cdots y_{B_n},y_{D_1}\cdots y_{D_n})=\max_{\pi\in S_n}\sum_{i=1}^n\left\vert B_i\cap D_{\pi(i)}\right\vert$$ 
that a binomial $y_{B_1}\cdots y_{B_n}-y_{D_1}\cdots y_{D_n}\in I_M$ is generated by quadratic binomials corresponding to symmetric exchanges. Clearly, the biggest value of $d$ is $r(M)n$.

If $d(y_{B_1}\cdots y_{B_n},y_{D_1}\cdots y_{D_n})=r(M)n$, then there exists a permutation $\pi\in S_n$ such that for each $i$ holds $B_i=D_{\pi(i)}$, so $y_{B_1}\cdots y_{B_n}-y_{D_1}\cdots y_{D_n}=0$.

Suppose the assertion holds for all binomials with the overlap function greater than $d<r(M)n$. Let $y_{B_1}\cdots y_{B_n}-y_{D_1}\cdots y_{D_n}$ be a binomial of $I_M$ with the overlap function equal to $d$. Without loss of generality we can assume that the identity permutation realizes the maximum in the definition of the overlap function. Then there is $i$ for which there exists $e\in B_i\setminus D_i$. Clearly, $y_{B_1}\cdots y_{B_n}-y_{D_1}\cdots y_{D_n}\in I_M$ if and only if $B_1\cup\dots\cup B_n=D_1\cup\dots\cup D_n$ as multisets. Thus there exists $j\neq i$ such that $e\in D_j\setminus B_j$. Without lost of generality we can assume that $i=1,j=2$. Since $M$ is a strongly base orderable matroid, there exist bijections $\pi_B:B_1\rightarrow B_2$ and $\pi_D:D_1\rightarrow D_2$ with the multiple symmetric exchange property. We can assume that $\pi_B$ is the identity on $B_1\cap B_2$, and similarly that $\pi_D$ is the identity on $D_1\cap D_2$.

Let $G$ be a graph on a vertex set $B_1\cup B_2\cup D_1\cup D_2$ with edges $\{b,\pi_B(b)\}$ for all $b\in B_1\setminus B_2$ and $\{d,\pi_D(d)\}$ for all $d\in D_1\setminus D_2$. Graph $G$ is bipartite since it is a sum of two matchings. Split the vertex set of $G$ into two independent (in the graph sense) sets $S$ and $T$. Define:
$$B'_1=(S\cap (B_1\cup B_2))\cup (B_1\cap B_2),\;B'_2=(T\cap (B_1\cup B_2))\cup (B_1\cap B_2),$$
$$D'_1=(S\cap (D_1\cup D_2))\cup (D_1\cap D_2),\;D'_2=(T\cap (D_1\cup D_2))\cup (D_1\cap D_2).$$
By the multiple symmetric exchange property of $\pi_B$ sets $B'_1,B'_2$ are bases, and they are obtained from the pair $B_1,B_2$ by a sequence of symmetric exchanges. Therefore the binomial $y_{B_1}y_{B_2}y_{B_3}\cdots y_{B_n}-y_{B'_1}y_{B'_2}y_{B_3}\cdots y_{B_n}$ is generated by quadratic binomials corresponding to symmetric exchanges. Analogously the binomial $y_{D_1}y_{D_2}y_{D_3}\cdots y_{D_n}-y_{D'_1}y_{D'_2}y_{D_3}\cdots y_{D_n}$ is generated by quadratic binomials corresponding to symmetric exchanges. Moreover, since $S$ and $T$ are disjoint we have that 
$$d(y_{B'_1}y_{B'_2}y_{B_3}\cdots  y_{B_n},y_{D'_1}y_{D'_2}y_{D_3}\cdots  y_{D_n})>d(y_{B_1}y_{B_2}\cdots  y_{B_n},y_{D_1}y_{D_2}\cdots  y_{D_n}).$$ Thus by inductive assumption $y_{B'_1}y_{B'_2}y_{B_3}\cdots y_{B_n}-y_{D'_1}y_{D'_2}y_{D_3}\cdots y_{D_n}$ is generated by quadratic binomials corresponding to symmetric exchanges. Composing these three facts we get the inductive assertion.
\end{proof}

\section{White's Conjecture up to Saturation}\label{SectionWeakWhiteConjecture}

Let $\mathfrak{m}$ be the ideal generated by all variables in $S_M$ (so-called \emph{irrelevant ideal}). Recall that $I:\mathfrak{m}^{\infty}=\{a\in S_M:a\mathfrak{m}^n\subset I\text{ for some }n\in\N\}$ is called the \emph{saturation} of an ideal $I$ with respect to the ideal $\mathfrak{m}$.

Let $J_M$ be the ideal generated by quadratic binomials corresponding to symmetric exchanges. Clearly, $J_M\subset I_M$ and White's conjecture asserts that $J_M$ and $I_M$ are equal. We will prove a slightly weaker property, namely that ideals $J_M$ and $I_M$ are equal up to saturation, that is $J_M:\mathfrak{m}^\infty=I_M:\mathfrak{m}^\infty$  (cf. \cite[Section 4]{Mi12}). In fact the ideal $I_M$, as a prime ideal, is saturated so $I_M:\mathfrak{m}^\infty=I_M$. 

Now we will express our result in the language of algebraic geometry. From its point of view it is natural to compare schemes (main objects of study in algebraic geometry) defined by ideals instead of ideals themself. A homogeneous ideal ($J_M$ and $I_M$ are homogeneous, because every basis has the same number of elements) defines two kinds of schemes -- affine and projective. Affine schemes are equal if and only if the ideals are equal. Thus White's conjecture asserts equality of affine schemes of $J_M$ and $I_M$. Whereas ideals define the same projective scheme if and only if their saturations with respect to the irrelevant ideal are equal. That is, we prove that for every matroid $M$ projective schemes $\proj(S_M/J_M)$ and $\proj(S_M/I_M)$ are equal. Additionally, since both $J_M$ and $I_M$ are contained in $\mathfrak{m}$, this implies that both ideals have equal radicals, which in other words means they have the same affine set of zeros. The toric variety $\proj(S_M/I_M)$ (we refer the reader to \cite{Fu93,CoLiSc11} for background of toric geometry) has been already studied before. White proved that it is projectively normal \cite{Wh77}.

\begin{theorem}[Lasoń, Michałek, \cite{LaMi13}]\label{TheoremMain2}
White's conjecture is true up to saturation. That is, for every matroid $M$ we have $J_M:\mathfrak{m}^{\infty}=I_M$.
\end{theorem}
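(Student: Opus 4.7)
The plan is to reduce the theorem to a combinatorial statement using the standard decomposition
\[
J_M : \mathfrak{m}^{\infty} \;=\; \bigcap_{B \in \mathcal{B}}\bigl(J_M : y_B^{\infty}\bigr),
\]
which is valid because $\mathfrak{m}$ is generated by the variables $y_B$. Since $I_M$ is prime and contains no $y_B$, we have $I_M : y_B^{\infty} = I_M$ for every basis $B$, so the inclusion $J_M : \mathfrak{m}^{\infty} \subset I_M$ is automatic. The content is the reverse inclusion: for every basis $B$ and every binomial $y_{B_1}\cdots y_{B_n} - y_{D_1}\cdots y_{D_n} \in I_M$, one must show that $y_B^{N}(y_{B_1}\cdots y_{B_n} - y_{D_1}\cdots y_{D_n}) \in J_M$ for some $N = N(B, B_i, D_i)$.

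Unfolded into combinatorics, the claim becomes: for any two multisets of bases $\{B_1,\ldots,B_n\}$ and $\{D_1,\ldots,D_n\}$ with equal multiset union, and for any fixed basis $B$, one can pass from $\{B_1,\ldots,B_n\} \cup \{B\}^{N}$ to $\{D_1,\ldots,D_n\} \cup \{B\}^{N}$ by a sequence of single-element symmetric exchanges once $N$ is large enough. My strategy is to follow the template of the proof of Theorem \ref{TheoremMain1}, namely decreasing induction on the overlap function $d$ appearing there. Because strong base orderability is not available for arbitrary $M$, the global bijections $\pi_B, \pi_D$ used in that proof do not exist; I would substitute for them by iterated use of the multiple symmetric exchange property (Proposition \ref{PropositionMultipleSymmetricExchange}), which holds unconditionally in every matroid.

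Concretely, the helper copies of $B$ play the role of a scratchpad. Given an element $e \in B_i \setminus D_i$ that must reach some $B_j$ (because $e \in D_j \setminus B_j$), I would apply Proposition \ref{PropositionMultipleSymmetricExchange} between $B_i$ and a helper $B$ to push $e$ into the helper, then exchange the perturbed helper with $B_j$ to deliver $e$ there, and finally use one or more additional exchanges to undo the perturbation of the helpers. Each such round trip strictly increases the overlap function while consuming only a bounded number of helpers, so a value of $N$ linear in $n \cdot |E|$ will suffice for the full transformation.

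The main obstacle is the bookkeeping needed to guarantee that, after every transfer, the helpers are genuinely restored to $B$ rather than accumulating irreversible damage that would obstruct later exchanges. In the strongly base orderable setting this is automatic because the bijection $\pi$ controls all swaps simultaneously; in the general case each swap must be explicitly reversed by a further exchange, and one must verify that these reversals do not undo previous progress. The natural monovariant is the overlap function itself, but it may need to be refined by an auxiliary term counting how many helpers are currently in a non-$B$ state, so that the induction closes cleanly. Carrying out this refinement, and checking that only finitely many exchanges are needed per transfer, is the technical heart of the argument.
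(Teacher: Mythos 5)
Your opening reduction is correct and matches the paper: $I_M$ is prime and contains no variable, so $J_M:\mathfrak{m}^{\infty}\subset I_M$ is automatic, and the content is to show that for every basis $B$ some power $y_B^{N}$ times a binomial of $I_M$ lands in $J_M$. After that, however, your strategy diverges from the paper's and, as written, has a gap that is not a loose end but the actual crux.

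Your plan is to run decreasing induction on the overlap function from the proof of Theorem \ref{TheoremMain1}, replacing the global bijections $\pi_B,\pi_D$ of strong base orderability with repeated applications of Proposition \ref{PropositionMultipleIndependentSetExchange}, and to route elements through helper copies of $B$. The step that does not close is the helper restoration. A single transfer of an element $e$ from $B_i$ to $B_j$ via a helper consists of two symmetric exchanges: first $y_{B_i}y_B\to y_{B_i\cup f\setminus e}\,y_{B\cup e\setminus f}$ for some $f\in B$, then $y_{B\cup e\setminus f}\,y_{B_j}\to y_{B\cup g\setminus f}\,y_{B_j\cup e\setminus g}$ for some $g\in B_j$. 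After this the helper is $B\cup g\setminus f$, not $B$, and there is no exchange available that is guaranteed to swap $g$ back for $f$ without touching (and possibly un-doing) the other bases. Moreover the overlap function need not increase: the helper's overlap with its counterpart $B$ drops by one, and there is no control on whether $g\in D_j$, so $|B_j\cap D_j|$ may stay put. You acknowledge this and propose to ``refine the monovariant,'' but in the non-strongly-base-orderable setting the exchanges are not simultaneously controllable, and I do not see a local monovariant of the overlap type that both decreases and is bounded; the bookkeeping you flag as ``the technical heart'' is exactly where this approach has historically stalled.

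The paper takes a genuinely different route that sidesteps helper restoration entirely. It introduces the $B$-degree $\deg_B(y_{B'})=|B'\setminus B|$ and inducts on that instead of on overlap. Lemma \ref{LemmaCommon} uses the extra $y_B$'s to trade an arbitrary variable $y_{B'}$ for a product of \emph{balanced} variables (bases at $B$-distance one from $B$), rather than trying to restore helpers to $B$. Balanced monomials are then encoded as bipartite multigraphs on $B\sqcup(E\setminus B)$, compatibility of the two sides of a binomial in $I_M$ becomes equality of degree sequences, and the symmetric difference decomposes into alternating cycles. Short cycles close the induction; the full-length cycle case is handled by a separate, delicate argument (Lemmas \ref{LemmaNieMaStrzalek}, \ref{LemmaSaNogi}, \ref{LemmaSaNogiSym} on the sets $S_k^i,T_k^i,U_k^i$), which derives a contradiction from the assumption that no common variable other than $y_B$ is achievable from both sides. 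None of this uses the overlap function or multiple symmetric exchange. So your reduction to ``$y_B^N \cdot(\text{binomial})\in J_M$'' is right, but to finish you would need to replace the overlap-and-helpers plan with something that, like the paper's $B$-degree machinery, embraces the damage to the helpers rather than trying to undo it.
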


\begin{proof}
Since $J_M\subset I_M$ we get that $J_M:\mathfrak{m}^\infty\subset I_M:\mathfrak{m}^\infty=I_M$. 

To prove the opposite inclusion $I_M\subset J_M:\mathfrak{m}^\infty$ it is enough to consider binomials since $I_M$, as any toric ideal, is generated by binomials. To prove that a binomial $y_{B_1}\dots y_{B_n}-y_{D_1}\dots y_{D_n}\in I_M$ belongs to $J_M:\mathfrak{m}^\infty$ it is enough to show that for each basis $B\in\mathcal{B}$ we have
$$y_B^{(r(M)-1)n}(y_{B_1}\cdots y_{B_n}-y_{D_1}\cdots y_{D_n})\in J_M.$$ 
It is because then 
$$(y_{B_1}\cdots y_{B_n}-y_{D_1}\cdots y_{D_n})\mathfrak{m}^{(r(M)-1)n\left\vert\mathcal{B}\right\vert}\in J_M.$$ 

Let $B\in\mathcal{B}$ be a basis. The polynomial ring $S_M$ has a natural grading given by degree function $\deg(y_{B'})=1$, for each variable $y_{B'}$. We define the \emph{$B$-degree} by $\deg_B(y_{B'})=\left\vert B'\setminus B\right\vert$, and extend this notion also to bases $\deg_B(B')=\left\vert B'\setminus B\right\vert$. Notice that the ideal $I_M$ is homogeneous with respect to both gradings. Additionally $B$-degree of $y_B$ is zero, thus multiplying by $y_B$ does not change $B$-degree of a polynomial.  Observe that if $\deg_B(B')=1$, then $B'$ differs from $B$ only by a single element. We call such basis, and a corresponding variable, \emph{balanced}. A monomial or a binomial is called \emph{balanced} if it contains only balanced variables.

We will prove by induction on $n$ the following claim. As argued before this will finish the proof.

\begin{claim}\label{Claim1} If a binomial $b\in I_M$ has $B$-degree equal to $n$, then we have $$y_B^{\deg_B(b)-\deg(b)}b\in J_M.$$
\end{claim} 

If $\deg_B(b)-\deg(b)<0$, then the statement of the claim means that $y_B^{\deg(b)-\deg_B(b)}$ divides $b$, and that the quotient belongs to $J_M$. 

If $n=0$, then the claim is obvious, since $0$ is the only binomial in $I_M$ with $B$-degree equal to $0$. 

Suppose $n>0$. It is easier to work with balanced variables, therefore we provide the following lemma.  

\begin{lemma}\label{LemmaCommon}
For every basis $B'\in\mathcal{B}$ there exist bases $B_1,\dots,B_{\deg_B(B')}$ which are balanced and satisfy $$y_B^{\deg_B(B')-1}y_{B'}-y_{B_1}\cdots y_{B_{\deg_B(B')}}\in J_M.$$
\end{lemma}

\begin{proof} 
The proof goes by induction on $\deg_B(B')$. If $\deg_B(B')=0,1$, then the assertion is clear. If $\deg_B(B')>1$, then for $e\in B'\setminus B$ from the symmetric exchange property applied to $B,B'$ there exists $f\in B$ such that both $B_1=B\cup e\setminus f$ and $B''=B'\cup f\setminus e$ are bases. Now $\deg_B(B_1)=1$ and $\deg_B(B'')=\deg_B(B')-1$. Applying inductive assumption for $B''$ we get balanced bases $B_2,\dots,B_{\deg_B(B')}$ with $$y_B^{\deg_B(B')-2}y_{B''}-y_{B_2}\cdots y_{B_{\deg_B(B')}}\in J_M.$$
Hence, since $y_By_{B'}-y_{B_1}y_{B''}$ corresponds to symmetric exchange, we get
$$y_B^{\deg_B(B')-1}y_{B'}-y_{B_1}\cdots y_{B_{\deg_B(B')}}=y_B^{\deg_B(B')-2}(y_By_{B'}-y_{B_1}y_{B''})+$$ $$+y_{B_1}(y_B^{\deg_B(B')-2}y_{B''}-y_{B_2}\cdots y_{B_{\deg_B(B')}})\in J_M.$$
\end{proof}

Lemma \ref{LemmaCommon} allows us to change each factor $y_B^{\deg_B(B')-\deg(B')}y_{B'}$ of a monomial $y_B^{\deg_B(m)-\deg(m)}m$ to a product of balanced variables. Notice that the $B$-degree is preserved.
Hence Claim \ref{Claim1} reduces to the following claim.

\begin{claim}\label{Claim2} If $b=y_{B_1}\cdots y_{B_n}-y_{D_1}\cdots y_{D_n}\in I_M$ is a balanced binomial of $B$-degree equal to $n$, then $b\in J_M$.
\end{claim} 

With a balanced monomial $m=y_{B_1}\cdots y_{B_n}$ we associate a bipartite multigraph $G(m)$ with vertex classes $B$ and $E\setminus B$ (where $E$ is the ground set of matroid $M$). For each variable $y_{B_i}$ of monomial $m$ ($B_i=B\cup e\setminus f$ for $f\in B, e\in E\setminus B$) we put an edge $\{e,f\}$ in $G(m)$. 

Let $b=y_{B_1}\cdots y_{B_n}-y_{D_1}\cdots y_{D_n}\in I_M$ be a balanced binomial of $B$-degree equal to $n$. Observe that $b$ belongs to $I_M$ if and only if each vertex from $E$ has the same degree with respect to $G(y_{B_1}\cdots y_{B_n})$ and $G(y_{D_1}\cdots y_{D_n})$. Thus we can apply the following lemma, which is obvious from graph theory.

\begin{lemma}
Let $G$ and $H$ be bipartite multigraphs with the same vertex classes. Suppose that each vertex has the same degree with respect to $G$ and $H$. Then the symmetric difference of multisets of edges of $G$ and of $H$ can be partitioned into alternating cycles. That is simple cycles of even length, with two consecutive edges belonging to different graphs.
\end{lemma}

We get a simple cycle with vertices $f_1,e_1,f_2,e_2,\dots,f_r,e_r$ consecutively. So for each $i$ we have $B'_i=B\cup e_i\setminus f_i\in\mathcal{B}$ and $D'_i=B\cup e_{i-1}\setminus f_i\in\mathcal{B}$ (with a cyclic numeration modulo $r$). Then $y_{B'_1}\cdots y_{B'_r}$ divides $y_{B_1}\cdots y_{B_n}$ and $y_{D'_1}\cdots y_{D'_r}$ divides $y_{D_1}\cdots y_{D_n}$. 

Suppose $r<n$. The balanced binomial $b'=y_{B'_1}\cdots y_{B'_r}-y_{D'_1}\cdots y_{D'_r}$ belongs to $I_M$ and has $B$-degree less than $n$. From inductive assumption we get that $b'\in J_M$. There are balanced monomials $m_1,m_2,m_3$ such that
$$b=y_{B_1}\cdots y_{B_n}-y_{D_1}\cdots y_{D_n}=m_1b'-y_{D'_1}\cdots y_{D'_r}(m_2-m_3)$$
and $m_2-m_3\in I_M$. Balanced binomial $b''=m_2-m_3\in I_M$ has $B$-degree less than $n$, so by inductive assumption $b''\in J_M$, and as a consequence $b\in J_M$.

Suppose now that $r=n$. We can assume that $E=\{f_1,e_1\dots,f_n,e_n\}$, since otherwise we can contract $B\setminus\{f_1,\dots,f_n\}$ and restrict our matroid to the set $\{f_1,e_1\dots,f_n,e_n\}$. Obviously the property of being generated extends from such a minor to a matroid.

We say that a monomial $m_2$ is \emph{achievable} from $m_1$ if $m_1-m_2\in J_M$. In this situation we say also that variables of $m_2$ are \emph{achievable} from $m_1$. Observe that if there is a variable different from $y_B$ that is achievable from both monomials $y_{B_1}\cdots y_{B_n}$ and $y_{D_1}\cdots y_{D_n}$, then the assertion follows by induction. Indeed, if a variable $y_{B'}$ is achievable from both, then there are monomials $m_1,m_2$ such that $$b=(y_{B_1}\cdots y_{B_n}-y_{B'}m_1)+(y_{B'}m_2-y_{D_1}\cdots y_{D_n})+y_{B'}(m_1-m_2).$$
Binomial $b'=m_1-m_2\in I_M$ has $B$-degree less than $n$, thus by inductive assumption $y_B^{\deg_B(b')-\deg(b')}b'\in J_M$. Hence $b'\in J_M$ because $$\deg_B(b')-\deg(b')=\deg_B(b)-\deg(b)-\deg_B(y_{B'})+\deg(y_{B'})\leq 0.$$

Suppose contrary -- no variable different from $y_B$ is achievable from both monomials of $b$, and we will reach a contradiction. The forthcoming part of the proof illustrates a sentence of Sturmfels ``Combinatorics is a nanotechnology of mathematics''. For $k,i\in\Z_n$ (cyclic group $(\{1,\dots,n\},+($mod $n)$) we define:
$$S_k^i:=B\cup\{e_{k},e_{k+1},\dots,e_{k+i-1}\}\setminus\{f_k,f_{k+1},\dots,f_{k+i-1}\},$$
$$T_k^i:=B\cup\{e_{k-1},e_{k},\dots,e_{k+i-2}\}\setminus\{f_k,f_{k+1},\dots,f_{k+i-1}\},$$
$$U_k^i:=B\cup\{e_{k-i}\}\setminus\{f_k\}.$$
Sets $S_k^i$ and $T_k^i$ differ only on the set $\{e_1,\dots,e_n\}$ by a shift by one. Notice that $S_k^n=T_{k'}^n$ for arbitrary $k,k'\in\Z_n$ and $S_k^1=U_k^0=B'_k$, $T_k^1=U_k^1=D'_k$. Hence
$m_1=y_{B'_1}\cdots y_{B'_n}=y_{S_1^1}\cdots y_{S_n^1}$ and $m_2=y_{D'_1}\cdots y_{D'_n}=y_{T_1^1}\cdots y_{T_n^1}$ are monomials of $b$.

\begin{lemma}\label{LemmaNieMaStrzalek}
Suppose that for a fixed $0<i<n$ and every $k\in\Z_n$ the following conditions are satisfied:
\begin{enumerate}
\item sets $S_k^{i}$ and $T_k^{i}$ are bases,
\item monomial $y_B^{i-1}y_{S_k^{i}}\prod_{j\neq k,\dots,k+i-1} y_{S_j^{1}}$ is achievable from $m_1$,
\item monomial $y_B^{i-1}y_{T_k^i}\prod_{j\neq k,\dots,k+i-1} y_{T_j^1}$ is achievable from $m_2$.
\end{enumerate}
Then for every $k\in\Z_n$ neither of the sets $U_k^{-i}$, $U_k^{i+1}$ is a basis.
\end{lemma}

\begin{proof}
Suppose contrary that $U_k^{-i}$ is a basis. Then $y_{S_{k}^1}y_{S_{k+1}^i}-y_{T_{k+1}^i}y_{U_k^{-i}}$ by the definition belongs to $J_M$. Thus, by the assumption, variable $y_{T_{k+1}^i}$ would be achievable from both $m_1$ and $m_2$, which is a contradiction. The argument for $U_k^{i+1}$ is analogous, it differs by a shift by one.
\end{proof}

\begin{lemma}\label{LemmaSaNogi}
Suppose that for a fixed $0<i<n$ and every $k\in\Z_n$ the following conditions are satisfied:
\begin{enumerate}
\item set $S_k^{i}$ is a basis,
\item monomial $y_B^{i-1}y_{S_k^{i}}\prod_{j\neq k,\dots,k+i-1}y_{S_j^{1}}$ is achievable from $m_1$,
\item set $U_k^{-j}$ is not a basis for any $0<j\leq i$.
\end{enumerate}
Then for every $k\in\Z_n$ the set $S_k^{i+1}$ is a basis and $y_B^{i}y_{S_k^{i+1}}\prod_{j\neq k,\dots,k+i}y_{S_j^{1}}$ is a monomial achievable from $m_1$.
\end{lemma}

\begin{proof}
From the symmetric exchange property for $e_{k}\in S_{k}^1\setminus S_{k+1}^i$ it follows that there exists $x\in S_{k+1}^i\setminus S_{k}^1$ such that $\tilde S_{k+1}^i=S_{k+1}^i\cup e_{k}\setminus x$ and $\tilde S_{k}^1=S_{k}^1\cup x\setminus e_{k}$ are also bases. Thus $x\in\{f_{k},e_{k+1},e_{k+2},\dots,e_{k+i}\}$. Notice that if $x=e_{k+j}$ for some $j$, then $\tilde S_{k}^1=U_{k}^{-j}$ contradicting condition $(3)$. Thus $x=f_{k}$. Hence $\tilde S_{k+1}^i=S_{k}^{i+1}$ and $\tilde S_k^1=B$. In particular the binomial
$y_{S_{k+1}^i}y_{S_{k}^1}-y_By_{S_k^{i+1}}$ belongs to $J_M$ (condition $(1)$ guarantees that variable $y_{S_{k+1}^i}$ exists). Thus the assertion follows from condition $(2)$.
\end{proof}

Analogously we get the following shifted version of Lemma \ref{LemmaSaNogi}.

\begin{lemma}\label{LemmaSaNogiSym}
Suppose that for a fixed $0<i<n$ and every $k\in\Z_n$ the following conditions are satisfied:
\begin{enumerate}
\item set $T_k^{i}$ is a basis,
\item monomial $y_B^{i-1}y_{T_k^{i}}\prod_{j\neq k,\dots,k+i-1}y_{T_j^{1}}$ is achievable from $m_2$,
\item set $U_k^{j+1}$ is not a base for any $0<j\leq i$.
\end{enumerate}
Then for every $k\in\Z_n$ the set $T_k^{i+1}$ is a basis and $y_B^{i}y_{T_k^{i+1}}\prod_{j\neq k,\dots,k+i}y_{T_j^{1}}$ is a monomial achievable from $m_2$.
\end{lemma}

We are ready to reach a contradiction by an inductive argument. Firstly we verify that for $i=1$ the assumptions of Lemma \ref{LemmaNieMaStrzalek} are satisfied. Suppose now that for some $1<i<n$ the assumptions of Lemma \ref{LemmaNieMaStrzalek} are satisfied for every $1\leq j\leq i$. Then, by Lemma \ref{LemmaNieMaStrzalek} the assumptions of both Lemma \ref{LemmaSaNogi} and Lemma \ref{LemmaSaNogiSym} are satisfied for every $1\leq j \leq i$. Thus by the assertions of Lemmas \ref{LemmaSaNogi} and \ref{LemmaSaNogiSym}, the assumptions of Lemma \ref{LemmaNieMaStrzalek} are satisfied for all $1\leq j\leq i+1$.  We obtain that the assumptions and the assertions of Lemmas \ref{LemmaNieMaStrzalek}, \ref{LemmaSaNogi} and \ref{LemmaSaNogiSym} are satisfied for every $1\leq i<n$. For $i=n-1$ we obtain that the monomial $y_B^{n-1}y_{S_1^{n}}=y_B^{n-1}y_{T_1^{n}}$ is achievable from both $m_1$ and $m_2$, this gives a contradiction.
\end{proof}

\begin{corollary}
White's conjecture holds for matroid $M$ if and only if the ideal $J_M$ is saturated. 
\end{corollary}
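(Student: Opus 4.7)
The plan is to unpack the definition of saturation on both sides and use Theorem \ref{TheoremMain2} as a black box. First I would observe that $I_M$, being a toric (hence prime) ideal strictly contained in the irrelevant ideal $\mathfrak{m}$, is automatically saturated with respect to $\mathfrak{m}$: if $a\,\mathfrak{m}^n\subset I_M$ for some $n$, primality of $I_M$ forces either $a\in I_M$ or $\mathfrak{m}^n\subset I_M$; the latter would iterate to $\mathfrak{m}\subset I_M$, contradicting that no generator $y_B$ lies in $I_M$ (since $\varphi_M(y_B)=\prod_{e\in B}x_e\neq 0$). Hence $I_M:\mathfrak{m}^\infty = I_M$.

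For the forward implication, if White's conjecture holds then $J_M=I_M$, and the previous paragraph shows $I_M$ is saturated, so $J_M$ is saturated.

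For the reverse implication, assume $J_M$ is saturated, i.e.\ $J_M=J_M:\mathfrak{m}^\infty$. Then Theorem \ref{TheoremMain2} gives
\[
J_M \;=\; J_M:\mathfrak{m}^\infty \;=\; I_M,
\]
which is precisely White's conjecture for $M$.

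There is no real obstacle here; the entire content is carried by Theorem \ref{TheoremMain2}. The only subtlety worth spelling out is the (standard) fact that a prime ideal properly contained in a maximal ideal $\mathfrak{m}$ is saturated with respect to $\mathfrak{m}$, which upgrades $I_M:\mathfrak{m}^\infty=I_M$ from a nontrivial-sounding statement to a tautology in our setting.
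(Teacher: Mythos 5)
Your proof is correct and is exactly the argument implicit in the paper, which states the corollary without proof immediately after Theorem \ref{TheoremMain2}; the paper's preceding discussion already records the key observation that $I_M$, being prime, is saturated, so both directions follow from Theorem \ref{TheoremMain2} as you describe.
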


In particular, in order to prove White's conjecture it is enough to show that the ideal $J_M$ is prime or radical.

\section{Remarks}\label{SectionRemarksWhiteConjecture}

We begin with the original formulation of conjectures stated by White in \cite{Wh80}. At the end we present a beautiful alternative combinatorial reformulation by Blasiak \cite{Bl08}, which is very natural, but not obviously equivalent.

Two sequences of bases $\mathsf{B}=(B_1,\dots,B_n)$ and $\mathsf{D}=(D_1,\dots,D_n)$ are \emph{compatible} if $B_1\cup\dots\cup B_n=D_1\cup\dots\cup D_n$ as multisets (that is if $y_{B_1}\cdots y_{B_n}-y_{D_1}\cdots y_{D_n}\in I_M$).

White defines three equivalence relations. Two sequences of bases $\mathsf{B}$ and $\mathsf{D}$ of equal length are in relation:
\newline
$\sim_1$ if $\mathsf{D}$ may be obtained from $\mathsf{B}$ by a composition of symmetric exchanges. That is $\sim_1$ is the transitive closure of the relation which exchanges a pair of bases $B_i,B_j$ in a sequence into a pair obtained by a symmetric exchange.
\newline $\sim_2$ if $\mathsf{D}$ may be obtained from $\mathsf{B}$ by a composition of symmetric exchanges and permutations of bases.
\newline $\sim_3$ if $\mathsf{D}$ may be obtained from $\mathsf{B}$ by a composition of multiple symmetric exchanges. 

Let $TE(i)$ denote the class of matroids such that for every $n\geq 2$ all compatible sequences of bases $\mathsf{B},\mathsf{D}$ of length $n$ satisfy $\mathsf{B}\sim_i\mathsf{D}$ (the notion $TE(i)$ is the same as original in \cite{Wh80}). An algebraic meaning of the property $TE(3)$ is that the toric ideal $I_M$ is generated by quadratic binomials. The property $TE(2)$ means that the toric ideal $I_M$ is generated by quadratic binomials corresponding to symmetric exchanges, while the property $TE(1)$ is its analog for the noncommutative polynomial ring $S_M$. 

We are ready to formulate the original conjecture \cite[Conjecture 12]{Wh80} of White.

\begin{conjecture}[White, \cite{Wh80}]\label{ConjectureOriginal}
We have the following equalities:
\begin{enumerate}
\item $TE(1)=$ the class of all matroids,
\item $TE(2)=$ the class of all matroids,
\item $TE(3)=$ the class of all matroids.
\end{enumerate}
\end{conjecture}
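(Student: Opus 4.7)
The plan is to first reduce the three-part conjecture to its strongest layer by establishing the chain $TE(1)\subseteq TE(2)\subseteq TE(3)$. The inclusion $TE(1)\subseteq TE(2)$ is immediate because any symmetric exchange is already of the form allowed by $\sim_2$. The inclusion $TE(2)\subseteq TE(3)$ follows because a transposition of two entries $B_i,B_j$ in a sequence is realized by a single multiple symmetric exchange with $A_1=B_i\setminus B_j$ and $A_2=B_j\setminus B_i$: one checks directly that $(B_i\setminus A_1)\cup A_2=B_j$ and $(B_j\setminus A_2)\cup A_1=B_i$. So part $(1)$ implies $(2)$, which implies $(3)$, and it would suffice to prove $(1)$; however $(2)$ and $(3)$ appear attackable separately.

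For parts $(2)$ and $(3)$, which form the commutative content of the conjecture, I would try to upgrade Theorem \ref{TheoremMain2}. That theorem gives $J_M:\mathfrak{m}^\infty=I_M$, so the missing step is to show that $J_M$ itself is saturated with respect to $\mathfrak{m}$, equivalently that from $y_B^{(r(M)-1)n}(y_{B_1}\cdots y_{B_n}-y_{D_1}\cdots y_{D_n})\in J_M$ one can deduce $y_{B_1}\cdots y_{B_n}-y_{D_1}\cdots y_{D_n}\in J_M$. The natural attempt is a cancellation argument using the $B$-degree bookkeeping from the proof of Theorem \ref{TheoremMain2}: each factor of $y_B$ should witness the first or last move of an exchange sequence that avoids the basis $B$ and should therefore be removable one at a time, with Lemma \ref{LemmaCommon} providing the normal form in terms of balanced variables. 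Once part $(2)$ is established for a matroid, part $(3)$ is automatic by the inclusion of relations.

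To enlarge the class of matroids for which $(2)$ is known unconditionally, I would try to relax the use of strong base orderability in Theorem \ref{TheoremMain1}. Its proof uses strong base orderability only to supply the two matching bijections $\pi_B,\pi_D$ that split the symmetric difference of two bases into independent halves by two-coloring a bipartite auxiliary graph. A reasonable substitute is an induction on the length of alternating cycles in the common-basis graph of $B_1,B_2$, in the spirit of Blasiak's argument for graphic matroids; combined with Seymour's decomposition theorem this would plausibly cover the class of regular matroids, and possibly more.

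The hard part, ultimately, is the cancellation step needed to pass from the saturated equality to an honest equality of ideals. Algebraically it is the statement that $S_M/J_M$ has no $\mathfrak{m}$-torsion, which for a prime saturation target is essentially equivalent to part $(2)$ itself. I expect this to require a genuinely new structural input about $M$, because the pathological exchange behaviour that obstructs purely combinatorial arguments already appears in small minors such as spikes and whirls that fail to be strongly base orderable, so any general proof has to handle such minors on their own terms. Part $(1)$, the noncommutative version, is strictly stronger still and will likely need a separate combinatorial argument showing that every transposition allowed by $\sim_2$ can be simulated by $\sim_1$-moves inside a compatible sequence, perhaps by constructing an explicit bridge sequence through auxiliary bases.
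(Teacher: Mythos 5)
The statement you are trying to prove is an open conjecture, and the paper does not claim a proof: it presents Conjecture~\ref{ConjectureOriginal} exactly as a conjecture and then establishes two strictly weaker facts, namely Theorem~\ref{TheoremMain1} (the special case of strongly base orderable matroids) and Theorem~\ref{TheoremMain2} (equality of the two ideals after saturation, equivalently equality of the associated projective schemes). There is therefore no ``paper's own proof'' to compare against, and your write-up, as you yourself indicate by phrasing each step as ``I would try,'' is a research sketch rather than a proof.

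The preliminary reduction you give is correct and coincides with \cite[Proposition 5]{Wh80}, which the paper quotes: $TE(1)\subseteq TE(2)$ is trivial, and $TE(2)\subseteq TE(3)$ holds because a single symmetric exchange is a multiple symmetric exchange with a singleton, while a transposition of $B_i,B_j$ is the multiple symmetric exchange $A_1=B_i\setminus B_j$, $A_2=B_j\setminus B_i$, both resulting sets being the original bases. But the rest does not close the gap. Your proposed route to parts $(2)$ and $(3)$ is to ``show that $J_M$ is saturated,'' and you correctly observe that this is essentially equivalent to part $(2)$ itself; the paper makes precisely this point in the corollary following Theorem~\ref{TheoremMain2}. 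So this is a reformulation, not a proof, and the proposed cancellation argument (``each factor of $y_B$ \ldots should therefore be removable one at a time'') is exactly the step that nobody knows how to carry out in general. Likewise, replacing strong base orderability by Seymour's decomposition of regular matroids is speculative: even if it worked, it would not go beyond regular matroids, and it is not even clear that $TE(2)$ is closed under the gluing operations in Seymour's theorem (the paper explicitly flags that closedness of $TE(2)$ under direct sum is open; see the discussion before Lemma~\ref{Lemma12} and Corollary~\ref{Corollary12}). Finally, for part $(1)$ you offer no argument beyond the observation that one would need to simulate permutations by $\sim_1$-moves, which is the content of Lemma~\ref{Lemma12} and Proposition~\ref{Proposition12}, and again this is a reformulation rather than a proof. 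One small caution: spikes and whirls fail to be strongly base orderable, but they are not known to violate the conjecture, so citing them as ``pathological exchange behaviour that obstructs'' the conjecture overstates what is known.
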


Clearly, Conjecture \ref{ConjectureWhite} coincides with Conjecture \ref{ConjectureOriginal} $(2)$. It is straightforward \cite[Proposition 5]{Wh80} that:
\begin{enumerate} 
\item $TE(1)\subset TE(2)\subset TE(3)$,
\item classes $TE(1),TE(2)$ and $TE(3)$ are closed under taking minors and dual matroid,
\item classes $TE(1)$ and $TE(3)$ are closed under direct sum.
\end{enumerate}

White claims also that the class $TE(2)$ is closed under direct sum, however unfortunately there is a gap in his proof. We believe that it is an open question. Corollary \ref{Corollary12} will show some consequences of $TE(2)$ being closed under direct sum to the relation between classes $TE(1)$ and $TE(2)$. 

\begin{lemma}\label{Lemma12}
The following conditions are equivalent for a matroid $M$:
\begin{enumerate}
\item $M\in TE(1)$,
\item $M\in TE(2)$ and for any two bases $(B_1,B_2)\sim_1 (B_2,B_1)$ holds.
\end{enumerate}
\end{lemma}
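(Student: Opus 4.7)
The implication $(1)\Rightarrow(2)$ is almost immediate. Since every symmetric exchange is also a symmetric exchange together with no permutation, we have $\sim_1\subset\sim_2$, and hence $TE(1)\subset TE(2)$ (as already noted in the paper). For the second part of (2), observe that for any two bases $B_1, B_2$ the two-element sequences $(B_1,B_2)$ and $(B_2,B_1)$ are compatible (their multiset unions are both $B_1\cup B_2$), so by $M\in TE(1)$ we have $(B_1,B_2)\sim_1 (B_2,B_1)$.

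The substantive direction is $(2)\Rightarrow(1)$. Let $\mathsf{B}=(B_1,\dots,B_n)$ and $\mathsf{D}=(D_1,\dots,D_n)$ be compatible. By $M\in TE(2)$ they can be connected by a finite chain of moves, each of which is either (a) a single symmetric exchange on two positions, or (b) a permutation of the sequence. Every single move of type (a) is already a $\sim_1$-step, so the task reduces to showing that every move of type (b) can be replaced by a finite sequence of $\sim_1$-steps performed on the current sequence. Since every permutation is a product of transpositions, it suffices to handle a single transposition swapping the bases in positions $i<j$.

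To simulate such a transposition, apply the hypothesis to the pair $(B_i',B_j')$ currently in those two positions: there is a $\sim_1$-chain of two-element sequences
$$(B_i',B_j')=(C_0^{(1)},C_0^{(2)})\to(C_1^{(1)},C_1^{(2)})\to\cdots\to(C_m^{(1)},C_m^{(2)})=(B_j',B_i'),$$
where each arrow is a single symmetric exchange. Performing exactly these symmetric exchanges at positions $i$ and $j$ in the full sequence (leaving all other positions fixed) is a legal $\sim_1$-derivation, because the definition of $\sim_1$ allows symmetric exchanges between any two positions of a sequence. Each intermediate sequence is automatically compatible with $\mathsf{B}$ since the multiset $C_t^{(1)}\cup C_t^{(2)}$ is preserved along the chain. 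The net effect is to swap positions $i$ and $j$, which is precisely the transposition we wanted. Composing these simulations along the entire $\sim_2$-derivation yields $\mathsf{B}\sim_1\mathsf{D}$, proving $M\in TE(1)$.

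The only point requiring care is the lifting argument in the last paragraph: one must check that replacing a permutation by the corresponding chain of symmetric exchanges inside a longer sequence produces legal $\sim_1$-moves, which follows directly from the fact that the definition of $\sim_1$ does not restrict which two positions of the sequence a symmetric exchange is applied to. No further matroid-theoretic input is needed beyond the two hypotheses of (2).
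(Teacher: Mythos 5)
Your proof is correct and follows the same approach as the paper, which is simply to reduce to transpositions (since every permutation is a product of transpositions) and then simulate each transposition by the $\sim_1$-chain guaranteed by the second hypothesis; your write-up is just a more detailed expansion of the same one-line idea.
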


\begin{proof}
Implication $(1)\Rightarrow (2)$ is clear just from the definition. To get the opposite implication it is enough to recall that any permutation is a composition of transpositions.
\end{proof}

\begin{proposition}\label{Proposition12}
The following conditions are equivalent:
\begin{enumerate}
\item $M\in TE(1)$,
\item $M\sqcup M\in TE(1)$,
\item $M\sqcup M\in TE(2)$.
\end{enumerate}
\end{proposition}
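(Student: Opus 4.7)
The implications $(1)\Rightarrow(2)$ and $(2)\Rightarrow(3)$ are immediate from the properties listed right after Conjecture \ref{ConjectureOriginal}: $TE(1)$ is closed under direct sum, and $TE(1)\subset TE(2)$. The content of the proposition lies in the converse $(3)\Rightarrow(1)$.

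For $(3)\Rightarrow(1)$, I would invoke Lemma \ref{Lemma12}, which reduces the task to (a) checking $M\in TE(2)$ and (b) verifying $(B_1,B_2)\sim_1(B_2,B_1)$ for every pair of bases $B_1,B_2$ of $M$. Part (a) is immediate: $M$ is a minor (the restriction to one of the two summands) of $M\sqcup M$, and $TE(2)$ is closed under taking minors.

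For part (b), let $B_1,B_2$ be bases of $M$ and write $X^{(2)}$ for the copy of $X\subset E$ sitting inside the second summand. Consider the two sequences of bases of $M\sqcup M$
$$\mathsf{S}_1=(B_1\sqcup B_2^{(2)},\; B_2\sqcup B_1^{(2)}),\qquad \mathsf{S}_2=(B_2\sqcup B_2^{(2)},\; B_1\sqcup B_1^{(2)}).$$
Both have multiset union $(B_1\cup B_2)\sqcup(B_1\cup B_2)^{(2)}$, so they are compatible, and the hypothesis produces a $\sim_2$-chain between them. Every basis of $M\sqcup M$ meets each summand in exactly $r(M)$ elements, so every symmetric exchange along the chain is confined to a single summand; each step is therefore either a swap of the ordered pair or a symmetric exchange inside just one copy.

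The key technical step is a commutation lemma: a swap of the pair followed by a symmetric exchange inside one summand can be replaced by the same exchange (with the roles of its two distinguished elements interchanged) followed by a swap, a direct verification from Definition \ref{DefinitionBases}. Iteratively applying this commutation lets me rearrange the chain so that all $k$ swaps are gathered at the very end. A sequence of $k$ consecutive swaps equals the identity if $k$ is even and a single swap if $k$ is odd. In the even case, the rearranged chain is a $\sim_1$-chain in $M\sqcup M$ from $\mathsf{S}_1$ to $\mathsf{S}_2$; its projection onto the first summand is a $\sim_1$-chain in $M$ from $(B_1,B_2)$ to $(B_2,B_1)$. In the odd case, the chain ends at the swap of $\mathsf{S}_2$, namely $(B_1\sqcup B_1^{(2)},\; B_2\sqcup B_2^{(2)})$; its projection onto the second summand is a $\sim_1$-chain from $(B_2,B_1)$ to $(B_1,B_2)$ in $M$, which by the symmetry of $\sim_1$ gives the required $(B_1,B_2)\sim_1(B_2,B_1)$. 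The main obstacle is the commutation lemma; once it is in place, the two parity cases run in parallel and the projection argument is immediate from the direct-sum structure of symmetric exchanges.
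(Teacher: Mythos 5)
Your proof is correct and follows essentially the same approach as the paper: reduce via Lemma \ref{Lemma12}, lift the pair $(B_1,B_2)$ to a pair of compatible sequences in $M\sqcup M$, commute all swaps to the end of the $\sim_2$-chain (what the paper compresses into the phrase ``by symmetry, without loss of generality we can assume that permutations are not needed''), and project onto one of the two coordinates depending on the parity of the number of swaps. The only cosmetic differences are that you establish $M\in TE(2)$ by minor-closure of $TE(2)$ rather than by padding $M$-sequences with a fixed basis as the paper does, and you lift to $([B_1,B_2],[B_2,B_1])$ versus $([B_2,B_2],[B_1,B_1])$ rather than the paper's $([B_1,B_1],[B_2,B_2])$ versus $([B_2,B_1],[B_1,B_2])$; neither change affects the substance.
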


\begin{proof}
Implications $(1)\Rightarrow (2)\Rightarrow (3)$ were already discussed. To get $(3)\Rightarrow (1)$ suppose that a matroid $M$ satisfies $M\sqcup M\in TE(2)$. 

First we prove that $M\in TE(2)$. Let $\mathsf{B}=(B_1,\dots,B_n)$ and $\mathsf{D}=(D_1,\dots,D_n)$ be compatible sequences of bases of $M$. If $B$ is a basis of $M$ then $\mathsf{B'}=([B_1,B],\dots)$ and $\mathsf{D'}=([D_1,B],\dots)$ are compatible sequences of bases of $M\sqcup M$ (where $[B',B'']$ denotes a basis of $M\sqcup M$ consisting of basis $B'$ of $M$ on the first copy and $B''$ on the second). From the assumption we have $\mathsf{B'}\sim_2\mathsf{D'}$. Notice that any symmetric exchange in $M\sqcup M$ is on the bases appearing on the first coordinate either trivial or a symmetric exchange. Thus, the same symmetric exchanges certify that $\mathsf{B}\sim_2\mathsf{D}$ in $M$. 

Now due to Lemma \ref{Lemma12} in order to prove $M\in TE(1)$ it is enough to show that for any two bases $B_1,B_2$ of $M$ relation $(B_1,B_2)\sim_1 (B_2,B_1)$ holds. Sequences of bases $([B_1,B_1],[B_2,B_2])$ and $([B_2,B_1],[B_1,B_2])$ in $M\sqcup M$ are compatible. Thus by the assumption one can be obtained from the other by a composition of symmetric exchanges and permutations. By the symmetry, without loss of generality we can assume that permutation are not needed. Now the projection of this symmetric exchanges to the first coordinate shows that $(B_1,B_2)\sim_1 (B_2,B_1)$ in $M$.
\end{proof}

As a corollary of Proposition \ref{Proposition12} we get that for reasonable classes of matroids the `standard' version of White's conjecture is equivalent to the `strong' one.

\begin{corollary}\label{Corollary12}
If a class of matroids $\mathfrak{C}$ is closed under direct sums, then $\mathfrak{C}\subset TE(1)$ if and only if $\mathfrak{C}\subset TE(2)$. In particular:
\begin{enumerate}
\item strongly base orderable matroids belong to $TE(1)$, 
\item Conjectures \ref{ConjectureOriginal} (1) and (2) are equivalent,
\item $TE(2)$ is closed under direct sum if and only if $TE(1)=TE(2)$. 
\end{enumerate}
\end{corollary}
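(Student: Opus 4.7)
The plan is to derive everything as a direct consequence of Proposition \ref{Proposition12}, so that the corollary is essentially a packaging of that proposition together with the closure properties listed after Conjecture \ref{ConjectureOriginal}. The forward direction of the main biconditional is trivial because $TE(1)\subset TE(2)$, so $\mathfrak{C}\subset TE(1)$ immediately gives $\mathfrak{C}\subset TE(2)$. For the backward direction, assume $\mathfrak{C}$ is closed under direct sums and $\mathfrak{C}\subset TE(2)$. Then for any $M\in\mathfrak{C}$ we have $M\sqcup M\in\mathfrak{C}\subset TE(2)$, and the implication $(3)\Rightarrow(1)$ of Proposition \ref{Proposition12} yields $M\in TE(1)$. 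This establishes the main statement without further computation.

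For item (1), I first verify that the class of strongly base orderable matroids is closed under direct sum; this is a routine check from Definition \ref{DefinitionStronglyBaseOrderable}, where a witnessing bijection for a pair of bases of $M_1\sqcup M_2$ is obtained by gluing the bijections provided on each summand. Combined with Theorem \ref{TheoremMain1} (which places strongly base orderable matroids in $TE(2)$), the main biconditional upgrades this to membership in $TE(1)$. For item (2), I apply the main biconditional to $\mathfrak{C}$ equal to the class of all matroids, which is trivially closed under direct sums; since $TE(1)\subset TE(2)\subset \{\text{all matroids}\}$, the equalities ``all matroids $=TE(1)$'' and ``all matroids $=TE(2)$'' are equivalent, which is exactly the equivalence of Conjectures \ref{ConjectureOriginal}(1) and (2). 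For item (3), if $TE(2)$ is closed under direct sum then the main biconditional applied to $\mathfrak{C}=TE(2)$ gives $TE(2)\subset TE(1)$, and together with $TE(1)\subset TE(2)$ this forces equality; conversely, if $TE(1)=TE(2)$, then $TE(2)$ inherits closure under direct sum from $TE(1)$, which the excerpt already records.

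The main obstacle is genuinely slight: the only nontrivial content outside Proposition \ref{Proposition12} is the closure-under-direct-sum check for strongly base orderable matroids, and this reduces to an assembly of the two per-summand bijections. Everything else is a formal diagram chase, exploiting the asymmetry between $TE(1)$ and $TE(2)$ already measured by Proposition \ref{Proposition12}: doubling a matroid upgrades $TE(2)$-information into $TE(1)$-information on the original.
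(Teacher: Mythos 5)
Your proof is correct and takes essentially the same route the paper intends: the paper states Corollary \ref{Corollary12} without proof, as a direct consequence of Proposition \ref{Proposition12} together with the closure facts recorded just after Conjecture \ref{ConjectureOriginal}, and your write-up simply makes that deduction explicit (including the routine direct-sum closure check for strongly base orderable matroids).
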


Now we proceed to the alternative combinatorial description of White's conjectures by Blasiak \cite{Bl08}.

A matroid $M$ whose ground set $E$ is a union of two disjoint bases is called a \emph{$2$-matroid}. With a $2$-matroid $M$ we associate a graph $\mathfrak{B}_2(M)$ whose vertices are pairs of bases $(B_1,B_2)$ of $M$ whose union is $E$, and with edges between pairs of bases corresponding to symmetric exchange. This graph was already defined in 1985 by Farber, Richter and Shank \cite{FaRiSh85}, who conjectured that it is always connected. They proved it for graphic $2$-matroids (see also \cite{AnHoMe13} for a game version).

Blasiak defined an analog of graph $\mathfrak{B}_2(M)$ for matroids whose ground set is a disjoint union of more bases. Let $k\geq 3$. A matroid $M$ whose ground set $E$ is a union of $k$ disjoint bases is called a \emph{$k$-matroid}. With a $k$-matroid $M$ we associate a graph $\mathfrak{B}_k(M)$ whose vertices are sets of $k$ bases $\{B_1,\dots,B_k\}$ of $M$ whose union is $E$. An edge joins two vertices if they have non-empty intersection, that is if one of bases is in both partitions. Blasiak \cite{Bl08} proves the following.

\begin{proposition}
Conjecture \ref{ConjectureOriginal} (3) (`weak' version of White's conjecture) is equivalent to the fact that for every $k\geq 3$ and for every $k$-matroid $M$ the graph $\mathfrak{B}_k(M)$ is connected.

Conjecture \ref{ConjectureOriginal} (1) (`strong' version of White's conjecture) is equivalent to the fact that for every $k\geq 2$ and for every $k$-matroid $M$ the graph $\mathfrak{B}_k(M)$ is connected.
\end{proposition}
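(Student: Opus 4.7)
The plan is to prove each equivalence by establishing both implications, handling the weak ($\sim_3$) and strong ($\sim_1$) versions in parallel. The forward implication --- conjecture implies connectivity --- is essentially immediate from the definitions. Given a $k$-matroid $M$ and two of its partitions $\mathcal{P},\mathcal{P}'$ into $k$ bases, view them as compatible sequences of length $k$ and apply the assumed form of White's conjecture to obtain a chain of multiple (respectively single) symmetric exchanges connecting them. Each such exchange modifies exactly two bases of the sequence; hence for $k\geq 3$ consecutive partitions share at least $k-2\geq 1$ bases, giving an edge in $\mathfrak{B}_k(M)$, while for $k=2$ in the strong version the single symmetric exchanges are by definition the edges of $\mathfrak{B}_2(M)$.

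For the converse I would prove the relevant $\sim_3$ (respectively $\sim_1$) statement for every matroid by induction on the length $n$ of two compatible sequences $\mathsf{B}=(B_1,\dots,B_n)$ and $\mathsf{D}=(D_1,\dots,D_n)$. The key device is the blow-up matroid $M'$ (Definition \ref{DefinitionBlowUp}) in which each element $e$ of the ground set $E$ is replaced by $\mu(e)$ parallel copies, where $\mu(e)$ denotes the multiplicity of $e$ in $B_1\cup\cdots\cup B_n$ (equivalently in $D_1\cup\cdots\cup D_n$, by compatibility). Distributing the copies among the bases of $\mathsf{B}$ and, independently, among those of $\mathsf{D}$ produces lifts $\tilde{\mathsf{B}},\tilde{\mathsf{D}}$ that form partitions of the ground set $E'$ of $M'$ into $n$ bases, so that $M'$ is an $n$-matroid. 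A crucial routine observation is that any multiple or single symmetric exchange carried out in $M'$ projects to an exchange of the same type in $M$ upon forgetting the copy labels; single exchanges between two copies of the same element of $E$ project to the identity and can simply be dropped.

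For the base case of the weak version ($n=2$) the multiple symmetric exchange property (Proposition \ref{PropositionMultipleSymmetricExchange}) already produces a single multiple symmetric exchange relating $\mathsf{B}$ and $\mathsf{D}$. For the base case of the strong version ($n=2$) the assumed connectivity of $\mathfrak{B}_2(M')$ directly supplies a path whose edges are single symmetric exchanges, which projects to the required chain in $M$. For the inductive step $n\geq 3$ I would use the hypothesised connectivity of $\mathfrak{B}_n(M')$ to obtain a walk $\tilde{\mathsf{B}}=\mathcal{P}_0,\mathcal{P}_1,\dots,\mathcal{P}_s=\tilde{\mathsf{D}}$ in which each consecutive pair $\mathcal{P}_i,\mathcal{P}_{i+1}$ shares a common basis $\tilde C_i$. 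Removing $\tilde C_i$ leaves two partitions of $E'\setminus\tilde C_i$ into $n-1$ bases of the restriction $M'|_{E'\setminus\tilde C_i}$, which is an $(n-1)$-matroid to which the inductive hypothesis applies; the resulting chain of exchanges in the restriction remains valid in $M'$, and its projection to $M$ extends the chain from $\mathsf{B}$ to $\mathsf{D}$.

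The main obstacle I anticipate is verifying that the blow-up correspondence cleanly intertwines the exchange operations in $M$ and $M'$, and in particular checking that the restriction $M'|_{E'\setminus\tilde C_i}$ has rank equal to $r(M')$, so that the bases of $M'$ contained in $E'\setminus\tilde C_i$ really are bases of this restriction. Once that combinatorial bookkeeping is in place, both halves of the proposition follow from the same uniform induction.
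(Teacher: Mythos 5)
Note first that the paper does not prove this proposition itself --- it is quoted from Blasiak \cite{Bl08} with no argument given --- so your attempt has to be judged on its own merits rather than compared to an in-paper proof. The forward implications and your converse for the weak version are sound: a swap of two coordinates is itself a multiple symmetric exchange, so $\sim_3$ absorbs permutations and the orderings of the partitions along the walk are irrelevant. The strong version, however, contains a real gap.

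For $n\geq 3$ the vertices of $\mathfrak{B}_n(M')$ are \emph{unordered} sets of $n$ bases, whereas $\sim_1$ is a relation on ordered sequences that does not allow permutations for free. When you chain the inductive steps along a walk $\mathcal{P}_0,\dots,\mathcal{P}_s$, the inductive hypothesis controls the unordered partition and one locked coordinate (the one carrying the common basis $\tilde C_i$); what you actually produce is a chain of single symmetric exchanges from $\tilde{\mathsf{B}}$ to \emph{some} reordering $\sigma(\tilde{\mathsf{D}})$ of the target, not to $\tilde{\mathsf{D}}$ itself. The simplest failure: take $\mathsf{B}=(B_1,B_2,B_3)$ and $\mathsf{D}=(B_2,B_1,B_3)$ with $B_1,B_2,B_3$ pairwise disjoint, so that $\tilde{\mathsf{B}}$ and $\tilde{\mathsf{D}}$ lift to the \emph{same} vertex of $\mathfrak{B}_3(M')$; the walk may then be trivial and your construction yields the empty chain although $\mathsf{B}\neq\mathsf{D}$. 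What is missing is a proof that $\sigma(\tilde{\mathsf{D}})\sim_1\tilde{\mathsf{D}}$, which, after writing $\sigma$ as a product of transpositions, reduces to showing $(X,Y)\sim_1(Y,X)$ for disjoint bases $X,Y$ spliced into two fixed coordinates. This transposition property is exactly what the extra $k=2$ hypothesis in the strong version supplies over the weak one, and it is the content of Lemma~\ref{Lemma12} in this chapter ($TE(1)=TE(2)$ together with the transposition condition); your inductive step never invokes it. A smaller remark: the weak-version base case does not actually need Proposition~\ref{PropositionMultipleSymmetricExchange} --- the single exchange with $A_1=B_1\setminus D_1$ and $A_2=D_1\setminus B_1$ sends $(B_1,B_2)$ to $(D_1,D_2)$, and the outputs are bases by hypothesis.
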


In the same way as we associate a toric ideal with a matroid one can associate a toric ideal $I_P$ with a discrete polymatroid $P$. Herzog and Hibi \cite{HeHi02} extend White's conjecture to discrete polymatroids, they also ask if the toric ideal $I_P$ of a discrete polymatroid possesses a quadratic Gr\"{o}bner basis (see also \cite{St95}). 

\begin{remark}
Theorem \ref{TheoremMain1} and Theorem \ref{TheoremMain2} are also true for discrete polymatroids. 
\end{remark}

There are several ways to prove that our results hold also for discrete polymatroids. One possibility is to use 
Lemma $5.4$ from \cite{HeHi02} to reduce a question if a binomial is generated by quadratic binomials corresponding to symmetric exchanges from a discrete polymatroid to a certain matroid. 
Another possibility is to associate to a discrete polymatroid $P\subset\Z^n$ a matroid $M_P$ on the ground set $\{1,\dots,r(P)\}\times\{1,\dots,n\}$ in which a set $I$ is independent if there is $v\in P$ such that $\left\vert I\cap\{1,\dots,r(P)\}\times\{i\}\right\vert\leq v_i$ holds for all $i$. It is straightforward that compatibility of sequences of bases and generation are the same in $P$ and in $M_P$. Moreover, one can easily prove that a symmetric exchange in $M_P$ corresponds to at most two symmetric exchanges in $P$.

\part{Applications of Matroids}

\chapter{Obstacles for Splitting Necklaces}

The results of this Chapter come from our paper \cite{La13c} and concern the famous necklace splitting problem. Suppose we are given a colored necklace, and we want to cut it so that the resulting pieces can be fairly split into two (or more) parts. What is the minimum number of cuts we have to make?

In the continuous version of the problem (from which a discrete one follows) a necklace is an interval, a coloring is a partition of a necklace into Lebesgue measurable sets, and a splitting is fair if each part captures the same measure of every color. A theorem of Goldberg and West \cite{GoWe85} asserts that if the number of parts is two, then every $k$-colored necklace has a fair splitting using at most $k$ cuts. This fact has a simple proof \cite{AlWe86} using the celebrated Borsuk-Ulam theorem (cf. \cite{Ma03}). Alon \cite{Al87} extended this result to arbitrary number of parts $q$ by showing that in this case $k(q-1)$ cuts suffice.

In a joint paper with Alon, Grytczuk, and Michałek \cite{AlGrLaMi09} we studied a kind of opposite question, which was motivated by the problem of Erd\H{o}s on strongly nonrepetitive sequences \cite{Er61} (cf. \cite{Cu93,De79,Gr08}). Namely, for which values of $k$ and $t$ there is a $k$-coloring of the real line such that no interval has a fair splitting into two parts with at most $t$ cuts? A theorem of Goldberg and West \cite{GoWe85} implies that $k>t$ is a necessary condition. We prove that $k>t+2$ is sufficient. 

In this Chapter we generalize this theorem to Euclidean spaces of arbitrary dimension $d$, and to arbitrary number of parts $q$. In higher dimensional problem a role of an interval plays an axis-aligned cube, and the cuts are made by axis-aligned hyperplanes. We prove that $k(q-1)>t+d+q-1$ is a sufficient condition (Theorem \ref{TheoremNecklaces}). Our bound is of the same order as a necessary condition $k(q-1)>t$, which is a consequence of a theorem of Alon \cite{Al87} (or by a more general theorem of de Longueville and \v{Z}ivaljevi\'{c} \cite{LoZi08}). This substantially improves a result of Grytczuk and Lubawski \cite{GrLu12}, which is exponential in $d$. Moreover for $d=1,q=2$ we get exactly the result of our previous paper \cite{AlGrLaMi09}. Additionally, we prove that if a stronger inequality $k(q-1)>dt+d+q-1$ is satisfied, then we can avoid necklaces in $\R^d$ with a fair $q$-splitting using arbitrary $t$ hyperplane cuts (Theorem \ref{TheoremNecklacesArbitrary}).

In an another result from \cite{AlGrLaMi09} of a slightly different flavor we proved that there is a $5$-coloring of the real line distinguishing intervals, that is no two intervals contain the same measure of every color (this solves a problem from \cite{GrSl03}). We generalize it by proving that there is a $(2d+3)$-coloring of $\R^d$ distinguishing axis-aligned cubes (Theorem \ref{2d+3}).

The main framework of our proofs is the topological Baire category theorem. However, the crucial part in the reasoning, which impacts on the bounds we achieve, is several application of algebraic matroids. 

\section{Introduction}

The following problem is well-known as the necklace splitting problem: \smallskip\newline
\emph{A necklace colored with $k$ colors has been stolen by $q$ thieves. They intend to share the necklace fairly, but they do not know what are the values of different colors. Therefore they want to cut it so that the resulting pieces can be fairly split into $q$ parts, which means that each part captures the same amount of every color. What is the minimum number of cuts they have to make?}\smallskip 

To be more precise a \emph{discrete $k$-colored necklace} is an interval in $\N$ colored with $k$ colors. A \emph{fair $q$-splitting of size $t$} is a division of the necklace using $t$ cuts into $t+1$ intervals, which can be split into $q$ parts with equal number of integers of every color. 

Goldberg and West \cite{GoWe85} proved that every discrete $k$-colored necklace with number of integers of every color divisible by $2$ has a fair $2$-splitting of size at most $k$ (see also \cite{AlWe86} for a short proof using the Borsuk-Ulam theorem, and \cite{Ma03} for other applications of the Borsuk-Ulam theorem in combinatorics). It is easy to see that this bound is tight. Consider a necklace consisting of $k$ pairs of consecutive integers, each pair colored differently. 

A famous generalization of Alon \cite{Al87} to the case of $q$ thieves, asserts that every discrete $k$-colored necklace with number of integers of every color divisible by $q$ has a fair $q$-splitting of size at most $k(q-1)$. A similar argument shows that the number of cuts is optimal.

All proofs of these discrete theorems go through a continuous version of the problem, in which a necklace is an interval, a coloring is a partition of a necklace into Lebesgue measurable sets, and a splitting is fair if each part captures the same measure of every color. 

Alon \cite{Al87} got even a more general version in which a coloring is replaced by a collection of arbitrary continuous probability measures (this generalizes a theorem of Hobby and Rice \cite{HoRi65}). 

\begin{theorem}[Alon, \cite{Al87}]\label{al}
Let $\mu_1,\dots,\mu_k$ be continuous probability measures on the interval $[0,1]$. Then there exists a division of  $[0,1]$ using at most $k(q-1)$ cuts into subintervals, which can be split into $q$ parts having equal measure $\mu_i$ for each $i$. 
\end{theorem}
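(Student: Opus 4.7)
The plan is to apply the classical equivariant-topological strategy for necklace splitting. First I would reduce to the case when $q$ is prime. Suppose the theorem holds for any prime number of parts, and write $q = pq'$ with $p$ prime and $q' \geq 2$. Applying the theorem with $p$ parts to the measures $\mu_1, \dots, \mu_k$ yields $k(p-1)$ cuts dividing $[0,1]$ into subintervals that can be grouped into $p$ sets $P_1, \dots, P_p$ with $\mu_i(P_j) = 1/p$ for all $i, j$. Each $P_j$ is a finite union of subintervals; concatenating them into a single interval $J_j$ (and pulling the measures back isometrically), applying the theorem with $q'$ parts to $J_j$, and translating the resulting cuts back, costs at most $k(q'-1)$ additional cuts per $j$. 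The total is $k(p-1) + p \cdot k(q'-1) = k(pq'-1) = k(q-1)$, so by induction on the number of prime factors of $q$ it suffices to treat prime $q$.

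For prime $q$, set $N = k(q-1)$ and parametrize $q$-splittings with $N$ cuts by the configuration space $X = \Z_q^{*(N+1)}$, the $(N+1)$-fold join. A point of $X$ is a formal sum $t_1 z_1 + \dots + t_{N+1} z_{N+1}$ with $t_i \geq 0$, $\sum_i t_i = 1$, and $z_i \in \Z_q$ (the coordinate $z_i$ being dropped when $t_i = 0$); such a point encodes the $N+1$ consecutive subintervals of lengths $t_i$ together with the assignment of piece $i$ to thief $z_i$. Let $W = \{(a_1, \dots, a_q) \in \R^q : a_1 + \dots + a_q = 0\}$ denote the standard irreducible $\Z_q$-representation, and define the test map $f : X \to W^k$ whose $(i,j)$-coordinate records the $\mu_i$-measure allocated to thief $j$ minus $1/q$. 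The map $f$ is continuous and $\Z_q$-equivariant with respect to the diagonal shift of labels, and its zeros correspond precisely to fair $q$-splittings.

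Finally, the space $X$ is $(N-1)$-connected, the $\Z_q$-action on $X$ is free (any nontrivial shift moves every label present), and since $q$ is prime $\Z_q$ acts freely on $W^k \setminus \{0\}$, whose unit sphere has dimension $k(q-1) - 1 = N-1$. An equivariant Borsuk-Ulam theorem for free actions of prime cyclic groups --- such as Dold's theorem, or the result of B\'ar\'any, Shlosman and Sz\H{u}cs --- rules out any $\Z_q$-equivariant map from an $(N-1)$-connected free $\Z_q$-space to the sphere $S(W^k)$, so $f$ must vanish somewhere and this zero provides the desired fair splitting. The hard part is this equivariant topology step: one must verify the connectivity of the join $X$, check freeness of the two $\Z_q$-actions (which is exactly where primality of $q$ in the target enters), and invoke the correct non-existence result for equivariant maps --- the reduction to prime $q$ and the construction of the test map are comparatively routine once this ingredient is in hand.
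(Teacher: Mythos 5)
The paper does not prove this theorem --- it is cited from Alon \cite{Al87} and used as a black box in what follows --- so there is no internal proof to compare against. Your sketch reproduces Alon's original argument: the reduction to prime $q$ by iterated splitting (and the cut-count arithmetic $k(p-1) + p\cdot k(q'-1) = k(q-1)$ is right), then for prime $q$ the configuration space $\Z_q^{*(N+1)}$ with $N = k(q-1)$, the test map into $W^k$ where $W$ is the standard $(q-1)$-dimensional $\Z_q$-representation, and the Dold / B\'ar\'any--Shlosman--Sz\H{u}cs obstruction to a $\Z_q$-equivariant map onto $S(W^k)$. All the ingredients you list check out: the $(N+1)$-fold join of $q$ points is $(N-1)$-connected; the shift action on it is free because a nontrivial shift moves every label that appears; primality of $q$ is precisely what makes the cyclic shift act freely on $W^k\setminus\{0\}$, whose sphere has dimension $N-1$; and continuity of the test map requires only that the $\mu_i$ be atomless, which is the meaning of ``continuous measure'' here. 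This is a correct outline of the standard proof.
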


Here we are interested in multidimensional necklace splitting problem. Notice that every axis-aligned hyperplane in $\R^d$ is perpendicular to exactly one of axes and parallel to others. Those perpendicular to $i$-th axis we call \emph{aligned to $i$-th axis}. Hence any collection of $t$ axis-aligned hyperplane cuts gives a natural partition $t=t_1+\dots+t_d$, where exactly $t_i$ hyperplanes are aligned to $i$-th axis. By projecting, Alon's Theorem \ref{al} implies that for any $k$ continuous probability measures on $[0,1]^d$ there exists a fair $q$-splitting using at most $k(q-1)$ hyperplanes aligned to $1$-st axis. De Longueville and \v{Z}ivaljevi\'{c} generalized it to a version in which we can fix arbitrary partition. 

\begin{theorem}[de Longueville, \v{Z}ivaljevi\'{c}, \cite{LoZi08}]\label{lozi}
Let $\mu_1,\dots,\mu_k$ be continuous probability measures on $d$-di\-men\-sion\-al cube $[0,1]^d$. For any selection of non-negative integers $t_1,\dots,t_d$ satisfying $k(q-1)=t_1+\dots+t_d$ there exists a division of $[0,1]^d$ using only axis-aligned hyperplane cuts and at most $t_i$ perpendicular to $i$-th axis into cuboids, which can be split into $q$ parts having equal measure $\mu_i$ for each $i$. 
\end{theorem}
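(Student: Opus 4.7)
The plan is to prove Theorem \ref{lozi} via the configuration space / test map paradigm of equivariant topology, generalizing the Borsuk--Ulam proof of the one-dimensional result due to Alon. I would construct a compact $\Z_q$-space $X$ parametrizing all labeled cut configurations with the prescribed budget $(t_1,\dots,t_d)$, define a $\Z_q$-equivariant test map $F$ from $X$ to a real representation $V$ of $\Z_q$ whose zero set corresponds exactly to fair splittings, and then invoke a Borsuk--Ulam / Dold-type theorem forcing $F$ to vanish.

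Concretely, for each axis $i$ I would model ``at most $t_i$ cuts labeled by $q$ parts'' as a point of the $q$-fold join $X_i := (\Delta^{t_i})^{*q}$, whose point $\sum_{j=1}^{q} \lambda_j p_j$ encodes that color $j$ occupies total length $\lambda_j$ along axis $i$, distributed in subintervals according to $p_j \in \Delta^{t_i}$. The cyclic group $\Z_q$ acts by permuting the $q$ factors. Taking $X := X_1 \times \cdots \times X_d$ with the diagonal $\Z_q$-action, a point of $X$ canonically determines an ordered measurable partition $(A_1,\dots,A_q)$ of $[0,1]^d$: a point of the cube belongs to $A_j$ when, along every axis, its slab carries the label $j$. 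The test map
$$F(x) = \bigl(\mu_\ell(A_j(x)) - \tfrac{1}{q}\bigr)_{1 \le \ell \le k,\; 1 \le j \le q}$$
lands, by the constraint $\sum_j \mu_\ell(A_j) = 1$, in $k$ copies of the reduced regular real representation $V := \widetilde{\R}[\Z_q]$, so that $\dim V^k = k(q-1)$; it is $\Z_q$-equivariant by construction, and a zero of $F$ in $V^k$ is by definition a fair $q$-splitting of size $(t_1,\dots,t_d)$ for all $k$ measures simultaneously.

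The final step is a Borsuk--Ulam-type non-existence statement: an equivariant map from a sufficiently highly connected free $\Z_q$-space into $V^k$ must vanish somewhere. For prime $q$ I would invoke Dold's theorem, and for composite $q$ the Volovikov and \v{C}adek--Kr\v{c}\'{a}l--Matou\v{s}ek extension; the relevant connectivity of $X$ follows from the standard join formula $\mathrm{conn}(A*B) = \mathrm{conn}(A) + \mathrm{conn}(B) + 2$ together with contractibility of each $\Delta^{t_i}$, and the dimension of the target matches the threshold exactly because $\sum t_i = k(q-1)$. I expect the main obstacle to be the non-freeness of the $\Z_q$-action: each $X_i$ contains a diagonal fixed sub-complex where $\lambda_1 = \cdots = \lambda_q$, so $X$ is not $\Z_q$-free. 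My plan to handle this is a three-step reduction: (i) approximate to measures of full support; (ii) verify by a generic-position argument that on such measures, $F$ cannot vanish on the fixed-point locus (an equidistributed diagonal configuration forces identical pieces, and identical pieces cannot simultaneously split all $k$ measures into equal parts except in the trivially acceptable case); (iii) then apply Dold or Volovikov on the free complement. For composite $q$ the elementary degree arguments break down and I would additionally resort to the Fadell--Husseini ideal-valued index to push the vanishing through.
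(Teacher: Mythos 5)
The paper does not prove Theorem \ref{lozi}; it is a result of de Longueville and \v{Z}ivaljevi\'{c} cited from \cite{LoZi08} and stated here only as background for the ``obstacles'' results of the chapter, so there is no in-paper proof to compare your argument against. Judged on its own, however, your sketch has a genuine gap at its foundation: the configuration space does not model the combinatorics of axis-aligned multidimensional splitting. After making $t_i$ cuts perpendicular to the $i$-th axis one obtains a grid of $(t_1+1)\cdots(t_d+1)$ cuboids, and the theorem allows each cuboid to be assigned independently to any of the $q$ parts. You instead label slabs along each axis separately (a point of $(\Delta^{t_i})^{*q}$) and declare a point of $[0,1]^d$ to lie in $A_j$ iff its slab label agrees with $j$ along \emph{every} axis. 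Then any point whose axis labels disagree belongs to no $A_j$, so $(A_1,\dots,A_q)$ is not a partition of the cube; and even if the mixed-label region were assigned to some default part, the resulting ``product-type'' labelings form a tiny slice of all grid labelings, within which a fair splitting need not exist. Your test map $F$ is therefore defined on the wrong domain, and no Borsuk--Ulam-type theorem can rescue it.

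There are secondary warning signs that point to the same problem. Since each $\Delta^{t_i}$ is contractible, so is $(\Delta^{t_i})^{*q}$ and hence $X = \prod_i (\Delta^{t_i})^{*q}$; your connectivity bound from the join formula is then trivially infinite, which means the dimension count plays no role and the entire content of the theorem would have to come from the non-free locus. You acknowledge non-freeness, but the proposed three-step fix (full-support approximation, genericity on the fixed locus, then Dold/Volovikov on the free part) is a plan rather than an argument: step (ii), that $F$ cannot vanish on the $\Z_q$-fixed locus, is a nontrivial claim needing its own proof, and for composite $q$ the index machinery (Volovikov, Fadell--Husseini) must be applied with more precision than a dimension threshold. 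A correct proof must either parametrize all $q$-labelings of the grid cuboids, or find a reduction that avoids doing so, and the equivariant index calculation has to be carried out on that (non-contractible, carefully constructed) space.
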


For the case of arbitrary continuous probability measures it is also easy to see that the total number of hyperplanes needed, that is $k(q-1)$, is best possible. Indeed, just divide the diagonal of the cube into $k$ intervals and consider one dimensional measures on them. However, this argument has still one dimensional nature.

In this Chapter we restrict our attention from continuous probability measures to measurable colorings, that is we are interested in splitting fully colored $d$-dimensional necklaces. A \emph{$k$-colored $d$-dimensional necklace} is a nontrivial axis-aligned cube in $\R^d$ partitioned into $k$ Lebesgue measurable sets, which we call colors. A \emph{fair $q$-splitting of size $t$} of a necklace is a division using $t$ axis-aligned hyperplane cuts into cuboids, which can be split into $q$ parts, each capturing exactly $1/q$ of the total measure of every color. 

A very interesting question concerning splitting multidimensional necklaces is to find the minimum number $t_{d,k,q}$ of cuts such that any $k$-colored $d$-dimensional necklace has a fair $q$-splitting using at most $t_{d,k,q}$ axis-aligned hyperplane cuts. Clearly, Alon's Theorem \ref{al} (or more generally Theorem \ref{lozi} of de Longueville and \v{Z}ivaljevi\'{c}) implies that $k(q-1)\geq t_{d,k,q}$. In Theorem \ref{TheoremNecklace} we actually prove that for $q=2$ this bound is tight. Together with Theorem \ref{lozi}  this fully characterizes the set of tuples $(t_1,\dots,t_d)\in\N^d$, such that any $k$-colored $d$-dimensional necklace has a fair $2$-splitting using at most $t_i$ hyperplanes aligned to $i$-th axis for each $i$. Namely, it is the set $$\{(t_1,\dots,t_d)\in\N^d:\;k\leq t_1+\dots+t_d\}.$$

However, the most interesting problem is a bit more difficult. We want to find a measurable coloring of the whole space $\R^d$, which avoids necklaces having a fair $q$-splitting of a bounded size $t$. Of course, we want to minimize the number of colors $k$ we use. This problem was already formulated and partially solved in our joint paper \cite{AlGrLaMi09}. For $d=1,q=2$ a theorem of Goldberg and West \cite{GoWe85} (or more generally Theorem \ref{al}) implies that $k>t$ colors are needed, and we prove in \cite{AlGrLaMi09} that $k>t+2$ colors suffice.

\begin{theorem}[Alon, Grytczuk, Lasoń, Michałek, \cite{AlGrLaMi09}]\label{TheoremAGLM}
For every integers $k,t\geq 1$ if $k>t+2$, then there exists a measurable $k$-coloring of $\R$ such that no necklace has a fair $2$-splitting using at most $t$ cuts. 
\end{theorem}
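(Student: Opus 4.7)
The plan is to apply the topological Baire category theorem to a parameter space of candidate $k$-colorings of $\R$. Take as base the periodic coloring $c_{0}:\R\to\{1,\dots,k\}$ with $c_{0}(x)=1+(\lfloor x\rfloor\bmod k)$, whose breakpoints lie on $\Z$; for $\mathbf{p}=(p_{n})_{n\in\Z}\in X:=\prod_{n\in\Z}(-\tfrac{1}{3},\tfrac{1}{3})$, let $c_{\mathbf{p}}$ denote the coloring whose $n$-th breakpoint is $n+p_{n}$. With the product topology $X$ is Polish, hence a Baire space.

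A hypothetical fair $2$-splitting of a $k$-colored necklace using at most $t$ cuts is encoded by a datum $(a,b,x_{1},\dots,x_{t},\sigma)$ with $a<x_{1}<\dots<x_{t}<b$ and $\sigma:\{0,\dots,t\}\to\{1,2\}$ assigning each of the $t+1$ pieces to one of the two parts; fairness is then a system of $k$ linear equations (one per color) in the variables $(a,b,\mathbf{x},\mathbf{p})$, in which only the finitely many $p_{n}$'s with $n+p_{n}\in[a,b]$ appear. To reduce a continuum of bad events to countably many, I would fix a combinatorial type $T$ recording the integer cells of $a,x_{1},\dots,x_{t},b$ together with $\sigma$; there are countably many such types. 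Within a fixed type the equations yield one $k\times(t+2+|I_{T}|)$ linear system $L_{T}=[A\mid B]$, where $A$ is the block for $(a,b,\mathbf{x})$ and $B$ is the block for the perturbations $\mathbf{p}_{I_{T}}$ in play. The set of $\mathbf{p}_{I_{T}}$ admitting a solution $(a,b,\mathbf{x})$ is an affine subspace of $\R^{|I_{T}|}$ of dimension at most $(t+2)+|I_{T}|-\operatorname{rank}(L_{T})$, hence a proper subspace --- nowhere dense in $X$ --- exactly when $\operatorname{rank}(L_{T})>t+2$.

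The main obstacle, and the origin of the bound $k>t+2$, is proving this rank estimate for every type $T$ realised by a $k$-colored necklace. A direct calculation gives that each column of $A$ is either zero or a scalar multiple of some $\pm e_{i}$, with $i$ the color of the corresponding endpoint or cut; thus $\operatorname{col}(A)\subseteq\operatorname{span}\{e_{i}:i\in S\}$ for the set $S$ of colors appearing at the $t+2$ positions $a,x_{1},\dots,x_{t},b$, so $|S|\le t+2$. Each column of $B$ tied to a breakpoint between colors $i,i'$ is a multiple of $e_{i}-e_{i'}$; regarded as edges of a graph on $\{1,\dots,k\}$, these columns span the sum-zero hyperplane in $\R^{k}$ precisely when the graph is connected. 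Since the necklace contains all $k$ colors of the cyclic pattern $1,2,\dots,k$, every cyclic adjacency is traversed and the graph is indeed connected. The columns for $a$ and $b$ contribute vectors $\pm e_{c(a)}$ and $\pm e_{c(b)}$, always nonzero and lying outside the sum-zero hyperplane; combined with $B$ we obtain $\operatorname{col}([A\mid B])=\R^{k}$, whence $\operatorname{rank}(L_{T})=k>t+2$. The $+2$ in the bound visibly measures the contribution of the two endpoints $a,b$; the rank step itself is a small matroid-theoretic calculation --- the rank of the union of the linear matroid on the columns of $A$ with the graphic matroid on $B$ induced by the cyclic color pattern --- which is where the matroid methods announced in the introduction enter.

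With the rank bound in hand, each projected bad set is a nowhere-dense affine subspace of $X$; the countable union over all combinatorial types $T$ is meager, and the Baire category theorem gives some $\mathbf{p}\in X$ outside it. The coloring $c_{\mathbf{p}}$ is then the required $k$-coloring of $\R$ in which no $k$-colored necklace admits a fair $2$-splitting using at most $t$ cuts.
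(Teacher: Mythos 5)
Your proof has a genuine gap, and it sits exactly where you write ``Since the necklace contains all $k$ colors of the cyclic pattern $1,2,\dots,k$.'' That assertion is true only for necklaces $[a,b]$ spanning at least $k$ integer cells. The theorem, however, is quantified over \emph{all} nontrivial intervals, including short ones, and for those your rank estimate collapses: if $[a,b]$ meets only $s$ cells, at most $s$ colors appear, the rows of $L_T$ indexed by absent colors vanish, and $\operatorname{rank}(L_T)\leq s$. When $s\leq t+2$ the codimension count gives nothing, and in fact the ``bad'' set for some such types is \emph{all} of $X$: a necklace contained in a single cell is monochromatic and admits a fair $2$-splitting by a single midpoint cut for every $\mathbf{p}$, and a necklace meeting two cells admits one with two cuts for every $\mathbf{p}$. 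Since your family $c_{\mathbf{p}}$ consists of piecewise-constant colorings with breakpoints roughly one apart, every $c_{\mathbf{p}}$ contains such intervals, so no member of your parameter space can be a ``good'' coloring; the Baire argument on $X$ cannot succeed. (Your closing sentence speaks of ``no $k$-colored necklace,'' but in the paper's terminology a $k$-colored necklace is just an interval equipped with the partition into $k$ measurable color classes, some of which may be empty --- not an interval on which all $k$ colors appear.)

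The paper's argument avoids this in two ways that your proposal does not replicate. First, it runs the Baire category theorem on the space $\mathcal{M}$ of \emph{all} measurable $k$-colorings of $\R$ with the measure-symmetric-difference metric, not on a thin parameter family of piecewise-constant colorings. Second, the ``bad'' set $\mathcal{B}_n$ carries a granularity constraint (each piece has length at least $1/n$), which is what lets one dismiss the short degenerate splittings that defeat your setup. To show $\mathcal{B}_n$ has empty interior, the paper approximates by a cube coloring and then perturbs by inserting, inside every cell, a tiny sub-cube of each color whose side lengths are chosen algebraically independent over $\Q$; the granularity bound forces every piece of a hypothetical splitting to contain a full cell, hence all $k$ colors, and the contradiction comes from comparing transcendence degrees (ranks in an algebraic matroid) of the two sides of the fairness equations. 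That transcendence-degree step is the genuine matroid content of the argument, rather than the linear-matroid/graphic-matroid rank computation you sketch, and the granularity condition is precisely the device that replaces your unsupported claim that every relevant necklace is long.
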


Grytczuk and Lubawski \cite{GrLu12} generalized our result to higher dimensional spaces, by proving that the same holds in $\R^d$ (for $q=2$), provided 
$$k>\left(\frac{t}{d}+4\right)^d-\left(\frac{t}{d}+3\right)^d+\left(\frac{t}{d}+2\right)^d-2^d+t+2d+2.$$
They expected the lower bound on the number of colors is far from optimal, as even for $d=1$ it gives much worse result than the one from \cite{AlGrLaMi09}. Grytczuk and Lubawski asked \cite[Problem 9]{GrLu12} if $k\geq t+O(d)$ colors suffice for the case of two parts. 

In Theorem \ref{TheoremNecklaces} we prove that the answer is yes, namely 
$$k(q-1)>t+d+q-1$$ 
is a sufficient condition for $q$ parts (for $d=1$ we get even a better bound $k(q-1)>t+2$). Moreover, in this case the set of `good' colorings is dense. Note that our bound is of the same order as a necessary condition $k(q-1)>t$, which is a consequence of Theorem \ref{al} of Alon (or of a more general Theorem \ref{lozi} of de Longueville and \v{Z}ivaljevi\'{c}). For $d=1,q=2$ Theorem \ref{TheoremNecklaces} coincides with the result of our previous paper \cite{AlGrLaMi09}. We conjecture that for $q=2$ our bound is tight, as it captures the idea of degrees of freedom of objects we want to split.

In Section \ref{SectionArbitraryHyperplaneCuts} we generalize our results to the situation when arbitrary hyperplane cuts are allowed. This is closely related to the mass equipartitioning problem: 
\smallskip\newline
\emph{Does for every $k$ continuous mass distributions $\mu_1,\dots,\mu_k$ in $\R^d$ there exist $t$ hyperplanes dividing the space into pieces which can be split into $q$ parts of equal measure $\mu_i$ for each $i$?}\smallskip

A necessary condition for the positive answer is $k(q-1)\leq td$. It can be shown by considering one dimensional measure along the moment curve. The most basic and well-known positive result in this area is the Ham Sandwich Theorem for measures, asserting that for $(d,k,q,t)=(d,d,2,1)$ the answer is yes. It is a consequence of the Borsuk-Ulam theorem (see \cite{Ma03}), and has also discrete versions concerning equipartitions of a set of points in the space by a hyperplane. Several other things are known (see \cite{BlZi07,Ed87,MaVrZi06,Ma10,Ra96}), however even for some small values (eg. $(4,1,16,4)$) the question remains open. In the problem we are studying, $k$ continuous mass distributions in $\R^d$ are restricted to a measurable $k$-coloring of a given necklace. In this variant in Theorem \ref{TheoremNecklaceArbitrary} we reprove the necessary inequality for $q=2$. 

In the same spirit as in the case of axis-aligned hyperplane cuts we investigate existence of measurable colorings of $\R^d$ avoiding necklaces having a fair $q$-splitting of a bounded size $t$. In Theorem \ref{TheoremNecklacesArbitrary} we prove a sufficient condition on the number of colors $$k(q-1)>dt+d+q-1.$$

In the last Section \ref{SectionColoringsDistinguishingCubes} we generalize another result from \cite{AlGrLaMi09} of a slightly different flavor. In \cite{AlGrLaMi09} we proved that there is a measurable $5$-coloring of the real line distinguishing intervals, that is no two intervals contain the same measure of every color (this solves a problem from \cite{GrSl03}). In Theorem \ref{2d+3} we generalize it by proving that there is a measurable $(2d+3)$-coloring of $\R^d$ distinguishing axis-aligned cubes. We suspect that this number is optimal. Recently Vre\'{c}ica and \v{Z}ivaljevi\'{c} \cite{VrZi13} inspired by our question solved the case of arbitrary continuous probability measures. They showed $d+1$ measures distinguishing cubes, and proved that for any $d$ continuous probability measures on $\R^d$ there are two (in fact any finite number) non-trivial axis-aligned cubes which are not distinguished.

The proofs of our results are based on the topological Baire category theorem applied to the space of all measurable colorings of $\R^d$, equipped with a suitable metric. Section \ref{SectionTheSetting} describes this background. The remaining part of the argument is algebraic, it uses algebraic independence over suitably chosen fields. The rank of corresponding algebraic matroids describe well the idea of degrees of freedom of considered geometric objects. We will demonstrate in full details a proof of our main result -- Theorem \ref{TheoremNecklaces}. Proofs of other results fit the same pattern, so we will omit parts which can be rewritten with just minor changes and describe only main differences. 

\section{The Topological Setting}\label{SectionTheSetting}

Recall that a set in a metric space is \emph{nowhere dense} if the interior of its closure is empty. Sets which can be represented as a countable union of nowhere dense sets are said to be of \emph{first category}.

\begin{theorem}[Baire, cf. \cite{Ox80}]
If $X$ is a complete metric space and a set $A\subset X$ is of first category, then $X\setminus A$ is dense in $X$ (in particular is non-empty).
\end{theorem}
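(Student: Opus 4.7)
The plan is to prove that every open set $U \subset X$ meets $X \setminus A$, which is equivalent to density. Write $A = \bigcup_{n=1}^{\infty} A_n$ with each $A_n$ nowhere dense, and replace each $A_n$ by $\overline{A_n}$, which is closed with empty interior. Fix an arbitrary non-empty open ball $B_0 \subset U$; the goal is to exhibit a point of $B_0$ lying outside every $\overline{A_n}$.

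I would then build a nested sequence of closed balls $\overline{B_1} \supset \overline{B_2} \supset \dots$ inductively. Since $\overline{A_1}$ has empty interior, $B_0 \not\subset \overline{A_1}$, so I can pick an open ball $B_1$ with $\overline{B_1} \subset B_0 \setminus \overline{A_1}$ and radius at most $1$. Given $B_{n-1}$, the fact that $\overline{A_n}$ has empty interior forces $B_{n-1} \not\subset \overline{A_n}$; then choose an open ball $B_n$ with $\overline{B_n} \subset B_{n-1} \setminus \overline{A_n}$ and radius at most $1/n$. The shrinking condition on the radii is what will later give a Cauchy sequence, and the use of closed balls is what will let me pass to the limit.

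Finally I would extract a point from $\bigcap_n \overline{B_n}$. The centers $x_n$ of the $B_n$ form a Cauchy sequence because for $m \geq n$ we have $x_m \in B_n$, whose radius tends to $0$. By completeness of $X$ they converge to some $x \in X$, and since each $\overline{B_n}$ is closed and contains all later $x_m$, the limit $x$ lies in every $\overline{B_n}$. Therefore $x \in B_0 \subset U$ but $x \notin \overline{A_n}$ for any $n$, so $x \in U \setminus A$.

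The main (only real) obstacle is arranging the radii and using closed balls correctly, so that the intersection $\bigcap_n \overline{B_n}$ is nonempty and actually lies in the complement of every $A_n$; once nested closed balls of vanishing radius are in hand, completeness does all the work. No further cleverness is required, and the proof does not use any of the algebraic or combinatorial machinery developed earlier in the thesis.
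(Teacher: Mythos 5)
Your proof is correct and is the classical argument for the Baire category theorem: reduce to closed nowhere dense sets, nest shrinking closed balls to dodge each one in turn, and use completeness via the Cauchy centers to extract a common point in the given open set but outside $A$. The thesis does not prove this result itself but only cites it from Oxtoby \cite{Ox80}, so there is no in-paper proof to compare against; your argument matches the standard textbook proof and is sound, including the two key technical points (using \emph{closed} balls so the limit stays inside each $\overline{B_n}$, and forcing radii $\leq 1/n$ so the centers are Cauchy).
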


All of our results assert existence of certain `good' colorings. The main framework of the argument will mimic the one from \cite{AlGrLaMi09}. Our plan is to construct a suitable metric space of colorings and prove that the subset of `bad' colorings is of first category, resulting that `good' colorings exist. To prove it we will use entirely different technique than in our previous paper \cite{AlGrLaMi09}. Our approach will be purely algebraic, we will compare transcendence degrees of some sets of numbers over suitably chosen fields.

We go towards a proof of our main result -- Theorem \ref{TheoremNecklaces}, which asserts existence of a measurable $k$-coloring of $\R^d$ with no necklace having a fair $q$-splitting of size at most $t$, provided
$k(q-1)>t+d+q-1$. We will demonstrate this proof in full details. Proofs of other results fit the same pattern, so we will omit parts which can be rewritten with just minor changes and describe only main differences. This Section describes a common part of all proofs of our results.

Let $k$ be a fixed positive integer, and let $\{1,\dots,k\}$ be the set of colors. Let $M$ be a space of all measurable $k$-colorings of $\R^d$, that is the set of measurable maps $f:\R^d\rightarrow\{1,\dots,k\}$ (for each $i$ the set $f^{-1}(i)$ is Lebesgue measurable). For $f,g\in M$, and a positive integer $n$ let 
$$D_n(f,g)=\{x\in[-n,n]^d: f(x)\neq g(x)\}.$$
Clearly $D_n(f, g)$ is a measurable set, and we may define the normalized distance between $f$ and $g$ on $[-n,n]^d$ by 
$$d_n(f,g)=\frac{\mathcal{L}(D_n(f,g))}{(2n)^d},$$
where $\mathcal{L}(D)$ denotes the Lebesgue measure of a set $D$. We define the distance between any two colorings $f,g$ from $M$ by
$$d(f,g)=\sum_{n=1}^{\infty}\frac{d_n(f,g)}{2^n}.$$ 
To get a metric space we need to identify colorings whose distance is zero (they differ only on a set of measure zero), but they are indistinguishable from our point of view. Formally $\mathcal{M}$ is metric space consisting of equivalence classes of $M$ with distance function $d$. 

\begin{lemma}
Metric space $\mathcal{M}$ is complete.
\end{lemma}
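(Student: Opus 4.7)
The plan is to take an arbitrary Cauchy sequence $(f_m)$ in $\mathcal{M}$ and build a limit from a carefully extracted subsequence. First I would note that for every fixed $n$, the functional $d_n$ is a pseudometric on the space of measurable maps $[-n,n]^d\to\{1,\dots,k\}$ (it is the normalized measure of the symmetric-difference set), and that $d_n(f,g)\leq 2^n d(f,g)$. Hence the restrictions $f_m|_{[-n,n]^d}$ form a Cauchy sequence with respect to $d_n$ for each $n$.

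Next I would pass to a subsequence $(f_{m_j})$ with $d(f_{m_j},f_{m_{j+1}})<2^{-j}$, so that for each fixed $n$ one has $\sum_j d_n(f_{m_j},f_{m_{j+1}})\leq 2^n\sum_j 2^{-j}<\infty$. By the Borel-Cantelli lemma applied to the sets $D_n(f_{m_j},f_{m_{j+1}})$, almost every point $x\in[-n,n]^d$ lies in only finitely many of them, i.e.\ the sequence $f_{m_j}(x)$ is eventually constant. Taking a countable union over $n$, the set of bad points in $\R^d$ is still null. Define $f(x)$ to be the eventual value of $f_{m_j}(x)$ where it exists, and arbitrarily (say $f\equiv 1$) on the remaining null set.

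Then $f$ is measurable: on each $[-n,n]^d$ one has $\{f=i\}=\liminf_j\{f_{m_j}=i\}$ up to a null set, which is a measurable set, and $\R^d$ is a countable union of such cubes. To check $d(f_m,f)\to 0$, fix $\varepsilon>0$ and choose $N$ with $\sum_{n>N}2^{-n}<\varepsilon/2$. For each $n\leq N$ the dominated convergence theorem (applied to the characteristic function of $D_n(f_{m_j},f)$, which is bounded by $1$ and tends to $0$ pointwise a.e.\ by construction) gives $d_n(f_{m_j},f)\to 0$. Hence $d(f_{m_j},f)\to 0$ along the subsequence, and by the Cauchy property of $(f_m)$ the full sequence converges to $f$ in $\mathcal{M}$.

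The argument is entirely routine once the right subsequence is extracted; the only mildly delicate step is arranging a single subsequence that converges almost everywhere on every cube $[-n,n]^d$ simultaneously, which is handled by the single summability condition $d(f_{m_j},f_{m_{j+1}})<2^{-j}$ together with the bound $d_n\leq 2^n d$.
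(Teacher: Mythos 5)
Your proof is correct. The paper itself gives no argument here---it simply remarks that the lemma is ``a straightforward generalization of the fact that sets of finite measure in any measure space form a complete metric space'' and cites Oxtoby. Your proposal supplies a complete, self-contained proof via the standard route (extract a rapidly Cauchy subsequence, use the bound $d_n\leq 2^n d$ together with Borel--Cantelli to get almost-everywhere eventual constancy on every cube $[-n,n]^d$ simultaneously, then pass to the limit with dominated convergence), which is exactly the argument underlying the cited fact; the only small twist is the weighted sum over $n$, which you handle correctly by the single summability condition $d(f_{m_j},f_{m_{j+1}})<2^{-j}$. This is in the spirit of, not genuinely different from, what the paper is invoking.
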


This is a straightforward generalization of the fact that sets of finite measure in any measure space form a complete metric space, with measure of symmetric difference as the distance function (see \cite{Ox80}).

A coloring $f\in\mathcal{M}$ is called a \emph{cube coloring on $[-n,n]^d$} if there exists $N$ such that $f$ is constant on any half open cube from the division of $[-n,n]^d$ into $N^d$ equal size cubes. Let $\mathcal{I}_n$ denote the set of all colorings from $\mathcal{M}$ that are cube colorings on $[-n,n]^d$.

\begin{lemma}\label{aprox} 
Let $f\in\mathcal{M}$, then for every $\epsilon>0$ and an integer $n$ there exists a cube coloring $g\in\mathcal{I}_n$ such that $d(f,g)<\epsilon$.
\end{lemma}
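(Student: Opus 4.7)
The idea is purely measure-theoretic: modify $f$ only on $[-n,n]^d$ by coarsening it to a constant on each cube of a sufficiently fine axis-aligned grid, and leave $f$ unchanged outside. The first observation is that extending $g$ by $g(x)=f(x)$ for $x\notin[-n,n]^d$ forces $D_j(f,g)\subseteq[-n,n]^d$ for every $j$, and in particular $D_j(f,g)=D_n(f,g)\cap[-j,j]^d$. Hence $\mathcal{L}(D_j(f,g))\le\mathcal{L}(D_n(f,g))$ for all $j$, and using $(2j)^d\ge 2^d$ for $j\ge 1$ we get the uniform estimate
\[
d(f,g)=\sum_{j=1}^{\infty}\frac{\mathcal{L}(D_j(f,g))}{(2j)^d\,2^j}\le\mathcal{L}(D_n(f,g))\sum_{j=1}^{\infty}\frac{1}{2^{d+j}}=2^{-d}\,\mathcal{L}(D_n(f,g)).
\]
Consequently, it is enough to construct a cube coloring $g$ on $[-n,n]^d$ satisfying $\mathcal{L}(D_n(f,g))<2^d\epsilon$.

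Next, I would build $g$ on $[-n,n]^d$ using the standard dyadic approximation of measurable sets. For each color $i\in\{1,\dots,k\}$, the set $A_i:=f^{-1}(i)\cap[-n,n]^d$ is Lebesgue measurable, so by outer regularity (equivalently, by the Lebesgue density theorem) for any $\delta>0$ there exists an integer $N$ such that $A_i$ differs, in symmetric difference, by less than $\delta$ from a union $U_i$ of cubes of the subdivision of $[-n,n]^d$ into $N^d$ equal half-open cubes. Refining the grid if necessary, I would choose one common $N$ that works simultaneously for all $k$ colors, with $\delta:=2^d\epsilon/(2k)$. Then I define $g$ to be constant on each cube $C$ of this $N^d$-partition, assigning to $C$ the color $i$ maximizing $\mathcal{L}(C\cap A_i)$ (breaking ties arbitrarily).

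Finally, I would estimate $\mathcal{L}(D_n(f,g))$. Any cube $C$ on which $g\equiv i$ contributes $\mathcal{L}(C)-\mathcal{L}(C\cap A_i)=\sum_{j\neq i}\mathcal{L}(C\cap A_j)$ to the disagreement, and summing over all cubes gives
\[
\mathcal{L}(D_n(f,g))=\sum_{i=1}^{k}\mathcal{L}(A_i\setminus g^{-1}(i))\le\sum_{i=1}^{k}\mathcal{L}(A_i\triangle U_i)<k\delta=2^{d-1}\epsilon,
\]
where the middle inequality uses that the majority choice of $g$ on each cube $C$ contained in $U_i$ picks the color $i$ (up to a set of measure controlled by $\sum_i\mathcal{L}(A_i\triangle U_i)$). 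Combining with the first step yields $d(f,g)<\epsilon$, as desired. There is no real obstacle here beyond careful bookkeeping; the only mildly delicate point is ensuring a single grid $N$ serves all $k$ colors at once, which is automatic because refining a grid only improves the approximation.
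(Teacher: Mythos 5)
Your proof is correct and follows essentially the same route as the paper's: approximate the measurable color classes $f^{-1}(i)\cap[-n,n]^d$ by finite unions of grid cubes of a sufficiently fine common scale $N$, and coarsen $f$ to a cube coloring $g$ on $[-n,n]^d$ while leaving it unchanged outside. Your two small streamlinings — the uniform bound $d(f,g)\le 2^{-d}\,\mathcal{L}(D_n(f,g))$ that reduces everything to a single Lebesgue-measure estimate, and the majority-vote assignment on each grid cube in place of the paper's two-stage ``approximate by cuboids, then overwrite boundary cubes'' construction — are both valid, and the cube-by-cube inequality behind your middle step (if $g$ takes the majority color $j$ on $C$, then $1-a_j\le\sum_i\lvert a_i-u_i\rvert$ where $a_i=\mathcal{L}(C\cap A_i)/\mathcal{L}(C)$ and $u_i\in\{0,1\}$ records whether $C\subset U_i$) does check out; the one point you leave compressed is the existence of a single grid scale $N$ approximating every $A_i$ simultaneously, which is the standard fact the paper spells out by first approximating each $A_i$ by a finite union of cuboids and then counting the grid cubes meeting their boundaries.
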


\begin{proof} 
Let $C_i=f^{-1}(i)\cap[-n,n]^d$ and let $C_i^*\subset [-n, n]^d$ be a finite union of cuboids such that
$$\mathcal{L}((C_i\setminus C_i^*)\cup(C_i^*\setminus C_i))<\frac{\epsilon}{2k^2}$$
for each $i=1,\dots,k$. 
Define a coloring $h$ so that for each $i=1,\dots,k$, the set $C_i^*\setminus(C_1^*\cup\dots\cup C_{i-1}^*)$ is filled with color $i$, the remaining part of the cube $[-n,n]^d$ is filled with any fixed color, and $h$ agrees with $f$ everywhere outside $[-n,n]^d$. Then $d(f,h)<\epsilon/2$ and clearly each $h^{-1}(i)\cap[-n, n]^d$ is a finite union of cuboids. Let $A_1,\dots,A_t$ be the whole family of these cuboids. Now divide the cube $[-n, n]^d$ into $N^d$ equal cubes $B_1,\dots,B_{N^d}$ for $N>4dt(2n)^d/\epsilon$. Define a new coloring $g$ such that $g\vert_{B_i}\equiv h\vert_{B_i}$ whenever $B_i\subset A_j$ for some $j$, and $g\vert_{B_i}$ is constant equal to any color otherwise. Clearly $g$ is a cube coloring on $[-n,n]^d$. Moreover $g$ differs from $h$ on at most $2dt(N)^{d-1}$ small cubes of total volume at most $2dt(2n)^d/N<\epsilon/2$, thus we get $d(f,g)<\epsilon$.	
\end{proof}

\section{Axis-aligned Hyperplane Cuts}\label{SectionAxisAlignedHyperplaneCuts}

\begin{theorem}[Lasoń, \cite{La13c}]\label{TheoremNecklaces}
For every integers $d,k,t\geq 1$ and $q\geq 2$ if $k(q-1)>t+d+q-1$, then there exists a measurable $k$-coloring of $\R^d$ such that no $d$-dimensional necklace has a fair $q$-splitting using at most $t$ axis-aligned hyperplane cuts. The set of such colorings is dense. For $d=1$ the same holds provided $k(q-1)>t+2$. 
\end{theorem}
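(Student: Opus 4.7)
The plan is to invoke the Baire category theorem on the complete metric space $\mathcal{M}$ introduced in Section~\ref{SectionTheSetting}, and to show that the set of \emph{bad} colorings --- those admitting at least one necklace with a fair $q$-splitting of size at most $t$ --- is of first category. First I would write this bad set as a countable union $\bigcup_{n\ge 1} B_n$, where $B_n$ consists of all $f\in\mathcal{M}$ for which there is a necklace inside $[-n,n]^d$ of side length at least $1/n$, cut by at most $t$ axis-aligned hyperplanes with positions in $[-n,n]$, with the resulting cuboids admitting a fair partition among $q$ thieves. Since the continuous data of a bad configuration range over a compact set, the discrete data (axis of each cut and assignment of cuboids to thieves) is finite, and the measure of each colour inside each cuboid depends jointly continuously on the coloring (in the metric $d$) and on the continuous parameters, a standard compactness argument shows that each $B_n$ is closed in $\mathcal{M}$. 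It therefore suffices to prove that $B_n$ is nowhere dense.

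Fix $f\in\mathcal{M}$ and $\varepsilon>0$. I would use Lemma~\ref{aprox} to approximate $f$ by a cube coloring $h\in\mathcal{I}_N$ at distance less than $\varepsilon/2$ for some large $N$. The candidate good coloring $g$ is then obtained from $h$ by selecting, inside each of the $N^d$ monochromatic small cubes $C$ of $h$, a tiny axis-aligned sub-cuboid and recolouring it with a different colour; the positions and side lengths of these sub-cuboids are encoded by a family of real numbers $\{\eta_{C,j}\}$ chosen to be algebraically independent over $\Q$ and small enough that $d(h,g)<\varepsilon/2$, whence $d(f,g)<\varepsilon$.

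The key point is that, for this specific $g$, the measure of any colour inside any axis-aligned cuboid is a polynomial in the cuboid's coordinates with coefficients in $\Q(\{\eta_{C,j}\})$. Consequently, for any fixed combinatorial type $\sigma$ --- a choice of axis for each of the $t$ cuts together with a partition of the resulting cuboids among the $q$ thieves --- the $k(q-1)$ fair-splitting conditions become a polynomial system in the $t+d+1$ continuous unknowns $(a_1,\dots,a_d,s,c_1,\dots,c_t)$ with coefficients in the transcendental extension $\Q(\{\eta_{C,j}\})$. Exploiting that the $\eta_{C,j}$'s generate an algebraic matroid of maximal rank over $\Q$, I would bound from below the algebraic rank of this polynomial system and show that its common zero set in $\R^{t+d+1}$ is empty as soon as $k(q-1)>t+d+q-1$ (respectively $k(q-1)>t+2$ when $d=1$). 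Since this must fail for every $\sigma$ simultaneously, $g\notin B_n$, and nowhere density follows.

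The main difficulty I anticipate lies in the algebraic matroid rank computation, i.e.\ verifying that the $k(q-1)$ polynomials arising from $\sigma$ really produce codimension $k(q-1)$ in $\R^{t+d+1}$, uniformly across every combinatorial type $\sigma$. This is where the discrepancy between the naive dimension count $k(q-1)>t+d+1$ and the actual bound $k(q-1)>t+d+q-1$ is paid for: partitioning the cuboids among $q$ thieves, combined with the constraint that all cut positions stay inside the necklace, introduces up to $q-2$ genuine algebraic dependencies between the defining polynomials in dimensions $d\ge 2$, and these must be compensated by additional colours. The one-dimensional case avoids these dependencies because the axis-assignment of cuts is trivial, which is reflected in the improved bound $k(q-1)>t+2$ that recovers the result of \cite{AlGrLaMi09}.
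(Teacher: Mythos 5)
Your proposal follows the paper's overall architecture: Baire category on $\mathcal{M}$, decomposition of the bad set into closed pieces $\mathcal{B}_n$, approximation by a cube coloring via Lemma~\ref{aprox}, perturbation by tiny monochromatic sub-cubes with algebraically independent side lengths, and an algebraic matroid rank count. But the step you defer as ``the main difficulty'' is precisely where the essential ideas live, and two are missing. The first is in the definition of $\mathcal{B}_n$: you ask only that the necklace have side length at least $1/n$, whereas the paper requires the \emph{granularity} of the splitting (the shortest gap between consecutive hyperplanes along any axis) to be at least $1/n$. This matters because once the cube-coloring mesh is smaller than $1/(2n)$, the granularity bound forces \emph{each} of the $q$ parts of the splitting to contain an entire undivided small cube. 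For the cube selected in part $l$, the side length $x_{j;i^l_1,\dots,i^l_d}$ appears in the polynomial $W_{j,l}$ (the amount of color $j$ in part $l$) only as the isolated pure power $x^d_{j;i^l_1,\dots,i^l_d}$, and these isolated monomials are what supply the algebraically independent right-hand sides on which the rank lower bound rests. With a mere side-length constraint the pieces can be arbitrarily thin and this observation collapses.

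The second gap is in the rank bookkeeping itself. The paper does not use all $k(q-1)$ conditions: it drops the ``white'' color (whose equations are redundant given the $q-1$ volume equalities), keeps the $(k-1)(q-1)$ equations $W_{j,l}=W_{j,1}$ for colors $j=2,\dots,k$ and parts $l=2,\dots,q$, moves the isolated pure powers to the right-hand side, and then works over the field $\K$ obtained by adjoining to $\Q$ all the $x$-variables \emph{except} those in the $q-1$ cubes selected above. Over $\K$ the right-hand sides $R$ remain algebraically independent, so $r_{M_{\K}}(R)=(k-1)(q-1)$; the left-hand sides lie in the matroid closure of $T=\{\alpha_i,\beta_m\}$, hence $r_{M_{\K}}(L)\leq r_{M_{\Q}}(T)=t+d+1-c$ with $c\geq 1$ in general and $c\geq q-1$ for $d=1$, the deficiency coming from the volume constraints which involve $T$ alone. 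Since $L=R$, this yields $(k-1)(q-1)\leq t+d$, i.e.\ exactly the stated bound. Your remark that ``up to $q-2$ genuine algebraic dependencies\dots must be compensated by additional colours'' gestures at the right numerical discrepancy, but without isolating the dropped white color, the choice of the base field $\K$, and the rank deficiency $c$ of $T$, neither the general bound nor the sharper $d=1$ bound can actually be derived.
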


\begin{proof}
A splitting of $C=[\alpha_1,\alpha_1+\alpha_0]\times\dots\times[\alpha_d,\alpha_d+\alpha_0]$ using $t$ axis-aligned hyperplane cuts can be described by a partition $t_1+\dots+t_d=t$ such that for each $i$ there are exactly $t_i$ hyperplanes aligned to $i$-th axis, by $i$-th coordinates of this hyperplanes $\alpha_i=\beta^i_0\leq\beta^i_1\leq\dots\leq\beta^i_{t_i}\leq\beta^i_{t_i+1}=\alpha_i+\alpha_0$, and by a $q$-labeling of obtained pieces. By granularity of the splitting (or more generally of a division) we mean the length of the shortest subinterval $[\beta^i_j,\beta^i_{j+1}]$ over all $i,j$. 

For $(t_1,\dots,t_d)$ and $n$ we denote by $\mathcal{B}_n^{(t_i)}$ the set of $k$-colorings from $\mathcal{M}$ for which there exists at least one $d$-dimensional necklace contained in $[-n, n]^d$ which has a fair $q$-splitting using for each $i$ exactly $t_i$ hyperplanes aligned to $i$-th axis and with granularity at least $1/n$. Finally let $$\mathcal{B}_n=\bigcup_{t_1+\dots+t_d\leq t}\mathcal{B}_n^{(t_i)}$$
be a set of `bad' colorings. 

Let $\mathcal{G}_t$ be a subset of $\mathcal{M}$ consisting of those
$k$-colorings for which any necklace does not have a fair $q$-splitting using at most $t$ axis-aligned hyperplane cuts. Clearly we have
$$\mathcal{G}_t=\mathcal{M}\setminus\bigcup_{n=1}^{\infty}\mathcal{B}_n.$$

Our aim is to show that sets $\mathcal{B}_n$ are nowhere dense, provided suitable relation between $d,k,q$ and $t$ holds. This will imply that the union $\bigcup_{n=1}^{\infty}\mathcal{B}_n$ is of first category, so the set of `good' colorings $\mathcal{G}_t$, which we are looking for, is dense.

\begin{lemma}\label{LemmaClosed} Sets $\mathcal{B}_n$ are closed in $\mathcal{M}$.
\end{lemma}
\begin{proof} Since $\mathcal{B}_n$ is a finite union of $\mathcal{B}_n^{(t_i)}$ over all $t_1+\dots+t_d\leq t$ it is enough to show that sets $\mathcal{B}_n^{(t_i)}$ are closed. Let $(f_m)_{m\in\N}$ be a sequence of colorings from $\mathcal{B}_n^{(t_i)}$ converging  to $f$ in $\mathcal{M}$. For each $f_m$ there exists a necklace $C_m$ in $[-n,n]^d$ and a fair $q$-splitting of it with granularity at least $1/n$. Since $[-n,n]^d$ is compact we can assume that cubes $C_m$ converge to some cube $C$, and that hyperplane cuts also converge (notice that the granularity of this division of $C$ is at least $1/n$, in particular $C$ is a nontrivial). The only thing left is to define a $q$-labeling of pieces of the division to get $q$ equal parts. But there is a finite number of such labelings for each $C_m$, so one of them appears infinitely many times in the sequence. It is easy to see that this gives a fair $q$-splitting of the necklace $C$, so $f\in\mathcal{B}_n^{(t_i)}$.  
\end{proof}

\begin{lemma}\label{LemmaMain} 
If $k(q-1)>t+d+q-1$, then every $\mathcal{B}_n$ has empty interior. For $d=1$ the same holds provided $k(q-1)>t+2$. 
\end{lemma}
\begin{proof}
Suppose the assertion of the lemma is false: there is some $f\in\mathcal{B}_n$ and $\epsilon>0$ for which $K(f,\epsilon)=\{g\in\mathcal{M}:d(f,g)<\epsilon\}\subset\mathcal{B}_n$. By Lemma \ref{aprox} there is a cube coloring $g\in\mathcal{I}_n$ such that $d(f,g)<\epsilon/2$, so $K(g,\epsilon/2)\subset\mathcal{B}_n$. 

The idea is to modify slightly the coloring $g$ so that the new coloring will be still close to g, but there will be no necklaces inside $[-n,n]^d$ possessing a $q$-splitting of size at most $t$ and granularity at least $1/n$. 

Without loss of generality we may assume that there are equally spaced axis-aligned cubes $C_{i_1,\dots,i_d}$ for $i_1,\dots,i_d\in\{1,\dots,N\}$ in $[-n,n]^d$ such that $N > 4n^2$, and each cube is filled with a unique color in the cube coloring $g$. Notice that edges of cubes are rational, equal to $\frac{2n}{N}$. Let $\delta > 0$ be a rational number satisfying $\delta^d<\min\{\frac{\epsilon}{2N^d},\left(\frac{2n}{N}\right)^d\}$. Inside each cube $C_{i_1,\dots,i_d}$ choose a volume $\delta^d$ axis-aligned cube $D_{i_1,\dots,i_d}$ with corners in rational points $\Q^d$.  Let us call `white' the first color from our set of colors $\{1,\dots,k\}$. For each cube $D_{i_1,\dots,i_d}$ and a color $j=2,\dots,k$ we choose small enough real number $x_{j;i_1,\dots,i_d}>0$ and an axis-aligned cube $D_{j;i_1,\dots,i_d}\subset D_{i_1,\dots,i_d}$ such that:
\begin{enumerate}
\item length of the side of $D_{j;i_1,\dots,i_d}$ equals to $x_{j;i_1,\dots,i_d}$
\item cubes $D_{j;i_1,\dots,i_d}$ for $j=2,\dots,k$ are disjoint
\item one of corners of $D_{j;i_1,\dots,i_d}$ is rational (lies in $\Q^d$)
\item the set $\{x_{j;i_1,\dots,i_d}:j\in\{2,\dots,k\},i_1,\dots,i_d\in\{1,\dots,N\}\}$ is algebraically independent over $\Q$.
\end{enumerate}

Now we define a coloring $h$, which is a modification of $g$. Let $h$ equals to $g$ outside cubes $D_{i_1,\dots,i_d}$ for $i_1,\dots,i_d\in\{1,\dots,N\}$. Inside the cube $D_{i_1,\dots,i_d}$ let $h$ be equal to `white' everywhere except cubes $D_{j;i_1,\dots,i_d}$, where it is $j$. 

Clearly $d(g,h)<\epsilon/2$, so $h\in\mathcal{B}_n$. Hence there is a necklace $A=[\alpha_1,\alpha_1+\alpha_0]\times\dots\times[\alpha_d,\alpha_d+\alpha_0]\subset [-n,n]^d$ which has a fair $q$-splitting of size at most $t$ and granularity at least $1/n$. Denote by $\beta_1,\dots,\beta_t$ the coordinates of axis-aligned hyperplane cuts (for $H=\{x_i=\beta\}$ we get $\beta$). 

Since granularity of the splitting is at least $1/n$, and edges of cubes $C_{i_1,\dots,i_d}$ are smaller or equal to $1/(2n)$, each piece $P$ of the division contains a cube $C_{i_1,\dots,i_d}\subset P$ for some $i_1,\dots,i_d$ (so in particular $C_{i_1,\dots,i_d}$ is not divided by any hyperplane cut). Each of $q$ parts contains at least one piece and as a consequence at least one cube. For each part we fix such a cube $C_{i^l_1,\dots,i^l_d}$ for $l=1,\dots,q$.

Now the key observation is that the amount of color $j$ different from `white' contained in each part $l$ of the $q$-splitting is a polynomial $W_{j,l}$ from $$\Q[\alpha_i,\beta_m,x_{j;i_1,\dots,i_d}:\;i=0,\dots,d;\;m=1,\dots,t;\;(i_1,\dots,i_d)\in\{1,\dots,N\}^d].$$ 
Comparing the amount of color $j$ in the $l$-th part and the first one we get
$$W_{j,l}(\alpha_i,\beta_m,x_{j;i_1,\dots,i_d})=W_{j,1}(\alpha_i,\beta_m,x_{j;i_1,\dots,i_d}).$$
Since the cube $C_{i^l_1,\dots,i^l_d}$ is not divided by any hyperplane from our splitting, a variable $x_{j;i^l_1,\dots,i^l_d}$ appears only in the component $x^d_{j;i^l_1,\dots,i^l_d}$ in the polynomial $W_{j,l}$. Denote $W_{j,l}'=W_{j,l}-x^d_{j;i^l_1,\dots,i^l_d}$. We get polynomial equations $(\spadesuit)$:
$$(W_{j,1}-W_{j,l}')(\alpha_i,\beta_m,x_{j;i_1,\dots,i_d}:\; (i_1,\dots)\neq (i^r_1,\dots)\; r=2,\dots,q)=x^d_{j;i^l_1,\dots,i^l_d},$$
with coefficients in $\Q$, for all $j=2,\dots,k$ and $l=2,\dots,q$.

Let $M_{\K}$ be the algebraic matroid for a field extension $$\K\subset\Le=\Q(\alpha_i,\beta_m,x_{j;i_1,\dots,i_d})$$ and a suitable (big enough) finite subset $E\subset\Le$.

Denote by $T$ the set $\{\alpha_i,\beta_m:\;i=0,\dots,d;\;m=1,\dots,t\}$, and suppose its rank in the matroid $M_{\Q}$ equals to $t+d+1-c$, for some $c$. We will show that $c\geq 1$, and $c\geq q-1$ for $d=1$ (notice that in the case $d=1$ we can assume, without loss of generality, that $t\geq q-1$). Observe that the volume of each part of the splitting is a polynomial over $\Q$ in variables $\alpha_0,\dots\alpha_d,\beta_1,\dots,\beta_t$. All $q$ parts have to be of the same volume, so we get $q-1$ nontrivial polynomial equations. Each of them shows that $T$ is a dependent set in $M_{\Q}$, hence $c\geq 1$. For us the ideal situation would be if equations were `independent' leading to $c=q-1$, since it would give a better lower bound $k(q-1)>t+d+1$. Unfortunately, it is not always the case, see Example \ref{q-1}. However in dimension one these equations are linear, and it is easy to see that they are linearly independent. Hence $c\geq q-1$ for $d=1$. Denote by $\K$ the field
$$\Q(x_{j;i_1,\dots,i_d}:\;(i_1,\dots,i_d)\neq (i^r_1,\dots,i^r_d)\; r=2,\dots,q).$$
Clearly, inclusion $\Q\subset\K$ implies that $r_{M_{\K}}(T)\leq r_{M_{\Q}}(T)=t+d+1-c$. 

Denote by $L$ the set of left sides of equations $(\spadesuit)$, and by $R$ the set of right sides of equations $(\spadesuit)$. We have that $L\subset\cl_{M_{\K}}(T)$ (see Definition \ref{DefinitionClosure} of closure), so $r_{M_{\K}}(L)\leq r_{M_{\K}}(T)=t+d+1-c$. From condition $(4)$ of the choice of $x_{j;i_1,\dots,i_d}$ it follows that the set $R$ is independent in $M_{\K}$, that is $r_{M_{\K}}(R)=(k-1)(q-1)$. But obviously $L=R$, hence $(k-1)(q-1)\leq t+d$ (and $(k-1)(q-1)\leq t-q+3$ for $d=1$) and we get a contradiction.
\end{proof}

Now from Lemmas \ref{LemmaClosed} and \ref{LemmaMain} it follows that if $k(q-1)>t+d+q-1$, or $k(q-1)>t+2$ for $d=1$, then sets $\mathcal{B}_n$ are nowhere dense, which proves the assertion of the theorem.
\end{proof}

\begin{example}\label{q-1}
Consider $d=2,q=4,t_1=1,t_2=1$. Then equality of volumes of three parts of the splitting implies that the fourth one also has the same volume. This phenomena is caused by the geometry of $\R^d$ and axis-aligned hyperplanes. It shows that in general it is easier to $q$-split fairly a whole necklace, than its Lebesgue measurable subset. In dimension one these situations do not happen.
\end{example}

By modifying slightly the proof of Theorem \ref{TheoremNecklaces} one can easily obtain a version for cuboids with inequality $k(q-1)>t+2d+q-2$.

The idea standing behind the proof is to count `degrees of freedom' of moving a necklace and cuts splitting it and to compare it with the number of equations forced by existence of a fair splitting. If the number of degrees of freedom is less than the number of equations, then there exists a coloring with the property that no necklace can be fairly split. Even though the intuitive meaning of degree of freedom is clear, it is hard to define it rigorously to be able to make use of it. It turns out that the rank in some algebraic matroid of the set $\{\alpha_i,\beta_m:\;i=0,\dots,d;\;m=1,\dots,t\}$ works. 

As we will see in Section \ref{SectionArbitraryHyperplaneCuts}, if we allow arbitrary hyperplane cuts, then each such cut adds $d$ degrees of freedom, not only one as an axis-aligned hyperplane cut. Therefore for arbitrary hyperplane cuts we will get roughly $d$ times worse bound. The same reason explains the difference between results for cubes and cuboids, the latter have larger degree of freedom.

For $d=1$ and $q=2$ Theorem \ref{TheoremNecklaces} gives exactly Theorem \ref{TheoremAGLM}, the result of our joint paper with Alon, Grytczuk and Michałek \cite{AlGrLaMi09}. However, the technique of showing that sets $\mathcal{B}_n$ have empty interior is quite different. To show a contradiction we used in \cite{AlGrLaMi09} only one color, which was different from colors of cut points and the end points of an interval. If $d>1$ or $q>2$ this is not always possible. Additionally our previous argument was not only algebraic, but contained also an analytic part. We used some inequalities on measure of colors. We ask if our bound from Theorem \ref{TheoremNecklaces} for $q=2$ is tight. 

\begin{question}
Fix $d,k,t\geq 1$ such that $k\leq t+d+1$. Does for every measurable $k$-coloring of $\R^d$ there exist a $d$-dimensional necklace which has a fair $2$-splitting using at most $t$ axis-aligned hyperplane cuts?
\end{question}

We suspect that the answer is positive, however we know that if we replace a $k$-coloring by arbitrary $k$ continuous measures then it is negative. In Section \ref{SectionColoringsDistinguishingCubes} we provide other examples when the case of arbitrary continuous measures differs from the case of measurable coloring.

The last part of this Section is devoted to colorings of a given fixed necklace that avoid a fair splitting of a certain size.

\begin{theorem}[Lasoń, \cite{La13c}]\label{TheoremNecklace}
For every integers $d,k,t\geq 1$ and $q\geq 2$ if $k(q-1)>t+q-2$, then there is a measurable $k$-coloring of a $d$-dimensional necklace for which there is no fair $q$-splitting using at most $t$ axis-aligned hyperplane cuts. The set of such colorings is dense. 
\end{theorem}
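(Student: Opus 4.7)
The plan is to adapt the proof of Theorem \ref{TheoremNecklaces} essentially verbatim, using that here the necklace $C$ is fixed (WLOG $C=[0,1]^d$). I would work in the same complete metric space $\mathcal{M}$ of measurable $k$-colorings, define $\mathcal{B}_n$ to be the set of colorings for which $C$ admits a fair $q$-splitting with at most $t$ axis-aligned hyperplane cuts and granularity at least $1/n$, and set $\mathcal{G}_t=\mathcal{M}\setminus\bigcup_n\mathcal{B}_n$. Closedness of each $\mathcal{B}_n$ is proved by the same compactness argument as in Lemma \ref{LemmaClosed}: along a convergent sequence $f_m\to f$ of colorings carrying fair splittings, the cut coordinates and $q$-labelings can be made to converge, and the granularity lower bound keeps the limit splitting nondegenerate. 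By the Baire category theorem it then suffices to prove that $\mathcal{B}_n$ has empty interior whenever $k(q-1)>t+q-2$.

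For the empty-interior step I would reuse the perturbation scheme of Lemma \ref{LemmaMain}. Approximate a putative interior point of $\mathcal{B}_n$ by a cube coloring $g$ on a fine rational grid, then perturb $g$ by inserting inside each grid-cube $D_{i_1,\dots,i_d}$ small axis-aligned cubes $D_{j;i_1,\dots,i_d}$ of color $j$ with rational corners and side lengths $x_{j;i_1,\dots,i_d}$ forming a set algebraically independent over $\Q$. If the resulting coloring $h$ still admits a fair $q$-splitting with cut coordinates $\beta_1,\dots,\beta_t$, the granularity bound lets me pick for each of the $q$ parts an uncut grid-cube $D_{i^l_1,\dots,i^l_d}$ lying entirely in part $l$. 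Comparing the color-$j$ measure in part $l$ with that in part $1$ and isolating the unique occurrence of $x^d_{j;i^l_1,\dots,i^l_d}$ (degree $d$, since this cube is uncut) produces polynomial identities over $\Q$ of exactly the shape $(\spadesuit)$ from Lemma \ref{LemmaMain}, indexed by $j=2,\dots,k$ and $l=2,\dots,q$.

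The key departure from Theorem \ref{TheoremNecklaces} is that now the set $T$ consists only of the cut coordinates $\{\beta_1,\dots,\beta_t\}$; the cube parameters $\alpha_0,\dots,\alpha_d$ are fixed rationals and contribute nothing, so $\left\vert T\right\vert=t$ rather than $d+1+t$. At least one of the $q-1$ ``equal volume of parts'' constraints on the splitting of $C$ is a genuinely non-trivial polynomial identity in $\Q[\beta_1,\dots,\beta_t]$ -- this is the main obstacle, and it is settled by observing that the total volumes of two different parts of a splitting are distinct polynomial functions of the $\beta$'s, since the parts consist of combinatorially different collections of product pieces, so their equality is a non-trivial algebraic constraint. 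Hence $r_{M_\Q}(T)\leq t-1$. Setting $\K=\Q(x_{j;i_1,\dots,i_d}:(i_1,\dots,i_d)\neq(i^r_1,\dots,i^r_d)\text{ for }r=2,\dots,q)$ then gives $r_{M_\K}(T)\leq t-1$, while algebraic independence of the inserted side lengths forces $r_{M_\K}(R)=(k-1)(q-1)$ for the right-hand-side set $R=\{x^d_{j;i^l_1,\dots,i^l_d}\}$. Since $L=R$ and $L\subset\cl_{M_\K}(T)$, we obtain $(k-1)(q-1)\leq t-1$, i.e., $k(q-1)\leq t+q-2$, contradicting the hypothesis. Baire category then yields density of $\mathcal{G}_t$ in $\mathcal{M}$.
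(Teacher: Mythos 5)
Your proposal is correct and follows the same Baire-category and algebraic-matroid framework as the paper's proof, with the same reduction to showing that at least one of the $q-1$ equal-volume constraints is a nontrivial polynomial relation among the cut coordinates. The only cosmetic difference is that you normalize the necklace to $[0,1]^d$ so that the $\alpha_i$ become rational, whereas the paper keeps $C$ arbitrary and instead adjoins $\alpha_0,\dots,\alpha_d$ to the base field $\K$ and requires the perturbation parameters $x_{j;i_1,\dots,i_d}$ to be algebraically independent over $\Q(\alpha_i)$; both devices remove the $d+1$ necklace parameters from the degree-of-freedom count in exactly the same way.
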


\begin{proof}
For $d,k\geq 1$, $q\geq 2$ and a $d$-dimensional necklace $C=[\alpha_1,\alpha_1+\alpha_0]\times\dots\times[\alpha_d,\alpha_d+\alpha_0]$ we define the set $\mathcal{B}_n^{(t_i)}$ of $k$-colorings from $\mathcal{M}$ for which there is a fair $q$-splitting of $C$ with exactly $t_i$ hyperplanes aligned to $i$-th axis, for each $i$, and granularity at least $1/n$. Finally let
$$\mathcal{B}_n=\bigcup_{t_1+\dots+t_d\leq t}\mathcal{B}_n^{(t_i)}$$
be `bad' colorings. The aim, as previously, is to show that sets $\mathcal{B}_n$ are nowhere dense, provided suitable relation between $d,k,q$ and $t$ holds. This will imply that the union $\bigcup_{n=1}^{\infty}\mathcal{B}_n$ is of first category, so the set of colorings which we are looking for is dense. Analogously to Lemma \ref{LemmaClosed} sets $\mathcal{B}_n$ are closed in $\mathcal{M}$. Similarly to Lemma \ref{LemmaMain} if $k(q-1)>t+q-2$, then $\mathcal{B}_n$ has empty interior. The only difference is that we want the set 
$$\{x_{j;i_1,\dots,i_d}:j\in\{2,\dots,k\},i_1,\dots,i_d\in\{1,\dots,N\}\}$$ 
to be algebraically independent over $\Q(\alpha_i:i=0,\dots,d)$. We also add $\alpha_0,\dots,\alpha_d$ to the base field $\K$, so the rank in the matroid of the left side of equations is equal to $t-c\leq t-1$, while of the right side it is still $(k-1)(q-1)$. This proves the assertion of the theorem.
\end{proof}

For $q=2$ the above theorem is tight. It asserts that there are measurable $(t+1)$-colorings of a given necklace for which there is no fair $2$-splitting using at most $t$ axis-aligned hyperplane cuts. The set of such colorings is even dense. Due to a result of Goldberg and West \cite{GoWe85} it is the minimum number of colors. Together with Theorem \ref{lozi} of de Longueville and \v{Z}ivaljevi\'{c} this fully characterizes the set of tuples $(t_1,\dots,t_d)\in\N^d$, such that any $k$-colored $d$-dimensional necklace has a fair $2$-splitting using at most $t_i$ hyperplanes aligned to $i$-th axis for each $i$. Namely, it is the set $$\{(t_1,\dots,t_d)\in\N^d:\;k\leq t_1+\dots+t_d\}.$$

\section{Arbitrary Hyperplane Cuts}\label{SectionArbitraryHyperplaneCuts}

\begin{theorem}[Lasoń, \cite{La13c}]\label{TheoremNecklacesArbitrary}
For every integers $d,k,t\geq 1$ and $q\geq 2$ if $k(q-1)>dt+d+q-1$, then there is a measurable $k$-coloring of $\R^d$ such that no $d$-dimensional necklace has a fair $q$-splitting using at most $t$ arbitrary hyperplane cuts. The set of such colorings is dense. 
\end{theorem}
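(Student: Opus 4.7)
The plan is to reproduce the three-step Baire category argument from the proof of Theorem \ref{TheoremNecklaces}, changing only the parameter count for the hyperplanes. Work in the complete metric space $\mathcal{M}$ of Section \ref{SectionTheSetting}, and parameterize each hyperplane as $H = \{x \in \R^d : \langle a,x\rangle = b\}$, choosing, say, $\|a\|=1$ with $a$ lexicographically positive, so each $H$ carries $d$ real parameters. For $n\geq 1$, let $\mathcal{B}_n \subset \mathcal{M}$ be the set of colorings $f$ such that some necklace $C=[\alpha_1,\alpha_1+\alpha_0]\times\cdots\times[\alpha_d,\alpha_d+\alpha_0]\subset[-n,n]^d$ with $\alpha_0\geq 1/n$ is cut by at most $t$ hyperplanes into regions each containing a ball of radius $1/n$, in such a way that the regions can be assembled into $q$ parts of equal $f$-measure in every color. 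Then $\mathcal{G}_t=\mathcal{M}\setminus\bigcup_n \mathcal{B}_n$, and by the Baire category theorem it suffices to prove each $\mathcal{B}_n$ is closed and nowhere dense.

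Closedness follows exactly as in Lemma \ref{LemmaClosed}: along $f_m \to f$ with $f_m \in \mathcal{B}_n$, the cubes $C_m$ lie in a compact set, each hyperplane lies in a compact parameter set $\{(a,b):\|a\|=1,\,a \text{ lex.\ positive},\,|b|\leq 2n\sqrt{d}\}$, and there are only finitely many combinatorial types of arrangements and $q$-labelings; the ball-of-radius-$1/n$ granularity prevents degeneration so that the subsequential limit is still a fair splitting for $f$. For empty interior we imitate Lemma \ref{LemmaMain}: given $f\in \mathcal{B}_n$ with $K(f,\epsilon)\subset\mathcal{B}_n$, approximate $f$ by a grid cube coloring $g\in \mathcal{I}_n$ on an $N^d$-subdivision of $[-n,n]^d$ with $N$ taken large enough that every ball of radius $1/n$ contains a whole grid cell, then perturb $g$ to $h$ by placing in each cell $C_{i_1,\dots,i_d}$ a small rational cube $D_{i_1,\dots,i_d}$ of volume $\delta^d$, and inside it for each color $j=2,\dots,k$ a rationally-cornered subcube $D_{j;i_1,\dots,i_d}$ of side $x_{j;i_1,\dots,i_d}$, with the collection $\{x_{j;i_1,\dots,i_d}\}$ algebraically independent over $\Q$. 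Since $h\in K(g,\epsilon/2)\subset\mathcal{B}_n$, some necklace $C$ admits a fair $q$-splitting. Granularity guarantees that in each of the $q$ parts we can fix a grid cell $C_{i^l_1,\dots,i^l_d}$ lying inside one piece of the partition; exactly as in Lemma \ref{LemmaMain} this yields, for every $j=2,\dots,k$ and $l=2,\dots,q$, a relation
$$(W_{j,1}-W_{j,l}')\!\left(\alpha_i,\,a_m^{(s)},\,b_m,\,x_{j;i_1,\dots,i_d}:\ (i_1,\dots,i_d)\neq (i^r_1,\dots,i^r_d),\ r=2,\dots,q\right)=x^d_{j;i^l_1,\dots,i^l_d},$$
with coefficients in $\Q$, where $W_{j,l}$ is the polynomial over $\Q$ expressing the $j$-th color mass in the $l$-th part.

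The only substantive change from the axis-aligned argument, and hence the main obstacle, is the rank count for
$$T=\{\alpha_0,\dots,\alpha_d\}\cup\{a_m^{(s)},\,b_m:\,m=1,\dots,t,\ s=1,\dots,d\}$$
in the algebraic matroid $M_\Q$ of the extension $\Q\subset \Le=\Q(T\cup\{x_{j;i_1,\dots,i_d}\})$. After the unit-length and lexicographic normalizations each hyperplane contributes $d$ algebraically free parameters, so $|T|=(d+1)+dt$; the $q-1$ equations setting the volumes of the $q$ parts equal yield at least one nontrivial algebraic dependence on $T$ (for $t\geq 1$, translating a single hyperplane while holding everything else fixed genuinely changes relative volumes), whence $r_{M_\Q}(T)\leq dt+d$. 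Setting $\K=\Q(x_{j;i_1,\dots,i_d}:(i_1,\dots,i_d)\neq (i^r_1,\dots,i^r_d),\ r=2,\dots,q)$ and letting $L,R$ denote the left and right sides of the displayed relations, we have $L\subset \cl_{M_\K}(T)$, so $r_{M_\K}(L)\leq r_{M_\K}(T)\leq dt+d$, while $R$ is algebraically independent over $\K$ with $r_{M_\K}(R)=(k-1)(q-1)$; the identity $L=R$ forces $(k-1)(q-1)\leq dt+d$, contradicting the hypothesis $k(q-1)>dt+d+q-1$. The delicate point is the formulation of granularity: the ``ball of radius $1/n$'' version simultaneously enables compactness in the closedness proof and guarantees that each piece of the partition of $C$ swallows a whole grid cell in the empty-interior proof; any weaker formulation would need to be compensated elsewhere.
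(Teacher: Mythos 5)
Your proposal follows essentially the same Baire-category-plus-algebraic-matroid strategy as the paper, with the same rank count $r_{M_\Q}(T)\le dt+d$ forcing $(k-1)(q-1)\le dt+d$. The only slip worth flagging is that you call the color-mass functions $W_{j,l}$ \emph{polynomials} in the hyperplane parameters; once cuts are not axis-aligned they are merely \emph{rational} functions of those parameters (the paper isolates this as a separate lemma via Cramer's rule and barycentric subdivision), but this is harmless, since the matroid step $L\subset\cl_{M_\K}(T)$ needs only that $W_{j,1}-W'_{j,l}$ be algebraic over $\K(T)$, not polynomial. Also, strictly speaking your $T$ as written has $(d+1)+t(d+1)$ elements, not $(d+1)+dt$; the relation $\|a_m\|=1$ is what drops the rank by one per hyperplane and recovers the count $r_{M_\Q}(T)\le (d+1)+dt-1$ you use, so the final inequality is unaffected.
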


\begin{proof}
An arbitrary hyperplane (not necessarily axis-aligned) can be described by $d$ numbers $\beta^1,\dots,\beta^d$, which we will call \emph{parameters}. There are several ways of defining parameters, fortunately usually transition functions between different definitions are rational over $\Q$. For example for a hyperplane not passing through $O=(0,\dots,0)$ we can define parameters to be coefficients of its normalized equation $\beta^1x_1+\dots+\beta^dx_d=1$. In general pick $d+1$ rational points in general position $P_1,\dots,P_{d+1}$. Then no hyperplane contains all of them. We describe a hyperplane $H$ using coefficients of its normalized equation with $O$ being moved to a point $P_i$ which does not belong to $H$. We will make use of the following classical fact.

\begin{lemma}\label{rat} 
Volume of a bounded set in $\R^d$ which is an intersection of a finite number of half spaces is a rational function over $\Q$ of parameters of hyperplanes supporting these half spaces.
\end{lemma}

\begin{proof}
Such a set $S$ is a convex polytope. From Cramer's formula it follows that coordinates of vertices of $S$ are rational functions over $\Q$ of parameters. We make the barycentric subdivision of $S$ into simplices. Vertices of these simplices are barycenters of faces of $S$, hence they are also rational functions over $\Q$ of parameters.

Volume of each simplex equals to the determinant of the matrix of coordinates of its vertices divided by $d!$. Thus it is a rational function over $\Q$ of parameters. Then so is the volume of the original polytope.   
\end{proof}

Granularity of the splitting is the largest $g$ such that any piece of the splitting contains an axis-aligned cube with side of length $g$. We denote by $\mathcal{B}_n$ the set of $k$-colorings from $\mathcal{M}$ for which there exists at least one $d$-dimensional necklace contained in $[-n, n]^d$ which has a fair $q$-splitting with at most $t$ hyperplanes and granularity at least $1/n$. These are `bad' colorings. 

As in the previous Section \ref{SectionAxisAlignedHyperplaneCuts} our aim is to show that sets $\mathcal{B}_n$ are nowhere dense, provided suitable relation between $d,k,q$ and $t$ holds. This will imply that the set of colorings which we are looking for is dense.

The proof that sets $\mathcal{B}_n$ are closed in $\mathcal{M}$ goes exactly the same as the proof of Lemma \ref{LemmaClosed}. To show that if $(k-1)(q-1)>dt+d$, then every $\mathcal{B}_n$ has empty interior we repeat the proof of Lemma \ref{LemmaMain} and make small modifications. Instead of a single parameter $\beta_i$ of a hyperplane we have $d$ parameters $\beta_i^1,\dots,\beta_i^d$. Due to Lemma \ref{rat} the amount of color $j$ different from `white' contained in each part $l$ of the $q$-splitting is a rational function $R_{j,l}$ from 
$$\Q(\alpha_i,\beta_m^p,x_{j;i_1,\dots,i_d}:\;i=0,\dots,d;\;m=1,\dots,t;\;p=1,\dots,d;\;(i_1,\dots,i_d)),$$ 
instead of a polynomial $W_{j,l}$. Now the rank in the matroid $M_{\K}$ for the field
$$\K=\Q(x_{j;i_1,\dots,i_d}:\;(i_1,\dots,i_d)\neq (i^r_1,\dots,i^r_d)\; r=2,\dots,q)$$ 
of the set of left sides of equations analogous to $(\spadesuit)$ is at most $dt+d+1-1$, while of the set of right sides equals to $(k-1)(q-1)$. Hence the hypothesis that $\mathcal{B}_n$ has non-empty interior implies inequality $(k-1)(q-1)\leq dt+d$. It follows that if $(k-1)(q-1)>dt+d$, then sets $\mathcal{B}_n$ are nowhere dense. This proves the assertion of the theorem.
\end{proof}

Modifying slightly the above argument one may obtain a version for cuboids with inequality $k(q-1)>dt+2d+q-2$. Other modification, similar to the proof of Theorem \ref{TheoremNecklace}, leads the following theorem.

\begin{theorem}[Lasoń, \cite{La13c}]\label{TheoremNecklaceArbitrary}
For every integers $d,k,t\geq 1$ and $q\geq 2$ if $k(q-1)>dt+q-2$, then there is a measurable $k$-coloring of a $d$-dimensional necklace for which there is no fair $q$-splitting using at most $t$ arbitrary hyperplane cuts. The set of such colorings is dense. 
\end{theorem}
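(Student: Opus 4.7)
The argument would follow the template of Theorem~\ref{TheoremNecklacesArbitrary} with the simplification, borrowed from Theorem~\ref{TheoremNecklace}, that the necklace $C=[\alpha_1,\alpha_1+\alpha_0]\times\cdots\times[\alpha_d,\alpha_d+\alpha_0]$ is fixed. After rescaling I may assume $\alpha_0,\ldots,\alpha_d\in\Q$, and absorb them into the base field. Working in the complete metric space $\cal{M}$ of Section~\ref{SectionTheSetting}, let $\cal{B}_n\subset\cal{M}$ consist of those colorings for which $C$ admits a fair $q$-splitting using at most $t$ arbitrary hyperplanes with granularity (in the sense of Section~\ref{SectionArbitraryHyperplaneCuts}) at least $1/n$. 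I aim to show each $\cal{B}_n$ is nowhere dense, whence $\cal{M}\setminus\bigcup_n\cal{B}_n$ is dense by Baire.

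\textbf{Closedness.} Each $\cal{B}_n$ is closed by the same compactness argument as Lemma~\ref{LemmaClosed}: for $f_m\to f$ in $\cal{B}_n$, the space of hyperplanes meeting $C$ is compact in its natural parametrization and $q$-labelings of pieces form a finite set, so along a subsequence the witnessing splittings converge; granularity is preserved in the limit, and fairness for $f$ follows by bounded convergence.

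\textbf{Empty interior.} Given $f\in\cal{B}_n$ and $\epsilon>0$, approximate $f$ by a cube coloring $g$ on a fine grid $\{C_{i_1,\ldots,i_d}\}$ (Lemma~\ref{aprox}), and perturb exactly as in Theorem~\ref{TheoremNecklacesArbitrary}: inside each $C_{i_1,\ldots,i_d}$ insert, for every non-white color $j$, a tiny axis-aligned cube $D_{j;i_1,\ldots,i_d}$ of side $x_{j;i_1,\ldots,i_d}$ with one rational corner, choosing the set $\{x_{j;i_1,\ldots,i_d}\}$ algebraically independent over $\Q$. The resulting coloring $h$ is within $\epsilon$ of $g$. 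Suppose $h\in\cal{B}_n$; then $C$ admits a fair $q$-splitting whose at most $t$ hyperplanes carry at most $dt$ parameters $\beta_m^p$, and by Lemma~\ref{rat} the measure of color $j$ in part $l$ is a rational function $R_{j,l}\in\Q(\beta_m^p,x_{j;i_1,\ldots,i_d})$. Granularity at least $1/n$ forces each part $l$ to entirely contain some grid cube $C_{i^l_1,\ldots,i^l_d}$, and in $R_{j,l}$ the variable $x_{j;i^l_1,\ldots,i^l_d}$ enters only through the monomial $x^d_{j;i^l_1,\ldots,i^l_d}$. Equating the color-$j$ mass in part $l$ with that in part $1$ and isolating those monomials produces, for $j=2,\ldots,k$ and $l=2,\ldots,q$, equations
\[
\Phi_{j,l}\bigl(\beta_m^p,\,x_{j;i_1,\ldots,i_d}:(i_1,\ldots,i_d)\notin\{(i^r_1,\ldots,i^r_d):r=2,\ldots,q\}\bigr)=x^d_{j;i^l_1,\ldots,i^l_d}.
\]
Set $\K=\Q(x_{j;i_1,\ldots,i_d}:(i_1,\ldots,i_d)\notin\{(i^r_1,\ldots,i^r_d):r=2,\ldots,q\})$ and consider the algebraic matroid of a suitable extension $\K\subset\Le$. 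The $(k-1)(q-1)$ right-hand sides are algebraically independent; the left-hand sides lie in $\cl(T)$ for $T=\{\beta_m^p\}$, and the $q-1$ volume equalities $\mathrm{vol}(\text{part}_l)=|C|/q$ give at least one nontrivial polynomial relation among the $\beta_m^p$ over $\Q\subset\K$ (since $|C|$ is now a fixed rational constant), so $r_\K(T)\leq dt-1$. Hence $(k-1)(q-1)\leq dt-1$, equivalently $k(q-1)\leq dt+q-2$, contradicting the hypothesis.

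\textbf{Main obstacle.} Nearly all the machinery is already in place in Theorems~\ref{TheoremNecklace} and~\ref{TheoremNecklacesArbitrary}; the one point that must be checked is the inequality $r_\K(T)\leq dt-1$, i.e. that the $q-1$ volume identities for the \emph{fixed} cube $C$ really impose at least one nontrivial algebraic constraint on the hyperplane parameters over $\Q$. This is exactly where fixing $C$ matters: because the total volume $|C|$ is a constant rational number while each part's volume is a non-constant rational function of the $\beta_m^p$ as soon as some hyperplane meets $\mathrm{int}(C)$, the equality $\mathrm{vol}(\text{part}_1)=|C|/q$ alone already yields the required $c\geq1$. Everything else is routine bookkeeping of ranks in the algebraic matroid, exactly as in the preceding two theorems.
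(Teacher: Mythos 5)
Your proposal is correct and takes essentially the same route the paper intends: the paper states that Theorem~\ref{TheoremNecklaceArbitrary} follows by combining the proof of Theorem~\ref{TheoremNecklacesArbitrary} with the modification described for Theorem~\ref{TheoremNecklace}, i.e.\ absorbing the $\alpha_i$ into the base field so that $T=\{\beta_m^p\}$ has only $dt$ elements and $r_\K(T)\leq dt-1$, and you reproduce precisely this count. The only cosmetic difference is that you normalize $\alpha_0,\dots,\alpha_d$ to be rational by an affine rescaling, whereas the paper keeps them arbitrary, adds them to $\K$, and takes the $x_{j;i_1,\dots,i_d}$ algebraically independent over $\Q(\alpha_0,\dots,\alpha_d)$; the two devices are interchangeable, and your observation that $\mathrm{vol}(\text{part}_1)=|C|/q$ with $|C|$ a base-field constant gives the required $c\geq 1$ is exactly the point the paper leaves implicit.
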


For $q=2$ we get a necessary condition ($k(q-1)\leq dt$) for mass equipartitioning problem. It is an interesting question if this condition is also sufficient for existence of a fair $q$-splitting of size at most $t$ of a $k$-colored $d$-dimensional necklace. For $(d,k,q,t)=(d,k,2^t,t)$ the question coincides with the conjecture of Ramos \cite{Ra96}. A small support is the positive answer for $(d,k,q,t)=(d,d,2^l,2^l-1)$, which follows directly from $2^l-1$ applications of the Ham Sandwich Theorem for measures \cite{Ma03}. 

\section{Colorings Distinguishing Cubes}\label{SectionColoringsDistinguishingCubes}

Here we are interested in a problem of a slightly different flavor. We say that a measurable coloring of $\R^d$ \emph{distinguishes axis-aligned cubes} if no two nontrivial axis-aligned cubes contain the same measure of every color. What is the minimum number of colors needed for such a coloring?

We get our results in a similar way to the proof of Theorem \ref{TheoremNecklaces}. For each $n$ we define a set $\mathcal{B}_n$ of all $k$-colorings from $\mathcal{M}$ for which there exist two $d$-dimensional cubes $A,B$ contained in $[-n, n]^d$ which have the same measure of every color, and such that $A\setminus B$ contains a translation of the cube $\left(0,\frac{1}{n}\right)^d$. The `bad' colorings are $\mathcal{B}=\bigcup_{n=1}^{\infty}\mathcal{B}_n$. Sets $\mathcal{B}_n$ are closed, and we can easily get an analog of Lemma \ref{LemmaMain}. This shows the following theorem.

\begin{theorem}[Lasoń, \cite{La13c}]\label{2d+3}
For every $d\geq 1$ there exists a measurable $(2d+3)$-coloring of $\R^d$ distinguishing axis-aligned cubes. The set of such colorings is dense. 
\end{theorem}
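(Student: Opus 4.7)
The plan is to apply the Baire category framework developed for Theorem~\ref{TheoremNecklaces}. Define $\mathcal{B}_n \subset \mathcal{M}$ to be the set of $k$-colorings for which there exist two axis-aligned cubes $A, B \subset [-n,n]^d$ having the same measure of every color and with $A \setminus B$ containing a translate of $(0, 1/n)^d$; then any coloring in $\mathcal{M} \setminus \bigcup_{n\geq 1} \mathcal{B}_n$ distinguishes axis-aligned cubes, so density of the good set reduces to showing each $\mathcal{B}_n$ is nowhere dense. Closedness of $\mathcal{B}_n$ follows exactly as in Lemma~\ref{LemmaClosed}: given a convergent sequence $f_m \to f$ with witness pairs $(A_m, B_m) \subset [-n,n]^d$, compactness produces a subsequence with $A_m \to A$ and $B_m \to B$, the $(0,1/n)^d$-translate inside $A_m \setminus B_m$ survives in the limit, and equality of color measures is preserved by $\mathcal{M}$-convergence.

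For empty interior, fix $f \in \mathcal{B}_n$ and $\epsilon > 0$. By Lemma~\ref{aprox}, approximate $f$ within $\epsilon/2$ by a cube coloring $g$ on a grid of side $2n/N$ with $N$ large enough that every translate of $(0,1/n)^d$ inside $[-n,n]^d$ contains a full grid cell. As in Lemma~\ref{LemmaMain}, perturb $g$ to a coloring $h$ within $\epsilon/2$ by inserting, inside a tiny rational subcube $D_{i_1,\dots,i_d}$ of each grid cell, for every non-white color $j = 2, \dots, k$, an axis-aligned subcube $D_{j; i_1,\dots,i_d}$ of side $x_{j; i_1,\dots,i_d}$ with one corner in $\Q^d$, the whole collection $\{x_{j; i_1,\dots,i_d}\}$ being algebraically independent over $\Q$. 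Suppose $h \in \mathcal{B}_n$ with witness cubes $A = \prod_i [\alpha_i, \alpha_i + \alpha_0]$ and $B = \prod_i [\alpha_i', \alpha_i' + \alpha_0']$. The granularity condition forces $A \setminus B$ to contain an entire grid cube $C_{i^A}$. Setting $T = \{\alpha_0, \dots, \alpha_d, \alpha_0', \dots, \alpha_d'\}$, the sum over all $k$ color equations $\mu_j(A) = \mu_j(B)$ gives the volume relation $\alpha_0^d = (\alpha_0')^d$, a nontrivial polynomial relation over $\Q$ purely in $T$, so in the algebraic matroid $M_\Q$ we have $r_{M_\Q}(T) \leq 2d + 1$.

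For each non-white color $j$, the variable $x_{j; i^A}$ enters $\mu_j(A)$ only through the single monomial $x_{j; i^A}^d$ (since $C_{i^A} \subset A$ contains $D_{j; i^A}$ in its interior) and is entirely absent from $\mu_j(B)$ (since $C_{i^A} \cap B = \emptyset$). Isolating this monomial yields $k - 1$ equations $x_{j; i^A}^d = P_j(T, \text{other } x\text{'s})$ over $\Q$. Setting $\K = \Q(x_{j'; i} : j' \in \{2,\dots,k\},\, i \neq i^A)$, each right-hand side lies in $\cl_{M_\K}(T)$, so the elements $x_{j; i^A}^d$ for $j = 2, \dots, k$ have combined rank at most $r_{M_\K}(T) \leq r_{M_\Q}(T) \leq 2d + 1$ in $M_\K$; yet by construction they are algebraically independent over $\K$, giving rank exactly $k - 1$. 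Thus $k - 1 \leq 2d + 1$, contradicting $k = 2d + 3$. The main obstacle is the geometric bookkeeping: one must verify that the translate condition defining $\mathcal{B}_n$ genuinely forces a full grid cube $C_{i^A} \subset A \setminus B$ for sufficiently fine grids, and that re-using a single $C_{i^A}$ across all $k-1$ non-white colors through the common corner cubes $D_{j; i^A}$ delivers $k-1$ independent leverage points rather than merely one. A secondary subtlety is the accounting for the single volume relation $\alpha_0^d = (\alpha_0')^d$: this one unavoidable reduction from $|T| = 2d + 2$ to $r_{M_\Q}(T) \leq 2d + 1$ is precisely what pushes the threshold from $2d + 2$ colors to $2d + 3$.
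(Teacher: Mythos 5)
Your proposal is correct and fills in, with the same mechanics, exactly the sketch the paper gives (the paper only says ``we can easily get an analog of Lemma~\ref{LemmaMain}''): the same $\mathcal{B}_n$, the same cube-coloring approximation with algebraically independent side lengths, the isolation of $x_{j;i^A}^d$ using a grid cell fully inside $A\setminus B$, and the rank comparison $k-1 = r_{M_\K}(R) \le r_{M_\K}(T) \le r_{M_\Q}(T) \le 2d+1$ via the volume identity $\alpha_0^d = (\alpha_0')^d$. One small slip in your closing commentary: the volume relation \emph{lowers} the color threshold (from $2d+4$ to $2d+3$) rather than ``pushing'' it up from $2d+2$; the proof itself is unaffected.
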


For $d=1$ this gives the second result of our previous paper \cite{AlGrLaMi09}, namely the existence of a measurable $5$-coloring of $\R$ with no two equally colored intervals. We suspect that this number of colors is optimal. 

\begin{question}
Fix $d\geq 1$. Does for every measurable $(2d+2)$-coloring of $\R^d$ there exist two nontrivial axis-aligned cubes which have the same measure of every color?
\end{question}

A slight modification of the argument leads to a version of Theorem \ref{2d+3} for cuboids with $4d+1$ colors.

Vre\'{c}ica and \v{Z}ivaljevi\'{c} \cite{VrZi13} solved the case of arbitrary continuous probability measures instead of measurable colorings of $\R^d$. They prove that $d+1$ measures are sufficient to distinguish axis-aligned cubes, and that it is tight.

\begin{theorem}[Vre\'{c}ica, \v{Z}ivaljevi\'{c}, \cite{VrZi13}]\label{vrec}
For every $d$ continuous probability measures $\mu_1,\dots,\mu_d$ on $\R^d$ there are two nontrivial axis-aligned cubes $C$ and $C'$ such that $\mu_i(C)=\mu_i(C')$ holds for every $i$. Moreover, there exist $d+1$ continuous probability measures on $\R^d$ distinguishing axis-aligned cubes.
\end{theorem}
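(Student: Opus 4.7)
The plan is to handle the two parts separately. The first, existence-type statement follows from a dimension count combined with invariance of domain, while the second, constructive statement requires an explicit example.

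For the first part, I would parameterize axis-aligned cubes in $\R^d$ as $C(a, s) = [a_1, a_1+s] \times \cdots \times [a_d, a_d+s]$ with $a \in \R^d$ and $s > 0$, identifying the space of such cubes with the $(d+1)$-dimensional manifold $\R^d \times (0, \infty)$. Consider the continuous map
$$\Phi : \R^d \times (0, \infty) \to \R^d, \qquad \Phi(a, s) = (\mu_1(C(a, s)), \dots, \mu_d(C(a, s))).$$
Fix any embedded topological $d$-sphere $S$ in the domain, for instance a small round sphere entirely inside $\R^d \times (0, \infty)$. If $\Phi|_S : S \to \R^d$ were injective, then by invariance of domain applied to each local coordinate chart of $S$ (where $S$ is homeomorphic to an open subset of $\R^d$), the image $\Phi(S)$ would be open in $\R^d$. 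But $\Phi(S)$ is also compact as the continuous image of $S$, and the only compact open subset of $\R^d$ is $\emptyset$, which is a contradiction. Hence there exist distinct parameters $(a, s) \neq (a', s')$ on $S$ with $\Phi(a, s) = \Phi(a', s')$, producing two distinct nontrivial cubes $C, C'$ with $\mu_i(C) = \mu_i(C')$ for every $i$.

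For the second part, I would construct $d+1$ continuous probability measures whose associated map $\Psi : \R^d \times (0, \infty) \to \R^{d+1}$ is injective. A natural strategy is to recover the $d+1$ parameters $(a_1, \dots, a_d, s)$ one at a time. For each $i = 1, \dots, d$ take $\mu_i$ to be a measure concentrated close to the $i$-th coordinate axis (smoothed into a thin positive-density tube) so that $\mu_i(C(a,s))$ is essentially determined by the intersection of $C$ with that axis and depends monotonically on the position along it. Then take $\mu_{d+1}$ to be a probability measure whose value on $C(a,s)$ is strictly monotone in the side length $s$ once $a$ is fixed; a Gaussian centered at the origin is a prototypical choice. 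The measures $\mu_1, \dots, \mu_d$ together with $\mu_{d+1}$ then separate parameters: $\mu_{d+1}(C)$ together with the $\mu_i(C)$ determines $s$, and then each $\mu_i(C)$ pins down $a_i$ by monotonicity.

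The main obstacle I foresee lies in the second part: upgrading the heuristic separation-of-variables construction into a rigorous injectivity proof requires carefully chosen densities. A safe route is to insist the densities be real-analytic and strictly positive, so that equations of the form $\mu_i(C) = \mu_i(C')$ become analytic identities in the cube parameters $(a, s, a', s')$, which can then be dispatched by monotonicity in a single variable together with an identity-theorem argument. The first part, by contrast, is forced by the mismatch between the $(d+1)$-dimensional cube space and the $d$-dimensional target $\R^d$, and presents no essential difficulty once an embedded $d$-sphere is fixed.
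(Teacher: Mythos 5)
The thesis quotes this theorem from Vre\'{c}ica and \v{Z}ivaljevi\'{c} \cite{VrZi13} without reproducing a proof, so there is no in-paper argument to compare against. Your first half is correct: restricting the evaluation map to an embedded $d$-sphere $S$ inside $\R^d\times(0,\infty)$ and invoking invariance of domain shows that a continuous injection of a compact boundaryless $d$-manifold into $\R^d$ is impossible (the image would be open, compact, and nonempty in $\R^d$), which forces a collision on $S$ and hence two distinct nontrivial cubes with equal measure vectors. The only wording slip is that $S$ is not ``homeomorphic to an open subset of $\R^d$'' globally, only locally; locality is all the invariance-of-domain step uses.

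The second half has a genuine gap. A measure ``concentrated close to the $i$-th coordinate axis'' does not make $\mu_i(C(a,s))$ depend monotonically on position along that axis: the value $\mu_i(C(a,s))$ depends jointly on $a_1,\dots,a_d$ and $s$, and once the cube is translated transversally away from the tube the dependence on $a_i$ becomes nearly flat, so the intended coordinate-by-coordinate recovery fails. The proposed rescue --- real-analytic, strictly positive densities plus an identity-theorem argument --- does not repair this: $\mu_i(C)=\mu_i(C')$ is an \emph{equation} in $(a,s,a',s')$, not an analytic identity, so the identity theorem yields nothing, and monotonicity in a single coordinate does not imply injectivity of a map $\R^{d+1}\to\R^{d+1}$. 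To complete this part you would need to exhibit specific densities (product densities are a natural choice, so that cube integrals factor into one-dimensional integrals) together with a concrete argument that the side length $s$ and each $a_i$ can be recovered in turn; as written, the existence of $d+1$ distinguishing measures is asserted rather than proved.
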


In fact Vre\'{c}ica and \v{Z}ivaljevi\'{c} show that even one can find any finite number of axis-aligned cubes in $\R^d$ which are not distinguished by given $d$ measures. They also get results analogous to Theorem \ref{vrec} for cuboids in $\R^d$ with $2d-1$ colors. 

For sure, the minimum number of colors in the two cases (arbitrary continuous measures, and measurable colorings of $\R^d$) is different. One can show the following. 

\begin{example} 
There are two measures on $\R$ distinguishing intervals, while no measurable $2$-coloring of $\R$ distinguishes intervals.
\end{example}



\begin{thebibliography}{99}

\bibitem{AhBe06} R. Aharoni, E. Berger, The intersection of a matroid and a simplicial complex, Trans. Amer. Math. Soc. 358 (2006), no. 11, 4895-4917.

\bibitem{AlWe86} N. Alon, D. West, The Borsuk-Ulam theorem and bisection of necklaces, Proc. Amer. Math.
Soc. 98 (1986), 623-628. 

\bibitem{Al87} N. Alon, Splitting necklaces, Advances in Math. 63 (1987), 247-253. 

\bibitem{AlTuVo97} N. Alon, Z. Tuza, M. Voigt, Choosability and fractional chromatic numbers, Discrete Math. 165/166 (1997), 31-38.

\bibitem{Al99} N. Alon, Combinatorial Nullstellensatz, Combin. Probab. Comput. 8 (1999), 7-29.

\bibitem{Al00} N. Alon, Degrees and choice numbers, Random Struct. Algor. 16 (2000), 364-368.

\bibitem{AlGrLaMi09} N. Alon, J. Grytczuk, M. Laso\'{n}, M. Micha\l ek, Splitting necklaces and measurable  colorings of the real line, Proc. Amer. Math. Soc. 137 (2009), no. 5, 1593-1599. 

\bibitem{AnHoMe13} S. Andres, W. Hochst\"{a}ttler, M. Merkel, On a base exchange game on bispanning graphs, Technical Report feu-dmo029.11, FernUniversit\"{a}t in: Hagen Fakult\"{a}t f\"{u}r Mathematik und Informatik.

\bibitem{ArHeHi97} A. Aramova, J. Herzog, T.~Hibi, Gotzmann theorems for exterior algebras and combinatorics, J. Algebra 191 (1997), 174-211.

\bibitem{AtMa} M. Atiyah, I. Macdonald, Introduction to commutative algebra, Addison-Wesley Publishing Co., Mass.-London-Don Mills, 1969. 

\bibitem{BaGrKiZh07} T. Bartnicki, J. Grytczuk, H. Kierstead, X. Zhu, The map coloring game, Amer. Math. Monthly 114 (2007), no. 9, 793-803.

\bibitem{BaGrKi08} T. Bartnicki, J. Grytczuk, H. Kierstead, The game of arboricity, Discrete Math. 308 (2008), 1388-1393.

\bibitem{BjKoLo} A. Bj\"{o}rner, B. Korte, L. Lov\'{a}sz, Homotopy properties of greedoids, Adv. in Appl. Math. 6 (1985), no. 4, 447-494.

\bibitem{Bj95} A. Bj\"{o}rner, Topological methods, 1819-1872, in: R. Graham, M. Gr\"{o}tschel, L. Lov\'{a}sz, Handbook of Combinatorics, Elsevier, Amsterdam, 1995.

\bibitem{BlZi07} P. Blagojevi\'{c}, G. Ziegler, The ideal-valued index for a dihedral group action, and mass partition by two hyperplanes, Topology Appl. 158 (2011), no. 12, 1326-1351.

\bibitem{Bl08} J. Blasiak, The toric ideal of a graphic matroid is generated by quadrics, Combinatorica 28 (2008), 283-297.

\bibitem{Bo91} H. Bodlaender, On the complexity of some coloring games, Internat. J. Found. Comput. Sci. 2 (1991), no. 2, 133-147.

\bibitem{BoMiMiNaZa12} M. Boij, J. Migliore, R. Mir\'{o}-Roig, U. Nagel, F. Zanello, On the shape of a pure O-sequence, Mem. Amer. Math. Soc. 218 (2012), no. 2024.

\bibitem{Bo13} J. Bonin, Basis-exchange properties of sparse paving matroids, Adv. in Appl. Math. 50 (2013), no. 1, 6-15.

\bibitem{Br69} R. Brualdi, Comments on bases in dependence structures, Bull. Austral. Math. Soc. 1 (1969), 161-167.

\bibitem{ClLi69} G. Clements, B. Lindst\"{o}m, A generalization of a combinatorial theorem of Macaulay, J. Combin. Theory 7 (1969), 230-238.

\bibitem{Co07} A. Conca, Linear spaces, transversal polymatroids and ASL domains, J. Algebraic Combin. 25 (2007), 25-41.

\bibitem{CoKaVa12} A. Constantinescu, T. Kahle, M. Varbaro, Generic and special constructions of pure O-sequences, arXiv:1212.3426.

\bibitem{CoLiSc11} D. Cox, J. Little, H. Schenck, Toric varieties, Graduate Studies in Mathematics 124, American Mathematical Society, Providence, 2011.

\bibitem{Cu93} J. Currie, Unsolved problems: Open problems in pattern avoidance, Amer. Math. Monthly 100 (1993), no. 8, 790-793. 

\bibitem{DaMc76} J. Davies and C. McDiarmid, Disjoint common transversals and exchange structures, J. London Math. Soc. (2) 14 (1976), no. 1, 55-62.

\bibitem{Da84} J. Dawson, A collection of sets related to the Tutte polynomial of a matroid, Graph Theory Singapore 1983, Lecture Notes in Mathematics 1073 (1984), 193-204.

\bibitem{Da74} D. Daykin, A simple proof of the Kruskal-Katona theorem, J. Combin. Theory Ser. A 17 (1974), 252-253.

\bibitem{Da75} D. Daykin, An algorithm for cascades giving Katona-type inequalities, Nanta Math. 8 (1975), no. 2, 78-83.

\bibitem{De79} F. Dekking, Strongly nonrepetitive sequences and progression-free sets, J. Combin. Theory
Ser. A 27 (1979), no. 2, 181-185.

\bibitem{EaRe89} J. Eagon, V. Reiner, Resolutions of Stanley-Reisner rings and Alexander duality, J. Pure Appl. Algebra 130 (1989), no. 3, 265-275.

\bibitem{Ed87} H. Edelsbrunner, Algorithms in Combinatorial Geometry, volume 10 of EATCS Monographs in Theoretical Computer Science, Springer-Verlag, Berlin, 1987.

\bibitem{Ed65} J. Edmonds, Minimum partition of a matroid into independent subsets, J. Res. Nat. Bur. Standards Sect. B 69B (1965), 67-72. 

\bibitem{Ed66} J. Edmonds, Lehman’s switching game and a theorem of Tutte and Nash-Williams, J. Research National Bureau of Standards B 69B (1965), 73-77.

\bibitem{Er61} P. Erd\H{o}s, Some unsolved problems, Magyar Tud. Akad. Mat. Kutat\'{o} Int. K\"{o} zl. 6 (1961), 221-254. 

\bibitem{ErRuTa80} P. Erd\H{o}s, A. Rubin, H. Taylor, Choosability in graphs, Congr. Numer. 26 (1980), 122-157.

\bibitem{FaRiSh85} M. Farber, B. Richter, H. Shank, Edge-disjoint spanning trees: A connectedness theorem, J. Graph Theory 8 (1985), no. 3, 319-324.

\bibitem{Fu93} W. Fulton, Introduction to toric varieties, Annals of Mathematics Studies 131, Princeton University Press, Princeton, 1993.

\bibitem{Ga95} F. Galvin, The list chromatic index of a bipartite multigraph, J. Combin. Theory Ser. B 63 (1995), no. 1, 153-158.

\bibitem{Ga81} M. Gardner, Mathematical Games, Scientific American, 1981.

\bibitem{Go09} M. Goemans, Lecture notes on matroid optimization, MIT 2009.

\bibitem{GoWe85} C. Goldberg, D. West, Bisection of circle colorings, SIAM J. Algebraic Discrete Methods
6 (1985), no. 1, 93-106. 

\bibitem{Gr02} J. Greene, A new short proof of Kneser's conjecture, Amer. Math. Monthly 109 (2002), no. 10, 918-920.

\bibitem{GrSl03} J. Grytczuk, W. \'{S}liwa, Non-repetitive colorings of infinite sets, Discrete Math. 265 (2003), no. 1-3, 365-373.

\bibitem{Gr08} J. Grytczuk, Thue type problems for graphs, points, and numbers, Discrete Math. 308 (2008), no. 19, 4419-4429.

\bibitem{GrLu12} J. Grytczuk, W. Lubawski, Splitting multidimensional necklaces and measurable colorings of Euclidean spaces, arXiv:1209.1809.

\bibitem{Gr12} A. Grzesik, Indicated coloring of graphs, Discrete Math. 312 (2012), 3467-3472.

\bibitem{Gu11} G. Gutowski, Mr. Paint and Mrs. Correct go fractional, Electron. J. Comb. 18 (2011), no. 1, RP140.

\bibitem{HaJa97} R. H\"{a}ggkvist, J. Janssen, New bounds on the list-chromatic index of the complete graph and other simple graphs, Combin. Probab. Comput. 6 (1997), no. 3, 295-313.

\bibitem{HeHi99} J. Herzog, T. Hibi, Componentwise linear ideals, Nagoya Math. J. 153 (1999), 141-153.

\bibitem{HeHi02} J. Herzog, T. Hibi, Discrete Polymatroids, J. Algebr. Combin. 16 (2002), 239-268.

\bibitem{HeHiSaTrZh08} J. Herzog, T. Hibi, S. Murai, N. Trung, X. Zheng, Kruskal-Katona type theorems for clique complexes arising from chordal and strongly chordal graphs, Combinatorica 28 (2008), no. 3, 315-323.

\bibitem{Hi79} A. Hilton, A simple proof of the Kruskal-Katona theorem and of some associated binomial inequalities, Period. Math. Hungar. 10 (1979), no. 1, 25-30.

\bibitem{HoRi65} C. Hobby, J. Rice, A moment problem in L1 approximation, Proc. Amer. Math. Soc. 16 (1965), 665-670.

\bibitem{Hu11} J. Huh, Milnor numbers of projective hypersurfaces and the chromatic polynomial of graphs, J. Amer. Math. Soc. 25 (2012), no. 3, 907-927.

\bibitem{Hu12} J. Huh, h-vectors of matroids and logarithmic concavity, arXiv:1201.2915.

\bibitem{HuKa12} J. Huh, E. Katz, Log-concavity of characteristic polynomials and the Bergman fan of matroids, Math. Ann. 354 (2012), no. 3, 1103-1116.

\bibitem{Ka00} J. Kahn, Asymptotics of the list chromatic index for multigraphs, Random Struct. Algor. 17 (2000), no. 2, 117-156.

\bibitem{Ka10} K. Kashiwabara, The toric ideal of a matroid of rank 3 is generated by quadrics, Electron. J. Combin. 17 (2010), no. 1, RP28.

\bibitem{Ka68} G. Katona, A theorem of finite sets, Theory of Graphs (Proc. Colloq. Tihany, 1966), 187-207, Academic Press, New York, 1968.

\bibitem{Ke92} V. Ker\"{a}nen, Abelian squares are avoidable on 4 letters, Lecture Notes in Comput. Sci. 623 (1992), 41-52.

\bibitem{Kr63} J. Kruskal, The number of simplices in a complex, Mathematical Optimalization Techniques, 251-278, University of California Press, Berkeley, 1963.

\bibitem{Ku86} J. Kung, Basis-Exchange Properties, in: N. White, Theory of matroids, Encyclopedia Math. Appl. 26, Cambridge University Press, Cambridge, 1986.

\bibitem{La14} M. Laso\'{n}, List coloring of matroids and base exchange properties, European J. Combin. 49 (2015), 265-268.

\bibitem{La12} M. Laso\'{n}, The coloring game on matroids, Discrete Math. 340 (2017), no. 4, 796-799.

\bibitem{LaLu12} M. Laso\'{n}, W. Lubawski, On-line list coloring of matroids, Discrete Appl. Math. 217 (2017), no. 2, 353-355.

\bibitem{La13b} M. Laso\'{n}, Indicated coloring of matroids, Discrete Appl. Math. 179 (2014), 241-243.

\bibitem{La13a} M. Laso\'{n}, f-vectors implying vertex decomposability, Discrete Comput. Geom. 49 (2013), no. 2, 296-301.

\bibitem{LaMi13} M. Laso\'{n}, M. Micha{\l}ek, On the toric ideal of a matroid, Advances in Math. 259 (2014), 1-12.

\bibitem{La13c} M. Laso\'{n}, Obstacles for splitting multidimensional necklaces, Proc. Amer. Math. Soc. 143 (2015), no. 11, 4655-4668.

\bibitem{La10} M. Laso\'{n}, A generalization of Combinatorial Nullstellensatz, Electron. J. Combin. 17 (2010), N32.

\bibitem{LaMi11} M. Laso\'{n}, M. Micha\l ek, On the full, strongly exceptional collections on toric varieties with Picard number three, Coll. Math. 62 (2011), no. 3, 275-296.

\bibitem{Le64} A. Lehman, A solution to the Shannon switching game, J. SIAM 12 (1964), 687-725.

\bibitem{Le11} M. Lenz, The f-vector of a realizable matroid complex is strictly log-concave, arXiv:1106.2944.

\bibitem{LoZi08} M. de Longueville, R. \v{Z}ivaljevi\'{c}, Splitting multidimensional necklaces, Advances in Math. 218 (2008), no. 3, 926-939.

\bibitem{Ma27} F. MacAulay, Some properties of enumeration in the theory of modular systems, Proc. London Math. Soc. 26 (1927), no. 1, 531-555.

\bibitem{MaVrZi06} P. Mani-Levitska, S. Vre\'{c}ica, R. \v{Z}ivaljevi\'{c}, Topology and combinatorics of partitions of masses by hyperplanes, Advances in Math. 207 (2006), no. 1, 266-296.

\bibitem{Ma72} J. Mason, Matroids: unimodal conjectures and Motzkin’s theorem, 207-220, in: D. Welsh, D. Woodall, Combinatorics, Proc. Conf. Combinatorial Math., Math. Inst., Oxford, 1972.

\bibitem{Ma03} J. Matou\v{s}ek, Using the Borsuk-Ulam theorem: Lectures on Topological Methods in Combinatorics and Geometry, Springer-Verlag, Berlin, 2003. 

\bibitem{Ma10} B. Matschke, A note on mass partitions by hyperplanes, arXiv 1001.0193.

\bibitem{Mi12} M. Micha\l ek, Constructive degree bounds for group-based models, J. Combin. Theory Ser. A 120 (2013), no. 7, 1672-1694.  

\bibitem{MiSt05} E. Miller, B. Sturmfels, Combinatorial commutative algebra, Graduate Texts in Mathematics
227, Springer-Verlag, New York, 2005.

\bibitem{Mi61} M. Minsky, Steps toward artificial intelligence, Proc. IRE 49 (1961), no. 1, 8-30.

\bibitem{Na64} C. Nash-Williams, Decomposition of finite graphs into forests, J. London Math. Soc. 39 (1964), 12. 

\bibitem{Ox92} J. Oxley, Matroid Theory, Oxford Science Publications, Oxford University Press, Oxford, 1992.

\bibitem{Ox03} J. Oxley, What is a matroid?, Cubo Mat. Educ. 5 (2003), no. 3, 179-218. 

\bibitem{Ox80} J. Oxtoby, Measure and Category, Graduate Texts in Mathematics 2, Springer-Verlag, New York-Berlin, 1980. 

\bibitem{PaZa12} A. Pastine, F. Zanello, Two unfortunate properties of pure f-vectors, arXiv:1211.3708.

\bibitem{PrBi80} J. Provan, L. Billera, Decompositions of simplicial complexes related to diameters of convex polyhedra, Math. Oper. Res. 5 (1980), no. 4, 576-594.

\bibitem{Ra96} E. Ramos, Equipartition of mass distributions by hyperplanes, Discrete Comput. Geom. 15 (1996), no. 2, 147-167.

\bibitem{Re76} G. Reisner, Cohen-Macaulay quotients of polynomial rings, Advances in Math. 21 (1976), no. 1, 30-49.

\bibitem{Sc09} U. Schauz, Mr. Paint and Mrs. Correct, Electron. J. Combin. 16 (2009), no. 1, RP77.

\bibitem{Sc10} U. Schauz, A Paintability Version of the Combinatorial Nullstellensatz, and List Colorings of k-partite k-uniform Hypergraphs, Electron. J. Combin. 17 (2010), RP176.

\bibitem{Sc10'} U. Schauz, Flexible Color Lists in Alon and Tarsi's Theorem, and Time Scheduling with
Unreliable Participants, Electron. J. Combin. 17 (2010), no. 1, RP13.

\bibitem{Sch03} A. Schrijver, Combinatorial Optimization, Polyhedra and Efficiency, Springer-Verlag, New York, 2003.

\bibitem{Sc11} J. Schweig, Toric ideals of lattice path matroids and polymatroids, J. Pure Appl. Algebra 215 (2011), no. 11, 2660-2665.

\bibitem{Se80} P. Seymour, Decomposition of Regular Matroids, J. Combin. Theory Ser. B 28 (1980), no. 3, 305-359.

\bibitem{Se98} P. Seymour, A note on list arboricity, J. Combin. Theory Ser. B 72 (1998), 150-151.

\bibitem{St77} R. Stanley, Cohen-Macaulay complexes, 51-62 in: M. Aiger, Higher Combinatorics, 31 NATO Adv. Study Inst. Ser. C: Math. Phys. Sci. 1977.

\bibitem{St80} R. Stanley, The number of faces of a simplicial convex polytope, Advances in Math. 35 (1980), no. 3, 236-238.

\bibitem{St96} R. Stanley, Combinatorics and Commutative Algebra, Second Ed., Progress in Mathematics 41,
Birkh\"{a}user Boston, Boston, 1996.

\bibitem{St95} B. Sturmfels, Gr\"{o}bner Bases and Convex Polytopes, Univ. Lecture Series 8, American Mathematical Society, Providence, 1995.

\bibitem{St97} B. Sturmfels, Equations defining toric varieties, Proc. Sympos. Pure Math. 62 (1997), 437-449.

\bibitem{Th94} C. Thomassen, Every planar graph is 5-choosable, J. Combin. Theory Ser. B 62 (1994), no. 1, 180-181.

\bibitem{Tu58} W. Tutte, A homotopy theorem for matroids I, II, Trans. Amer. Math. Soc. 88 (1958), 144-174. 

\bibitem{Vi64} V. Vizing, On an estimate of the chromatic class of a p-graph, Diskret. Analiz. 3 (1964), 25-30 [in Russian].

\bibitem{Vi76} V. Vizing, Coloring the vertices of a graph in prescribed colors, Diskret. Analiz. 29 (1976), 3-10 [in Russian].

\bibitem{VrZi13} S. Vre\'{c}ica, R. \v{Z}ivaljevi\'{c}, Measurable patterns, necklaces, and sets indiscernible by measure, arXiv:1305.7474.

\bibitem{We71} D. Welsh, Combinatorial problems in matroid theory,  291-306, in: Combinatorial Mathematics and its Applications (Proc. Conf., Oxford, 1969), Academic Press, London, 1971.

\bibitem{We01} D. West, Introduction to Graph Theory, Prentice Hall, New York, 2001. 

\bibitem{Wh77} N. White, The basis monomial ring of a matroid, Advances in Math. 24 (1977), no. 3, 292-297.

\bibitem{Wh80} N. White, A unique exchange property for bases, Linear Algebra and its App. 31 (1980), 81-91.

\bibitem{Wh35} H. Whitney, On the abstract properties of linear dependence, Amer. J. Math. 57 (1935),
no. 3, 509-533.

\bibitem{Wo74} D. Woodall, An exchange theorem for bases of matroids, J. Combin. Theory Ser. B 16 (1974), 227-228.

\bibitem{Zh99} X. Zhu, The game coloring number of planar graphs, J. Combin. Theory Ser. B 75 (1999), no. 2, 245-258.

\bibitem{Zh09} X. Zhu, On-line list colouring of graphs, Electron. J. Comb. 16 (2009), no. 1, RP127.

\end{thebibliography}
\end{document}